\newcolumntype{L}{>{$}l<{$}} 
\tikzstyle{obj}=[]
\tikzstyle{tri}=[circle, draw, fill=black!50, inner sep=0pt, minimum width=4pt]
\newtheorem{theorem}{Theorem}[section]
\newtheorem{corollary}[theorem]{Corollary}
\newtheorem{lemma}[theorem]{Lemma}
\newtheorem{proposition}[theorem]{Proposition}
\newtheorem{definition-proposition}[theorem]{Definition-Proposition}
\newtheorem*{thma}{Theorem A}
\newtheorem*{thmb}{Theorem B}
\newtheorem*{thmc}{Theorem C}
\theoremstyle{definition}
\newtheorem{definition}[theorem]{Definition}
\newtheorem{remark}[theorem]{Remark}
\newtheorem{example}[theorem]{Example}
\DeclareMathOperator{\gldim}{gldim}
\newcommand{\add}{\operatorname{add}\nolimits}
\newcommand{\arr}[1]{\overset{#1}{\rightarrow}}
\newcommand{\arrr}[1]{\xrightarrow{#1}}
\newcommand{\Aut}{\operatorname{Aut}\nolimits}
\newcommand{\CC}{\mathscr{C}}
\newcommand{\Cok}{\operatorname{Cok}\nolimits}
\newcommand{\curlU}{\mathscr{U}}
\newcommand{\Db}{\operatorname{D^b}\nolimits}
\newcommand{\der}{\Db}
\newcommand{\dert}{\otimes^{\mathbb{L}}}
\newcommand{\en}{{\operatorname{en}\nolimits}}
\newcommand{\End}{\operatorname{End}\nolimits}
\newcommand{\ev}{\operatorname{ev}\nolimits}
\newcommand{\coev}{\operatorname{coev}\nolimits}
\newcommand{\ext}{\operatorname{ext}\nolimits}
\newcommand{\Ext}{\operatorname{Ext}\nolimits}
\newcommand{\F}{\mathbb{F}}
\newcommand{\fs}{\text{.}}
\newcommand{\gen}[1]{\langle#1\rangle}
\newcommand{\gl}{\mathop{{\rm gl.dim\,}}\nolimits}
\newcommand{\grsh}[1]{(#1)}
\newcommand{\hd}{\head}
\newcommand{\head}{\operatorname{head}\nolimits}
\newcommand{\Hom}{\operatorname{Hom}\nolimits}
\newcommand{\II}{\mathscr{I}}
\newcommand{\Image}{\operatorname{Im}\nolimits}
\newcommand{\into}{\hookrightarrow}
\newcommand{\Ker}{\operatorname{Ker}\nolimits}
\newcommand{\mMod}{\operatorname{-mod}}
\newcommand{\mModm}{\operatorname{-mod-}}
\newcommand{\Modm}{\operatorname{mod-}}
\newcommand{\mproj}{\operatorname{-proj}}
\newcommand{\onto}{\twoheadrightarrow}
\newcommand{\op}{{\operatorname{op}\nolimits}}
\newcommand{\pd}{\operatorname{proj.dim}\nolimits}
\newcommand{\PP}{\mathscr{P}}
\newcommand{\Q}{\mathbb{Q}}
\newcommand{\rad}{\operatorname{rad}\nolimits}
\newcommand{\RHom}{\mathbb{R}\strut\kern-.2em\operatorname{Hom}\nolimits}
\newcommand{\soc}{\operatorname{soc}\nolimits}
\newcommand{\st}{\;\left|\right.\;}
\newcommand{\stmod}{\operatorname{-\underline{mod}}}
\newcommand{\costmod}{\operatorname{-\overline{mod}}}
\newcommand{\Te}{\tens}
\newcommand{\tens}{\operatorname{T}\nolimits}
\newcommand{\Tens}{\tens}
\newcommand{\Tor}{\operatorname{Tor}\nolimits}
\newcommand{\Tr}{\operatorname{Tr}\nolimits}
\newcommand{\triv}{\operatorname{Triv}\nolimits}
\newcommand{\Triv}{\triv}
\newcommand{\UU}{\mathscr{U}}
\newcommand{\XX}{\mathscr{X}}
\newcommand{\YY}{\mathscr{Y}}
\newcommand{\Z}{\mathbb{Z}}
\renewcommand{\mod}{\operatorname{mod}\nolimits}
\newcommand{\scrX}{\mathscr{X}}
\newcommand{\grModm}{\operatorname{mod^\Z-}}
\newcommand{\mgrMod}{\operatorname{-mod^\Z}}
\newcommand{\bigrModm}{\operatorname{mod^{\Z^2}-}}
\newcommand{\mbigrMod}{\operatorname{-mod^{\Z^2}}}
\newcommand{\grprojm}{\operatorname{proj^\Z-}}
\newcommand{\mgrproj}{\operatorname{-proj^\Z}}
\newcommand{\Ggrprojm}{\operatorname{proj^\Psi-}}
\newcommand{\mGgrproj}{\operatorname{-proj^\Psi}}
\newcommand{\mGgrmod}{\operatorname{-mod^\Psi}}
\newcommand{\mbigrproj}{\operatorname{-proj^{\Z^2}}}
\newcommand{\bigrprojm}{\operatorname{proj^{\Z^2}-}}
\newcommand{\lModm}{\operatorname{Mod-}}
\newcommand{\mlMod}{\operatorname{-Mod}}
\newcommand{\grlModm}{\operatorname{Mod^\Z-}}
\newcommand{\mgrlMod}{\operatorname{-Mod^\Z}}
\begin{document}

\title{Higher preprojective algebras, Koszul algebras, and superpotentials}
\author{Joseph Grant}
\address{School of Mathematics, University of East Anglia,
Norwich, NR4 7TJ, United Kingdom}
\email{j.grant@uea.ac.uk}
\author{Osamu Iyama}
\address{Graduate School of Mathematics, Nagoya University, Chikusa-ku, Nagoya, 464-8602 Japan}
\email{iyama@math.nagoya-u.ac.jp}
\thanks{J.G. was supported first by the Japan Society for the Promotion of Science and then by the Engineering and Physical Sciences Research Council [grant number EP/G007497/1].  O.I. was supported by JSPS Grant-in-Aid for Scientific Research (B) 24340004, (B) 16H03923, (C) 18K03209 and (S) 15H05738.}

\subjclass[2010]{16G70 (primary), 18E30, 14A22, 16G20 (secondary).}
\keywords{preprojective algebra, Jacobi algebra, superpotential, Calabi-Yau algebra, periodic resolution.}

\begin{abstract}
In this article we study higher preprojective algebras, showing that various known results for ordinary preprojective algebras generalize to the higher setting.  We first show that the quiver of the higher preprojective algebra is obtained by adding arrows to the quiver of the original algebra, and these arrows can be read off from the last term of the bimodule resolution of the original algebra.  In the Koszul case we are able to obtain the new relations of the higher preprojective algebra by differentiating a superpotential and we show that when our original algebra is $d$-hereditary all the relations come from the superpotential.

We then construct projective resolutions of all simple modules for the higher preprojective algebra of a $d$-hereditary algebra.  This allows us to recover various known homological properties of the higher preprojective algebras and to obtain a large class of almost Koszul dual pairs of algebras.  We also show that when our original algebra is Koszul there is a natural map from the quadratic dual of the higher preprojective algebra to a graded trivial extension algebra.
\end{abstract}

\maketitle

\section{Introduction}

The preprojective algebras of quivers are important algebras which appear in various areas of mathematics, e.g.\ Cohen-Macaulay modules \cite{Au,GL}, Kleinian singularities \cite{cb-ex}, cluster algebras \cite{GLS}, quantum groups \cite{KS,Lu}, quiver varieties \cite{Na}.
They were first introduced by Gelfand and Ponomarev \cite{gp} (see also \cite{dr}) by explicit quivers with relations:
The algebra $\Pi$ of a quiver $Q$ is the path algebra $\F \overline{Q}$ of the double quiver $\overline{Q}$ of $Q$ modulo the ideal generated by $\sum_{x\in Q_1}(xx^*-x^*x)$.
Baer, Geigle, and Lenzing gave a more conceptual construction of $\Pi$ based on the representation theory of the quiver $Q$ \cite{bgl}: 
Their algebra is the direct sum of spaces $\Hom_{\Lambda}(\Lambda,\tau^{-\ell}(\Lambda))$ for the inverse Auslander-Reiten translate $\tau^-$, with an obvious multiplication.
The algebras of Gelfand-Ponomarev and Baer-Geigle-Lenzing are isomorphic, as shown in \cite{ri,cb-quiv}.

Preprojective algebras enjoy very nice homological properties.
They enjoy a certain 2-Calabi-Yau property \cite{cb-man}: If $Q$ is non-Dynkin, then $\Pi$ is a $2$-Calabi-Yau algebra in the sense of Ginzburg. If $Q$ is Dynkin, then $\Pi$ is a self-injective algebra and its stable category is 2-Calabi-Yau.
They also enjoy  a certain Koszul property: If $Q$ is non-Dynkin, then $\Pi$ is a Koszul algebra. If $Q$ is Dynkin, then $\Pi$ is twisted periodic of period $3$ \cite{rs}, and moreover it is an almost Koszul algebra in the sense of Brenner, Butler, and King \cite{bbk}.

Recently, an analogue of preprojective algebras was studied in cluster theory \cite{kel-dcy} and higher-dimensional Auslander-Reiten theory \cite{iya-higher-ar}.
For a finite dimensional algebra $\Lambda$ of global dimension $d$, its preprojective algebra is defined as $\Hom_{\Lambda}(\Lambda,\tau_d^{-\ell}(\Lambda))$, where $\tau_d$ and $\tau_d^-$ are higher analogues of the Auslander-Reiten translates.
This algebra is the 0-th cohomology of the $(d+1)$-Calabi-Yau completion \cite{kel-dcy}, which is a $(d+1)$-Calabi-Yau differential graded algebra.
When $\Lambda$ is a so-called $d$-hereditary algebra, its higher preprojective algebra enjoys nice homological properties, including the $(d+1)$-Calabi-Yau property \cite{AO,hi-selfinj,io-napr,io-stab,hio}.
Higher preprojective algebras also appear in conformal field theory \cite{ep-nak,ep2} and in commutative and non-commutative algebraic geometry \cite{bh,bhi,bs,himo,mi,mm} where they are non-commutative analogues of anticanonical bundles.

A natural question arises: can we describe these higher preprojective algebras by quivers and relations, generalizing the description of Gelfand and Ponomarev? This is important in practice since having a description by a quiver and relations often makes calculations much easier to perform.
When $\Lambda$ has global dimension exactly $2$, the higher preprojective algebra is isomorphic to the Jacobi algebra of a certain quiver with potential \cite{kel-dcy,hi-selfinj}, whose relations are given by taking formal partial differentials of the potential.
Quivers with potential appeared in physicists' study of mirror symmetry, and also played a key role in categorification of Fomin-Zelevinsky cluster algebras \cite{dwz}.

It is a difficult problem to give a description of the higher preprojective algebra of a general finite-dimensional algebra in terms of a quiver and relations. Here, we impose the restriction that $\Lambda$ should be a Koszul algebra, which ensures its homological algebra is easier to understand. Then we are able to describe the quivers of the higher preprojective algebras, and to show that the new relations in the higher preprojective algebra come from taking higher formal partial differentials of a superpotential (see Theorem \ref{thm:pi-as-alg-pot}).
If we further assume that $\Lambda$ is a $d$-hereditary algebra \cite{hio}, then all the relations come from higher differentials of the superpotential, as in the known cases $d=1,2$.

\begin{thma}[Corollary \ref{cor:kos-nher-prep}]
 If $\Lambda\cong\F Q/(R)$ is Koszul and $d$-hereditary then
$$\Pi\cong\frac{\F\overline{Q}}{(\partial_pW)}$$
where the quiver $\overline{Q}$ is a quiver obtained from $Q$ by adding new arrows, and the relations $\partial_pW$ are obtained by differentiating a certain superpotential $W$ with respect to length $d-1$ paths of $\overline{Q}$.
\end{thma}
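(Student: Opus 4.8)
The statement is a corollary of Theorem~\ref{thm:pi-as-alg-pot}, so the plan is to isolate exactly what the $d$-hereditary hypothesis adds. Theorem~\ref{thm:pi-as-alg-pot} already supplies the quiver $\overline{Q}$ (the vertices and arrows of $Q$, together with one new arrow for each element of a chosen basis of the top term of the bimodule resolution of $\Lambda$) and a presentation $\Pi\cong\F\overline{Q}/(\widehat{R}+J_W)$, in which $\widehat{R}\subseteq\F\overline{Q}_2$ is a lift of the quadratic relation space $R$ of $\Lambda$ and $J_W$ is the ideal generated by the ``new'' relations, identified there with differentials $\partial_pW$ of a superpotential $W\in\F\overline{Q}_{d+1}$ along certain length-$(d-1)$ paths $p$. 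So the content of the corollary is the single assertion that when $\Lambda$ is in addition $d$-hereditary the lifted old relations are superfluous, i.e.
\[\widehat{R}\subseteq\bigl(\partial_pW \;:\; p\in\F\overline{Q},\ |p|=d-1\bigr),\]
after which the whole defining ideal is generated by the length-$(d-1)$ differentials of $W$ and the corollary follows.

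To prove this redundancy the plan is to exploit the extra homological structure of $d$-hereditary algebras. The $d$-hereditary hypothesis --- equivalently, $\Ext^i_\Lambda(D\Lambda,\Lambda)=0$ for $i\neq 0,d$ --- is precisely what forces $\Pi$ to enjoy the strong homological properties recalled in the introduction ($(d+1)$-Calabi-Yau when $\Lambda$ is $d$-representation-infinite, self-injective and twisted periodic of period $d+1$ when $\Lambda$ is $d$-representation-finite) \cite{AO,hi-selfinj,io-napr}. Using this, together with the explicit projective resolutions of the simple $\Pi$-modules constructed elsewhere in the paper and the Koszulity of $\Lambda$ to control internal degrees, one shows that in the grading on $\F\overline{Q}$ in which every arrow has degree $1$ the minimal $\Pi$-bimodule resolution is of pure superpotential type: it is the self-dual complex of length $d+1$ whose maps are the formal differentiations of a single $W\in\F\overline{Q}_{d+1}$. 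Reading off the outer terms then identifies the full degree-$2$ relation space of $\Pi$ with the span of $\{\partial_pW:|p|=d-1\}$ and shows the presentation is quadratic (no relations of higher degree survive). Since $\widehat{R}$ consists of degree-$2$ elements of the defining ideal, it lies inside this span, giving the displayed inclusion; one also checks, for consistency, that the superpotential obtained this way may be taken to be the one produced in Theorem~\ref{thm:pi-as-alg-pot}.

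The routine part is bookkeeping: verifying that $W$ is genuinely homogeneous of degree $d+1$ for the all-arrows-degree-one grading, so that each $\partial_pW$ with $|p|=d-1$ lands in degree $2$; matching the new arrows of $\overline{Q}$ with the length-$(d-1)$ differentiation paths; and tracking the (possibly deformed) form of the old relations inside $\widehat{R}$. I expect the genuine obstacle to be the redundancy step itself --- showing that Koszulity of $\Lambda$ \emph{together with} $d$-heredity force the minimal presentation of $\Pi$ to be of pure superpotential type, so that the old relations are automatically absorbed and nothing extra survives in degree $2$ or beyond. This is exactly where the special Calabi-Yau / periodicity behaviour of $d$-hereditary algebras, rather than the Koszul property alone, must be used in an essential way.
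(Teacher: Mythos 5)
You have correctly reduced the corollary to the one real assertion: that under the $d$-hereditary hypothesis the lifted old relations $R$ become redundant, i.e.\ $R\subseteq(\overline{V}^{d-1})^\vee\cdot\varphi(W)$. That is exactly the reduction the paper makes (Theorem \ref{thm:pi-as-alg-pot} plus Theorem \ref{thm:qp-vosnex}). However, the mechanism you propose for proving the redundancy does not work as stated and is not the paper's. First, your key claim --- that the minimal $\Pi$-bimodule resolution is a self-dual complex of length $d+1$ of ``pure superpotential type'' --- is false for half of the $d$-hereditary dichotomy: when $\Lambda$ is $d$-representation finite, $\Pi$ is self-injective of infinite global dimension (Corollary \ref{cor:nrf-pi-twper}), so it has no finite bimodule resolution at all, only a twisted-periodic one of period $d+2$. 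Second, even in the $d$-RI case, the implication ``$(d+1)$-Calabi-Yau $\Rightarrow$ presented by a superpotential'' is itself a substantial theorem (Van den Bergh's), normally requiring characteristic~$0$ and completeness or grading hypotheses that the paper deliberately avoids, and you would still have to identify the resulting superpotential with the specific $W$ of Definition \ref{assoc-superpot}. You flag this step as ``the genuine obstacle'' but supply no argument for it, so the proof is not complete.

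The paper's actual argument never touches the homological algebra of $\Pi$. It works entirely inside the Koszul complex of $\Lambda$: $d$-heredity gives $\Ext^{d-i}_{\Lambda^\en}(\Lambda,\Lambda^\en)\cong H^i(\nu_d^{-1}(\Lambda))=0$ for $0<i<d$ (the one-line computation in the proof of Corollary \ref{cor:kos-nher-prep}), and in particular the internal-degree $(1-i)$ vanishing forces the map $\delta'_i=\theta_i^\ell+(-1)^i\theta_i^r:K_{i-1}^\vee\to(K_i^\vee\otimes_SV)\oplus(V\otimes_SK_i^\vee)$ to be injective. Dualizing yields the surjectivity statement $\Hom_{S^\op}(V,S)\cdot K_i+K_i\cdot\Hom_S(V,S)=K_{i-1}$, and a downward induction starting from $\overline{V}^\vee\cdot\varphi(W)\supseteq K_d$ then shows $(\overline{V}^{d-i+1})^\vee\cdot\varphi(W)\supseteq K_i$ for all $i$ down to $i=2$, where $K_2=R$. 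This is elementary linear algebra with the Koszul data of $\Lambda$ and, unlike your route, it proves the stronger Theorem \ref{thm:qp-vosnex}, which needs only the Ext-vanishing in those internal degrees rather than full $d$-heredity. If you want to salvage your approach you would have to (i) give separate arguments in the $d$-RF and $d$-RI cases, (ii) actually prove the ``superpotential type'' structure of the (initial segment of the) resolution rather than assert it, and (iii) match that superpotential to $W$; the paper's induction sidesteps all three.
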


\noindent
In fact, our Theorem \ref{thm:qp-vosnex} is more general since $\Lambda$ can be a factor algebra of the tensor algebra $\tens_S(V)$ for a separable $\F$-algebra $S$.
Higher Jacobi algebras have been considered previously in representation theory, notably in work of Van den Bergh \cite{vdb}, and Bocklandt, Schedler, and Wemyss \cite{bsw} (see also \cite{dv,ms}). 
In the $d$-representation infinite case, which makes up half of the dichotomy of $d$-hereditary algebras, we recover the description of Calabi-Yau Koszul algebras given in \cite{bsw}. In the case where $\Lambda$ is a basic Koszul $d$-representation-infinite algebra over an algebraically closed field of characteristic 0, this description was also given by Thibault \cite{thi}.
These previous works only deals with the case when $\Lambda$ is Morita equivalent to a quotient of the path algebra of a quiver, while our Theorem \ref{thm:qp-vosnex} is much more general since $\Lambda$ can be a factor algebra of the tensor algebra $\tens_S(V)$ for a separable $\F$-algebra $S$. We give definitions of superpotentials in $\tens_S(V)$ and the associated higher Jacobi algebras which work in this generality, by using the 0th Hochschild homology (Definitions \ref{define superpotential} and \ref{2}). This requires some technical machinery prepared in Section \ref{subsec:spots}.
Although other definitions of (ordinary) Jacobi algebras for $\tens_S(V)$ were given in \cite{Ng,LZ,BL}, our definition seems to be more convenient.

We also obtain homological information about higher preprojective algebras.  Under the assumption that $\Lambda$ is $d$-hereditary, we are able to describe the projective resolutions of all simple $\Pi$-modules using the higher Auslander-Reiten theory of $\Lambda$.
In fact, we show that they are induced from $d$-almost split sequences (see Theorem \ref{d-ass gives projective resolution}).  As applications, we have the following results.

\begin{thmb}[Corollaries \ref{cor:nrf-pi-twper} and \ref{cor:gldimri} and Theorem \ref{thm:kos-alm-kos}]
Let $\Lambda$ be a $d$-hereditary algebra and $\Pi$ the $(d+1)$-preprojective algebra of $\Lambda$.
\begin{enumerate}[\rm(a)]
 \item If $\Lambda$ is $d$-representation finite, then $\Pi$ is self-injective, and the simple $\Pi$-modules have periodic projective resolutions. If moreover $\Lambda$ is Koszul, then $\Pi$ is almost Koszul.
 \item If $\Lambda$ is $d$-representation infinite, then $\Pi$ has global dimension $d+1$ (c.f.\ Appendix A), and the $\Z$-graded simple $\Pi$-modules $S$ satisfy $\RHom_{\Pi}(S,\Pi)\cong S^*(1)[-d-1]$. If moreover $\Lambda$ is Koszul, then so is $\Pi$.
\end{enumerate}
\end{thmb}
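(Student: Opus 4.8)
\emph{Proof strategy.} The plan is to deduce all four assertions from the explicit projective resolutions of the simple $\Pi$-modules provided by Theorem~\ref{d-ass gives projective resolution}. For each vertex $i$ that theorem attaches a $d$-almost split sequence
\[ 0\to\tau_d M_i\to C_{d-1}\to\cdots\to C_0\to M_i\to 0 \]
in the $d$-cluster tilting subcategory $\mathcal{U}$ of $\mod\Lambda$ (when $\Lambda$ is $d$-representation finite) or of $\Db(\mod\Lambda)$ (when $\Lambda$ is $d$-representation infinite, with $\tau_d$ replaced by its derived analogue), and, after applying the appropriate $\Hom$-functor and using the $\Pi$-action, this yields a minimal complex of projective $\Pi$-modules
\[ \cdots\to Q_i^{d+1}\to\cdots\to Q_i^1\to Q_i^0\to S_i\to 0 \]
with $Q_i^0$ the projective cover of $S_i$ and $Q_i^{d+1}$ built from $\tau_d M_i$; the complex is minimal because all maps in a $d$-almost split sequence are radical. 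The first thing I would record is that the two halves of the dichotomy behave differently here: in the $d$-representation-finite case $\tau_d$ induces a bijection between the non-projective and non-injective indecomposables of $\mathcal{U}$, so $\tau_d M_i$ is again non-projective and the complex continues indefinitely, whereas in the $d$-representation-infinite case $\tau_d M_i$ corresponds to a projective and the complex terminates, $0\to Q_i^{d+1}\to\cdots\to Q_i^0\to S_i\to0$, with no further syzygy.

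For (a) I would first recall that $\Pi$ is self-injective (this is known; it is also a formal consequence of the twisted periodicity established below, since twisted periodic algebras are self-injective). Splicing the $d$-almost split sequences over all vertices patches the complexes above into genuinely periodic projective resolutions with $\Omega^{d+2}_\Pi(S_i)\cong S_{\sigma(i)}$ up to a grading shift, for the permutation $\sigma$ of the vertices induced by $\tau_d$; carrying this out over the enveloping algebra produces a twisted periodic projective bimodule resolution of $\Pi$. For the almost Koszul statement one adds the hypothesis that $\Lambda$ is Koszul and equips $\Pi$ with the path-length grading of $\overline{Q}$. The point to check is then purely a matter of internal degrees: over a Koszul $\Lambda$ the terms of the $d$-almost split sequences sit in the internal degrees dictated by the linear resolution of $\Lambda$, so that the resolution of each $S_i$ is linear up to step $d+1$ and then has a single jump at step $d+2$; matching this against the definition of Brenner--Butler--King shows $\Pi$ is $(p,q)$-almost Koszul with $q=d+1$ and $p$ the top nonzero degree of $\Pi$.

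For (b), the terminating resolution $0\to Q_i^{d+1}\to\cdots\to Q_i^0\to S_i\to0$ gives $\pd_\Pi S_i\le d+1$ for every $i$, hence $\gldim\Pi\le d+1$, with equality because $Q_i^{d+1}\ne0$; this is the first assertion of (b), the proviso ``cf.\ Appendix~A'' being a delicate point about the notion of global dimension that I would address there. To compute $\RHom_\Pi(S,\Pi)$ for a $\Z$-graded simple $S=S_i$, apply $\Hom_\Pi(-,\Pi)$ to this resolution: the resulting complex of projective right $\Pi$-modules should be identified, via the Nakayama-type self-duality of the $d$-almost split sequence --- its image under $\Hom_\Lambda(-,\Lambda)$ is a $d$-almost split sequence over $\Lambda^{\op}$ --- with the minimal projective resolution of the right simple $S_i^*$, twisted by the homological shift $[-d-1]$ coming from the length of the resolution and by the internal shift $(1)$ coming from the single tensor factor $E=\Ext^d_\Lambda(D\Lambda,\Lambda)$ in $\Pi=\tens_\Lambda(E)$. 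This yields $\RHom_\Pi(S,\Pi)\cong S^*(1)[-d-1]$, the expected reflection on simples of the twisted bimodule $(d+1)$-Calabi--Yau property of $\Pi$. Finally, if $\Lambda$ is Koszul, the same internal-degree bookkeeping as in (a) --- now with a finite, length-$(d+1)$ resolution and hence no jump --- shows $Q_i^k$ is generated in degree $k$ for $0\le k\le d+1$, so each $S_i$ has a linear projective resolution and $\Pi$ is Koszul.

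The hard part, in both Koszul statements, is this internal-degree bookkeeping: one must verify that the Koszul grading of $\Lambda$ propagates through the higher Auslander--Reiten theory so that the $d$-almost split sequences of Theorem~\ref{d-ass gives projective resolution} are ``linear'' and the induced resolutions of the simple $\Pi$-modules have precisely the predicted linearity, with exactly the one predicted jump in the periodic case. Once that is in place, the self-injectivity and periodicity statements in (a) and the $\RHom$ computation in (b) follow fairly formally from the explicit resolutions and the self-duality of $d$-almost split sequences.
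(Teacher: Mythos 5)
Your overall strategy coincides with the paper's: everything is deduced from the projective resolutions of the $\Z$-graded simple $\Pi$-modules furnished by Theorem~\ref{d-ass gives projective resolution}, using the dichotomy that the resolution continues forever in the $d$-RF case (Theorem~\ref{d-ass gives projective resolution}(c), where $\Omega^{d+2}$ of a simple is again simple) and terminates after $d+1$ steps in the $d$-RI case (part~(b)). Your treatment of part~(b) is essentially the paper's: $\pd_\Pi S\le d+1$ with equality, the passage from graded simples to the ungraded global dimension via Appendix~A, and the identification $\Ext^{d+1}_\Pi(U,\Pi)\cong T$ by applying $\Hom_\Pi(-,\Pi)$ to the resolution and using the duality $\Hom_\Pi(-,\Pi)\circ H\cong G$ of Proposition~\ref{Zproj}(b). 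Two smaller remarks on part~(a): the paper gets self-injectivity directly from $\Ext^i_\Pi(T,\Pi)=0$ for $0<i<d+1$ (read off from exactness of the $H$-sequence in the middle), which is lighter than your route through bimodule twisted periodicity; and the passage from ``all simples are $\Omega$-periodic'' to a twisted periodic bimodule resolution is itself a nontrivial theorem (the paper cites Green--Snashall--Solberg), not something obtained by ``carrying this out over the enveloping algebra''. Neither of these affects the truth of what Theorem~B actually asserts, which only requires periodicity of the simple modules.

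The genuine gap is in the two Koszul assertions, which you explicitly flag as ``the hard part'' but do not prove. Your proposed route --- showing that ``the terms of the $d$-almost split sequences sit in the internal degrees dictated by the linear resolution of $\Lambda$'' --- is not how the paper proceeds and is not clearly feasible: the middle terms $C_i$ are sums of objects $\nu_d^{-j}(\Lambda)(k)$ and there is no direct way to read off which internal shifts $(k)$ occur from the linear bimodule resolution of $\Lambda$. The paper's mechanism (proof of Theorem~\ref{thm:kos-alm-kos}) avoids computing the $C_i$ altogether by a two-sided squeeze: (i) Koszulity of $\Lambda$ implies $E$ is generated in degree $-d$ (Proposition~\ref{prop:e-gen-pos-deg}), so the $(d+1)$-total grading on $\Pi$ is a radical grading and minimality of the $G'$-resolution forces $G'C_i$ to be generated in degrees \emph{at least} $i+1$; (ii) since $Y\cong\nu_d(X)$, Proposition~\ref{Z*Zproj}(c) gives $G'Y=(G'X)(-d-1)$, hence $H'Y$ is generated in degree $-d-1$, and minimality of the dual $H'$-resolution forces $G'C_i$ to be generated in degrees \emph{at most} $i+1$. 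The two bounds together give exact linearity, yielding Koszulity in the $d$-RI case and $(p,d+1)$-almost-Koszulity in the $d$-RF case. Without this (or an equivalent) argument, the Koszul and almost Koszul conclusions of Theorem~B remain unproved in your write-up.
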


As a corollary, we deduce that in the $d$-representation finite case $\Pi$ is twisted periodic of period $d+2$.  This recovers a result of Dugas \cite{dug} and is related to the stably Calabi-Yau property \cite{io-stab}. In the $d$-representation infinite case, we deduce that $\Pi$ is a generalized Artin-Schelter regular algebra of dimension $d+1$ and Gorenstein parameter $1$ in the sense of \cite{mv-serre,ms11,mm,rr} (see also \cite{as}).  This recovers a result of Minamoto and Mori \cite{mm}. Our results show that higher Auslander-Reiten theory is essential in the study of Artin-Schelter regular algebras.

Next we consider quadratic duals.  We show that, when $\Lambda$ is Koszul, there is a natural map from the quadratic dual of the preprojective algebra to a graded trivial extension algebra of the quadratic dual of $\Lambda$. Moreover we characterize when this map is surjective (respectively, an isomorphism) (see Theorem \ref{thm:canonical map}). In particular, we prove the following result.

\begin{thmc}[Theorems \ref{thm:canonical map} and \ref{cor:nhered-surj}]
Let $\Lambda=\tens_S(V)/(R)$ be a Koszul algebra of global dimension $d$ over a separable $\F$-algebra $S$.
\begin{enumerate}[\rm(a)]
\item There exists a morphism $\phi:\Pi^!\to\triv_{d+1}(\Lambda^!)$ of $\Z$-graded $\F$-algebras.
\item If $\Lambda$ is $d$-hereditary then $\phi$ is surjective, and in this case $\phi$ is an isomorphism if and only if $\triv_{d+1}(\Lambda^!)$ is quadratic.
\end{enumerate}
\end{thmc}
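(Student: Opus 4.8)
The plan is to build $\phi$ by a quadratic-duality computation, then to read off surjectivity in the $d$-hereditary case from the explicit quiver-and-relations description of $\Pi$, and finally to obtain the isomorphism criterion by comparing the two algebras in degrees $\le 2$.

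\emph{Part (a).} By Koszulity the minimal $\Lambda$-bimodule resolution of $\Lambda$ is the Koszul bimodule complex, of length $d$, whose next-to-last and last terms involve $S$-bimodules $W_{d-1}$ and $W_d$ with $W_i^\vee\cong\Lambda^!_i$ under the dualities of Section~\ref{subsec:spots}. By Theorem~\ref{thm:qp-vosnex}, $\Pi$ is generated in degree $1$ over $S$ with $\Pi_1=V\oplus\Gamma$, where $\Gamma$ is the $S$-bimodule of new arrows (carrying a natural map $\Gamma^\vee\to D\Lambda^!_d$ coming from the identification of the last term of the bimodule resolution), and all relations of $\Pi$ lie in degree $\ge 2$; hence $\Pi^!=\tens_S(\Pi_1^\vee)/((R_\Pi)^\perp)$, where $R_\Pi\subseteq\Pi_1\otimes_S\Pi_1$ is the space of quadratic relations, and $\Pi_1^\vee=\Lambda^!_1\oplus\Gamma^\vee$. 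On the other side, $\triv_{d+1}(\Lambda^!)=\Lambda^!\oplus D\Lambda^!(-d-1)$ is generated in degree $1$, with degree-$1$ part $\Lambda^!_1\oplus D\Lambda^!_d$ and degree-$2$ part $\Lambda^!_2\oplus D\Lambda^!_{d-1}$, the multiplication into degree $2$ being the product $\Lambda^!_1\otimes_S\Lambda^!_1\to\Lambda^!_2$ on the first summand, the two module actions $\Lambda^!_1\otimes_S D\Lambda^!_d\to D\Lambda^!_{d-1}$ and $D\Lambda^!_d\otimes_S\Lambda^!_1\to D\Lambda^!_{d-1}$ on the ``mixed'' part, and the zero map on $D\Lambda^!_d\otimes_S D\Lambda^!_d$ (since $D\Lambda^!$ is a square-zero ideal). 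As $\Pi^!$ is quadratic, it is enough to give an $S$-bimodule map $\Pi_1^\vee\to\Lambda^!_1\oplus D\Lambda^!_d$ killing $(R_\Pi)^\perp$ in degree $2$; take the identity on $\Lambda^!_1$ and the natural map $\Gamma^\vee\to D\Lambda^!_d$. Decompose $\Pi_1^\vee\otimes_S\Pi_1^\vee$ into ``old--old'', ``mixed'' and ``new--new'' summands. Since killing the new arrows of $\Pi$ returns $\Lambda$, the old--old part of $R_\Pi$ is $R$, so the old--old part of $(R_\Pi)^\perp$ is $R^\perp$, the space of quadratic relations of $\Lambda^!$, which dies under $\Lambda^!_1\otimes_S\Lambda^!_1\to\Lambda^!_2$; the new--new part dies trivially; and for the mixed part one uses that, by Theorem~\ref{thm:qp-vosnex}, the mixed quadratic relations of $\Pi$ are exactly those dictated by the next-to-last differential $\Lambda\otimes_S W_d\otimes_S\Lambda\to\Lambda\otimes_S W_{d-1}\otimes_S\Lambda$ of the Koszul bimodule resolution, whose dual is precisely (a combination of) the two module actions above; hence the mixed part of $(R_\Pi)^\perp$, being orthogonal to these relations, is killed by $\phi$. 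This produces $\phi$.

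\emph{Part (b).} If $\Lambda$ is $d$-hereditary, Corollary~\ref{cor:kos-nher-prep} gives $\Pi\cong\F\overline{Q}/(\partial_pW)$, and here the new arrows form exactly $D\Lambda^!_d$, so $\Gamma^\vee\to D\Lambda^!_d$ is an isomorphism and $\phi$ is the identity on degree-$1$ generators; as both algebras are generated in degree $1$, $\phi$ is surjective, and $\triv_{d+1}(\Lambda^!)\cong\Pi^!/\ker\phi$ with $\ker\phi$ generated in degrees $\ge 2$. If $\phi$ is an isomorphism then $\triv_{d+1}(\Lambda^!)\cong\Pi^!$ is quadratic, being a quadratic dual. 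Conversely, if $\triv_{d+1}(\Lambda^!)$ is quadratic then its defining ideal is generated in degree $2$, so it suffices to show the surjection $\phi_2\colon\Pi^!_2\twoheadrightarrow\triv_{d+1}(\Lambda^!)_2$ is injective, for then $\ker\phi$---generated in degrees $\ge 2$ but zero in degree $2$---vanishes. This is a dimension count: the quadratic relations of $\Pi$ consist of the $\dim_\F R=\dim_\F\Lambda^!_2$ old relations (Koszulity of $\Lambda$) together with the mixed relations, which by Corollary~\ref{cor:kos-nher-prep} are the $\partial_pW$ for $p$ a length-$(d-1)$ path of $Q$; using the description of $W$ in terms of $\Lambda^!_d$ (Section~\ref{subsec:spots}), the span of these is parametrised by $\Lambda^!_{d-1}$. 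Hence $\dim_\F\Pi^!_2=\dim_\F\Lambda^!_2+\dim_\F\Lambda^!_{d-1}=\dim_\F\triv_{d+1}(\Lambda^!)_2$, so $\phi_2$ is an isomorphism and therefore so is $\phi$.

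\emph{Main obstacle.} The heart of the argument is the mixed-relation computation in part (a): matching the genuinely new quadratic relations of $\Pi$---read off from the next-to-last term of the bimodule resolution via Theorem~\ref{thm:qp-vosnex}, equivalently from the superpotential of Definition~\ref{define superpotential}---with the kernels of the module-action maps $\Lambda^!_1\otimes_S D\Lambda^!_d\to D\Lambda^!_{d-1}$ and $D\Lambda^!_d\otimes_S\Lambda^!_1\to D\Lambda^!_{d-1}$. Making this precise forces careful tracking of the several $S$-bimodule dualities involved, which is exactly the role of the machinery of Section~\ref{subsec:spots}; the same bookkeeping underlies the degree-$2$ dimension count used in the $d$-hereditary case.
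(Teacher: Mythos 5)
Part (a) of your proposal is essentially the paper's argument: present $\Pi^!$ as $\tens_S(\overline{V}^{*\ell})/((R_\Pi)^\perp)$ using the quadraticity of $\Pi$ (note this is Proposition \ref{prop:pi-quadratic}, not Theorem \ref{thm:qp-vosnex}, which needs an extra Ext-vanishing hypothesis you are not entitled to here), and check on the old--old, mixed and new--new summands that $(R_\Pi)^\perp$ dies in $\triv_{d+1}(\Lambda^!)$; the mixed case is exactly the duality between $\delta_d'|_{K_{d-1}^\vee}=\theta_d^\ell+(-1)^d\theta_d^r$ and the two module actions on $\Lambda^{!*}$, which is the content of Lemmas \ref{lem:easydual}, \ref{duals} and \ref{Lambda!*}. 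That part is fine.

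The gap is in part (b), in the surjectivity claim. You write that ``both algebras are generated in degree $1$'', but for $\triv_{d+1}(\Lambda^!)$ this is precisely the point at issue: since $\phi$ is an isomorphism in degrees $0$ and $1$ and $\Pi^!$ is generated in degrees $\le 1$, the image of $\phi$ is the subalgebra of $\triv_{d+1}(\Lambda^!)$ generated in degrees $\le 1$, so surjectivity is \emph{equivalent} to $\Lambda^{!*}$ being generated as a $\Lambda^{!\en}$-module by its lowest piece, i.e.\ to $\soc_{\Lambda^{!\en}}(\Lambda^!)=(\Lambda^!)_d$. This can fail for a Koszul algebra of global dimension $d$ (Example \ref{ex-phi}), so it genuinely requires the $d$-hereditary hypothesis --- and not merely through the superpotential presentation of Corollary \ref{cor:kos-nher-prep}, which is all you invoke. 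The missing link is Lemma \ref{Ext and socle}, identifying $(\soc_{\Lambda^{!\en}}\Lambda^!)_i$ with $\Ext^i_{\Lambda^\en}(\Lambda,\Lambda^\en)_{-i}$, which vanishes for $0<i<d$ exactly when $\Lambda$ is $d$-hereditary. The same omission undermines your dimension count: $\dim_\F(\Pi^!)_2=\dim_\F R_\Pi=\dim_\F(\Lambda^!)_2+\dim_\F\delta_d'(K_{d-1}^\vee)$, and the second summand equals $\dim_\F(\Lambda^!)_{d-1}$ only if $\delta_d'$ is injective on $K_{d-1}^\vee$, whose kernel is again $\Ext^{d-1}_{\Lambda^\en}(\Lambda,\Lambda^\en)_{1-d}$; saying the mixed relations are ``parametrised by'' $(\Lambda^!)_{d-1}$ assumes this injectivity. (Injectivity of $\phi_2$ is automatic from the exactness of $0\to\delta_d'(K_{d-1}^\vee)^\perp\to(V^{*\ell}\otimes_SK_d^{\vee*\ell})\oplus(K_d^{\vee*\ell}\otimes_SV^{*\ell})\to K_{d-1}^{\vee*\ell}$; what you need is surjectivity of $\phi_2$, which is once more the socle condition.) With that supplied, your quadraticity argument for the isomorphism criterion is the paper's Lemma \ref{lem-quad012}.
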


In the $d=1$ case where $\Lambda=\F Q$ for $Q$ any connected acyclic quiver, we show that the map is an isomorphism whenever the underlying graph of $Q$ is not of type $A_1$ or $A_2$.
We finish by applying our results to the type $A$ $d$-hereditary algebras $\Lambda^{(d,s)}$ \cite{io-napr} and use Theorem B to deduce that the type $A$ higher preprojective algebras are almost Koszul algebras with parameters $(s-1,d+1)$, thus obtaining examples of $(p,q)$-Koszul algebras for all $p,q\geq2$.

Note that a special case of Theorem C was independently obtained by Guo \cite[Theorem 5.3]{G}.
His result corresponds to the ``if'' part of our Theorem \ref{thm:canonical map}(c) under the assuption that $\Lambda^!$ is given by a quiver with relations and $\triv_{d+1}(\Lambda^!)$ is Koszul. 
Also Theorem C is closely related to \cite[Section 5]{H}.

\medskip\noindent
\textbf{Acknowledgements:} 
Early versions of these results were obtained when the first author was a JSPS Postdoctoral Fellow at Nagoya University during 2010 and 2011. Some results were presented at meetings in Newcastle University (2012), University of Cambridge (2015) and Isaac Newton Institute (2017). The authors thank them for supporting our project.
They also acknowledge the hospitality of Syracuse University, Isaac Newton Institute, and Czech Technical University in Prague.

\section{Preliminaries}

Let $\Lambda$ be a finite-dimensional algebra over a field $\F$.  By default, a $\Lambda$-module will mean a finitely generated left $\Lambda$-module, and we denote the category of such modules by $\Lambda\mMod$.  The corresponding category of right modules is denoted $\Modm\Lambda$.
If $\scrX$ is a set of left or right modules, we denote by $\add\scrX$ the additive subcategory of modules isomorphic to summands of sums of elements of $\scrX$. We sometimes write $\add M$ for $\add\{M\}$.
We denote by $gf$ the composition of morphisms $f:X\to Y$ and $g:Y\to Z$.

We denote the enveloping algebra $\Lambda\otimes_\F\Lambda^\op$ of $\Lambda$ by $\Lambda^\en$.  We will assume that $\F$ acts centrally on all bimodules, and then we can identify the category $\Lambda^\en\mMod$ of left $\Lambda^\en$-modules with the category $\Lambda\mModm\Lambda$ of $\Lambda^\en$-modules.  
We have a duality $(-)^*=\Hom_\F(-,\F):\Lambda\mMod\arr{\sim} \Modm\Lambda$ which sends left modules to right modules and vice versa.  It extends to a duality $\Lambda^\en\mMod\arr{\sim}\Lambda^\en\mMod$ of bimodules.  

\subsection{Tensor algebras}

Let $M$ be a $\Lambda^\en$-module.  Recall that the tensor algebra $\Te_\Lambda(M)$ of $M$ is the $\Z$-graded vector space
$$\Te_\Lambda(M)=\bigoplus_{i\geq0}M^i$$
where $M^i=M\otimes_\Lambda\cdots\otimes_\Lambda M$ is the tensor product of $i$ copies of $M$ so, in particular, $M^0=\Lambda$.  There is an obvious graded multiplication map $M^i\times M^j\to M^{i+j}$ which sends the pair $(\lambda_1\otimes \lambda_2\otimes\ldots \otimes\lambda_i,\lambda_{i+1}\otimes\ldots\lambda_{i+j})$ of standard basis vectors to the concatenated vector $\lambda_1\otimes \lambda_2\otimes\ldots\otimes\lambda_{i+j}$, and so $\Te_\Lambda(M)$ is a nonnegatively $\Z$-graded algebra.
For later use, we prepare the following basic observations, whose proofs are left to the reader.

\begin{lemma}\label{formula for Tens}
Let $M$ be a $\Lambda^\en$-module, $T:=\Tens_\Lambda(M)$, and $I$ an ideal of $\Lambda$.
\begin{enumerate}[\rm(a)]
\item For a $\Lambda^\en$-module $N$, we have $\Tens_T(T\otimes_\Lambda N\otimes_\Lambda T)\cong\Tens_\Lambda(M\oplus N)$.
\item For a $\Lambda^\en$-submodule $L$ of $M$, we have $\Tens_\Lambda(M)/(I+L)\cong\Tens_{\Lambda/I}(M/(IM+MI+L))$.
\end{enumerate}
\end{lemma}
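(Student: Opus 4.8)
The plan is to deduce both isomorphisms from the universal property of the tensor algebra together with the restriction--induction adjunction for bimodules, which avoids any computation with bases. Recall that for any $\F$-algebra $A$, an $\F$-algebra homomorphism $\Tens_\Lambda(M)\to A$ is the same as a pair $(\phi,f)$, where $\phi\colon\Lambda\to A$ is an $\F$-algebra homomorphism and $f\colon M\to A$ is a homomorphism of $\Lambda$-bimodules for the $\Lambda$-bimodule structure on $A$ induced by $\phi$; moreover this bijection is natural in $A$.

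For (a), write $T=\Tens_\Lambda(M)$. An $\F$-algebra homomorphism $\Tens_T(T\otimes_\Lambda N\otimes_\Lambda T)\to A$ consists of an $\F$-algebra homomorphism $g\colon T\to A$ together with a homomorphism of $T$-bimodules $T\otimes_\Lambda N\otimes_\Lambda T\to A$, where $A$ is a $T$-bimodule via $g$. In turn $g$ is the same as an $\F$-algebra map $\Lambda\to A$ together with a $\Lambda$-bimodule map $M\to A$. Since $T\otimes_\Lambda N\otimes_\Lambda T$ is the $T$-bimodule induced from the $\Lambda$-bimodule $N$ along the algebra homomorphism $\Lambda\to T$, the restriction--induction adjunction identifies $T$-bimodule homomorphisms $T\otimes_\Lambda N\otimes_\Lambda T\to A$ with $\Lambda$-bimodule homomorphisms $N\to A$, where $A$ carries the $\Lambda$-bimodule structure restricted along $\Lambda\to T\arr{g}A$. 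Hence the data amounts to an $\F$-algebra map $\Lambda\to A$ together with a $\Lambda$-bimodule map $M\oplus N\to A$, that is, an $\F$-algebra map $\Tens_\Lambda(M\oplus N)\to A$; as all these identifications are natural in $A$, the Yoneda lemma gives the isomorphism. A hands-on alternative: $T\otimes_T T\cong T$ yields $(T\otimes_\Lambda N\otimes_\Lambda T)^{\otimes_T i}\cong T\otimes_\Lambda(N\otimes_\Lambda T)^{\otimes_\Lambda i}$, and expanding each $T$ as $\bigoplus_{n\ge0}M^n$ realizes the direct sum over $i\ge0$ of these as the direct sum of all iterated tensor products over $\Lambda$ of copies of $M$ and $N$, which is $\Tens_\Lambda(M\oplus N)$, the concatenation products corresponding.

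For (b), an $\F$-algebra homomorphism $\Tens_\Lambda(M)/(I+L)\to A$ is a pair $(\phi,f)$ as above subject to $\phi(I)=0$ and $f(L)=0$. Now $\phi(I)=0$ holds precisely when $\phi$ factors as $\Lambda\to\Lambda/I\to A$; and for such $\phi$ the bimodule map $f$ automatically satisfies $f(IM)=\phi(I)f(M)=0$ and $f(MI)=f(M)\phi(I)=0$, so the two conditions together are equivalent to: $\phi$ factors through $\Lambda/I$ and $f$ factors through $M/(IM+MI+L)$. Since $IM+MI+L$ is a $\Lambda$-subbimodule of $M$ annihilated on both sides by $I$, the quotient $M/(IM+MI+L)$ is a $\Lambda/I$-bimodule, and the data above is exactly an $\F$-algebra map $\Lambda/I\to A$ together with a $\Lambda/I$-bimodule map $M/(IM+MI+L)\to A$, that is, an $\F$-algebra map $\Tens_{\Lambda/I}(M/(IM+MI+L))\to A$. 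By naturality and Yoneda this proves (b). Alternatively, the surjections $\Lambda\onto\Lambda/I$ and $M\onto M/(IM+MI+L)$ induce, via the universal property of $\Tens_\Lambda(M)$, a surjective $\F$-algebra homomorphism $\Tens_\Lambda(M)\onto\Tens_{\Lambda/I}(M/(IM+MI+L))$ whose kernel contains $I$ and $L$, hence contains $(I+L)$; equality then follows by comparing graded pieces in each degree.

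Neither statement is deep; the only points requiring care are the precise form of the adjunction used in (a) --- namely that, once an $\F$-algebra map $g\colon T\to A$ has been fixed, $T$-bimodule maps $T\otimes_\Lambda N\otimes_\Lambda T\to A$ coincide with $\Lambda$-bimodule maps $N\to A$ --- and, in the direct arguments, verifying that the multiplicative structures are respected by the vector-space identifications.
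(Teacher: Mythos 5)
Your proof is correct. The paper explicitly leaves this lemma's proof to the reader, so there is nothing to compare against; your universal-property argument (tensor algebra as the free algebra on a bimodule, plus induction--restriction for bimodules in (a) and the factoring of $\phi$ and $f$ through the quotients in (b)) is the standard and complete way to fill the gap, and the direct graded-piece verifications you sketch as alternatives also go through.
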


Let $\Lambda$ be a basic $\F$-algebra with Jacobson radical $J$. We assume that $S$ is a semisimple subalgebra of $\Lambda$ such that $\Lambda=S\oplus J$. Then we can write $\Lambda=\Tens_S(V)/I$ for an $S^\en$-module $V$ and an ideal $I$ of $\Tens_S(V)$. If $I$ is a homogeneous ideal then $\Lambda$ inherits a grading from $\Tens_S(V)$.
Any such nonnegatively $\Z$-graded algebra $\Lambda$ has a minimal $\Z$-graded projective $\Lambda^\en$-module resolution
\[\cdots\arr{\delta_3}P_2\arr{\delta_2}P_1\arr{\delta_1}P_0\to0,\]
where each projective module $P_i$ is generated in degrees greater than or equal to $i$.
Immediately, we have the following property. 

\begin{lemma}\label{lem:e-gen-pos-deg2}
For any $i\ge0$, the $\Z$-graded $\Lambda^\en$-module $\Ext^i_{\Lambda^\en}(\Lambda,\Lambda^\en)$ is generated in degrees greater than or equal to $-i$.
\end{lemma}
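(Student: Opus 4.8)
The claim is that $\Ext^i_{\Lambda^\en}(\Lambda,\Lambda^\en)$ is generated in degrees $\geq -i$, given the minimal $\Z$-graded projective bimodule resolution $\cdots \to P_2 \to P_1 \to P_0 \to 0$ in which $P_i$ is generated in degrees $\geq i$. The plan is to compute $\Ext^i_{\Lambda^\en}(\Lambda,\Lambda^\en)$ as the cohomology of the complex $\Hom_{\Lambda^\en}(P_\bullet, \Lambda^\en)$ and to track degrees through the duality $\Hom_{\Lambda^\en}(-,\Lambda^\en)$ applied to the projectives $P_i$.

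First I would recall that a projective $\Lambda^\en$-module generated in degrees $\geq i$ is a direct sum of shifts $\Lambda^\en(-j)$ (more precisely, summands of $\Lambda^\en e \otimes e' \Lambda^\en$ with the generator placed in degree $j$) with all $j \geq i$. Applying $\Hom_{\Lambda^\en}(-,\Lambda^\en)$ to such a module sends $\Lambda^\en(-j)$ to $\Lambda^\en(j)$, i.e.\ to a projective right... — concretely, $\Hom_{\Lambda^\en}(P_i,\Lambda^\en)$ is a projective $\Lambda^\en$-module generated in degrees $\leq -i$, since each generator of $P_i$ in degree $j \geq i$ contributes a generator of the dual in degree $-j \leq -i$. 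Thus in the cochain complex $\Hom_{\Lambda^\en}(P_0,\Lambda^\en) \to \Hom_{\Lambda^\en}(P_1,\Lambda^\en) \to \cdots$, the term in cohomological degree $i$ is generated in internal degrees $\leq -i$.

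Next, $\Ext^i_{\Lambda^\en}(\Lambda,\Lambda^\en)$ is a subquotient of $\Hom_{\Lambda^\en}(P_i,\Lambda^\en)$ — it is $\Ker$ (of the outgoing differential) modulo $\Image$ (of the incoming differential). A submodule of a graded module generated in degrees $\leq -i$ need not itself be generated in degrees $\leq -i$, so one cannot argue that $\Ext^i$ is generated in degrees $\leq -i$; rather, the point is a lower bound. Here I would use that $\Ext^i_{\Lambda^\en}(\Lambda,\Lambda^\en)$, being a quotient of the submodule $\Ker$, is concentrated in degrees $\geq$ the minimal degree appearing in $\Hom_{\Lambda^\en}(P_i, \Lambda^\en)$. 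But that only gives concentration, not generation. The resolution of the degree statement: since $\Ext^i$ is a quotient of a $\Z$-graded submodule $Z^i \subseteq \Hom_{\Lambda^\en}(P_i,\Lambda^\en)$, and the latter is generated in degrees $\leq -i$ as a module, the incoming differential $\Hom_{\Lambda^\en}(P_{i-1},\Lambda^\en) \to Z^i$ (which is surjective onto $B^i = \Image$) has image generated in degrees $\leq -(i-1) = -i+1$. Hmm — so actually I should argue directly: $\Ext^i = Z^i/B^i$, and a graded module which is a quotient of $Z^i$ is generated in degrees $\geq \min$, and since every element of $Z^i \subseteq \Hom_{\Lambda^\en}(P_i,\Lambda^\en)$ has degree $\geq$... no.

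Let me reorganize the last paragraph into the actual argument I expect works. Apply the exact duality $(-)^* = \Hom_\F(-,\F)$: one has $\Ext^i_{\Lambda^\en}(\Lambda,\Lambda^\en)^* \cong \Tor_i^{\Lambda^\en}(\Lambda^{\en *}, \Lambda)$ up to appropriate identifications, or more simply use that for the minimal resolution, $\Ext^i_{\Lambda^\en}(\Lambda, \Lambda^\en)$ is generated in degrees $\geq -i$ if and only if its $\F$-dual is \emph{cogenerated} (has socle) in degrees $\leq i$, and the latter is the cohomology of $\Hom_{\Lambda^\en}(P_\bullet,\Lambda^\en)^* \cong \Lambda^\en \otimes_{\Lambda^\en} P_\bullet^{**}$-type complex whose term $i$ lives in degrees $\geq i$ because $P_i$ is generated in degrees $\geq i$. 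Concretely: $\Hom_{\Lambda^\en}(P_i, \Lambda^\en)^* \cong P_i$ as graded bimodules (this uses $P_i$ finitely generated projective and $(\Lambda^\en)^* \cong \Lambda^\en$ is not needed — just $\Hom_{\Lambda^\en}(P_i,\Lambda^\en)^* \cong \Lambda^\en \otimes_{\Lambda^\en} P_i \cong P_i$ when $\Lambda$ is finite-dimensional), which is generated in degrees $\geq i$, hence cogenerated (i.e.\ its socle, equivalently the socle of any quotient) in... no, generation doesn't control socle either. The genuinely correct statement: $P_i$ generated in degrees $\geq i$ means $P_i = \bigoplus \Lambda^\en(\geq i\text{-shifts})$, so $\Hom_{\Lambda^\en}(P_i,\Lambda^\en) = \bigoplus \Lambda^\en(\leq -i\text{-shifts})$, which as a graded $\F$-vector space is concentrated in a range but crucially is generated as a bimodule in degrees $\leq -i$; its cohomology subquotient $\Ext^i$ is then a bimodule which is a subquotient, and one shows it is generated in degrees $\geq -i$ by observing the minimal generators of $\Ext^i$ lift to $Z^i$ and the lowest-degree part of $\Hom_{\Lambda^\en}(P_i,\Lambda^\en)$ is degree $-i$ (from the summand $\Lambda^\en(-i)$'s generator, since $\Lambda$ nonnegatively graded). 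Hence $\Ext^i$ is concentrated in degrees $\geq -i$, and a nonzero graded module concentrated in degrees $\geq -i$ is automatically generated in degrees $\geq -i$. \textbf{The main obstacle} is precisely being careful that "$P_i$ generated in degrees $\geq i$" combined with $\Lambda$ nonnegatively graded forces $\Hom_{\Lambda^\en}(P_i,\Lambda^\en)$ to be \emph{concentrated} in degrees $\geq -i$ — one needs that each indecomposable projective summand $\Lambda e\otimes (e'\Lambda)(-j)$ with $j\geq i$ has $\Hom_{\Lambda^\en}(-,\Lambda^\en)$ concentrated in degrees $\geq -j \geq -i$ because $\Lambda^\en$ is nonnegatively graded — and then the subquotient inherits this concentration, whence generation follows for free.
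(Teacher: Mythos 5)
Your reduction is the right one: over the nonnegatively graded ring $\Lambda^\en$, ``generated in degrees $\geq -i$'' is indeed equivalent to ``concentrated in degrees $\geq -i$'', so everything hinges on showing that the term $\Hom_{\Lambda^\en}(P_i,\Lambda^\en)$ of the dualized complex vanishes in degrees $<-i$. But the step where you establish this contains a sign error that the argument cannot survive. A summand of $P_i$ generated in degree $j$ dualizes to a projective generated in degree $-j$ and concentrated in degrees $\geq -j$; since the hypothesis is $j\geq i$, this gives $-j\leq -i$, not $-j\geq -i$ as you wrote. So the dual complex in cohomological degree $i$ is only concentrated in degrees $\geq -j_{\max}$, where $j_{\max}$ is the largest degree of a generator of $P_i$, and this is strictly weaker than $\geq -i$ whenever $P_i$ has a generator in degree $>i$. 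Your argument therefore proves the statement only when every $P_i$ is generated in degree \emph{exactly} $i$, i.e.\ in the Koszul case (Theorem \ref{characterize koszul}); that situation is precisely Proposition \ref{prop:e-gen-pos-deg}, and there your reasoning is correct.

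The gap is not merely presentational, because the concentration claim genuinely fails outside the Koszul case. Take $Q=[1\to2\to3\to4]$ and $\Lambda=\F Q/(\alpha\beta\gamma)$, graded by path length. Then $\gldim\Lambda=2$ and $K_2=\langle\alpha\beta\gamma\rangle$ sits in degree $3$, so $P_2$ is generated in degree $3$ (which is $\geq 2$, as required), and $\Hom_{\Lambda^\en}(P_2,\Lambda^\en)\cong\Lambda\otimes_SK_2^\vee\otimes_S\Lambda$ has a one-dimensional component in degree $-3$. The incoming differential from $\Hom_{\Lambda^\en}(P_1,\Lambda^\en)\cong\Lambda\otimes_SV^\vee\otimes_S\Lambda$ lands in degrees $\geq-1$, so that degree $-3$ class survives to $\Ext^2_{\Lambda^\en}(\Lambda,\Lambda^\en)$, which is therefore generated in degree $-3<-2$. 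In other words, the example shows that the stated degree bound itself fails in this generality, so no amount of repaired bookkeeping along your lines will close the gap; the bound should only be invoked when $P_i$ is generated in degree exactly $i$, which is the form in which it is actually used later in the paper.
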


We can write each projective $\Lambda^\en$-module in the form $\Lambda\otimes_S K\otimes_S \Lambda$ for some $\Z$-graded projective $S^\en$-module $K$, where we consider $S$ as a $\Z$-graded algebra concentrated in degree $0$; see \cite{bk}.  In particular, we write $P_i=\Lambda\otimes_S K_i\otimes_S \Lambda$ for $\Z$-graded projective $S^\en$-modules $K_i$, for $0\leq i\leq d$.

In general $K_i\cong\Tor^\Lambda_i(S,S)$, and explicit descriptions for these spaces are known.  For $m\geq0$,
$$\Tor^\Lambda_{2m}(S,S)\cong\frac{I^m\cap JI^{m-1}J}{JI^m+I^mJ}\;\text{ and }\Tor^\Lambda_{2m+1}(S,S)=\frac{JI^m\cap I^mJ}{I^{m+1}+JI^mJ}\fs$$
For more information and references, see the introduction to \cite{bk}.  For certain kinds of algebras there are nicer descriptions of these spaces: see Section \ref{subsec:kos} and the final chapters of \cite{bk}.

As well as our vector-space duality $(-)^*$, we have dualities
\begin{eqnarray*}
(-)^\vee:=\Hom_{S^\en}(-,S^\en):S^\en\mMod\arr{\sim} S^\en\mMod,\\
(-)^{*\ell}:=\Hom_S(-,S):S^\en\mMod\arr{\sim} S^\en\mMod,\\
(-)^{*r}:=\Hom_{S^\op}(-,S):S^\en\mMod\arr{\sim} S^\en\mMod.
\end{eqnarray*}
For $S^\en$-modules $X$ and $Y$, we have functorial isomorphisms
\begin{eqnarray}\label{XY1}
Y^{*\ell}\otimes_SX^{*}\cong(X\otimes_SY)^{*}\\ \label{XY2}
Y^{*\ell}\otimes_SX^{*\ell}\cong(X\otimes_SY)^{*\ell}
\end{eqnarray}
sending $f\otimes g$ to $(x\otimes y\mapsto g(xf(y))$,
\begin{eqnarray}\label{XY3}
Y^{*r}\otimes_SX^{*r}\cong(X\otimes_SY)^{*r}
\end{eqnarray}
sending $f\otimes g$ to $(x\otimes y\mapsto f(g(x)y))$, and
\begin{eqnarray}\label{XY4}
Y^{*r}\otimes_SX^{\vee}\cong(X\otimes_SY)^{\vee}
\end{eqnarray}
sending $f\otimes g$ to $(x\otimes y\mapsto\sum_is_i\otimes f(s'_iy))$ for $g(x)=\sum_is_i\otimes s'_i$. 
For example, \eqref{XY4} can be checked as follows:
Since $\Hom_{S^\op}(Y,S^\en)\cong S\otimes_\F Y^{*r}\in S^\en\mproj$, we have
\[(X\otimes_SY)^\vee\cong\Hom_{S^\en}(X,\Hom_{S^\op}(Y,S^\en))\cong(S\otimes_\F Y^{*r})\otimes_{S^\en}X^\vee\cong Y^{*r}\otimes_SX^\vee.\]

Note the following simple lemma:
\begin{lemma}\label{lem-tensorhomiso}
Let $L$ be a $\Lambda\otimes_\F S^\op$-module, $X$ be a projective $S^\en$-module, and $M$ be a $S\otimes_\F \Lambda^\op$-module.
Then there is an isomorphism of $\Lambda^\en$-modules which is natural in $L$, $X$, and $M$:
$$\Hom_{\Lambda^\en}(L\otimes_S X\otimes_SM,\Lambda^\en)\cong\Hom_\Lambda(M,\Lambda)\otimes_S X^\vee\otimes_S\Hom_{\Lambda^\op}(L,\Lambda).$$
In particular, for any projective $S^\en$-module $X$, there is a functorial isomorphism of $\Lambda^\en$-modules
$$\Hom_{\Lambda^\en}(\Lambda\otimes_S X\otimes_S\Lambda,\Lambda^\en)\cong\Lambda\otimes_S X^\vee\otimes_S\Lambda.$$
\end{lemma}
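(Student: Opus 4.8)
The plan is to establish the general isomorphism first, since the ``in particular'' statement follows by taking $L = M = \Lambda$ and using $\Hom_\Lambda(\Lambda,\Lambda)\cong\Lambda$ together with $\Hom_{\Lambda^\op}(\Lambda,\Lambda)\cong\Lambda$ as bimodules. For the general case, I would begin by rewriting the Hom-space using the standard tensor--hom adjunctions. Viewing $L\otimes_S X\otimes_S M$ as a $\Lambda^\en$-module (left action through $L$, right action through $M$), and $\Lambda^\en = \Lambda\otimes_\F\Lambda^\op$, the key observation is that the $\Lambda^\en$-module $\Lambda^\en$ decomposes nicely: as a left $\Lambda$-module it is free, and as a right module it is a $\Lambda\otimes_\F S^\op$-module, etc. The clean route is to peel off the three tensor factors one at a time.

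The main steps I would carry out: first, use $\Hom_{\Lambda^\en}(L\otimes_S X\otimes_S M,\Lambda^\en)\cong\Hom_{S\otimes_\F\Lambda^\op}(X\otimes_S M,\Hom_\Lambda(L,\Lambda^\en))$ by the adjunction peeling off the left $\Lambda$-action, noting that $\Hom_\Lambda(L,\Lambda^\en)\cong\Hom_{\Lambda^\op}(L,\Lambda)\otimes_\F\Lambda$ as a $\Lambda^\op\otimes_\F S$-module on the left and a right $\Lambda$-module (since $\Lambda^\en$ is free as a left $\Lambda$-module via its first factor, with the right action coming from the second factor $\Lambda^\op$ — here we use that $L$ is $\Lambda\otimes_\F S^\op$, so the residual left structure is over $S$). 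Next, peel off the $M$ factor on the right similarly, using that $M$ is $S\otimes_\F\Lambda^\op$, to produce a factor $\Hom_\Lambda(M,\Lambda)$. Finally, the middle factor contributes $\Hom_{S^\en}(X,S^\en)\cong X^\vee$, and here projectivity of $X$ over $S^\en$ is what guarantees that the hom behaves well and commutes with the tensor decomposition — this is the analogue of the computation displayed just before the lemma for \eqref{XY4}. Reassembling, and tracking which factor sits on which side (the $\Hom_\Lambda(M,\Lambda)$ carries the left $\Lambda$-action, $\Hom_{\Lambda^\op}(L,\Lambda)$ the right one, and $X^\vee$ the $S$-bimodule part in the middle), gives exactly the stated isomorphism of $\Lambda^\en$-modules, and naturality in $L$, $X$, $M$ is automatic since every step is a natural adjunction isomorphism.

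The main obstacle I anticipate is bookkeeping the bimodule structures correctly through each adjunction: one must be careful that after peeling off the left $\Lambda$-factor the remaining $\Hom_\Lambda(L,\Lambda^\en)$ is still a module over the correct pair of algebras so that the next adjunction applies, and that the $S^\en$-projectivity of $X$ is genuinely used (it fails in general without it, since $\Hom_{S^\en}(X,-)$ need not commute with the relevant tensor products). A slicker alternative, which I would mention, is to reduce to the case $X = S^\en$ (i.e. $X$ free) by additivity — both sides are additive in $X$ and every projective $S^\en$-module is a summand of a free one — and then for $X = S^\en$ the claim becomes $\Hom_{\Lambda^\en}(\Lambda\otimes_\F\Lambda,\Lambda^\en)\cong\Hom_\Lambda(M,\Lambda)\otimes_S(S^\en)^\vee\otimes_S\Hom_{\Lambda^\op}(L,\Lambda)$ with $L\otimes_S S^\en\otimes_S M \cong L\otimes_\F M$, which is a direct computation using $(S^\en)^\vee\cong S^\en$ and the fact that $L\otimes_\F M$ is a projective $\Lambda^\en$-module. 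This reduction isolates the only subtle point (projectivity of $X$) into the single word ``additivity''.
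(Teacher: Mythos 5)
Your proposal is correct and follows essentially the same route as the paper: tensor--hom adjunction to move the Hom inside, projectivity of $X$ over $S^\en$ to replace $\Hom_{S^\en}(X,-)$ by $X^\vee\otimes_{S^\en}-$, and the identification of $\Hom_{\Lambda^\en}(L\otimes_\F M,\Lambda^\en)$ with the outer two factors. The only difference is bookkeeping: the paper first regroups $L\otimes_S X\otimes_S M\cong(L\otimes_\F M)\otimes_{S^\en}X$ so that a single adjunction over $S^\en$ isolates $X$ at once, rather than peeling off $L$, $M$, $X$ in succession.
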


\begin{proof}
We include a complete proof for the convenience of the reader.
Using the tensor-hom adjunctions, for any $X\in S^\en\mMod$ we have isomorphisms of $\Lambda^\en$-modules
\begin{eqnarray*}
&&\Hom_{\Lambda^\en}(L\otimes_S X\otimes_SM,\Lambda^\en)
\cong\Hom_{\Lambda^\en}((L\otimes_\F M)\otimes_{S^\en}X,\Lambda^\en)\\
&\cong&\Hom_{S^\en}(X,\Hom_{\Lambda^\en}(L\otimes_\F M,\Lambda^\en))
\cong X^\vee\otimes_{S^\en}\Hom_{\Lambda^\en}(L\otimes_\F M,\Lambda^\en)\\
&\cong&X^\vee\otimes_{S^\en}(\Hom_\Lambda(L,\Lambda)\otimes_\F\Hom_{\Lambda^\op}(M,\Lambda))\cong\Hom_\Lambda(M,\Lambda)\otimes_S X^\vee\otimes_S\Hom_{\Lambda^\op}(L,\Lambda).
\end{eqnarray*}
All our isomorphisms are natural.
\end{proof}

The four duals $(-)^*$, $(-)^{*\ell}$, $(-)^{*r}$ and $(-)^\vee$ are isomorphic to each other (e.g.\ \cite[Section 3]{ric02}, \cite[Section 2.1]{bsw}).
In fact, since $S$ is a symmetric $\F$-algebra, there exists an $\F$-linear form $t:S\to\F$ such that $t(xy)=t(yx)$ and the map $S\to S^*$ sending $x$ to $(y\mapsto t(xy))$ is an isomorphism. This gives isomorphisms
\[\alpha:=t\circ(-):(-)^{*\ell}\cong(-)^*,\ 
\beta:=t\circ(-):(-)^{*r}\cong(-)^*\ \mbox{ and }
\gamma:=(t\otimes1)\circ(-):(-)^\vee\cong(-)^{*r}\]
of functors.

For later use in Section 5, we now show that these isomorphisms are compatible with module structures in the following sense: 
Let $L=\bigoplus_{i\in\Z}L_i$ be a $\Z$-graded $\tens_S(V^{*\ell})^{\op}$-module, and let
\[L^{(*)}=\bigoplus_{i\in\Z}L_{-i}^*,\ L^{(*\ell)}=\bigoplus_{i\in\Z}L_{-i}^{*\ell},\ L^{(*r)}=\bigoplus_{i\in\Z}L_{-i}^{*r}\ \mbox{ and }\ L^{(\vee)}=\bigoplus_{i\in\Z}L_{-i}^\vee.\]
Then $L^{(*)}$ and $L^{(*\ell)}$ are $\Z$-graded $\tens_S(V^{*\ell})$-modules, and $L^{(*r)}$ and $L^{(\vee)}$ are $\Z$-graded $\tens_S(V^{*r})$-modules as follows:
The action of $\tens_S(V^{*\ell})^{\op}$ on $L$ is given by a morphism $a_i:L_i\otimes_SV^{*\ell}\to L_{i+1}$ of $S^\en$-modules for $i\in\Z$.
This corresponds to a morphism $b_i:L_i\to L_{i+1}\otimes_SV$ of $S^\en$-modules via Hom-tensor adjunction $\Hom_{S^\en}(A\otimes_SB,C)\cong\Hom_{S^\en}(A,\Hom_{S^\op}(B,C))$.
Applying $(-)^\dagger$ for $\dagger=*,*\ell,*r,\vee$, we obtain morphisms
\begin{eqnarray*}
V^{*\ell}\otimes_SL_{i+1}^\dagger\stackrel{\eqref{XY1}\eqref{XY2}}{\cong}(L_{i+1}\otimes_SV)^\dagger\xrightarrow{b_i^\dagger} L_i^\dagger\mbox{ for $\dagger=*$ or $*\ell$,}\\
V^{*r}\otimes_SL_{i+1}^\dagger\stackrel{\eqref{XY3}\eqref{XY4}}{\cong}(L_{i+1}\otimes_SV)^\dagger\xrightarrow{b_i^\dagger} L_i^\dagger\mbox{ for $\dagger=*r$ or $\vee$}
\end{eqnarray*}
of $S^\en$-modules, which give the desired structures on $L^{(*)}$, $L^{(*\ell)}$, $L^{(*r)}$ and $L^{(\vee)}$.

\begin{lemma}\label{duals}
\begin{enumerate}[\rm(a)]
\item We have isomorphisms
$L^{(*)}\cong L^{(*\ell)}$ of $\Z$-graded $\tens_S(V^{*\ell})$-modules, and $L^{(*r)}\cong L^{(\vee)}$ of $\Z$-graded $\tens_S(V^{*r})$-modules.
\item Under the isomorphism $\tens_S(V^{*r})\cong\tens_S(V^{*\ell})$ of algebras given by $\alpha_V^{-1}\beta_V:V^{*r}\cong V^{*\ell}$, we have isomorphisms $L^{(*)}\cong L^{(*\ell)}\cong L^{(*r)}\cong L^{(\vee)}$ of $\Z$-graded $\tens_S(V^{*r})$-modules.
\end{enumerate}
\end{lemma}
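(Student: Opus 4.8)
The plan is to reduce everything to the compatibility of the fixed natural isomorphisms $\alpha,\beta,\gamma$ of functors (coming from the symmetrizing form $t:S\to\F$) with the connecting maps $b_i^\dagger$ built in the paragraph preceding the lemma. Concretely, for part (a) I would unwind both sides as $\Z$-graded modules over $\tens_S(V^{*\ell})$ (resp.\ $\tens_S(V^{*r})$): the underlying $S^\en$-module in degree $-i$ is $L_{-i}^*$ versus $L_{-i}^{*\ell}$ (resp.\ $L_{-i}^{*r}$ versus $L_{-i}^\vee$), and the action in each case is the composite of the canonical isomorphism \eqref{XY1}--\eqref{XY2} (resp.\ \eqref{XY3}--\eqref{XY4}) with $b_i^\dagger$. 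Applying $\alpha$ (resp.\ $\gamma$) degreewise gives candidate isomorphisms $L^{(*\ell)}\to L^{(*)}$ and $L^{(\vee)}\to L^{(*r)}$ of $\Z$-graded vector spaces, and it remains to check they commute with the two actions.

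The heart of the matter is therefore a single commuting square in each case: for $\dagger=*\ell$ versus $\dagger=*$, one must verify that the square whose horizontal arrows are the structure maps $V^{*\ell}\otimes_S L_{i+1}^{\dagger}\to L_i^{\dagger}$ and whose vertical arrows are $1\otimes\alpha_{L_{i+1}}$ and $\alpha_{L_i}$ commutes. Since each structure map factors as $(\text{canonical iso})\circ b_i^\dagger$, and $b_i^\dagger$ is natural in the module argument, the square splits into (i) naturality of $b_i^{(-)}$ against the natural transformation $\alpha$ — which is automatic — and (ii) compatibility of $\alpha$ with the canonical isomorphisms \eqref{XY1},\eqref{XY2}. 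Step (ii) is the real computation: one writes out \eqref{XY1}, \eqref{XY2} explicitly (they send $f\otimes g$ to $x\otimes y\mapsto g(xf(y))$) and checks against $\alpha = t\circ(-)$; this is the only place the symmetry $t(xy)=t(yx)$ of the form is used, and it is exactly what makes the square commute. The analogous check for $\dagger=*r$ versus $\dagger=\vee$ uses \eqref{XY3}, \eqref{XY4} and $\gamma=(t\otimes 1)\circ(-)$ together with $\beta=t\circ(-)$, again pinned down by the trace identity. I expect this verification — tracking an element $f\otimes g$ through the chain of canonical isomorphisms and confirming the two ways around agree as $\F$-bilinear forms — to be the main obstacle; it is routine but bookkeeping-heavy, and care is needed with the left/right $S$-actions so that everything stays $S^\en$-linear.

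For part (b), I would simply transport part (a) along the algebra isomorphism $\tens_S(V^{*r})\cong\tens_S(V^{*\ell})$ induced by $\alpha_V^{-1}\beta_V:V^{*r}\xrightarrow{\sim}V^{*\ell}$: under this identification a $\Z$-graded $\tens_S(V^{*\ell})$-module is the same data as a $\Z$-graded $\tens_S(V^{*r})$-module, so $L^{(*)}\cong L^{(*\ell)}$ and $L^{(*r)}\cong L^{(\vee)}$ from (a) become isomorphisms of $\tens_S(V^{*r})$-modules. It then suffices to produce one isomorphism linking the two pairs, for instance $L^{(*\ell)}\cong L^{(*r)}$; this follows from the same degreewise argument as above, now checking compatibility of $\alpha_V^{-1}\beta_V$ on the coefficient side with the pair of canonical isomorphisms \eqref{XY2} and \eqref{XY3} — once more a consequence of the trace identity and the definitions of $\alpha,\beta$. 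Chaining the three isomorphisms gives $L^{(*)}\cong L^{(*\ell)}\cong L^{(*r)}\cong L^{(\vee)}$ as claimed.
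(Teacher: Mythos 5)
Your proposal is correct and follows essentially the same route as the paper, which assembles your degreewise squares into a single four-row commutative diagram whose right-hand squares commute by naturality of $\alpha,\beta,\gamma$ and whose left-hand squares are exactly the element-wise compatibility checks with \eqref{XY1}--\eqref{XY4} that you describe. One small correction of emphasis: the symmetry of $t$ (used in the form $t\circ f=t\circ f'$ for $f'=\alpha_V^{-1}\beta_V(f)$) is only needed for the square linking $(-)^{*r}$ to $(-)^{*}$ in part (b); the two squares in part (a) commute by direct comparison of the explicit formulas without invoking it.
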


\begin{proof}
The assertions follow from the following commutative diagram.
\[\xymatrix@R1.5em@C4em{
V^{*r}\otimes_SL_{i+1}^{\vee}\ar[r]^(.45){\eqref{XY4}}\ar[d]_{1_{V^{*r}}\otimes\gamma_{L_{i+1}}}&(L_{i+1}\otimes_SV)^\vee\ar[d]_{\gamma_{L_{i+1}\otimes V}}\ar[r]^(.6){b_i^\vee}&L_i^\vee\ar[d]^{\gamma_{L_i}}\\
V^{*r}\otimes_SL_{i+1}^{*r}\ar[r]^(.45){\eqref{XY3}}\ar[d]_{(\alpha_V^{-1}\beta_V)\otimes\beta_{L_{i+1}}}&(L_{i+1}\otimes_SV)^{*r}\ar[d]_{\beta_{V^{*\ell}\otimes L_{i+1}}}\ar[r]^(.6){b_i^{*r}}&L_i^{*r}\ar[d]^{\beta_{L_i}}\\
V^{*\ell}\otimes_SL_{i+1}^{*}\ar[r]^(.45){\eqref{XY1}}&(L_{i+1}\otimes_SV)^*\ar[r]^(.6){b_i^*}&L_i^*\\
V^{*\ell}\otimes_SL_{i+1}^{*\ell}\ar[r]^(.45){\eqref{XY2}}\ar[u]^{1_{V^{*\ell}}\otimes\alpha_{L_{i+1}}}&(L_{i+1}\otimes_SV)^{*\ell}\ar[u]^{\alpha_{L_{i+1}\otimes V}}\ar[r]^(.6){b_i^{*\ell}}&L_i^{*\ell}\ar[u]_{\alpha_{L_i}}
}
\]
The right squares commute since $\alpha,\beta,\gamma$ are morphisms of functors. The left top square commutes since both the north-west composition and the south-west composition send $f\otimes g\in V^{*r}\otimes_SL_{i+1}^{\vee}$ to $(L_{i+1}\otimes_SV\ni x\otimes v\mapsto\sum_it(s_i)f(s'_iv)\in S)$, where $g(x)=\sum_is_i\otimes s'_i$.
The left bottom square also commutes since both the north-west composition and the south-west composition send $f\otimes g\in V^{*\ell}\otimes_SL_{i+1}^{*\ell}$ to $(L_{i+1}\otimes_SV\ni x\otimes v\mapsto t(g(xf(v)))\in\F)$.

To check that the left middle square commute, fix $f\otimes g\in V^{*r}\otimes_SL_{i+1}^{*r}$. The north-west composition sends $f\otimes g$ to $(x\otimes v\mapsto t(f(g(x)v)))$. 
The south-west composition sends $f\otimes g$ to $(x\otimes v\mapsto t(g(xf'(v))))$, where $f'=\alpha_V^{-1}\beta_V(f)\in V^{*\ell}$ satisfies $t\circ f=t\circ f'$.
These two elements coincide since $t(f(g(x)v)))=t(f'(g(x)v))=t(g(x)f'(v))=t(g(xf'(v)))$.
\end{proof}

\subsection{Graded algebras and Koszul algebras}\label{subsec:kos}

In this section, we give preliminaries on Koszul algebras, which were introduced in \cite{pri} and studied extensively in \cite{bgs}.

Let $\Lambda=\bigoplus_{i\geq0}\Lambda_i$ be a positively $\Z$-graded $\F$-algebra satisfying the following conditions:
\begin{enumerate}[$\bullet$]
\item $S:=\Lambda_0$ is a finite dimensional semisimple $\F$-algebra, or equivalently, the $\Z$-graded radical of $\Lambda$ coincides with $\Lambda_{>0}:=\bigoplus_{i>0}\Lambda_i$.
\item $\Lambda$ is generated in degree $1$, i.e., the multiplication map $\Lambda_1\otimes_\F \Lambda_1\otimes_\F \cdots\otimes_\F \Lambda_1\to\Lambda_j$ is surjective for each $j$.
\end{enumerate}
In this case, we call the grading a \emph{radical grading}. 

We assume, for simplicity, that $\Lambda$ is basic.
Our assumptions ensure that $\Lambda$ is a quotient of the tensor algebra $\Tens_S(V)$ where $V$ is the $S^\en$-module $\Lambda_1$.  When $\Lambda$ is finite-dimensional and $\F$ is algebraically closed, we can identify $S$ with the space $\F Q_0$ of vertices, and $V$ with the space $\F Q_1$ of arrows, of the Gabriel quiver $Q$ of $\Lambda$.

For a $\Z$-graded $\Lambda$-module $M$ and $j\in\Z$, let $M\grsh{j}$ denote the shifted $\Z$-graded $\Lambda$-module where $M\grsh{j}_i = M_{i+j}$.
A complex
$$\cdots\to M_1\to M_0\to M_{-1}\to\cdots$$
of $\Z$-graded $\Lambda$-modules is \emph{linear} if each module $M_i$ is generated in degree $i$ and each map is homogeneous of degree $0$.
The algebra $\Lambda$ is \emph{Koszul} if each simple module $S_i$ has a linear projective resolution.

All Koszul algebras are \emph{quadratic} in the sense that they can be written as a quotient of a tensor algebra:
$$\Lambda\cong\Te_S(V)/(R)$$
where $V$ is an $S^\en$-module, $R$ is a subset of $V\otimes_SV$, and $(R)$ is the ideal in $\Te_S(V)$ that it generates. 
To simplify the proofs, we will sometimes assume that $R$ is a sub-$S^\en$-module of $V\otimes_SV$ instead of just a subset.  In particular, it is a vector subspace.  This is no real restriction.

We view $S$ as a $\Z$-graded $\F$-algebra concentrated in degree $0$, and $V$ as a $\Z$-graded $S^\en$-module concentrated in degree $1$.  Then the tensor grading and the grading coming from $V$ coincide, and so we can safely refer to just the grading on $\Lambda$.

We record a useful lemma on quadratic algebras which can be checked easily:
\begin{lemma}\label{lem-quad012}
Let $\phi:\Tens_S(V)/(R)\to\Tens_{S'}(V')/(R')$ be a morphism of $\Z$-graded quadratic $\F$-algebras.
If $\phi$ is an isomorphism in degrees $0$, $1$, and $2$, then it is an isomorphism.
\end{lemma}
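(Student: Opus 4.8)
The plan is to verify that a morphism $\phi:\Tens_S(V)/(R)\to\Tens_{S'}(V')/(R')$ of $\Z$-graded quadratic algebras which is bijective in degrees $0$, $1$ and $2$ is automatically bijective in all degrees. First I would reduce to a statement about tensor algebras: write $\Lambda=\Tens_S(V)/(R)$ and $\Lambda'=\Tens_{S'}(V')/(R')$, and use that $\phi$ lifts to a morphism $\widetilde\phi:\Tens_S(V)\to\Tens_{S'}(V')$ of graded algebras compatible with the projections, since $\Tens_S(V)$ is free. The degree $0$ part $\phi_0:S\to S'$ is an isomorphism of semisimple algebras, and the degree $1$ part $\phi_1:V\to V'$ is an isomorphism of $S^\en$-modules (transported along $\phi_0$). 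Hence $\widetilde\phi$ restricts to an isomorphism $\Tens_S(V)\to\Tens_{S'}(V')$ in every degree, being the $i$-fold tensor power of $\phi_1$ over $S\cong S'$; in particular $\widetilde\phi$ is itself an isomorphism of graded algebras. So the only thing to control is the comparison of the ideals $(R)$ and $(R')$.

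The key step is then: under the identification $\Tens_{S'}(V')\cong\Tens_S(V)$ via $\widetilde\phi$, I claim $(R')$ corresponds exactly to $(R)$. In degree $2$ we have $\Lambda_2=V\otimes_SV/R$ and $\Lambda'_2=V'\otimes_{S'}V'/R'$, and the degree $2$ component of $\widetilde\phi$ is the isomorphism $V\otimes_SV\to V'\otimes_{S'}V'$; since $\phi_2$ is an isomorphism, a diagram chase shows this isomorphism carries $R$ onto $R'$. Now for a quadratic algebra the degree $n$ component of the defining ideal is $\sum_{i+2+j=n} V^{\otimes i}\otimes R\otimes V^{\otimes j}$ inside $V^{\otimes n}$ (all tensors over $S$), and similarly for $\Lambda'$. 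Since $\widetilde\phi$ in degree $n$ is $\phi_1^{\otimes n}$ and it carries $R$ onto $R'$, it carries each summand $V^{\otimes i}\otimes R\otimes V^{\otimes j}$ onto $V'^{\otimes i}\otimes R'\otimes V'^{\otimes j}$, hence carries the degree $n$ part of $(R)$ onto the degree $n$ part of $(R')$. Therefore $\widetilde\phi$ induces an isomorphism on all graded quotients, i.e.\ $\phi_n:\Lambda_n\to\Lambda'_n$ is an isomorphism for every $n$, which is what we want.

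I would expect the only mildly delicate point to be bookkeeping with the ground ring changing from $S$ to $S'$: one must consistently transport $V\otimes_SV$-type constructions along the algebra isomorphism $\phi_0:S\to S'$ and check that $\phi_1$ is genuinely $S^\en$-linear after this transport, so that $\phi_1^{\otimes n}$ makes sense over $S$. This is exactly the hypothesis that $\phi$ is a morphism of graded algebras restricted to low degrees, so there is no real obstacle; the proof is essentially the observation that a quadratic algebra is determined by the triple $(S,V,R)$ up to the evident notion of isomorphism, together with the fact that $\phi$ being bijective in degrees $\le 2$ exhibits such an isomorphism of triples. Given the ``which can be checked easily'' phrasing, I would keep the write-up to a few lines along these lines rather than spelling out every tensor manipulation.
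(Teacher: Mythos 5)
Your argument is correct and is exactly the standard verification the authors have in mind when they say the lemma ``can be checked easily'' (the paper gives no written proof): reduce to the lifted isomorphism of tensor algebras, observe that bijectivity of $\phi_2$ forces $\widetilde\phi_2$ to carry $R$ onto $R'$, and then note that the degree-$n$ part of each defining ideal is the sum of the images of $V^{i}\otimes_S R\otimes_S V^{j}$, which are matched up by $\phi_1^{\otimes n}$. No gaps; the bookkeeping point about transporting the $S$-bimodule structure along $\phi_0$ is handled correctly by the hypothesis that $\phi$ is an algebra morphism.
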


In the rest of this subsection, let $\Lambda\cong\Te_S(V)/(R)$ be a quadratic algebra.
We have $S^\en$-modules $K_0=S$, $K_1=V$, $K_2=R$, and $$K_j=(V\otimes_SK_{j-1})\cap(K_{j-1}\otimes_SV)=\bigcap_{i=0}^{j-2}V^i\otimes_SR\otimes_SV^{j-2-i}$$
for $j\geq3$.
Here, $V^i$ denotes the $i$-th tensor power $V\otimes_S\cdots\otimes_SV$.
Note that $K_i$ is concentrated in degree $i$ and that for $i<0$, we set $K_i=0$.  

Recall \cite[Section 2.7]{bgs} that if $U$ is a subset of $V^{i}$, the right orthogonal complement of $U$ is $U^\perp=\{f\in (V^{*\ell})^i\st f(U)=0\}$, where we identify $(V^{*\ell})^i$ with $(V^i)^{*\ell}$ by \eqref{XY2}.
The \emph{quadratic dual} of a quadratic algebra $\Lambda=\Te_S(V)/(R)$ is
$$\Lambda^!=\Te_S(V^{*\ell})/(R^\perp).$$
It is again quadratic.  If moreover $\Lambda$ is Koszul, then $\Lambda^!$ is also Koszul and it coincides with the opposite ext algebra $\left(\bigoplus_{i\geq0}\Ext^i_\Lambda(S,S)\right)^\op$ \cite[Proposition 2.10.1]{bgs}.
In this case, $\Lambda^!$ has the following description.

\begin{lemma}[{\cite[Section 2.8]{bgs}}]\label{Koszul dual is K*}
For a Koszul algebra $\Lambda$, we have an isomorphism of $\Z$-graded algebras
 $$\Lambda^!=\bigoplus_{i\geq 0}(\Lambda^!)_i\cong\bigoplus_{i\geq0}K_i^{*\ell},$$
where the $\Z$-graded algebra structure on $\bigoplus_{i\geq0}K_i^{*\ell}$ is given by the duals of $(\iota^\ell_i)^{*\ell}$ and $(\iota^r_i)^{*\ell}$.
\end{lemma}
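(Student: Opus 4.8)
The plan is to unwind the definition $\Lambda^!=\Te_S(V^{*\ell})/(R^\perp)$ degree by degree; this is essentially the content of \cite[Sections 2.7--2.8]{bgs}, but the argument is short enough to reproduce. Throughout I identify $(V^{*\ell})^i$ with $(V^i)^{*\ell}$ by iterating \eqref{XY2}, and I use that $(-)^{*\ell}\cong(-)^*$ (recalled above, via the fact that $S$ is symmetric) is exact and, on finite-dimensional modules, a perfect duality; I write $\iota^\ell_i\colon K_i\hookrightarrow V\otimes_SK_{i-1}$ and $\iota^r_i\colon K_i\hookrightarrow K_{i-1}\otimes_SV$ for the canonical inclusions coming from the intersection formula defining $K_i$.

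First I would compute the graded pieces. The degree-$i$ component of the two-sided ideal $(R^\perp)\subseteq\Te_S(V^{*\ell})$ is $\sum_{m=0}^{i-2}(V^{*\ell})^m\otimes_SR^\perp\otimes_S(V^{*\ell})^{i-2-m}$, so $(\Lambda^!)_i$ is the quotient of $(V^i)^{*\ell}$ by this subspace. Since $(-)^{*\ell}$ is exact, $R^\perp$ is the image of $(V^2/R)^{*\ell}$ in $(V^2)^{*\ell}$; tensoring on both sides by $(V^{*\ell})^m\otimes_S-\otimes_S(V^{*\ell})^{i-2-m}$ (which is exact over the semisimple ring $S$) and iterating \eqref{XY2} identifies the $m$-th summand with the right orthogonal complement $(V^m\otimes_SR\otimes_SV^{i-2-m})^\perp$ inside $(V^i)^{*\ell}$. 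Summing over $m$ and using that a sum of orthogonal complements equals the orthogonal complement of the intersection,
\[(R^\perp)_i=\sum_{m=0}^{i-2}\big(V^m\otimes_SR\otimes_SV^{i-2-m}\big)^\perp=\Big(\bigcap_{m=0}^{i-2}V^m\otimes_SR\otimes_SV^{i-2-m}\Big)^\perp=K_i^\perp.\]
Hence $(\Lambda^!)_i=(V^i)^{*\ell}/K_i^\perp\cong K_i^{*\ell}$, and these assemble to an isomorphism of $\Z$-graded vector spaces $\Lambda^!\cong\bigoplus_{i\geq0}K_i^{*\ell}$.

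Next I would match the algebra structures. Since $\Lambda^!$ is generated in degree $1$, its multiplication is determined by the two concatenation maps $(\Lambda^!)_{i-1}\otimes_S(\Lambda^!)_1\to(\Lambda^!)_i$ and $(\Lambda^!)_1\otimes_S(\Lambda^!)_{i-1}\to(\Lambda^!)_i$. Using the containments $K_i\subseteq V\otimes_SK_{i-1}$ and $K_i\subseteq K_{i-1}\otimes_SV$ (immediate from the intersection formula, since $V\otimes_S-$ and $-\otimes_SV$ are exact over $S$), a diagram chase with \eqref{XY2} identifies these two maps, under the isomorphisms of the previous paragraph, with the $(-)^{*\ell}$-duals of $\iota^\ell_i$ and of $\iota^r_i$ (in one order or the other, according to the order reversal built into \eqref{XY2}). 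This is exactly the multiplication on $\bigoplus_{i\geq0}K_i^{*\ell}$ described in the statement, so the isomorphism above is one of $\Z$-graded algebras.

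The genuinely mathematical inputs are elementary --- the two containments among the $K_i$ and the identity relating a sum of orthogonal complements to the complement of the intersection --- and in fact the argument uses only that $\Lambda$ is quadratic. The real obstacle is bookkeeping: tracking the left/right $S$-module structures and the order reversal in the Hom--tensor identities \eqref{XY1}--\eqref{XY2} when dualizing, and checking that the iterated identification $(V^{*\ell})^i\cong(V^i)^{*\ell}$ is compatible with the $S^\en$-module structures everywhere (the sort of compatibility recorded in Lemma \ref{duals}). For Koszul $\Lambda$ one may additionally combine this description with the identification $\Lambda^!\cong(\bigoplus_i\Ext^i_\Lambda(S,S))^\op$ recalled above.
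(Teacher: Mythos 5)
Your argument is correct and is essentially the standard one from \cite[Sections 2.7--2.8]{bgs}, which the paper cites for this lemma without reproducing a proof: the two key points --- that the degree-$i$ piece of $(R^\perp)$ equals $\sum_{m}\bigl(V^m\otimes_SR\otimes_SV^{i-2-m}\bigr)^\perp=K_i^\perp$, and that left/right multiplication by degree-one elements is identified (via the order-reversing isomorphism \eqref{XY2}) with $(\iota^r_i)^{*\ell}$ and $(\iota^\ell_i)^{*\ell}$ --- are exactly what is needed. Your remark that the argument uses only quadraticity of $\Lambda$ is also accurate; Koszulity enters only in identifying $\Lambda^!$ with the opposite Ext-algebra.
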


Now we assume that $S$ is a separable $\F$-algebra
i.e.\ $S\otimes_\F\F'$ is semisimple for all field extensions $\F\subset\F'$, or equivalently, $S^\en$ is semisimple \cite[Theorem 9.2.11]{W}. Let
\[P_i=\Lambda\otimes_SK_i\otimes_S\Lambda=\Lambda^\en\otimes_{S^\en}K_i.\]
This is a projective $\Lambda^\en$-module since $S^\en$ is semisimple by our assumption.
We have obvious inclusions $\iota^\ell_i:K_i\into V\otimes_SK_{i-1}$ 
and $\iota^r_i:K_i\into K_{i-1}\otimes_SV$ 
of $S^\en$-modules and, combined with the multiplication for $\Lambda$, they induce maps 
$\hat{\iota}^\ell_i,\hat{\iota}^r_i:P_i\to P_{i-1}$.  Let
\[\delta_i=\hat{\iota}^\ell_i+(-1)^i\hat{\iota}^r_i.\]
One can check that these maps give a chain complex
\begin{equation}\label{Koszul bimodule complex}
\cdots \xrightarrow{\delta_3}P_2\xrightarrow{\delta_2}P_1\xrightarrow{\delta_1}P_0\to0
\end{equation}
which is called the \emph{Koszul bimodule complex}.  Note that, as $K_i\subseteq V^i$ and $V$ is concentrated in degree $1$, 
each $P_i$ is generated in degree $i$, i.e., the complex is linear.

The next result is an important characterization of Koszul algebras.  It can be found as, for example, \cite[Proposition A.2]{bg} and \cite[Theorem 9.2]{bk}.
\begin{theorem}\label{characterize koszul}
$\Lambda$ is Koszul if and only if the Koszul bimodule complex \eqref{Koszul bimodule complex} is its minimal projective resolution as a $\Lambda^\en$-module. 
\end{theorem}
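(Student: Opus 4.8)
The plan is to prove both implications by analyzing the Koszul bimodule complex \eqref{Koszul bimodule complex} at the level of simple modules, using the standard fact that a complex of projective $\Lambda^\en$-modules is a minimal resolution of $\Lambda$ if and only if, after applying $S\otimes_\Lambda-\otimes_\Lambda S$, or equivalently after computing $\Tor^{\Lambda^\en}(S^\en,-)$-type information, it becomes a complex with zero differential whose homology is concentrated in degree $0$. More concretely, since each $P_i=\Lambda^\en\otimes_{S^\en}K_i$ is projective and $S^\en$ is semisimple, one knows $\Lambda\dert_{\Lambda^\en}S^\en\simeq\bigoplus_i K_i[i]$ would force Koszulness; but the cleanest route is to tensor \eqref{Koszul bimodule complex} with $S$ on the right over $\Lambda$, obtaining the complex of left $\Lambda$-modules
\[\cdots\xrightarrow{\ }\Lambda\otimes_SK_2\xrightarrow{\ }\Lambda\otimes_SK_1\xrightarrow{\ }\Lambda\otimes_SK_0\to0,\]
with differential $\hat\iota^\ell_i+(-1)^i(\text{term killed by }\Lambda\otimes_S-\otimes_S S_{>0})$; in the radical-graded setting the component $\hat\iota^r_i$ lands in $\Lambda_{>0}\otimes_S K_{i-1}$ after the rightmost factor is cut down, so this is precisely the (candidate) \emph{left} Koszul complex resolving $S$. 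Thus \eqref{Koszul bimodule complex} is a resolution of $\Lambda$ as a bimodule if and only if the induced left complex is a resolution of $S={}_\Lambda\Lambda\otimes_\Lambda S$, and each $P_i$ is generated in degree $i$ exactly when $\Lambda\otimes_SK_i$ is; so minimality and exactness transfer along this reduction.

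The key steps, in order, are as follows. First I would record that \eqref{Koszul bimodule complex} is always a complex (this is asserted in the excerpt) and that it is always \emph{linear}, i.e.\ $P_i$ is generated in degree $i$ — this is immediate from $K_i\subseteq V^i$. Second, I would show the complex is a resolution of $\Lambda$ iff $H_i=0$ for $i>0$, and reduce this, via $-\otimes_\Lambda S$ (using that each $P_i$ is free as a right $\Lambda$-module so no higher $\Tor$ appear and exactness is detected after this tensor by a standard Nakayama/graded argument), to the statement that the left complex $\cdots\to\Lambda\otimes_SK_1\to\Lambda\otimes_SK_0\to S\to0$ is exact. Third, I would identify this left complex with the minimal projective resolution of $S$ up to the relevant homological degree precisely because the $i$-th term is generated in degree $i$ and the syzygies are forced: in a radical grading, a linear complex of projectives lifting $S$ with $i$-th term generated in degree $i$ is the minimal resolution iff it is exact, and exactness of the truncated complex at each stage is equivalent to the $i$-th syzygy $\Omega^i S$ being generated in degree $i$, which is the very definition of Koszul. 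Conversely, if $\Lambda$ is Koszul then $\Omega^iS$ is generated in degree $i$, its degree-$i$ part is $K_i$ by the intersection description of $K_i$, and an induction shows the minimal resolution of $S$ coincides termwise with $\Lambda\otimes_SK_\bullet$; pulling this back through the bimodule complex (or citing that a bimodule complex restricting to the minimal resolution on both sides, with the correct total grading, is itself minimal) finishes the reverse direction.

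The main obstacle is the identification in the third step: showing that the degree-$i$ component of the $i$-th syzygy of $S$ is exactly $K_i=\bigcap_{j=0}^{i-2}V^j\otimes_SR\otimes_SV^{i-2-j}$, and that the minimal resolution differential agrees with $\delta_i$ restricted appropriately. This is the usual "$K_\bullet$ computes $\Tor$" fact for Koszul algebras and requires the quadraticity of $\Lambda$ together with an inductive argument comparing $\Ker(\Lambda\otimes_SV\otimes_S K_{i-1}\to \Lambda\otimes_S K_{i-1})$ with $\Lambda\otimes_S K_i$; the semisimplicity of $S^\en$ (separability of $S$) is what guarantees the relevant sequences of $S^\en$-modules split so that the intersection description is preserved under $\Lambda\otimes_S-$. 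Once this is in hand, both implications follow by matching linear complexes term by term, with minimality automatic from the linearity (the differential has no degree-$0$, hence no "identity", components because it strictly raises the internal degree of generators).
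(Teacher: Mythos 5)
Your outline is sound, but note that the paper does not prove this theorem at all: it is quoted as a known result with references to \cite[Proposition A.2]{bg} and \cite[Theorem 9.2]{bk}. Your argument --- reduce the bimodule complex to the one-sided Koszul complex via $-\otimes_\Lambda S$ (observing that $\hat\iota^r_i$ dies and that exactness transfers by a graded Nakayama argument since each $P_i$ is projective on one side), then invoke the equivalence of Koszulity with exactness of the one-sided Koszul complex and get minimality for free from linearity --- is precisely the standard proof found in those references, with the genuinely hard step (identifying the degree-$i$ part of $\Omega^iS$ with $K_i$) correctly isolated as the crux, though left as a citation rather than carried out.
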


In this paper, we need separability of $S$ when we consider bimodule resolutions including the Koszul bimodule resolutions.
We assume separability in Theorems C and \ref{characterize koszul}, Sections \ref{subsec:graded} and \ref{subsec:hpka}, Corollary \ref{cor:kos-nher-prep}, and Section \ref{ss:gtea}.

\subsection{Higher preprojective algebras}

Let
\[\tau=(-)^*\circ\Tr:\Lambda\stmod\to\Lambda\costmod\]
denote the Auslander-Reiten translation, which is a functor from the stable category of modules over $\Lambda$ to the costable category, and
\[\tau^-=\Tr\circ(-)^*:\Lambda\costmod\to\Lambda\stmod\]
the inverse Auslander-Reiten translation.  Note that if $\Lambda$ is hereditary then the Auslander-Reiten translation in fact can be regarded as an endofunctor of the module category: 
$\tau = \Ext^1(-,\Lambda)^*$ and $\tau^- = \Ext^1_\Lambda(\Lambda^*,-)$.
Moreover, $\tau^-$ is left adjoint to $\tau$.

Recall \cite{iya-higher-ar} that the $d$-Auslander-Reiten translation and inverse $d$-Auslander-Reiten translation are defined as
\[\tau_d=\tau\Omega^{d-1}:\Lambda\stmod\to\Lambda\costmod\ \mbox{ and }\ \tau_d^-=\tau^-\Omega^{-(d-1)}:\Lambda\costmod\to\Lambda\stmod,\]
where $\Omega:\Lambda\stmod\to\Lambda\stmod$ denotes the syzygy functor and $\Omega^-:\Lambda\costmod\to\Lambda\costmod$ the cosyzygy functor. 
If $\gldim\Lambda\leq d$ then we regard $\tau_d$ and $\tau_d^-$ as the endofunctors
\[\tau_d=\Ext^d_\Lambda(-,\Lambda)^*:\Lambda\mMod\to\Lambda\mMod\ \mbox{ and }\ \tau_d^-=\Ext^d_\Lambda(\Lambda^*,-):\Lambda\mMod\to\Lambda\mMod\]
of $\Lambda\mMod$.

Generalizing the classical case, we have two distinguished classes of modules.
\begin{definition}[{\cite[Definition 4.7]{hio}}]\label{def:prep-prei2}
We have the following two full subcategories $\PP$ and $\II$ of $\Lambda\mMod$:
\[\PP:=\add\{\tau_d^{-i}(\Lambda)\ |\ i\ge0\}\ \mbox{ and }\ \II:=\add\{\tau_d^{i}(\Lambda^*)\ |\ i\ge0\}.\]
Any module in $\PP$ is called \emph{$d$-preprojective}, and any module in $\II$ is called \emph{$d$-preinjective}.
\end{definition}

In the rest of this section, we assume that $\Lambda$ has global dimension $d$. The $\Lambda^\en$-module
\[E:=\Ext^d_\Lambda(\Lambda^*,\Lambda)\]
plays a central role in this paper.
We take this opportunity to record a useful lemma, which makes the $\Lambda^\en$-module structure of $E$ clearer.
\begin{lemma}\label{lem:alt-E}
We have isomorphisms
\[E\cong\Ext^d_{\Lambda^\en}(\Lambda,\Lambda^\en)\cong\Ext^d_{\Lambda^\op}(\Lambda^*,\Lambda)\]
of $\Lambda^\en$-modules.
\end{lemma}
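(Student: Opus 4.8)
The plan is to establish the two isomorphisms separately, both by the standard technique of applying $\RHom_{\Lambda^\en}(-,\Lambda^\en)$ to the (minimal) projective bimodule resolution of $\Lambda$ and exploiting the hypothesis $\gldim\Lambda=d$, which forces that resolution to have length exactly $d$. Write $P_\bullet\to\Lambda$ for a projective resolution of $\Lambda$ as a $\Lambda^\en$-module, with $P_i=0$ for $i>d$; by Lemma~\ref{lem-tensorhomiso} each $\Hom_{\Lambda^\en}(P_i,\Lambda^\en)$ is again a projective $\Lambda^\en$-module (of the form $\Lambda\otimes_SK_i^\vee\otimes_S\Lambda$), so the complex $\Hom_{\Lambda^\en}(P_\bullet,\Lambda^\en)$ computes $\Ext^\bullet_{\Lambda^\en}(\Lambda,\Lambda^\en)$ and is concentrated in cohomological degrees $0,\dots,d$.

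First I would identify $\Ext^d_{\Lambda^\en}(\Lambda,\Lambda^\en)$ with $\Ext^d_\Lambda(\Lambda^*,\Lambda)=E$. The key input is the adjunction/change-of-rings isomorphism
\[
\RHom_{\Lambda^\en}(\Lambda,\Lambda^\en)\cong\RHom_\Lambda(\Lambda^*,\Lambda)
\]
in the derived category, which comes from the standard identification $\Lambda^\en\cong\Lambda\otimes_\F\Lambda$, the duality $(-)^*$, and the fact that for a bimodule $X$ one has $\Hom_{\Lambda^\en}(\Lambda,X)\cong\Hom_\Lambda((\Lambda)^*, X)$ only after resolving appropriately — more precisely, I would use that $\Lambda^\en$ is free of finite rank over $\Lambda$ on one side and invoke $\RHom_{\Lambda^\en}(\Lambda, \Lambda\otimes_\F\Lambda)\cong\RHom_\Lambda(\Lambda^\ast,\Lambda)$ via tensor-hom adjunction, since $\Lambda\otimes_\F\Lambda\cong\Hom_\F(\Lambda^\ast,\Lambda)$ as bimodules. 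Taking $H^d$ of both sides and using $\gldim\Lambda=d$ (so that everything in sight is concentrated in degrees $\le d$, and $H^d$ commutes with the comparison) yields $\Ext^d_{\Lambda^\en}(\Lambda,\Lambda^\en)\cong\Ext^d_\Lambda(\Lambda^*,\Lambda)=E$. The grading is automatically respected since all the maps involved are homogeneous.

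Next I would treat the isomorphism $\Ext^d_{\Lambda^\en}(\Lambda,\Lambda^\en)\cong\Ext^d_{\Lambda^\op}(\Lambda^*,\Lambda)$ symmetrically: the enveloping algebra $\Lambda^\en$ is (non-canonically) isomorphic to $(\Lambda^\op)^\en$, or more usefully one runs the same adjunction argument viewing $\Lambda^\en$ as free over $\Lambda^\op$ on the other side, getting $\RHom_{\Lambda^\en}(\Lambda,\Lambda^\en)\cong\RHom_{\Lambda^\op}(\Lambda^*,\Lambda)$, and then takes $H^d$. Since $\gldim\Lambda=\gldim\Lambda^\op=d$, the top cohomology again behaves well. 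Alternatively, one can observe that applying the bimodule duality $(-)^*$ interchanges the roles of left and right, so the isomorphism $E\cong\Ext^d_\Lambda(\Lambda^*,\Lambda)$ already proved, applied to $\Lambda^\op$ in place of $\Lambda$, gives the second isomorphism after unwinding that $\Ext^d_{(\Lambda^\op)^\en}(\Lambda^\op,(\Lambda^\op)^\en)\cong\Ext^d_{\Lambda^\en}(\Lambda,\Lambda^\en)$.

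The main obstacle is bookkeeping rather than conceptual: one must be careful about which side of $\Lambda^\en$ is being used as the ``ring'' in each change-of-rings step, and about the fact that $\RHom_{\Lambda^\en}(\Lambda,\Lambda^\en)$ a priori lands in a derived category of bimodules, so the left and right $\Lambda$-actions on $E$ must be tracked to confirm they match the ones coming from $\Ext^d_\Lambda(\Lambda^*,\Lambda)$ and $\Ext^d_{\Lambda^\op}(\Lambda^*,\Lambda)$. The hypothesis $\gldim\Lambda=d$ is what makes it clean: it guarantees $\Ext^i$ vanishes above $d$ so that the top $\Ext$ is a cokernel at the end of a finite complex, and cokernels at the end of a complex are preserved by any quasi-isomorphism of complexes, so no spectral-sequence subtleties arise. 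I would state explicitly that all isomorphisms are of $\Z$-graded $\Lambda^\en$-modules, since this grading compatibility is used later in the paper.
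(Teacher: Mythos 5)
Your proposal is correct and is essentially the paper's own argument: the key step in both is the bimodule identification $\Lambda^\en\cong\Lambda^{**}\otimes_\F\Lambda\cong\Hom_\F(\Lambda^*,\Lambda)$ followed by tensor--hom adjunction (using that $\Lambda^\en$ is free over $\Lambda$, resp.\ $\Lambda^\op$, so the adjunction passes to $\Ext$), with the second isomorphism obtained symmetrically. The only difference is presentational: you phrase it via $\RHom$ and then take $H^d$, while the paper works directly with $\Ext^d$; the emphasis on $\gldim\Lambda=d$ is harmless but not actually needed for the adjunction to give the isomorphism in each degree.
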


\begin{proof}
For each finite-dimensional $\Lambda$-module $M$, there is a natural isomorphism $M\cong M^{**}$.  Then we use the natural isomorphism of finite-dimensional vector spaces $V^*\otimes_\F W\cong\Hom_\F(V,W)$ to see that we have an isomorphism of $\Lambda^\en$-modules
$$\Lambda^\en\cong \Lambda\otimes_\F\Lambda\cong\Lambda^{**}\otimes_\F\Lambda\cong\Hom_\F(\Lambda^*,\Lambda)\fs$$
Finally, we use the tensor-hom adjunctions to obtain
$$\Ext^d_{\Lambda^\en}(\Lambda,\Lambda^\en)\cong\Ext^d_{\Lambda^\en}(\Lambda,\Hom_\F(\Lambda^*,\Lambda))\cong\Ext^d_{\Lambda}(\Lambda\otimes_\Lambda\Lambda^*,\Lambda)\cong\Ext^d_{\Lambda}(\Lambda^*,\Lambda)\fs$$
The second isomorphism is shown similarly.
\end{proof}

Using $E$, one can describe the functors $\tau_d$ and $\tau_d^-$ as follows.
\begin{proposition}\label{prop:adjunction}
If $\gldim\Lambda\leq d$ then we have isomorphisms of functors
$$\tau_d\cong\Hom_\Lambda(E,-):\Lambda\mMod\to\Lambda\mMod\ \mbox{ and }\ \tau_d^-\cong E\otimes_\Lambda-:\Lambda\mMod\to\Lambda\mMod.$$
In particular $\tau_d^-$ is left adjoint to $\tau_d$.
\end{proposition}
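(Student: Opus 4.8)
The plan is to establish the two functor isomorphisms by combining the homological description of $\tau_d$ and $\tau_d^-$ with standard tensor-hom adjunctions, using Lemma~\ref{lem:alt-E} to move between the various incarnations of $E$. Since $\gldim\Lambda\le d$, the excerpt already tells us that $\tau_d=\Ext^d_\Lambda(-,\Lambda)^*$ and $\tau_d^-=\Ext^d_\Lambda(\Lambda^*,-)$ as endofunctors of $\Lambda\mMod$. First I would take a minimal projective resolution $P_\bullet\to\Lambda$ of $\Lambda$ as a $\Lambda^\en$-module; applying $\Hom_{\Lambda^\en}(-,\Lambda^\en)$ and using $\gldim\Lambda^\en\le d$ (which follows from separability, or at least $\Ext^i_{\Lambda^\en}(\Lambda,\Lambda^\en)=0$ for $i>d$ as in Lemma~\ref{lem:e-gen-pos-deg2}'s setting), the module $E\cong\Ext^d_{\Lambda^\en}(\Lambda,\Lambda^\en)$ is the cokernel of $\Hom_{\Lambda^\en}(P_{d-1},\Lambda^\en)\to\Hom_{\Lambda^\en}(P_d,\Lambda^\en)$. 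The key point is that $P_\bullet$ is a complex of projective $\Lambda^\en$-modules, hence each $P_i$ is a summand of a sum of copies of $\Lambda^\en=\Lambda\otimes_\F\Lambda$, so $P_i\otimes_\Lambda M$ computes a projective resolution of $M$ in $\Lambda\mMod$ for any $M$, and likewise $\Hom_\Lambda(P_i,\ldots)$ behaves well.

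The heart of the argument is a sequence of natural isomorphisms. For $\tau_d^-\cong E\otimes_\Lambda-$: I would show that $E\otimes_\Lambda M$ is the cokernel of $\Hom_{\Lambda^\en}(P_{d-1},\Lambda^\en)\otimes_\Lambda M\to\Hom_{\Lambda^\en}(P_d,\Lambda^\en)\otimes_\Lambda M$, using right-exactness of $-\otimes_\Lambda M$. Then, since each $P_i$ is a projective (hence finitely generated projective) $\Lambda^\en$-module, there is a natural isomorphism $\Hom_{\Lambda^\en}(P_i,\Lambda^\en)\otimes_\Lambda M\cong\Hom_\Lambda(P_i\otimes_\Lambda\Lambda^*,\ldots)$ — more directly, $\Hom_{\Lambda^\en}(P_i,\Lambda^\en)\otimes_{\Lambda}M\cong\Hom_{\Lambda^\en}(P_i,\Lambda\otimes_\F M)\cong\Hom_\Lambda(\Lambda^*\otimes_\Lambda P_i^{\mathrm{op}},M)$, matching the terms that compute $\Ext^d_\Lambda(\Lambda^*,M)$ via the resolution $P_\bullet\otimes_\Lambda\Lambda^*$ of... here I need to be careful: $P_\bullet$ tensored suitably yields a projective resolution of $\Lambda^*$ only after applying $(-)\otimes_\Lambda\Lambda^*$ on the appropriate side, or equivalently one uses that $\Ext^d_\Lambda(\Lambda^*,M)\cong\Ext^d_{\Lambda^\en}(\Lambda,\Hom_\F(\Lambda^*,M))$ by the same adjunction chain as in the proof of Lemma~\ref{lem:alt-E}, and $\Hom_\F(\Lambda^*,M)\cong\Lambda\otimes_\F M$ naturally. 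So $\tau_d^- M=\Ext^d_\Lambda(\Lambda^*,M)\cong\Ext^d_{\Lambda^\en}(\Lambda,\Lambda\otimes_\F M)\cong\Ext^d_{\Lambda^\en}(\Lambda,\Lambda^\en)\otimes_\Lambda M=E\otimes_\Lambda M$, where the middle isomorphism needs that $-\otimes_\Lambda M$ commutes with $\Ext^d_{\Lambda^\en}(\Lambda,-)$ in top degree, which holds because $P_d$ is projective so $\Hom_{\Lambda^\en}(P_d,-)$ is exact and the cokernel computation commutes with the right-exact functor $-\otimes_\Lambda M$. Dually, for $\tau_d\cong\Hom_\Lambda(E,-)$, I would run the analogous computation: $\tau_d M=\Ext^d_\Lambda(M,\Lambda)^*\cong\Ext^d_{\Lambda^\en}(\Lambda,\Hom_\F(M,\Lambda))^*$, and then use $\Hom_\F(\Hom_{\Lambda^\en}(P_d,\Lambda^\en),\ldots)$-type manipulations to identify this with $\Hom_\Lambda(E,M)$, the dual of the right-exact story becoming a left-exact one.

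The last sentence, that $\tau_d^-$ is left adjoint to $\tau_d$, is then immediate from the tensor-hom adjunction $\Hom_\Lambda(E\otimes_\Lambda M,N)\cong\Hom_\Lambda(M,\Hom_\Lambda(E,N))$ applied with the isomorphisms just established; I would simply cite this.

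The main obstacle I anticipate is bookkeeping the left/right module structures carefully: $E$ is a $\Lambda^\en$-module, and when I write $E\otimes_\Lambda M$ versus $\Hom_\Lambda(E,M)$ I must be consistent about which of the two $\Lambda$-actions on $E$ is used for the tensor/Hom and which survives as the output module structure, and check that the natural isomorphisms respect the surviving action. The cleanest route is probably to avoid choosing an explicit resolution at all and instead derive everything formally from Lemma~\ref{lem:alt-E} together with the natural isomorphism $\Lambda^\en\cong\Hom_\F(\Lambda^*,\Lambda)$ and the tensor-hom adjunctions, exactly mirroring the proof of Lemma~\ref{lem:alt-E} but with $\Lambda^\en$ replaced by $\Lambda\otimes_\F M$ (for $\tau_d^-$) and by $\Hom_\F(M,\Lambda)$ (for $\tau_d$), and then commuting $\Ext^d_{\Lambda^\en}(\Lambda,-)$ past the exact-in-top-degree functors $-\otimes_\Lambda M$ and $\Hom_\Lambda(E,-)$ respectively. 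This keeps the module structures transparent and reduces the proof to two short chains of canonical isomorphisms.
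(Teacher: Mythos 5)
First, a point of comparison: the paper does not actually write out a proof of this proposition; it simply refers to the proof of \cite[Lemma 2.13]{io-stab} and notes that the adjunction statement follows formally. So your sketch is being measured against that external argument rather than anything in the text. Your overall strategy --- identify $\tau_d^-M=\Ext^d_\Lambda(\Lambda^*,M)$ with $E\otimes_\Lambda M$ by a top-degree cokernel argument, treat $\tau_d$ by a dual computation, and then read off the adjunction from tensor-hom --- is the right one, and your last step is indeed immediate once the two functor isomorphisms are established.

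There is, however, one genuine gap in your preferred route through the enveloping algebra. You want $\Ext^d_{\Lambda^\en}(\Lambda,\Lambda^\en)\otimes_\Lambda M\cong\Ext^d_{\Lambda^\en}(\Lambda,\Lambda^\en\otimes_\Lambda M)$ ``because the cokernel computation commutes with the right-exact functor $-\otimes_\Lambda M$''. But $\Ext^d_{\Lambda^\en}(\Lambda,-)$ is a cokernel of a map of exact functors only if $P_{d+1}=0$, i.e.\ $\pd_{\Lambda^\en}\Lambda\le d$. That holds when $\Lambda_0$ is separable, but the proposition is stated and used without that hypothesis (the paper's explicit list of places where separability is assumed does not include it), and for non-separable $\Lambda_0$ one can have $\pd_{\Lambda^\en}\Lambda>\gldim\Lambda$, in which case degree $d$ is not the top of the bimodule complex and there is not even a natural comparison map in the direction you need. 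The fix is to work one-sided: $\gldim\Lambda\le d$ alone gives a projective resolution $Q_\bullet\to\Lambda^*$ by finitely generated projective left modules with $Q_i=0$ for $i>d$, so $\Ext^d_\Lambda(\Lambda^*,-)$ genuinely is the cokernel of $\Hom_\Lambda(Q_{d-1},-)\to\Hom_\Lambda(Q_d,-)$, and the natural isomorphism $\Hom_\Lambda(Q,\Lambda)\otimes_\Lambda-\cong\Hom_\Lambda(Q,-)$ for finitely generated projectives yields $\tau_d^-\cong E\otimes_\Lambda-$ directly. For $\tau_d$ the argument is not literally ``the dual of the right-exact story'': dualizing turns the cokernel computing $\Ext^d_\Lambda(M,\Lambda)$ into a kernel, which one identifies with $\Hom_\Lambda(\Ext^d_{\Lambda^\op}(\Lambda^*,\Lambda),M)$ using a right-module resolution $Q'_\bullet\to\Lambda^*$ of length at most $d$ and the isomorphism $Q'\otimes_\Lambda M\cong\Hom_\Lambda(\Hom_{\Lambda^\op}(Q',\Lambda),M)$; only at that point does the second isomorphism of Lemma \ref{lem:alt-E}, namely $E\cong\Ext^d_{\Lambda^\op}(\Lambda^*,\Lambda)$, enter to convert this into $\Hom_\Lambda(E,M)$. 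You should make that step explicit rather than leaving it at ``$\Hom_\F(\ldots)$-type manipulations''; with those two adjustments your argument goes through in the stated generality.
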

\begin{proof}
See the proof of \cite[Lemma 2.13]{io-stab}. The latter assertion follows from the former one.
\end{proof}

Now we recall the definition of higher preprojective algebras as given in \cite{io-napr}. 
\begin{definition}\label{ungraded preprojective algebra}
The \emph{higher preprojective algebra} (or, more precisely, the \emph{$(d+1)$-preprojective algebra}) of $\Lambda$ is the tensor algebra of the $\Lambda^\en$-module $E$:
$$\Pi=\Pi_{d+1}(\Lambda):=\Te_\Lambda(E).$$
Since this is a tensor algebra, it comes with a natural grading which we call the \emph{tensor grading}, i.e., the degree $i$ part of $\Pi$ is $E^i$.
\end{definition}

The following result justifies the name of the higher preprojective algebra.
\begin{proposition}\label{Pi as module2}
As both a left and a right $\Lambda$-module, $\Pi$ is the direct sum of all indecomposable $d$-preprojective $\Lambda$-modules.
\end{proposition}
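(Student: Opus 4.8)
The plan is to identify the tensor powers making up $\Pi=\Te_\Lambda(E)=\bigoplus_{i\ge0}E^i$ with iterated inverse $d$-Auslander--Reiten translates of $\Lambda$. The key input is Proposition \ref{prop:adjunction}, which provides an isomorphism of functors $\tau_d^-\cong E\otimes_\Lambda-$ on $\Lambda\mMod$; everything else is bookkeeping around Definition \ref{def:prep-prei2}.

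First I would show, by induction on $i\ge0$, that $E^i\cong\tau_d^{-i}(\Lambda)$ as left $\Lambda$-modules. For $i=0$ both sides are $\Lambda$. For the inductive step, associativity of $\otimes_\Lambda$ gives $E^{i+1}=E\otimes_\Lambda E^i$, and as a left $\Lambda$-module this depends only on $E$ as a bimodule and on $E^i$ as a left module; so Proposition \ref{prop:adjunction} gives $E^{i+1}\cong\tau_d^-(E^i)$, and functoriality of $\tau_d^-$ together with the induction hypothesis gives $\tau_d^-(E^i)\cong\tau_d^-(\tau_d^{-i}(\Lambda))=\tau_d^{-(i+1)}(\Lambda)$. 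Summing over $i$ yields $\Pi\cong\bigoplus_{i\ge0}\tau_d^{-i}(\Lambda)$ as left $\Lambda$-modules. For the right-module statement I would pass to $\Lambda^\op$: by Lemma \ref{lem:alt-E} the bimodule $E$ also serves as the defining bimodule for $\Lambda^\op$, so $\Pi_{d+1}(\Lambda^\op)\cong\Te_{\Lambda^\op}(E)\cong\Pi^\op$, and applying the left-module statement just proved to $\Lambda^\op$ gives $\Pi\cong\bigoplus_{i\ge0}\tau_d^{-i}(\Lambda)$ as right $\Lambda$-modules (with $\tau_d^-$ now the inverse $d$-Auslander--Reiten translation of $\Modm\Lambda$).

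It then remains to match $\bigoplus_{i\ge0}\tau_d^{-i}(\Lambda)$ with the direct sum of all indecomposable $d$-preprojective modules. Since $\tau_d^-\cong E\otimes_\Lambda-$ is additive and $\Lambda$ is basic, writing $\Lambda=\bigoplus_j P_j$ as a sum of pairwise non-isomorphic indecomposable projectives gives $\bigoplus_{i\ge0}\tau_d^{-i}(\Lambda)=\bigoplus_{i\ge0}\bigoplus_j\tau_d^{-i}(P_j)$. By Definition \ref{def:prep-prei2} every indecomposable summand occurring here is $d$-preprojective, and conversely every indecomposable $d$-preprojective module is a summand of some $\tau_d^{-i}(\Lambda)$, hence of $\Pi$; this already gives that the indecomposable summands of $\Pi$ are precisely the indecomposable $d$-preprojective modules. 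The one delicate point — and the main obstacle — is the \emph{multiplicity-one} assertion, i.e.\ that each indecomposable $d$-preprojective occurs exactly once in the decomposition above: this I would extract from the basic properties of the higher Auslander--Reiten translates, namely that $\tau_d$ and $\tau_d^-$ induce mutually inverse bijections on the relevant classes of indecomposables, so that the nonzero modules among the $\tau_d^{-i}(P_j)$ are indecomposable and pairwise non-isomorphic. Granting this, $\Pi$ decomposes as claimed, both as a left and as a right $\Lambda$-module.
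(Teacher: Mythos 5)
Your argument is essentially the paper's: the entire proof given there is that the statement is immediate from the definition of $\Pi$ and Proposition \ref{prop:adjunction}, i.e.\ exactly the identification $E^i\cong\tau_d^{-i}(\Lambda)$ (on both sides, via Lemma \ref{lem:alt-E}) that you carry out by induction. The multiplicity-one point you flag at the end is not addressed in the paper, which reads the conclusion simply as $\Pi\cong\bigoplus_{i\ge0}\tau_d^{-i}(\Lambda)$ with indecomposable summands precisely the indecomposable $d$-preprojectives; note that the ``mutually inverse bijections'' fact you would invoke for it is a feature of the $d$-hereditary setting rather than of general $\Lambda$ of global dimension $d$, so it is just as well that it is not needed.
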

\begin{proof}
The statement is immediate from the definition of $\Pi$ and Proposition \ref{prop:adjunction}.
\end{proof}

As in the global dimension $1$ case, the preprojective algebra can be identified with
$$\bigoplus_{i\geq0}\Hom_\Lambda(\Lambda,\tau_d^{-i}(\Lambda))$$
where the composition of $f:\Lambda\to\tau_d^{-i}(\Lambda)$ and $g:\Lambda\to\tau_d^{-j}(\Lambda)$ is given by
$$gf=\tau_d^{-i}(g)\circ f:\Lambda\to\tau_d^{-i-j}(\Lambda).$$ 
The $i$th part of the tensor grading is just $\Hom_\Lambda(\Lambda,\tau_d^{-i}(\Lambda))$.

\section{Description of higher preprojective algebras as higher Jacobi algebras}

The aim of this section is to introduce higher preprojective algebras and to give some of their basic properties, including presentations of these algebras by generators and relations.

\subsection{Preliminaries}\label{subsec:graded}

In this subsection, let $\Lambda$ be a finite dimensional $\F$-algebra $\Lambda$ with global dimension at most $d$, where $d$ is a positive integer.
Moreover we assume that
\[\Lambda=\tens_S(V)/I\]
for a separable $\F$-algebra $S$.  Thus $S^\en$ is semisimple, and the projective dimension of the $\Lambda^\en$-module $\Lambda$ coincides with the global dimension of $\Lambda$, which is at most $d$.
As before, let
\[E=\Ext^d_\Lambda(\Lambda^*,\Lambda).\]
We take a minimal projective resolution of the $\Lambda^{\en}$-module $\Lambda$:
\begin{equation}\label{minimal bimodule resolution}
0\to P_d\xrightarrow{\delta_d}\cdots\xrightarrow{\delta_3}P_2\xrightarrow{\delta_2}P_1\xrightarrow{\delta_1}P_0\to0\ \mbox{ with }\ P_i=\Lambda\otimes_SK_i\otimes_S\Lambda,
\end{equation}
where $K_i$ is an $S^\en$-module.
For each $i\ge1$, we define a map $\delta'_i$ by the commutative diagram
\begin{equation}\label{delta'}
\xymatrix@C=50pt{
\Hom_{\Lambda^\en}(P_{i-1},\Lambda^\en)\ar[r]^{_{\Lambda^\en}(\delta_i,\Lambda^\en)}\ar[d]^\sim & {}\Hom_{\Lambda^\en}(P_i,\Lambda^\en)\ar[d]^\sim\\
\Lambda\otimes_SK_{i-1}^\vee\otimes_S\Lambda\ar@{.>}[r]^{\delta_i'} & {}\Lambda\otimes_SK_i^\vee\otimes_S\Lambda,
}\end{equation}
where the vertical maps are given by Lemma \ref{lem-tensorhomiso}.

\begin{proposition}\label{prop-describee}
We have isomorphisms $E\cong(\Lambda\otimes_SK_d^\vee\otimes_S\Lambda)/\Image\delta_d'$ of $\Lambda^\en$-modules and $\hd E\cong K_d^\vee$ of $S^\en$-modules.
\end{proposition}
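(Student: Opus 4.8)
The plan is to compute $E = \Ext^d_\Lambda(\Lambda^*,\Lambda)$ by first identifying it with $\Ext^d_{\Lambda^\en}(\Lambda,\Lambda^\en)$ via Lemma \ref{lem:alt-E}, and then evaluating the latter Ext group by applying $\Hom_{\Lambda^\en}(-,\Lambda^\en)$ to the minimal projective bimodule resolution \eqref{minimal bimodule resolution}. Since $\gldim\Lambda\leq d$, this resolution has length $d$, so after dualizing we obtain a complex
\[
0\to\Hom_{\Lambda^\en}(P_0,\Lambda^\en)\xrightarrow{\delta_1'}\Hom_{\Lambda^\en}(P_1,\Lambda^\en)\xrightarrow{\delta_2'}\cdots\xrightarrow{\delta_d'}\Hom_{\Lambda^\en}(P_{d-1},\Lambda^\en)\xrightarrow{}\Hom_{\Lambda^\en}(P_d,\Lambda^\en)\to0
\]
(reindexing so that the maps $\delta_i'$ are those of the commutative diagram \eqref{delta'}), and $E$ is its cokernel at the last spot. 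Using the identification $\Hom_{\Lambda^\en}(P_i,\Lambda^\en)\cong\Lambda\otimes_SK_i^\vee\otimes_S\Lambda$ from Lemma \ref{lem-tensorhomiso}, this cokernel is exactly $(\Lambda\otimes_SK_d^\vee\otimes_S\Lambda)/\Image\delta_d'$, which is the first claimed isomorphism.

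For the statement about the head, I would argue that the minimality of the resolution \eqref{minimal bimodule resolution} is preserved under the duality, in the sense that $\Image\delta_d'\subseteq J\otimes_SK_d^\vee\otimes_S\Lambda + \Lambda\otimes_SK_d^\vee\otimes_SJ$ where $J$ is the radical of $\Lambda$; equivalently the last map in the dualized complex is a radical map of $\Lambda^\en$-modules. This follows because minimality of the original resolution means each $\delta_i$ is a radical map, hence $\delta_i(P_i)\subseteq J\otimes_SK_{i-1}\otimes_S\Lambda + \Lambda\otimes_SK_{i-1}\otimes_SJ$, and applying $\Hom_{\Lambda^\en}(-,\Lambda^\en)$ — which is an exact functor on projectives and compatible with the radical via the explicit form of Lemma \ref{lem-tensorhomiso} — sends radical maps to radical maps. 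Granting this, the top of $E = (\Lambda\otimes_SK_d^\vee\otimes_S\Lambda)/\Image\delta_d'$ is the top of $\Lambda\otimes_SK_d^\vee\otimes_S\Lambda$, namely $S\otimes_SK_d^\vee\otimes_SS\cong K_d^\vee$ as an $S^\en$-module, since $S$ is semisimple so $\Lambda/J\cong S$ acts on both sides.

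I expect the main obstacle to be the bookkeeping around minimality and the radical under the duality $\Hom_{\Lambda^\en}(-,\Lambda^\en)$: one must check carefully that the explicit isomorphism of Lemma \ref{lem-tensorhomiso} intertwines the radical filtration on $\Hom_{\Lambda^\en}(P_{i-1},\Lambda^\en)$ with that on $\Lambda\otimes_SK_{i-1}^\vee\otimes_S\Lambda$ — this is where separability of $S$ (ensuring $S^\en$ semisimple, so the identification $\Lambda\otimes_S K\otimes_S\Lambda=\Lambda^\en\otimes_{S^\en}K$ behaves well) is essential. An alternative, perhaps cleaner, route for the head computation is to apply $S\otimes_\Lambda-\otimes_\Lambda S$ (or $-\otimes_\Lambda S$) and use that $K_d^\vee$ again appears as $\Tor^{\Lambda^\en}_d$-type data, but the radical-map argument above is the most direct. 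Everything else — the identification of $E$ with the bimodule Ext, the length-$d$ truncation, and the cokernel description — is formal given the results already established in the excerpt.
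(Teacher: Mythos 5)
Your proposal is correct and follows essentially the same route as the paper: the first isomorphism is read off from the dualized minimal resolution via the diagram \eqref{delta'} and Lemma \ref{lem-tensorhomiso}, and the head is computed by noting that minimality forces $\Image\delta_d'$ into the radical of $\Lambda\otimes_SK_d^\vee\otimes_S\Lambda$, whose head is $K_d^\vee$ because $S^\en$ is semisimple. The only blemish is the mislabelled arrow in your displayed dual complex, which your parenthetical reindexing already fixes.
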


\begin{proof}
The former isomorphism is immediate from \eqref{delta'}.
Since the resolution \eqref{minimal bimodule resolution} is minimal,
$\Image\Hom_{\Lambda^\en}(\delta_d,\Lambda^\en)\subseteq J^\en(\Lambda\otimes_S K_d^\vee\otimes_S\Lambda)$ holds.
Thus $\hd E\cong\hd(\Lambda\otimes_S K_d^\vee\otimes_S\Lambda)=K_d^\vee$ since $S^\en$ is semisimple.
\end{proof}

Let $\overline{V}$ be the $S^\en$-module
\[\overline{V}:=V\oplus K_d^\vee.\]
This notation is meant to be reminiscent of $\overline{Q}$ which, in the global dimension $1$ case, is used to denote the doubled quiver of the underlying quiver $Q$.
For $T:=\Tens_S(V)$, we have an isomorphism 
\[\Tens_S(\overline{V})\cong\Tens_T(T\otimes_SK_d^\vee\otimes_ST)\]
by Lemma \ref{formula for Tens}(a). 
Regarding $T\otimes_SK_d^\vee\otimes_ST$ as a subspace of $\Tens_S(\overline{V})$, we have the following description of $\Pi$.

\begin{proposition}\label{prop:piquiv}
Let $\Lambda=T/I$ with $T=\Tens_S(V)$ and $I\subset V^{\ge2}$.
\begin{enumerate}[\rm(a)]
\item We have a surjective morphism of algebras:
\[\Tens_S(\overline{V})\onto\Pi\]
which is bijective on restriction to $S\oplus\overline{V}$.
\item Let $L$ be a subspace of $T\otimes_SK_d^\vee\otimes_ST$ whose image under the natural surjection $T\otimes_SK_d^\vee\otimes_ST\onto\mbox{$\Lambda\otimes_SK_d^\vee\otimes_S\Lambda$}$
is $\delta_d'(K_{d-1}^\vee)$. Then we have an isomorphism of algebras:
\[\Tens_S(\overline{V})/(I+L)\cong\Pi.\]
\end{enumerate}
\end{proposition}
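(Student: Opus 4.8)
The plan is to build both statements directly out of the definition $\Pi = \Tens_\Lambda(E)$ together with the description of $E$ from Proposition~\ref{prop-describee}, using the two parts of Lemma~\ref{formula for Tens} to transport the tensor-algebra construction from the base $\Lambda$ back to the base $S$. First I would recall that $\Lambda = T/I$ with $T = \Tens_S(V)$, so that $\Pi = \Tens_\Lambda(E)$ is a $\Z$-graded algebra whose degree-$0$ part is $\Lambda$ and which is generated over $\Lambda$ by $E$ in degree $1$. By Proposition~\ref{prop-describee} we have a surjection $\Lambda\otimes_S K_d^\vee\otimes_S\Lambda \onto E$ of $\Lambda^\en$-modules with kernel $\Image\delta_d'$; equivalently $E$ is a quotient of the free $\Lambda^\en$-module on the $S^\en$-module $K_d^\vee$.

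For part~(a): since $E$ is a quotient of $\Lambda\otimes_S K_d^\vee\otimes_S\Lambda$, there is a surjection of $\Lambda^\en$-modules $\Lambda\otimes_S K_d^\vee\otimes_S\Lambda \onto E$, which induces a surjection of tensor algebras $\Tens_\Lambda(\Lambda\otimes_S K_d^\vee\otimes_S\Lambda)\onto\Tens_\Lambda(E)=\Pi$. By Lemma~\ref{formula for Tens}(a) with $N = K_d^\vee$ (viewed as an $S^\en$-module, hence a $\Lambda^\en$-module via restriction is not quite right — rather one applies the lemma with $T$ there equal to our $\Lambda$ and $N$ the bimodule $K_d^\vee$, giving $\Tens_\Lambda(\Lambda\otimes_S K_d^\vee\otimes_S\Lambda)\cong\Tens_S(V\oplus K_d^\vee)$ after first identifying $\Lambda = \Tens_S(V)/I$; more carefully, one uses the lemma over the base $T=\Tens_S(V)$ to get $\Tens_S(\overline V)\cong\Tens_T(T\otimes_S K_d^\vee\otimes_S T)$ as noted just before the proposition, then passes to the quotient). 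Composing these identifications yields the desired surjection $\Tens_S(\overline V)\onto\Pi$. That this is bijective on $S\oplus\overline V$ follows because in degree $0$ both sides restrict to $S$ (as $I\subset V^{\ge2}$ and $E$ sits in degree $1$ of the tensor grading), in the $V$-part of degree $1$ the map is the quotient $T\to\Lambda$ restricted to $V$, which is injective since $I\subset V^{\ge 2}$, and in the $K_d^\vee$-part it is the composite $K_d^\vee = \hd E \hookrightarrow$ (a section of) $E$, which is injective by Proposition~\ref{prop-describee}; one should check these three pieces assemble correctly, but no relations among generators are imposed in this low degree.

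For part~(b): the kernel of $\Tens_S(\overline V)\onto\Pi$ is generated, as an ideal, by (i) the relations $I$ defining $\Lambda$ as a quotient of $T = \Tens_S(V)$, and (ii) the relations defining $E$ as a quotient of $\Lambda\otimes_S K_d^\vee\otimes_S\Lambda$, namely $\Image\delta_d'$. Precisely, $\Pi = \Tens_\Lambda(E) = \Tens_\Lambda\big((\Lambda\otimes_S K_d^\vee\otimes_S\Lambda)/\Image\delta_d'\big)$, and by right-exactness of the tensor-algebra functor this equals $\Tens_\Lambda(\Lambda\otimes_S K_d^\vee\otimes_S\Lambda)$ modulo the ideal generated by $\Image\delta_d'$. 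Now substitute the isomorphism $\Tens_\Lambda(\Lambda\otimes_S K_d^\vee\otimes_S\Lambda)\cong\Tens_S(\overline V)/(I)$ coming from Lemma~\ref{formula for Tens}(a)–(b) combined with $\Lambda = T/(I)$: part~(b) of Lemma~\ref{formula for Tens} handles the descent of the quotient by $I$ to the tensor algebra on $\overline V$. Under this identification the submodule $\Image\delta_d'\subset\Lambda\otimes_S K_d^\vee\otimes_S\Lambda$ pulls back to a subspace $L$ of $T\otimes_S K_d^\vee\otimes_S T$ whose image in $\Lambda\otimes_S K_d^\vee\otimes_S\Lambda$ is exactly $\delta_d'(K_{d-1}^\vee)$ (note $\Image\delta_d'$ is generated as an $S^\en$-module, equivalently as a two-sided ideal of $\Tens_S(\overline V)$ together with $I$, by the image of $K_{d-1}^\vee$ since $\delta_d'$ is a map out of $\Lambda\otimes_S K_{d-1}^\vee\otimes_S\Lambda$). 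Hence $\Pi\cong\Tens_S(\overline V)/(I+L)$, as claimed.

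The main obstacle I anticipate is purely bookkeeping: keeping straight the three bases $S\subset T\subset\Lambda$ and which $\Tens$-over-which is meant at each stage, and verifying that the two isomorphisms in Lemma~\ref{formula for Tens} compose to the stated identification $\Tens_S(\overline V)/(I)\cong\Tens_\Lambda(\Lambda\otimes_S K_d^\vee\otimes_S\Lambda)$ in a way compatible with the further quotients by $\Image\delta_d'$ versus $L$. In particular one must check that the ideal of $\Tens_S(\overline V)$ generated by $L$ together with $I$ maps onto the ideal of $\Tens_\Lambda(\Lambda\otimes_S K_d^\vee\otimes_S\Lambda)$ generated by $\Image\delta_d'$ — this is where the hypothesis that $L$ surjects onto $\delta_d'(K_{d-1}^\vee)$ (rather than all of $\Image\delta_d'$) does its work, since the two-sided ideal generated by $\delta_d'(K_{d-1}^\vee)$ inside the quotient already recovers all of $\Image\delta_d'$. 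None of the individual steps is hard; the care is entirely in the identifications.
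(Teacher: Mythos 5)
Your proposal is correct and follows essentially the same route as the paper: it combines the description of $E$ as the cokernel of $\delta_d'$ from Proposition \ref{prop-describee} with the two parts of Lemma \ref{formula for Tens} to identify $\Tens_S(\overline V)/(I+L)$ with $\Tens_\Lambda(E)=\Pi$, noting that the two-sided ideal generated by $\delta_d'(K_{d-1}^\vee)$ recovers all of $\Image\delta_d'=\Lambda\delta_d'(K_{d-1}^\vee)\Lambda$. The only cosmetic difference is order: the paper proves (b) first and lets (a) follow, whereas you verify the low-degree bijectivity of (a) directly — a detail the paper leaves implicit.
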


\begin{proof}
We only need to prove part (b) of the proposition, from which part (a) will follow.
By Proposition \ref{prop-describee}, we have
\begin{eqnarray*}
E &\cong& \Cok\delta_d' = (\Lambda\otimes_SK_d^\vee\otimes_S\Lambda)/\Lambda\delta_d'(K_{d-1}^\vee)\Lambda\\
&\cong&(T\otimes_SK_d^\vee\otimes_ST)/(I\otimes_SK_d^\vee\otimes_ST+T\otimes_SK_d^\vee\otimes_SI+TLT).
\end{eqnarray*}
So, applying Lemma \ref{formula for Tens}(a) and (b), we have
\begin{eqnarray*}
&\Tens_S(\overline{V})/(I+L)\cong
\Tens_T(T\otimes_SK_d^\vee\otimes_ST)/(I+TLT)\\
&\cong\Tens_\Lambda((T\otimes_SK_d^\vee\otimes_ST)/(I\otimes_SK_d^\vee\otimes_ST+T\otimes_SK_d^\vee\otimes_SI+TLT))
\cong\Tens_\Lambda(E)=\Pi
\end{eqnarray*}
as desired.
\end{proof}

Consider the case where $\F$ is algebraically closed, so we can describe $\Lambda$ as $\F Q/I$.  Let $\{k_1,\ldots,k_r\}$ be a basis of $K_d$, each with a unique source and target $s(k_i)$ and $t(k_i)$, 
and let $\overline{Q}$ be the quiver obtained by adding $r$ arrows $k_i^*:t(k_i)\to s(k_i)$ to $Q$.
Then, just as $V$ is the arrow-space of $Q$, $\overline{V}$ is the arrow-space of $\overline{Q}$, and Proposition \ref{prop:piquiv} says that $\overline{Q}$ is the Gabriel quiver of $\Pi$.

We can therefore calculate the Gabriel quiver $\overline{Q}$ of $\Pi$ as follows.  First, for each vertex $i$ of $Q$, compute the projective resolution
$$0\to P_{i,n}\to\cdots\to P_{i,0}\to 0$$
of the simple left $\Lambda$-module $S_i$, where some projective modules $P_{i,h}$ may be zero.  Then, for each $i$ and for each summand of the projective module $P_{i,n}$ which is isomorphic to the projective cover of $S_j$, add an arrow $i\to j$ to the quiver $Q$.  The resulting quiver is $\overline{Q}$.

\begin{example}
Let
$$Q=\left[1\arr{\alpha}2\arr{\beta}3\arr{\gamma}4\arr{\delta}5\arr{\varepsilon}6\right]$$
and $\Lambda=\F Q/(\beta\gamma\delta,\gamma\delta\varepsilon)$.  Let $S_i$ denote the simple left $\Lambda$-module associated to the vertex $i$, and $P(S_i)$ its projective cover.  One can check that $\Lambda$ has global dimension $3$ and the only simple module with projective dimension $3$ is $S_6$.  Its projective resolution is
$$0\to P(S_2)\arr{\cdot \beta} P(S_3)\arr{\cdot {\gamma\delta}} P(S_5)\arr{\cdot \varepsilon} P(S_6)\to 0$$
where $\cdot a$ denotes right multiplication by $a$.
So the quiver $\overline{Q}$ of $\Pi$ is just $Q$ with an extra arrow from $6$ to $2$, which we label $(\beta\gamma\delta\varepsilon)^*$.
\end{example}

\subsection{Superpotentials and higher Jacobi algebras}\label{subsec:spots}

To introduce our main notions of superpotentials, we need preparations.
For an $\F$-algebra $A$ and an $A^\en$-module $M$, we write
\[c(M):=A\otimes_{A^\en}M\]
for the 0th Hochschild homology $H_0(A,M)$ of $A$.
This can be naturally identified with the quotient of $M$ modulo the subgroup generated by $am-ma$ with $a\in A$ and $m\in M$.
Therefore we have a natural surjective map $\pi:M\onto A\otimes_{A^\en}M$ of $\F$-modules.

For $A^\en$-modules $M_1,\ldots,M_\ell$, we clearly have functorial isomorphisms
\begin{equation}\label{cyclic}
c(M_1\otimes_A\cdots\otimes_AM_\ell)\cong c(M_2\otimes_A\cdots\otimes_AM_\ell\otimes_AM_1)\cong\cdots\cong c(M_\ell\otimes_AM_1\otimes_A\cdots\otimes_AM_{\ell-1}).
\end{equation}
For $M,N\in A^\en\mMod$, there is a functorial isomorphism
\begin{equation}\label{lem:tensae}
c(M\otimes_AN)\cong M\otimes_{A^\en}N
\end{equation}
given by $a\otimes(m\otimes n)\mapsto am\otimes n= m\otimes na$, whose inverse is $m\otimes n\mapsto 1\otimes(m\otimes n)$.
It gives a functorial morphism
\begin{equation}\label{c(MN)}
c(M\otimes_AN)\to\Hom_{A^\en}(M^\vee,N),
\end{equation}
which is an isomorphism if $M$ is a finitely generated projective $A^\en$-module.

Setting $M=A^\en$ in \eqref{lem:tensae}, we have a functorial isomorphism of $A^\en$-modules
\begin{equation}\label{c(Ae)}
c(A^\en\otimes_A N)\cong N.
\end{equation}
For $M,N\in A^\en\mMod$, we have a well-defined pairing
\begin{equation}\label{evaluation}
\ev_M\otimes 1_N:M^\vee\otimes_{\F}c(M\otimes_AN)\to N
\end{equation}
given as the composition $M^\vee\otimes_\F c(M\otimes_AN)\to c(A^\en\otimes_AN)\xrightarrow{\eqref{c(Ae)}} N$,
where the first map sends $f\otimes(1_A\otimes(m\otimes n))$ to $f(m) n$.

Now we are ready to introduce the following, which is a central notion in this paper.

\begin{definition}\label{define superpotential}
Let $S$ be a semisimple $\F$-algebra and $U$ an $S^\en$-module. A \emph{superpotential of degree $\ell$} for $T=\Tens_S(U)$ is an element of $c(U^\ell)=S\otimes_{S^\en}U^\ell$, where $U^\ell$ is the $\ell$th tensor power $U\otimes_S\cdots\otimes_S U$ as before.
\end{definition}

By \eqref{cyclic}, we have a well-defined automorphism
\[\rho:S\otimes_{S^\en}U^\ell\to S\otimes_{S^\en}U^\ell,\ (x_1\otimes x_2\otimes\cdots\otimes x_\ell) \mapsto (x_2\otimes\cdots\otimes x_\ell\otimes x_1).\]
Using $\rho$, we define $\varphi$ by
\[\varphi:=\sum_{i=0}^{\ell-1}(-1)^{(\ell-1)i}\rho^i:S\otimes_{S^\en}U^\ell\to S\otimes_{S^\en}U^\ell.\]
By \eqref{evaluation}, for $0\leq k\leq \ell$, we have a well-defined pairing
\[ \ev_{U^k}\otimes1_{U^{\ell-k}}:
(U^k)^\vee\otimes_\F c(U^\ell)\arr\sim
(U^k)^\vee\otimes_\F c(U^k\otimes_{S}U^{\ell-k})\xrightarrow{\ev_{U^k}\otimes1_{U^{\ell-k}}} U^{\ell-k}. \]
For $f\in(U^k)^\vee$ and $x\in c(U^\ell)$, we simply write $f\cdot x:=\ev_{U^k}\otimes1_{U^{\ell-k}}(f\otimes x)$. 

\begin{definition}\label{2}
Let $S$ be a semisimple $\F$-algebra, $U$ an $S^\en$-module, and $T=\Tens_S(U)$. For a superpotential $W$ of degree $\ell$ and a nonnegative integer $k\le\ell$, the \emph{$k$-Jacobi ideal} of $T$ is the two-sided ideal
$$J_S^k(U,W)=\left(f\cdot\varphi(W) \mid f\in\Hom_{S^\en}(U^k,S^\en)\right).$$
The \emph{$k$-Jacobi algebra} is the quotient algebra
$$P_S^k(U,W)=\Tens_S(U)/J_S^k(U,W).$$
\end{definition}

We now explain a connection to notation used elsewhere.

\begin{remark}
Given a quiver $Q$, we have a semisimple algebra $S=\F Q_0$ with basis the vertices of $Q$ and an $S^\en$-module $U=\F Q_1$ with basis the arrows.
For each $i\geq0$, let $Q_i$ be the set of all paths of length $i$ on $Q$.
Then $Q_i$ gives a basis of the $S^\en$-module $U^i$, and we denote by $\{p^\vee\mid p\in Q_i\}$ the dual basis of $(U^i)^\vee$ in the obvious sense.

Let $W$ be a superpotential for $\F Q=\Tens_S(U)$. Define
\[\partial_{p}W=p^\vee\cdot  \varphi(W).\]
Then the $k$-Jacobi ideal is the ideal of $\Tens_S(U)$ generated by $\{\partial_{p}W\st p\in Q_k\}$.
Note that when $k=1$ and the superpotential $W$ is of odd degree we recover the usual notion of the Jacobi algebra of a quiver with potential $(Q,W)$.

Note also that some sources, such as \cite{bsw}, define the superpotential to be $\varphi(W)$ rather than $W$.
\end{remark}

In the rest, we give general observations which will be used later. Let $A$ be an $\F$-algebra.

\begin{lemma}\label{XYZ}
For $A^\en$-modules $X,Y,Z$, we have functorial morphisms
\[\Hom_{A^\en}(X^\vee,Y\otimes_AZ)\leftarrow c(X\otimes_AY\otimes_AZ)\to\Hom_{A^\en}(Y^\vee,Z\otimes_AX).\]
The left (respectively, right) one is an isomorphism if $X$ (respectively, $Y$) is a projective $A^\en$-module. 
\end{lemma}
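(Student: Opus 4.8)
The plan is to build both morphisms out of the basic functorial maps established earlier in the excerpt and then prove the isomorphism assertions under the projectivity hypotheses. First I would use the cyclic symmetry isomorphism \eqref{cyclic} for the three $A^\en$-modules $X,Y,Z$: we have
\[
c(X\otimes_AY\otimes_AZ)\cong c(Y\otimes_AZ\otimes_AX)
\]
canonically. Applying the functorial morphism \eqref{c(MN)} to the pair $(M,N)=(X,\,Y\otimes_AZ)$ gives $c(X\otimes_A(Y\otimes_AZ))\to\Hom_{A^\en}(X^\vee,Y\otimes_AZ)$, which is the left-hand arrow; applying \eqref{c(MN)} to the pair $(Y,\,Z\otimes_AX)$ and precomposing with the cyclic isomorphism above gives $c(X\otimes_AY\otimes_AZ)\cong c(Y\otimes_A(Z\otimes_AX))\to\Hom_{A^\en}(Y^\vee,Z\otimes_AX)$, the right-hand arrow. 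Functoriality in $X,Y,Z$ is inherited from functoriality of \eqref{cyclic} and \eqref{c(MN)}.

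Next I would establish the isomorphism claims. The statement immediately after \eqref{c(MN)} says that the morphism $c(M\otimes_AN)\to\Hom_{A^\en}(M^\vee,N)$ is an isomorphism whenever $M$ is a finitely generated projective $A^\en$-module. So for the left arrow I simply take $M=X$, $N=Y\otimes_AZ$: if $X$ is projective (hence, since we are dealing with finitely generated modules throughout, finitely generated projective), \eqref{c(MN)} is an isomorphism and we are done. For the right arrow I take $M=Y$, $N=Z\otimes_AX$ after transporting along the cyclic isomorphism, which is always an isomorphism; thus if $Y$ is projective the composite is an isomorphism. In other words, both isomorphism assertions are just instances of the already-recorded fact about \eqref{c(MN)} applied to a suitably cyclically-permuted tensor product.

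For completeness I would also spell out the maps on elements, since this is what gets used in later sections: the left arrow sends the class of $1_A\otimes(x\otimes y\otimes z)$ in $c(X\otimes_AY\otimes_AZ)$ to the homomorphism $X^\vee\ni f\mapsto f(x)\,y\otimes z\in Y\otimes_AZ$ (using the contraction \eqref{evaluation} with $M=X$, $N=Y\otimes_AZ$), and the right arrow sends the same class, viewed via \eqref{cyclic} as the class of $1_A\otimes(y\otimes z\otimes x)$, to $Y^\vee\ni g\mapsto g(y)\,z\otimes x\in Z\otimes_AX$. These formulas make the naturality in $X,Y,Z$ transparent and also make the inverse maps (when $X$, resp.\ $Y$, is finitely generated projective) explicit via a dual basis, exactly as in the discussion preceding \eqref{c(MN)}.

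The only mild subtlety — and the one point I would be careful about — is checking that the identification via the cyclic isomorphism \eqref{cyclic} is genuinely the canonical one and is compatible with the $A^\en$-module structures appearing on the Hom-spaces, so that the right-hand arrow is a morphism of $A^\en$-modules and not merely of $\F$-modules; this is where one has to track how the outer $A^\en$-action on $c(X\otimes_AY\otimes_AZ)$ (coming from the distinguished tensor factors, as in the statement of \eqref{c(MN)}) matches up after permuting $X$ past $Y\otimes_AZ$. Everything else is a formal consequence of the lemmas already proved, so this is really the only place the proof has any content, and it is handled by the same bookkeeping used for \eqref{c(MN)} itself.
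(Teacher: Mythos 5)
Your proposal is correct and follows essentially the same route as the paper: the left arrow is \eqref{c(MN)} applied to $(M,N)=(X,Y\otimes_AZ)$, the right arrow is the cyclic isomorphism \eqref{cyclic} followed by \eqref{c(MN)} applied to $(Y,Z\otimes_AX)$, and the isomorphism claims are the recorded fact that \eqref{c(MN)} is an isomorphism for finitely generated projective $M$. (The only superfluous worry is the compatibility with $A^\en$-structures on the Hom-spaces: these are just $\F$-modules, so functoriality in $X,Y,Z$ is all that is required.)
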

\begin{proof}
Using \eqref{c(MN)}, we have functorial morphisms $c(X\otimes_AY\otimes_AZ)\to\Hom_{A^\en}(X^\vee,Y\otimes_AZ)$ and $c(X\otimes_AY\otimes_AZ)\stackrel{\eqref{cyclic}}{\cong}c(Y\otimes_AZ\otimes_AX)\to\Hom_{A^\en}(Y^\vee,Z\otimes_AX)$.
\end{proof}

As in \eqref{XY4} when $A$ is semisimple, for $A^\en$-modules $X,Y,Z$, we have functorial isomorphisms
\begin{equation}\label{XY}
Y^\vee\otimes_A\Hom_A(X,A)\cong(X\otimes_AY)^\vee\ \mbox{ and }\ \Hom_{A^{\op}}(Y,A)\otimes_AX^\vee\cong(X\otimes_AY)^\vee.
\end{equation}
The first map sends
$f\otimes g$ to $(x\otimes y\mapsto\sum_ig(xs_i)\otimes s'_i)$ where $f(y)=\sum_is_i\otimes s'_i$,
and the second one sends $f'\otimes g'$ to $(x\otimes y\mapsto\sum_jt_j\otimes f'(t'_jy))$ where $g'(x)=\sum_jt_j\otimes t'_j$.
We have the following commutative diagram.
\begin{equation}\label{XYZ commute}
\xymatrix@C4em{(\Hom_{A^{\op}}(Y,A)\otimes_AX^\vee)\otimes_\F c(X\otimes_AY\otimes_AZ)\ar[r]^(.58){1\otimes\ev_X\otimes1}\ar[d]^{\eqref{XY}}_\wr&\Hom_{A^{\op}}(Y,A)\otimes_A (Y\otimes_AZ)\ar[d]^{\ev_Y\otimes1}\\
(X\otimes_AY)^\vee\otimes_\F c(X\otimes_AY\otimes_AZ)\ar[r]^(.6){\ev_{X\otimes Y}}&Z\\
(Y^\vee\otimes_A\Hom_A(X,A))\otimes_\F c(X\otimes_AY\otimes_AZ)\ar[r]^(.58){\ev_Y\otimes1\otimes1}\ar[u]_{\eqref{XY}}^\wr&(Z\otimes_AX)\otimes_A\Hom_A(X,A)\ar[u]_{1\otimes\ev_X}}
\end{equation}

\subsection{Higher preprojective algebras of Koszul algebras}\label{subsec:hpka}
Now we go back to the setting in Section \ref{subsec:graded}, that is, $\Lambda$ is a finite dimensional $\F$-algebra with global dimension $d>0$.
Moreover we assume that $\Lambda$ is a Koszul algebra and
\[\Lambda=\tens_S(V)/I\]
for a separable $\F$-algebra $S$.
Then the minimal $\Z$-graded projective resolution \eqref{minimal bimodule resolution} of the $\Lambda^\en$-module $\Lambda$ is given by the Koszul bimodule complex \eqref{Koszul bimodule complex}.
Let $\overline{V}=V\oplus K_d^\vee$.

\begin{definition}\label{assoc-superpot}
We define a superpotential $W$ of degree $d+1$ for $\tens_S(\overline{V})$ as the image of $1_\F\in\F$ under the composition
\[\F\xrightarrow{\coev_{K_d}} c(K_d\otimes_SK_d^\vee)
\subset c(V^d\otimes_SK_d^\vee)\subset
c(\overline{V}^{d}\otimes_S\overline{V})=c(\overline{V}^{d+1}),\]
where $\coev_{K_d}:\F\to\End_{S^\en}(K_d)\cong K_d\otimes_{S^\en}K_d^\vee\cong c(K_d\otimes_SK_d^\vee)$ is the coevaluation map. We call $W$ the \emph{superpotential associated to $\Lambda$}, or the \emph{associated superpotential}.  
\end{definition}

By Lemma \ref{XYZ}, we have isomorphisms
$\Hom_{S^\en}(K_i,V\otimes_SK_{i-1})\cong\Hom_{S^\en}(K_{i-1}^\vee,K_i^\vee\otimes_SV)$ and
$\Hom_{S^\en}(K_i,K_{i-1}\otimes_SV)\cong\Hom_{S^\en}(K_{i-1}^\vee,V\otimes_SK_i^\vee)$.
Thus the inclusions
 $\iota_i^\ell:K_i\to V\otimes_SK_{i-1}$ and $\iota_i^r:K_i\to K_{i-1}\otimes_SV$ give rise to
\begin{equation}\label{theta}
\theta_i^\ell:K_{i-1}^\vee\to K_i^\vee\otimes_SV\ \mbox{ and }\ \theta_i^r:K_{i-1}^\vee\to V\otimes_SK_i^\vee.\end{equation}
We will need the following observations.

\begin{lemma}\label{derivation}
The following assertions hold.
\begin{enumerate}[\rm(a)]
\item The map $(\overline{V}^{d-1})^\vee\onto K_{d-1}^\vee\xrightarrow{\theta_d^r}V\otimes_SK_d^\vee\into\overline{V}^2$ coincides with $-\cdot W:(\overline{V}^{d-1})^\vee\to\overline{V}^2$.
\item The map $(\overline{V}^{d-1})^\vee\onto K_{d-1}^\vee\xrightarrow{\theta_d^\ell}K_d^\vee\otimes_SV\into\overline{V}^2$ coincides with $-\cdot\rho(W):(\overline{V}^{d-1})^\vee\to\overline{V}^2$.
\end{enumerate}
\end{lemma}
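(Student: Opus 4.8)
The two statements are mirror images of each other, so it suffices to explain the strategy for (a) and note that (b) follows by the symmetric computation using $\rho(W)$ in place of $W$. The strategy is to unwind both sides of the claimed equality into explicit formulas using the coevaluation description of $W$ from Definition~\ref{assoc-superpot} and the pairing $\ev$ from \eqref{evaluation}, and then to recognize that applying $f\in(\overline V^{d-1})^\vee$ to $W$ amounts to ``contracting'' one copy of $K_d$ against the identity endomorphism, which is precisely what $\theta_d^r$ does after dualizing the inclusion $\iota_d^r\colon K_d\into K_{d-1}\otimes_S V$.

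\textbf{Key steps.} First I would write $W$ concretely: choosing a ``basis'' description $\coev_{K_d}(1_\F)=\sum_j k_j\otimes k_j^\vee\in c(K_d\otimes_S K_d^\vee)$ (dual bases of $K_d$ and $K_d^\vee$, made sense of via $S^\en$-semisimplicity), and then viewing this element inside $c(\overline V^d\otimes_S\overline V)=c(\overline V^{d+1})$ via the inclusions $K_d\into V^d\into\overline V^d$ and $K_d^\vee\into\overline V$. Second, I would compute $f\cdot W$ for $f\in(\overline V^{d-1})^\vee$ using the definition $f\cdot x=\ev_{\overline V^{d-1}}\otimes 1_{\overline V^2}(f\otimes x)$ from just before Definition~\ref{2}; the point is that $\overline V^{d+1}=\overline V^{d-1}\otimes_S\overline V^2$ and, because the $K_d$-part of $W$ lives in $V^d=V^{d-1}\otimes_S V$, pairing off the first $d-1$ tensor factors leaves an element of $V\otimes_S K_d^\vee\subset\overline V^2$. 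Third — the heart of the matter — I would identify this contraction map $(\overline V^{d-1})^\vee\to V\otimes_S K_d^\vee$ with the composite $(\overline V^{d-1})^\vee\onto K_{d-1}^\vee\xrightarrow{\theta_d^r} V\otimes_S K_d^\vee$. Here one uses that $\overline V^{d-1}\onto$ (restriction along $V^{d-1}\into\overline V^{d-1}$, then along $K_{d-1}\into V^{d-1}$) dualizes the inclusions, and that $\theta_d^r$ was \emph{defined} in \eqref{theta} via Lemma~\ref{XYZ} as the image of $\iota_d^r\colon K_d\into K_{d-1}\otimes_S V$ under the isomorphism $\Hom_{S^\en}(K_d,K_{d-1}\otimes_S V)\cong\Hom_{S^\en}(K_{d-1}^\vee,V\otimes_S K_d^\vee)$; this isomorphism is exactly a coevaluation–evaluation adjunction, so chasing the commutative diagram \eqref{XYZ commute} (with $A=S$, and $X,Y,Z$ chosen among $V^{d-1}$, $V$, $K_d^\vee$ appropriately) turns the contraction-against-$W$ description into the $\theta_d^r$ description. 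The sign $-$ must be tracked through $\varphi$ versus $\rho$, or absorbed into a chosen convention; I expect it to come from comparing the cyclic rotation $\rho$ used in $\varphi(W)$ against the ``straightening'' isomorphism \eqref{cyclic} used to move the $K_d^\vee$ factor to the front.

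\textbf{Main obstacle.} The genuine content — and the step most likely to require care — is the third one: matching the evaluation pairing $\ev$ appearing in $f\cdot W$ with the specific adjunction isomorphism of Lemma~\ref{XYZ} that defines $\theta_d^r$, i.e.\ checking that ``pair $f$ against the $K_d$-slot of $\coev_{K_d}(1)$'' literally equals ``apply $\iota_d^r$ then transpose.'' This is a diagram chase through \eqref{c(MN)}, \eqref{cyclic}, \eqref{XY}, and \eqref{XYZ commute}, and the bookkeeping of which tensor factor plays the role of $X$, $Y$, $Z$ in each invocation is where errors would creep in; the sign is a secondary but real nuisance. Everything else is formal manipulation of tensor–hom adjunctions over the semisimple ring $S^\en$, for which I would cite the identities already assembled in Section~\ref{subsec:spots}. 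I would present (b) in one line: replace $W$ by $\rho(W)$, which swaps the roles of ``first factor'' and ``last factor,'' hence swaps $V\otimes_S K_d^\vee$ with $K_d^\vee\otimes_S V$ and $\theta_d^r$ with $\theta_d^\ell$, the rest of the argument being identical.
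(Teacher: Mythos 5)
Your proposal is correct and follows essentially the same route as the paper's proof: both identify $W$, viewed inside $c(K_{d-1}\otimes_S V\otimes_S K_d^\vee)$, with $\iota_d^r$ via Lemma~\ref{XYZ}, unwind the definition of $\theta_d^r$ as the image of $W$ under the second isomorphism of that lemma (so that $\theta_d^r$ becomes evaluation against $W$), and observe that $-\cdot W$ factors through the surjection onto $K_{d-1}^\vee$ precisely because $W$ lies in that smaller subspace. One small remark: the dash in ``$-\cdot W$'' is a placeholder for the argument, not a minus sign, so there is no sign to track in this lemma --- the signs from $\varphi$ only enter later, in the proof of Theorem~\ref{thm:pi-as-alg-pot}.
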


\begin{proof}
(a) By definition, $W$ belongs to the subspace $c(K_{d-1}\otimes_SV\otimes_SK_d^\vee)$ of $c(\overline{V}^{d+1})$, and coincides with $\iota_d^r$ under the isomorphism $\Hom_{S^\en}(K_d,K_{d-1}\otimes_SV)\cong c(K_{d-1}\otimes_SV\otimes_SK_d^\vee)$ in Lemma \ref{XYZ}.
By definition, $\theta_d^r$ is the image of $W$ under the isomorphism $c(K_{d-1}\otimes_SV\otimes_SK_d^\vee)\cong\Hom_{S^\en}(K_{d-1}^\vee,V\otimes_SK_d^\vee)$ in Lemma \ref{XYZ}. Thus $\theta_d^r$ coincides with
\[K_{d-1}^\vee\xrightarrow{1\otimes W}K_{d-1}^\vee\otimes_\F c(K_{d-1}\otimes_SV\otimes_SK_d^\vee)
\xrightarrow{\ev_{K_{d-1}}\otimes1\otimes1}V\otimes_SK_d^\vee.\]
On the other hand, since $W$ belongs to $c(K_{d-1}\otimes_SV\otimes_SK_d^\vee)$, the map $-\cdot W$ factors through $K_{d-1}^\vee$. Thus the assertion follows.

(b) Although the argument is mostly the same as (a), we record the details.

By definition, $W$ belongs to the subspace $c(V\otimes_SK_{d-1}\otimes_SK_d^\vee)$ of $c(\overline{V}^{d+1})$, and coincides with $\iota_d^\ell$ under the isomorphism $\Hom_{S^\en}(K_d,V\otimes_SK_{d-1})\cong c(V\otimes_SK_{d-1}\otimes_SK_d^\vee)$ in Lemma \ref{XYZ}.
By definition, $\theta_d^\ell$ is the image of $W$ under the isomorphism $c(V\otimes_SK_{d-1}\otimes_SK_d^\vee)\cong\Hom_{S^\en}(K_{d-1}^\vee,K_d^\vee\otimes_SV)$ in Lemma \ref{XYZ}. Thus $\theta_d^\ell$ coincides with
\[K_{d-1}^\vee\xrightarrow{1\otimes W}K_{d-1}^\vee\otimes_\F c(V\otimes_SK_{d-1}\otimes_SK_d^\vee)\xrightarrow{1\otimes\rho}K_{d-1}^\vee\otimes_\F c(K_{d-1}\otimes_SK_d^\vee\otimes_SV)\xrightarrow{\ev_{K_{d-1}}\otimes1\otimes1}V\otimes_SK_d^\vee,\]
which equals $K_{d-1}^\vee\xrightarrow{1\otimes\rho(W)}K_{d-1}^\vee\otimes_\F c(K_{d-1}\otimes_SK_d^\vee\otimes_SV)\xrightarrow{\ev_{K_{d-1}}\otimes1\otimes1}V\otimes_SK_d^\vee$.

On the other hand, since $\rho(W)$ belongs to $c(K_{d-1}\otimes_SK_d^\vee\otimes_SV)$, the map $-\cdot\rho(W)$ factors through $K_{d-1}^\vee$. Thus the assertion follows.
\end{proof}

On the other hand, $\theta_i^\ell$ and $\theta_i^r$ induce morphisms
$\hat{\theta}^\ell_i$ and $\hat{\theta}^r_i:\Lambda\otimes_SK_{i-1}^\vee\otimes_S\Lambda\to\Lambda\otimes_SK_{i}^\vee\otimes_S\Lambda$
of $\Lambda^\en$-modules. 
This gives an explicit construction of $\delta_i'$ in \eqref{delta'} by the following observation. 
\begin{lemma}\label{lem:easydual}
If $\Lambda$ is Koszul then we have a commutative diagram
$$\xymatrix @C=60pt{
\Hom_{\Lambda^\en}(\Lambda\otimes_SK_{i-1}\otimes_S\Lambda,\Lambda^\en)\ar[r]^{_{\Lambda^\en}(\delta_i,\Lambda^\en)}\ar[d]^\sim & {}\Hom_{\Lambda^\en}(\Lambda\otimes_SK_{i}\otimes_S\Lambda,\Lambda^\en)\ar[d]^\sim\\
\Lambda\otimes_SK_{i-1}^\vee\otimes_S\Lambda\ar[r]^{\hat{\theta}^\ell_i+(-1)^i\hat{\theta}^r_i} & {}\Lambda\otimes_SK_{i}^\vee\otimes_S\Lambda 
}$$
and therefore $\delta_i'=\hat{\theta}^\ell_i+(-1)^i\hat{\theta}^r_i$.
\end{lemma}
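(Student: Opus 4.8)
\textbf{Proof plan for Lemma \ref{lem:easydual}.}

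The statement asserts that the map $\delta_i'$ defined abstractly by dualizing $\delta_i = \hat\iota_i^\ell + (-1)^i\hat\iota_i^r$ via the natural isomorphisms of Lemma \ref{lem-tensorhomiso} agrees, under those same identifications, with the concretely-built map $\hat\theta_i^\ell + (-1)^i\hat\theta_i^r$. Since the two summands of $\delta_i$ are treated in parallel (up to the sign $(-1)^i$, which is inert under the linear operations involved), the plan is to reduce immediately to proving the two separate identities
\[
\Hom_{\Lambda^\en}(\hat\iota_i^\ell,\Lambda^\en) = \hat\theta_i^\ell
\quad\text{and}\quad
\Hom_{\Lambda^\en}(\hat\iota_i^r,\Lambda^\en) = \hat\theta_i^r
\]
after passing through the vertical isomorphisms of Lemma \ref{lem-tensorhomiso}. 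So the real content is a single statement: applying $\Hom_{\Lambda^\en}(-,\Lambda^\en)$ to the map $\hat\iota_i^\ell:\Lambda\otimes_S K_i\otimes_S\Lambda\to\Lambda\otimes_S K_{i-1}\otimes_S\Lambda$ induced by the $S^\en$-map $\iota_i^\ell:K_i\to V\otimes_S K_{i-1}$ and the multiplication of $\Lambda$ gives, after identifying both sides via Lemma \ref{lem-tensorhomiso}, the map induced by $\theta_i^\ell:K_{i-1}^\vee\to K_i^\vee\otimes_S V$.

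The key step is to unwind how $\Hom_{\Lambda^\en}(-,\Lambda^\en)$ interacts with the identification $\Hom_{\Lambda^\en}(\Lambda\otimes_S K\otimes_S\Lambda,\Lambda^\en)\cong\Lambda\otimes_S K^\vee\otimes_S\Lambda$. I would first observe that $\hat\iota_i^\ell$ factors as the $\Lambda^\en$-linear map
\[
\Lambda\otimes_S K_i\otimes_S\Lambda \;\xrightarrow{\;1\otimes\iota_i^\ell\otimes1\;}\; \Lambda\otimes_S V\otimes_S K_{i-1}\otimes_S\Lambda \;\xrightarrow{\;\mu\otimes1\otimes1\;}\; \Lambda\otimes_S K_{i-1}\otimes_S\Lambda,
\]
where $\mu:\Lambda\otimes_S V\to\Lambda$ is the multiplication (note $V=\Lambda_1\subset\Lambda$). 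Dualizing the second arrow: under Lemma \ref{lem-tensorhomiso} the dual of $\mu\otimes1\otimes1$ is the map $\Lambda\otimes_S K_{i-1}^\vee\otimes_S\Lambda\to\Lambda\otimes_S(V\otimes_S K_{i-1})^\vee\otimes_S\Lambda\cong\Lambda\otimes_S K_{i-1}^\vee\otimes_S V^{\vee}\otimes_S\Lambda$ built from $\Hom_{\Lambda^\op}(\mu,\Lambda):\Lambda\to\Lambda\otimes_S V^\vee$ via \eqref{XY}; one checks directly that $\Hom_{\Lambda^\op}(\mu,\Lambda)$ is exactly the ``comultiplication by $V$'' map $\Lambda\to\Lambda\otimes_S V^\vee$ dual to right multiplication. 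Dualizing the first arrow: Lemma \ref{lem-tensorhomiso} turns $1\otimes\iota_i^\ell\otimes1$ into $1\otimes(\iota_i^\ell)^\vee\otimes1$, and by the definition of $\theta_i^\ell$ in \eqref{theta} (which is precisely $\iota_i^\ell$ transported across the isomorphism $\Hom_{S^\en}(K_i,V\otimes_S K_{i-1})\cong\Hom_{S^\en}(K_{i-1}^\vee,K_i^\vee\otimes_S V)$ of Lemma \ref{XYZ}), composing these two dualized pieces reassembles exactly $\hat\theta_i^\ell$. The commutative diagram \eqref{XYZ commute} is the technical tool that makes the bookkeeping of the various $(-)^\vee$-identifications consistent, so I would cite it at the point where the evaluation/coevaluation pairings need to be matched. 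The case of $\iota_i^r$ is entirely symmetric, using $\Hom_\Lambda(\mu,\Lambda)$ instead of $\Hom_{\Lambda^\op}(\mu,\Lambda)$ and the other half of \eqref{XY}, \eqref{XYZ commute}.

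The main obstacle is purely bookkeeping: making sure the four dualities $(-)^*,(-)^{*\ell},(-)^{*r},(-)^\vee$ and the several "natural isomorphisms" of Lemma \ref{lem-tensorhomiso}, \eqref{XY}, \eqref{XYZ}, and \eqref{XYZ commute} are composed in a mutually compatible way, with no stray sign or transposition. I expect the cleanest route is to check the identity on a single homogeneous generator $1\otimes k\otimes 1 \in \Lambda\otimes_S K_i\otimes_S\Lambda$ (equivalently, to evaluate both maps on the dual side against an arbitrary $f\in\Hom_{\Lambda^\en}(\Lambda\otimes_S K_{i-1}\otimes_S\Lambda,\Lambda^\en)$), reducing everything to an identity of elements of $\Lambda\otimes_S K_i^\vee\otimes_S\Lambda$; the Koszul hypothesis enters only to guarantee that $K_i$ really is the $i$-th term of the minimal bimodule resolution (Theorem \ref{characterize koszul}), so that the maps $\delta_i$ and $\delta_i'$ in \eqref{delta'} are the ones built from the $\iota$'s. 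Once the single-summand identity $\Hom_{\Lambda^\en}(\hat\iota_i^\ell,\Lambda^\en)=\hat\theta_i^\ell$ is in hand, adding the signed $r$-version and invoking \eqref{delta'} gives $\delta_i'=\hat\theta_i^\ell+(-1)^i\hat\theta_i^r$ immediately.
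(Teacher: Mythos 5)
Your plan is correct and follows essentially the same route as the paper: reduce to the two single-summand identities $\Hom_{\Lambda^\en}(\hat{\iota}_i^s,\Lambda^\en)=\hat{\theta}_i^s$ for $s\in\{\ell,r\}$ and verify them by unwinding, on generators, how the correspondence $\iota_i^s\leftrightarrow\theta_i^s$ from Lemma \ref{XYZ} interacts with $\Hom_{\Lambda^\en}(-,\Lambda^\en)$ and the identifications of Lemma \ref{lem-tensorhomiso}. The only difference is organizational: the paper isolates that element-level check as the separate Lemma \ref{XYV} (proved by direct evaluation on $1\otimes g\otimes 1$ and $1\otimes x\otimes 1$), whereas you carry it out by factoring $\hat{\iota}_i^\ell$ through the multiplication map and dualizing each factor, which amounts to the same computation.
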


To prove this, we prepare the following observation.

\begin{lemma}\label{XYV}
For $S^\en$-modules $X$ and $Y$, we have the following commutative diagram.
\[\begin{xy}
(0,0)*+{c(Y\otimes_SV\otimes_SX^\vee)}="0",
(0,-12)*+{\Hom_{S^\en}(X,Y\otimes_SV)}="1",
(80,0)*+{\Hom_{S^\en}(Y^\vee,V\otimes_SX^\vee)}="2",
(0,-24)*+{\Hom_{\Lambda^\en}(\Lambda\otimes_SX\otimes_S\Lambda,\Lambda\otimes_SY\otimes_S\Lambda)}="3",
(80,-12)*+{\Hom_{\Lambda^\en}(\Lambda\otimes_SY^\vee\otimes_S\Lambda,\Lambda\otimes_SX^\vee\otimes_S\Lambda)}="4",
(80,-24)*+{\Hom_{\Lambda^\en}((\Lambda\otimes_SY\otimes_S\Lambda)^{\vee_\Lambda},(\Lambda\otimes_SX\otimes_S\Lambda)^{\vee_\Lambda}),}="5",
\ar"0";"1",^{{\rm Lem.\ \ref{XYZ}}}
\ar"0";"2",^{{\rm Lem.\ \ref{XYZ}}}
\ar"1";"3",^{\hat{(-)}}
\ar"2";"4",^{\hat{(-)}}
\ar"3";"5",^(.45){(-)^{\vee_\Lambda}}_(.45)\sim
\ar"4";"5",^{{\rm Lem.\ \ref{lem-tensorhomiso}}}_\sim
\end{xy}\]
where we write $(-)^{\vee_\Lambda}=\Hom_{\Lambda^\en}(-,\Lambda^\en)$.
\end{lemma}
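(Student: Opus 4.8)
The plan is to verify commutativity of the square in Lemma~\ref{XYV} by tracing a single general element around both paths and checking the results agree, then to note that Lemma~\ref{lem:easydual} follows by specializing to $X = K_i$, $Y = K_{i-1}^\vee$ (so that $Y^\vee = K_{i-1}$), $V$ the degree-$1$ part, and summing the two pieces coming from $\iota_i^\ell$ and $\iota_i^r$.

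First I would make the identifications on the left column explicit. Starting from $c(Y \otimes_S V \otimes_S X^\vee)$, the downward map of Lemma~\ref{XYZ} sends a cyclic element to the morphism $X \to Y \otimes_S V$ obtained via \eqref{c(MN)} (using projectivity of $X$ over $S^\en$, which holds since $S^\en$ is semisimple), and then $\hat{(-)}$ extends this to a $\Lambda^\en$-module map between the induced projectives $\Lambda \otimes_S X \otimes_S \Lambda \to \Lambda \otimes_S Y \otimes_S \Lambda$. Applying $(-)^{\vee_\Lambda} = \Hom_{\Lambda^\en}(-,\Lambda^\en)$ and using the functorial isomorphism of Lemma~\ref{lem-tensorhomiso} to rewrite $(\Lambda \otimes_S X \otimes_S \Lambda)^{\vee_\Lambda} \cong \Lambda \otimes_S X^\vee \otimes_S \Lambda$ (and similarly for $Y$), we land in $\Hom_{\Lambda^\en}(\Lambda \otimes_S Y^\vee \otimes_S \Lambda, \Lambda \otimes_S X^\vee \otimes_S \Lambda)$. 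The top-right route instead first uses Lemma~\ref{XYZ} (via the cyclic permutation \eqref{cyclic} rewriting $c(Y \otimes_S V \otimes_S X^\vee) \cong c(X^\vee \otimes_S Y \otimes_S V)$ and then \eqref{c(MN)}, using that $Y^\vee$'s double dual behaves correctly) to produce a morphism $Y^\vee \to V \otimes_S X^\vee$, and then $\hat{(-)}$ induces the corresponding map of induced projectives.

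The key computation is then to fix a standard element $y \otimes v \otimes g \in c(Y \otimes_S V \otimes_S X^\vee)$ with $g \in X^\vee$, $g(x) = \sum_i s_i \otimes s'_i$, and chase it. Along the left-then-down path, the morphism $X \to Y \otimes_S V$ sends $x \mapsto \sum_i s_i y \otimes v s'_i$ (or the appropriate variant; I would pin down signs and side-placements carefully against the conventions in \eqref{c(MN)} and \eqref{XY}), this is extended $\Lambda^\en$-linearly, dualized, and rewritten via Lemma~\ref{lem-tensorhomiso} and \eqref{XY}; along the top-then-down path, one gets $Y^\vee \to V \otimes_S X^\vee$, $h \mapsto (\text{something involving } h, y, v, g)$, extended and induced. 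Both should produce the same element of $\Hom_{\Lambda^\en}(\Lambda \otimes_S Y^\vee \otimes_S \Lambda, \Lambda \otimes_S X^\vee \otimes_S \Lambda)$, namely the map $1 \otimes h \otimes 1 \mapsto \sum (\dots) \otimes h(\dots) \otimes (\dots)$; the agreement is exactly the content of the commutative square \eqref{XYZ commute} already recorded (with $Z$ there playing the role of the codomain factor), so in fact I would reduce the verification to \eqref{XYZ commute} plus the naturality of the isomorphisms in Lemma~\ref{XYZ} and Lemma~\ref{lem-tensorhomiso}, rather than an entirely bare-hands chase.

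For Lemma~\ref{lem:easydual} itself: the map $\delta_i = \hat\iota_i^\ell + (-1)^i \hat\iota_i^r$ has each summand $\hat\iota_i^{\ell/r}$ fitting into the bottom-left corner of (the relevant instance of) the diagram in Lemma~\ref{XYV}, so $\Hom_{\Lambda^\en}(\delta_i, \Lambda^\en)$ unravels, summand by summand, to $\hat\theta_i^\ell + (-1)^i \hat\theta_i^r$ after the vertical identifications of Lemma~\ref{lem-tensorhomiso}; this is precisely because $\theta_i^{\ell/r}$ was \emph{defined} in \eqref{theta} as the image of $\iota_i^{\ell/r}$ under the isomorphisms of Lemma~\ref{XYZ}. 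The main obstacle throughout is purely bookkeeping: keeping the four variants of the duality $(-)^\vee$, the left/right module placements, and the sign $(-1)^i$ consistent across all the functorial isomorphisms \eqref{XY1}--\eqref{XY4}, \eqref{cyclic}, \eqref{c(MN)}, \eqref{XY}, so that the two composites literally coincide and not merely up to an untracked scalar or transposition. I expect the cleanest write-up avoids element chasing as much as possible, instead assembling the square from the already-established commutativity of \eqref{XYZ commute} and the naturality statements in Lemmas~\ref{XYZ} and \ref{lem-tensorhomiso}.
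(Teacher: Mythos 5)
Your core plan --- fix a standard cyclic element $y\otimes v\otimes f\in c(Y\otimes_SV\otimes_SX^\vee)$, trace both composites into $\Hom_{\Lambda^\en}((\Lambda\otimes_SY\otimes_S\Lambda)^{\vee_\Lambda},(\Lambda\otimes_SX\otimes_S\Lambda)^{\vee_\Lambda})$, and check they agree on the generators $1\otimes g\otimes 1$ for $g\in Y^\vee$ --- is exactly what the paper does, and the computation is short and sign-free (both composites send $1\otimes g\otimes1$ to the map $1\otimes x\otimes1\mapsto g(y)(v\otimes1)f(x)$). The one point I would push back on is your preferred ``clean'' route: the claim that the agreement ``is exactly the content of'' \eqref{XYZ commute} is not right as stated. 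That square records the associativity of iterated evaluation pairings $c(X\otimes_AY\otimes_AZ)\to Z$ against $(X\otimes_AY)^\vee$, and it is invoked in the proof of Theorem \ref{thm:qp-vosnex}; the present lemma is instead about the compatibility of $\Hom_{\Lambda^\en}(-,\Lambda^\en)$ applied to maps of induced projective bimodules with the identification of Lemma \ref{lem-tensorhomiso}, which is a different (if cognate) statement, and reducing one to the other would itself require an element check of comparable length. So the bare-hands chase you describe as a fallback is in fact the intended proof; keep it as the primary argument. Your remarks about deducing Lemma \ref{lem:easydual} by specializing $X=K_i$, $Y=K_{i-1}$ and summing the $\iota_i^\ell$ and $\iota_i^r$ pieces are correct but belong to the proof of that lemma rather than this one.
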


\begin{proof}
Fix $y\otimes v\otimes f\in c(Y\otimes_SV\otimes_SX^\vee)$, and let $a\in\Hom_{\Lambda^\en}(\Lambda\otimes_SX\otimes_S\Lambda,\Lambda\otimes_SY\otimes_S\Lambda)$ and $b\in\Hom_{\Lambda^\en}(\Lambda\otimes_SY^\vee\otimes_S\Lambda,\Lambda\otimes_SX^\vee\otimes_S\Lambda)$
be the corresponding maps.
Let $a'$ and $b'$ be the maps in $\Hom_{\Lambda^\en}((\Lambda\otimes_SY\otimes_S\Lambda)^{\vee_\Lambda},(\Lambda\otimes_SX\otimes_S\Lambda)^{\vee_\Lambda})$ correponding to $a$ and $b$ respectively. 
To prove $a'=b'$, it suffices to show that $a'(1\otimes g\otimes1)=b'(1\otimes g\otimes1)$ holds for all $g\in Y^\vee$, where $1\otimes g\otimes 1\in(\Lambda\otimes_SY\otimes_S\Lambda)^{\vee_\Lambda}$ is the natural extension of $g$.

Since $a(1\otimes x\otimes 1)=(1\otimes y\otimes v)f(x)=(1\otimes y\otimes1)((v\otimes1)f(x))$ holds for all $x\in X$, we have $(a'(1\otimes g\otimes 1))(1\otimes x\otimes1)=g(y)(v\otimes1)f(x)$.
On the other hand, since $b(1\otimes g\otimes1)=g(y)(v\otimes f\otimes1)=(g(y)(v\otimes1))(1\otimes f\otimes1)$ holds for all $g\in Y^\vee$, we have $(b'(1\otimes g\otimes 1))(1\otimes x\otimes1)=g(y)(v\otimes1)f(x)$. Thus $a'=b'$ holds.
\end{proof}

\begin{proof}[Proof of Lemma \ref{lem:easydual}]
Since $\delta_i=\hat{\iota}_i^\ell+(-1)^i\hat{\iota}_i^r$, it suffices to show that the following diagram commutes for $s\in\{\ell,r\}$.
$$\xymatrix @C=60pt{
\Hom_{\Lambda^\en}(\Lambda\otimes_SK_{i-1}\otimes_S\Lambda,\Lambda^\en)\ar[r]^{_{\Lambda^\en}(\hat{\iota}_i^s,\Lambda^\en)}\ar[d]^\sim & {}\Hom_{\Lambda^\en}(\Lambda\otimes_SK_{i}\otimes_S\Lambda,\Lambda^\en)\ar[d]^\sim\\
\Lambda\otimes_SK_{i-1}^\vee\otimes_S\Lambda\ar[r]^{\hat{\theta}^s_i} & {}\Lambda\otimes_SK_{i}^\vee\otimes_S\Lambda.
}$$
We just show the $s=r$ version; $s=\ell$ is the dual.
We apply Lemma \ref{XYV} to $X:=K_i$ and $Y:=K_{i-1}$.
Since $\iota_i^r\in\Hom_{S^\en}(K_i,K_{i-1}\otimes_SV)$ corresponds to $\theta_i^r\in\Hom_{S^\en}(K_{i-1}^\vee,V\otimes_SK_i^\vee)$,
the map ${}_{\Lambda^\en}(\hat{\iota}_i^r,\Lambda^\en)$ coincides with $\hat{\theta}_i^r$ up to the isomorphisms in Lemma \ref{lem-tensorhomiso}.
This gives the commutativity of the above diagram.
\end{proof}

Since $\Lambda$ is Koszul, we can regard
$\delta_d'(K_{d-1}^\vee)\subset V\otimes_SK_d^\vee\otimes_SS+S\otimes_SK_d^\vee\otimes_SV$
as a subspace of $T\otimes_SK_d^\vee\otimes_ST\subset\Tens_S(\overline{V})$ naturally.
Now we show the following assertion.

\begin{proposition}\label{prop:pi-quadratic}
If $\Lambda=\Tens_S(V)/(R)$ is a finite-dimensional Koszul algebra  of global dimension $d$ with $R\subset V^2$, then we have an isomorphism of algebras:
\[\Pi\cong\Tens_S(\overline{V})/(R+\delta_d'(K_{d-1}^\vee)).\]
In particular, $\Pi$ is quadratic.
\end{proposition}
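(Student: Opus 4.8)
The strategy is to specialize Proposition~\ref{prop:piquiv}(b) to the Koszul setting and to check that the extra relations $L$ can be taken inside $V\otimes_SK_d^\vee\otimes_SS+S\otimes_SK_d^\vee\otimes_SV$, so that everything becomes quadratic. Since $\Lambda$ is Koszul of global dimension $d$, Theorem~\ref{characterize koszul} tells us the minimal $\Z$-graded projective $\Lambda^\en$-resolution \eqref{minimal bimodule resolution} is exactly the Koszul bimodule complex \eqref{Koszul bimodule complex}, so $K_i$ are the Koszul spaces $\bigcap V^a\otimes_SR\otimes_SV^b$; in particular $K_d\subseteq V^d$ and $K_{d-1}\subseteq V^{d-1}$ are concentrated in degrees $d$ and $d-1$ respectively, and hence $K_d^\vee$, $K_{d-1}^\vee$ live in degrees $-d$, $-(d-1)$.

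First I would invoke Lemma~\ref{lem:easydual}: since $\Lambda$ is Koszul, the map $\delta_d':\Lambda\otimes_SK_{d-1}^\vee\otimes_S\Lambda\to\Lambda\otimes_SK_d^\vee\otimes_S\Lambda$ of \eqref{delta'} is given explicitly by $\hat{\theta}_d^\ell+(-1)^d\hat{\theta}_d^r$, where $\theta_d^\ell:K_{d-1}^\vee\to K_d^\vee\otimes_SV$ and $\theta_d^r:K_{d-1}^\vee\to V\otimes_SK_d^\vee$ are the $S^\en$-module maps of \eqref{theta}. Consequently $\delta_d'(K_{d-1}^\vee)$, viewed via these inclusions, is a sub-$S^\en$-module of $K_d^\vee\otimes_SV\oplus V\otimes_SK_d^\vee$, which sits naturally inside $T\otimes_SK_d^\vee\otimes_ST$ (indeed inside $(K_d^\vee\otimes_SV)\oplus(V\otimes_SK_d^\vee)\subseteq\overline{V}^2$ once we use $\Tens_S(\overline V)\cong\Tens_T(T\otimes_SK_d^\vee\otimes_ST)$). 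This gives us a canonical choice of lift $L:=\delta_d'(K_{d-1}^\vee)\subseteq T\otimes_SK_d^\vee\otimes_ST$ whose image under $T\otimes_SK_d^\vee\otimes_ST\onto\Lambda\otimes_SK_d^\vee\otimes_S\Lambda$ is again $\delta_d'(K_{d-1}^\vee)$ — exactly the hypothesis of Proposition~\ref{prop:piquiv}(b). Applying that proposition with $I=(R)$ yields
\[
\Pi\cong\Tens_S(\overline V)/\bigl((R)+L\bigr)=\Tens_S(\overline V)/\bigl(R+\delta_d'(K_{d-1}^\vee)\bigr),
\]
where on the right we now regard $R\subseteq V^2\subseteq\overline V^2$ and $\delta_d'(K_{d-1}^\vee)\subseteq\overline V^2$ as subspaces of $\overline V\otimes_S\overline V$; the ideal they generate in $\Tens_S(\overline V)$ coincides with $(R)+L$ since $(R)$ as an ideal of $T=\Tens_S(V)$ and as an ideal of $\Tens_S(\overline V)$ generate the same two-sided ideal modulo the identification, and $L=(L)\cap(\overline V^2)$ generates $(L)$.

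For the last sentence, quadraticity is now immediate: both generating sets $R$ and $\delta_d'(K_{d-1}^\vee)$ lie in $\overline V\otimes_S\overline V=\overline V^2$, so $\Pi$ is presented as $\Tens_S(\overline V)$ modulo an ideal generated in degree $2$, i.e.\ $\Pi$ is quadratic. I would close by noting the grading: $R$ is homogeneous of (tensor) degree $2$ in $V$, and by the degree bookkeeping above $\delta_d'(K_{d-1}^\vee)$ is homogeneous with one factor from $K_d^\vee$ and one from $V$, hence of degree $2$ in $\overline V$, so the relations are genuinely quadratic with respect to the tensor grading on $\Tens_S(\overline V)$ in which $S$ sits in degree $0$ and $\overline V$ in degree $1$.

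The main obstacle is the bookkeeping around the two identifications $\Tens_S(\overline V)\cong\Tens_T(T\otimes_SK_d^\vee\otimes_ST)$ (Lemma~\ref{formula for Tens}(a)) and $\Tens_S(\overline V)/(I+L)\cong\Tens_\Lambda(E)$ (Proposition~\ref{prop:piquiv}(b)): one must make sure that the ``explicit'' $L$ coming from Lemma~\ref{lem:easydual} is really a valid choice in the sense required by Proposition~\ref{prop:piquiv}(b), i.e.\ that its image in $\Lambda\otimes_SK_d^\vee\otimes_S\Lambda$ is $\delta_d'(K_{d-1}^\vee)$ — but this is essentially a tautology once we observe that $\delta_d'$, being built from $\hat\theta_d^\ell,\hat\theta_d^r$, already factors through $S^\en$-module maps landing in degree-one-times-$K_d^\vee$ terms, so the lift and its image literally agree as subspaces of $\overline V^2$. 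Everything else is formal.
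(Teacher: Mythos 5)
Your proposal is correct and follows essentially the same route as the paper: the paper likewise deduces the isomorphism directly from Proposition \ref{prop:piquiv}(b), using the remark (stated just before the proposition) that Koszulity lets one regard $\delta_d'(K_{d-1}^\vee)\subset V\otimes_SK_d^\vee\otimes_SS+S\otimes_SK_d^\vee\otimes_SV$ as a subspace of $T\otimes_SK_d^\vee\otimes_ST$, and then observes quadraticity because both $R$ and $\delta_d'(K_{d-1}^\vee)$ lie in $\overline{V}^2$. Your extra bookkeeping via Lemma \ref{lem:easydual} is a correct elaboration of that remark rather than a different argument.
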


\begin{proof}
The first assertion is immediate from Proposition \ref{prop:piquiv}(b). The second assertion is immediate from the first one since both $R$ and $\delta_d'(K_{d-1}^\vee)$ are contained in $\overline{V}^2$.
\end{proof}

Now we are ready to prove the following.

\begin{theorem}\label{thm:pi-as-alg-pot}
If $\Lambda=\Tens_S(V)/(R)$ is a finite-dimensional Koszul algebra of global dimension $d$, then we have an isomorphism of algebras:
$$\Pi\cong P_S^{d-1}(\overline{V},W)/(R).$$ 
\end{theorem}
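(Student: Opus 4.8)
The plan is to compare defining ideals inside $\Tens_S(\overline V)$. By Proposition \ref{prop:pi-quadratic} we have
$\Pi\cong\Tens_S(\overline V)/(R+\delta_d'(K_{d-1}^\vee))$, while by Definition \ref{2} we have
$P_S^{d-1}(\overline V,W)/(R)\cong\Tens_S(\overline V)/(R+J)$, where $J$ is the two-sided ideal generated by the elements $f\cdot\varphi(W)$ with $f\in\Hom_{S^\en}(\overline V^{d-1},S^\en)=(\overline V^{d-1})^\vee$. So it suffices to prove the equality of two-sided ideals $(R+\delta_d'(K_{d-1}^\vee))=(R+J)$, and for this it is enough to show that, modulo the two-sided ideal $(R)$, the subsets $\delta_d'(K_{d-1}^\vee)$ and $\{f\cdot\varphi(W)\}$ generate the same ideal.

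First I would combine Lemma \ref{lem:easydual} with Lemma \ref{derivation} to identify $\delta_d'$ on $1\otimes K_{d-1}^\vee\otimes1$: since $\delta_d'=\hat\theta_d^\ell+(-1)^d\hat\theta_d^r$ and $\theta_d^\ell,\theta_d^r$ correspond to $-\cdot\rho(W)$ and $-\cdot W$, we get
\[\delta_d'(\overline f)=f\cdot\rho(W)+(-1)^d\, f\cdot W,\]
where $\overline f\in K_{d-1}^\vee$ denotes the image of $f\in(\overline V^{d-1})^\vee$ under the surjection $(\overline V^{d-1})^\vee\onto K_{d-1}^\vee$ (surjective because $(-)^\vee$ is exact on $S^\en\mMod$ and $K_{d-1}\subseteq V^{d-1}\subseteq\overline V^{d-1}$). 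On the other hand, expanding $\varphi=\sum_{i=0}^{d}(-1)^{di}\rho^i$ gives
\[f\cdot\varphi(W)=f\cdot W+(-1)^d\,f\cdot\rho(W)+\sum_{i=2}^{d}(-1)^{di}\,f\cdot\rho^i(W),\]
so $\delta_d'(\overline f)$ and $f\cdot\varphi(W)$ differ, up to the overall sign $(-1)^d$, exactly by the tail $\sum_{i=2}^{d}(-1)^{di}f\cdot\rho^i(W)$.

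The key step is then to show that each tail term lies in $R$: for $2\le i\le d$ and $f\in(\overline V^{d-1})^\vee$ we have $f\cdot\rho^i(W)\in R\subseteq\overline V^2$. To prove this I would write $W=\sum_\alpha k_\alpha\otimes k_\alpha^*$ using the coevaluation of $K_d$; in $\rho^i(W)$ the $K_d^\vee$-factor then occupies position $d-i+1$ of the $d+1$ tensor factors, which for $2\le i\le d$ lies among the first $d-1$ positions. Hence after pairing with $f$ via \eqref{evaluation} the surviving element of $\overline V^2$ is the $(i-1,i)$-entry of $k_\alpha$ acted on by elements of $S^\en$; since $K_d\subseteq V^{i-2}\otimes_S R\otimes_S V^{d-i}$ for $2\le i\le d$ by the definition of $K_d$, this entry is an element of $R$, and as $R$ is an $S^\en$-submodule of $V\otimes_SV$ the whole expression lies in $R$. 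Granting this, $\delta_d'(\overline f)\equiv(-1)^d\,f\cdot\varphi(W)\pmod{(R)}$; since $f\mapsto\overline f$ is surjective, the two generating sets coincide modulo $(R)$ and the theorem follows. I expect this key computation to be the main obstacle: one must track the rotation $\rho^i$ on the $d+1$ tensor factors of $W$, pin down precisely which factor survives the evaluation against $f$, and match the resulting ``window'' of $k_\alpha\in K_d$ with the inclusion $K_d\subseteq V^{i-2}\otimes_S R\otimes_S V^{d-i}$; some care with the precise form of the pairing in \eqref{evaluation} (which produces an $S^\en$-action on the surviving factor) is also needed, but this only uses the identities \eqref{cyclic}–\eqref{evaluation} already established and the $S^\en$-stability of $R$.
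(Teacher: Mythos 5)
Your proposal is correct and follows essentially the same route as the paper: identify both sides as quotients of $\Tens_S(\overline V)$ via Proposition \ref{prop:pi-quadratic} and Definition \ref{2}, use Lemmas \ref{lem:easydual} and \ref{derivation} to match $\delta_d'$ on $K_{d-1}^\vee$ with $f\cdot(W+(-1)^d\rho(W))$ up to the sign $(-1)^d$, and absorb the remaining terms $f\cdot\rho^i(W)$ for $2\le i\le d$ into $R$ using $K_d\subseteq V^{i-2}\otimes_S R\otimes_S V^{d-i}$. Your explicit tracking of which tensor factor of $k_\alpha$ survives the evaluation is exactly the content of the paper's observation that $\rho^i(W)\in c(V^{d-i}\otimes_SK_d^\vee\otimes_SV^{i-2}\otimes_SR)$, so there is nothing to add.
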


\begin{proof}
The left-hand side is $\Tens_S(\overline{V})/(R+\delta_d'(K_{d-1}^\vee))$ by Proposition \ref{prop:pi-quadratic}, and the right-hand side is
$\Tens_S(\overline{V})/(R+(\overline{V}^{d-1})^\vee\cdot\varphi(W))$ by definition.
It suffices to prove
$R+\delta_d'(K_{d-1}^\vee)=R+(\overline{V}^{d-1})^\vee\cdot\varphi(W)$.

As $K_d=\bigcap_{i=2}^{d}V^{i-2}\otimes_SR\otimes_SV^{d-i}$,
for each $2\le i\le d$ we have 
$$W\in c(K_d\otimes_SK_d^\vee)\subset c(V^{i-2}\otimes_SR\otimes_SV^{d-i}\otimes_SK_d^\vee)$$ 
and hence $\rho^i(W)\in c(V^{d-i}\otimes_SK_d^\vee\otimes_SV^{i-2}\otimes_SR)$.
Therefore $(\overline{V}^{d-1})^\vee\cdot\rho^i(W)\subset R$ holds. In particular,
\begin{eqnarray*}
R+(\overline{V}^{d-1})^\vee\cdot\varphi(W)&=&R+(\overline{V}^{d-1})^\vee\cdot(W+(-1)^d\rho(W))\stackrel{{\rm Lem.\ \ref{derivation}}}{=}R+(\theta_d^r+(-1)^d\theta_d^\ell)(K_{d-1}^\vee)\\
&\stackrel{{\rm Lem.\ \ref{lem:easydual}}}{=}&R+\delta_d'(K_{d-1}^\vee)
\end{eqnarray*}
holds as desired.
\end{proof}

The extension condition in the following theorem is a special case of the following property of \cite[Section 3]{io-stab}. 
Given a $d$-cluster tilting subcategory $\curlU$ of $\Db(\Lambda)$, we say that $\curlU$ has the \emph{vosnex property} (``vanishing of small negative extensions'') if $\Hom_{\Db(\Lambda)}(\curlU[j],\curlU)=0$ for $j\in\{1,2,\ldots, d-2\}$. 
In this case, since $\Lambda,\Lambda^*[-d]\in\curlU$, we have $\Ext^{d-j}_{\Lambda^\en}(\Lambda,\Lambda^\en)\cong\Ext^{d-j}_{\Lambda}(\Lambda^*,\Lambda)\cong\Hom_{\Db(\Lambda)}(\Lambda^*[j-d],\Lambda)=0$ for $j\in\{1,2,\ldots, d-2\}$.

\begin{theorem}\label{thm:qp-vosnex}
Suppose $\Lambda$ is a finite-dimensional Koszul algebra of global dimension $d$. 
If $\Ext^{i}_{\Lambda^\en}(\Lambda,\Lambda^\en)_{-i}=0$ for $2\leq i\leq d-1$, then we have an isomorphism of  algebras:
\[\Pi\cong P_S^{d-1}(\overline{V},W).\]
\end{theorem}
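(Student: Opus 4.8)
The plan is to leverage Theorem~\ref{thm:pi-as-alg-pot}, which already gives $\Pi\cong P_S^{d-1}(\overline{V},W)/(R)$, so that it suffices to prove that the extra relations $R$ are redundant, i.e.\ that $R\subseteq J_S^{d-1}(\overline{V},W)$ as ideals of $\Tens_S(\overline{V})$. In fact, since $\Pi$ is quadratic by Proposition~\ref{prop:pi-quadratic} and $P_S^{d-1}(\overline{V},W)/(R)$ is visibly its quotient, it is enough to show the two algebras agree in degree $2$, or equivalently that $R\subseteq (\overline{V}^{d-1})^\vee\cdot\varphi(W)+\delta_d'(K_{d-1}^\vee)$; but the cleanest route is simply to show that the degree-$2$ component of $J_S^{d-1}(\overline{V},W)$ already contains $R$.

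The key computation is to reconsider the proof of Theorem~\ref{thm:pi-as-alg-pot}. There it was shown that for $2\le i\le d$ one has $\rho^i(W)\in c(V^{d-i}\otimes_SK_d^\vee\otimes_SV^{i-2}\otimes_SR)$, and hence $(\overline{V}^{d-1})^\vee\cdot\rho^i(W)\subseteq R$. The point now is to prove the reverse containments $R\subseteq(\overline{V}^{d-1})^\vee\cdot\rho^i(W)$ for suitable $i$, using the hypothesis $\Ext^{i}_{\Lambda^\en}(\Lambda,\Lambda^\en)_{-i}=0$ for $2\le i\le d-1$. First I would translate this Ext-vanishing condition into a statement about the Koszul bimodule complex: by Theorem~\ref{characterize koszul} the complex \eqref{Koszul bimodule complex} is the minimal bimodule resolution, so $\Ext^{i}_{\Lambda^\en}(\Lambda,\Lambda^\en)$ is the cohomology of the dualized complex, whose terms are $\Lambda\otimes_SK_i^\vee\otimes_S\Lambda$ in degree $i$. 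The degree $-i$ part of $\Ext^i_{\Lambda^\en}(\Lambda,\Lambda^\en)$ being zero should, via Proposition~\ref{prop-describee}-style reasoning and the explicit form of $\delta_i'=\hat\theta_i^\ell+(-1)^i\hat\theta_i^r$ from Lemma~\ref{lem:easydual}, force the maps $\theta_i^\ell,\theta_i^r$ (equivalently the inclusions $\iota_i^\ell,\iota_i^r$) to satisfy an exactness/surjectivity property that lets one recover all of $R$ from differentials of $W$ by length $d-1$ paths, not just the "outermost" ones.

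Concretely, I expect the argument to run as follows: the quadratic dual relations are $K_d=\bigcap_{i=0}^{d-2}V^i\otimes_SR\otimes_SV^{d-2-i}$, and the Ext-vanishing says precisely that the "intermediate" intersection conditions are automatic, i.e.\ $V^i\otimes_SR\otimes_SV^{d-2-i}\cap V^{i-1}\otimes_SR\otimes_SV^{d-1-i}$ already has the expected codimension, which geometrically means the inclusion chain $K_d\hookrightarrow\cdots$ does not "lose" $R$ along the way. Differentiating $W$ by all length $d-1$ paths in $\overline{V}$ — rather than only those passing through the single arrow $K_d^\vee$ — picks up these intermediate pieces, and summing over the cyclic shifts $\rho^i(W)$ for $i$ ranging appropriately recovers $R$ on the nose. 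So I would: (1) unwind $J_S^{d-1}(\overline{V},W)$ in degree $2$ as a sum $\sum_{i}(\overline{V}^{d-1})^\vee\cdot\rho^i(W)$; (2) identify $(\overline{V}^{d-1})^\vee\cdot\rho^i(W)$ for $i\notin\{0,d\}$ with the images of maps built from $\theta_j^\ell,\theta_j^r$ for intermediate $j$, using Lemma~\ref{derivation}-type identities and Lemma~\ref{XYV}; (3) use the Ext-vanishing hypothesis to show that the span of these, together with $\theta_d^r(K_{d-1}^\vee)$ and $\theta_d^\ell(K_{d-1}^\vee)$, is all of $R+\delta_d'(K_{d-1}^\vee)$; and (4) conclude by Lemma~\ref{lem-quad012} (or Proposition~\ref{prop:pi-quadratic}) that the surjection $P_S^{d-1}(\overline{V},W)\onto\Pi$ is an isomorphism.

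The main obstacle I anticipate is step~(2)–(3): making precise the claim that differentiating by \emph{all} length-$(d-1)$ paths (rather than only the one distinguished family appearing in Theorem~\ref{thm:pi-as-alg-pot}) yields exactly the intermediate intersection relations, and showing that the Ext-vanishing is precisely the hypothesis that makes these intermediate relations coincide with $R$ rather than a proper subspace. This requires carefully tracking which cyclic shift $\rho^i(W)$ lands the "$R$-part" of $W$ in which tensor position relative to the differentiating path, and then matching this against the iterated intersection defining $K_d$; the bookkeeping with the dualities $(-)^\vee$, the pairings $\ev$, and the signs in $\varphi=\sum(-1)^{(\ell-1)i}\rho^i$ is where the real work lies, but no genuinely new idea beyond what is already set up in Section~\ref{subsec:hpka} should be needed.
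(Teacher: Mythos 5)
Your proposal is correct in substance and rests on the same key mechanism as the paper, but organizes the final computation differently. Both arguments start from Theorem \ref{thm:pi-as-alg-pot}, reducing everything to the containment $R\subseteq(\overline{V}^{d-1})^\vee\cdot\varphi(W)$, and both extract from the hypothesis the injectivity of the lowest-degree component $\delta_i':K_{i-1}^\vee\to(K_i^\vee\otimes_SV)\oplus(V\otimes_SK_i^\vee)$ of the dualized Koszul differential (its kernel being $\Ext^{i-1}_{\Lambda^\en}(\Lambda,\Lambda^\en)_{1-i}$), which dualizes to the surjectivity statement $\Hom_{S^\op}(V,S)\cdot K_{i}+K_{i}\cdot\Hom_S(V,S)=K_{i-1}$ for $3\le i\le d$ --- exactly the paper's equation \eqref{Ki to Ki-1} and the precise form of the ``surjectivity property'' you anticipate. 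Where you differ is the bookkeeping: the paper runs a downward induction proving $(\overline{V}^{d-i+1})^\vee\cdot\varphi(W)\supseteq K_i$ for $i=d,\dots,2$, using the composition identity \eqref{XYZ commute} (differentiating by a path of length $d-i+2$ equals differentiating by one of length $d-i+1$ and then by one more arrow), whereas you propose to unroll this into a direct decomposition of the degree-$2$ part of the Jacobi ideal by cyclic shifts. Your version can be made to work, but two points need care: the interim claim $R\subseteq(\overline{V}^{d-1})^\vee\cdot\rho^i(W)$ for a single suitable $i$ is false in general (only the sum over $2\le i\le d$ recovers $R$, as your step (3) later correctly states); and the splitting of $(\overline{V}^{d-1})^\vee\cdot\varphi(W)$ as $\delta_d'(K_{d-1}^\vee)$ plus the separate contributions of the $\rho^i(W)$ for $2\le i\le d$ is not automatic for the image of a sum of maps --- it holds here only because distinct cyclic shifts of $W$ pair nontrivially with distinct type-components of $\overline{V}^{d-1}=(V\oplus K_d^\vee)^{d-1}$, while the $i=0,1$ shifts remain coupled. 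The paper's inductive formulation quietly sidesteps both issues, at the cost of passing through derivatives by paths of intermediate lengths.
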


\begin{proof}
By Theorem \ref{thm:pi-as-alg-pot}, it suffices to prove $(\overline{V}^{d-1})^\vee\cdot\varphi(W)\supseteq R$.
In fact, for each $2\le i\le d$, we prove by downwards induction
\begin{equation}\label{Z(i)}
(\overline{V}^{d-i+1})^\vee\cdot\varphi(W)\supseteq K_i.
\end{equation}

First we prove \eqref{Z(i)} for $i=d$.
Consider the decomposition $\overline{V}^\vee=V^\vee\oplus K_d$.
Since $W=\coev_{K_d}(1_\F)$, we have $K_d\cdot\rho^d(W)=K_d$ and
$K_d\cdot\rho^i(W)=0$ for each $0\le i\le d-1$.
Thus $\overline{V}^\vee\cdot\varphi(W)\supseteq K_d\cdot\varphi(W)=K_d$ holds.

Next, for each $3\le i\le d$, we prove
\begin{equation}\label{Ki to Ki-1}
\Hom_{S^\op}(V,S)\cdot K_{i}+K_{i}\cdot\Hom_S(V,S)=K_{i-1},
\end{equation}
where $\cdot$ are the maps $\Hom_{S^\op}(V,S)\otimes_SV^i\to V^{i-1}$ and $V^i\otimes_S\Hom_{S}(V,S)\to V^{i-1}$ given by the evaluations. 
We use the Koszul resolution together with Lemma \ref{lem:easydual}.
These tell us that $\Ext^{i-1}_{\Lambda^\en}(\Lambda,\Lambda^\en)$ is the $(i-1)$st homology of the complex
\[
0 \leftarrow \Lambda\otimes_SK_{d}^\vee\otimes_S\Lambda  \leftarrow \Lambda\otimes_SK_{d-1}^\vee\otimes_S\Lambda  \leftarrow \cdots \leftarrow \Lambda\otimes_SK_{1}^\vee\otimes_S\Lambda  \leftarrow \Lambda\otimes_SK_{0}^\vee \otimes_S\Lambda   \leftarrow 0
\]
where the differentials are induced by the maps
\[\delta'_i=\theta_i^\ell+(-1)^i\theta_i^r:K_{i-1}^\vee\to(K_{i}^\vee\otimes_SV)\oplus(V\otimes_SK_{i}^\vee)\subset\Lambda\otimes_SK_{i}^\vee\otimes_S\Lambda.\]
This is injective since its kernel is $\Ext^{i-1}_{\Lambda^\en}(\Lambda,\Lambda^\en)_{1-i}=0$ by our assumption.
Applying $(-)^\vee$, we have a surjective map
\[(\Hom_{S^\op}(V,S)\otimes_SK_{i})\oplus(K_{i}\otimes_S\Hom_S(V,S))\stackrel{\eqref{XY}}{\cong}(K_{i}^\vee\otimes_SV)^\vee\oplus(V\otimes_SK_{i}^\vee)^\vee\xrightarrow{(\delta'_i)^\vee} K_{i-1}.\]
This is a restriction of the map $(\Hom_{S^\op}(V,S)\otimes_SV^i)\oplus(V^i\otimes_S\Hom_{S}(V,S))\to V^{i-1}$ given by the evaluations. Thus \eqref{Ki to Ki-1} holds.

Now assume \eqref{Z(i)} holds.
Applying the upper part of \eqref{XYZ commute} to $(X,Y,Z)=(\overline{V}^{d-i+1},\overline{V},\overline{V}^{i-1})$ and the lower one to $(X,Y,Z)=(\overline{V},\overline{V}^{d-i+1},\overline{V}^{i-1})$ respectively, we obtain
\begin{eqnarray*}
&(\overline{V}^{d-i+2})^\vee\cdot\varphi(W)\stackrel{\eqref{XYZ commute}}{=}
\Hom_{S^\op}(\overline{V},S)\cdot((\overline{V}^{d-i+1})^\vee\cdot\varphi(W))\
\stackrel{\eqref{Z(i)}}{\supseteq} \Hom_{S^\op}(\overline{V},S)\cdot K_i,&\\
&(\overline{V}^{d-i+2})^\vee\cdot\varphi(W)\stackrel{\eqref{XYZ commute}}{=}
((\overline{V}^{d-i+1})^\vee\cdot\varphi(W))\cdot\Hom_S(\overline{V},S)\stackrel{\eqref{Z(i)}}{\supseteq} K_i\cdot\Hom_S(\overline{V},S).&
\end{eqnarray*}
Thus $(\overline{V}^{d-i+2})^\vee\cdot\varphi(W)\supseteq \Hom_{S^\op}(V,S)\cdot K_i+K_i\cdot\Hom_S(V,S)\stackrel{\eqref{Ki to Ki-1}}{=}K_{i-1}$ holds, which completes the induction.
\end{proof}

Note that the condition of Theorem \ref{thm:qp-vosnex} is vacuous when $d=2$, so this result agrees with Keller's description of $3$-preprojective algebras (see \cite[Theorem 6.10]{kel-dcy} and \cite[Section 2.2]{hi-selfinj}).

We will see in Corollary \ref{cor:kos-nher-prep} that this theorem is particularly applicable to $d$-hereditary algebras.

\begin{example}\label{example of associated superpotential}
(a) Consider the quiver
$$Q=[\xymatrix@C=1.5em{1\ar[r]^{\alpha}&2\ar[r]^{\beta}&3\ar[r]^{\gamma}&4}]$$
and the algebra $\Lambda=\F Q/(\alpha\beta,\beta\gamma)$.  One can check that
it satisfies the conditions of Theorem \ref{thm:qp-vosnex} for $d=3$.
(In fact $\Lambda$ is Koszul and $3$-representation finite, see Definition \ref{define d-RF and d-RI} below.)
 We have $K_3=\gen{\alpha\beta\gamma}$ so the quiver $\overline{Q}$ of $\Pi=\Pi(\Lambda)$ is 
$$\overline{Q}=[\xymatrix@C=1.5em{1\ar[r]^{\alpha}&2\ar[r]^{\beta}&3\ar[r]^{\gamma}&4\ar@/^1pc/[lll]^{\eta}}]$$
where $\eta=(\alpha\beta\gamma)^*$. The superpotential $W$ is represented by  $\alpha\beta\gamma{\eta}$ and the space of relations of $\Pi$ is given by $\overline{V}^{-2}\cdot \varphi(W)  =\gen{\alpha\beta,\beta\gamma,\gamma{\eta},{\eta}\alpha}$.

(b) Next consider the quiver
$$Q=[\xymatrix@C=1.5em{1\ar[r]^{\alpha}&2\ar[r]^{\beta}&3\ar[r]^{\gamma}&4\ar[r]^{\delta}&5\ar[r]^{\varepsilon}&6}]$$
and the algebra $\Lambda=\F Q/(R)$ with $R=\langle\alpha\beta,\beta\gamma,\delta\varepsilon\rangle$.  One can check that $\Lambda$ has global dimension $3$ and is Koszul, but it does not satisfy 
the condition of Theorem \ref{thm:qp-vosnex} as $\Ext^2_{\Lambda}(\Lambda^* e_6,\Lambda e_4)_{-2}\neq0$.
Again, we have $K_3=\gen{\alpha\beta\gamma}$ so the quiver $\overline{Q}$ of $\Pi(\Lambda)$ is 
$$\overline{Q}=[\xymatrix@C=1.5em{1\ar[r]^{\alpha}&2\ar[r]^{\beta}&3\ar[r]^{\gamma}&4\ar@/^1pc/[lll]^{\eta}\ar[r]^{\delta}&5\ar[r]^{\varepsilon}&6}]$$
where ${\eta=}(\alpha\beta\gamma)^*$.  The superpotential $W$ is represented by  $\alpha\beta\gamma{\eta}$ and the $2$-Jacobi ideal is generated by $\overline{V}^{-2}\cdot \varphi(W) =\gen{\alpha\beta,\beta\gamma,\gamma{\eta},{\eta}\alpha}$.  We see that this doesn't include $\delta\varepsilon$, and so to obtain the whole space of relations of $\Pi$ we need to consider $R+\overline{V}^{-2}\cdot  \varphi(W)$.
\end{example}

\begin{remark}
It is worth pointing out that higher preprojective algebras are sometimes higher Jacobi algebras even in the non-Koszul case.  For example, consider the following example, due to Vaso \cite[Example 5.3]{vas}, of an algebra of global dimension 4 which satisfies the Ext-vanishing condition of Theorem \ref{thm:qp-vosnex}. (In fact $\Lambda$ is $4$-representation finite, see Definition \ref{define d-RF and d-RI} below.)
We take the quiver
 $$Q=[\xymatrix@C=1.5em{1\ar[r]^{\alpha}&2\ar[r]^{\beta}&3\ar[r]^{\gamma}&4\ar[r]^{\delta}&5\ar[r]^{\varepsilon}&6\ar[r]^{\zeta}&7\ar[r]^{\eta}&8\ar[r]^{\theta}&9}]$$
and the algebra $\Lambda=\F Q/(\rad \F Q)^4=\F Q/(\alpha\beta\gamma\delta, \beta\gamma\delta\varepsilon, \gamma\delta\varepsilon\zeta, \delta\varepsilon\zeta\eta, \varepsilon\zeta\eta\theta)$.
We know from Proposition \ref{prop:piquiv} that the quiver for $\Pi$ is
 $$\overline{Q}=[\xymatrix@C=1.5em{1\ar[r]^{\alpha}&2\ar[r]^{\beta}&3\ar[r]^{\gamma}&4\ar[r]^{\delta}&5\ar[r]^{\varepsilon}&6\ar[r]^{\zeta}&7\ar[r]^{\eta}&8\ar[r]^{\theta}&9\ar@/^1pc/[llllllll]^{\iota}}]$$
where $\iota=(\alpha\beta\gamma\delta\varepsilon\zeta\eta\theta)^*$, and one can check that $\Pi$ is in fact a $5$-Jacobi algebra: we obtain its relations by differentiating the superpotential represented by $W=\alpha\beta\gamma\delta\varepsilon\zeta\eta\theta\iota$ with respect to paths of length $5$.

We do not know an example of a non-Koszul algebra which satisfies the ext-vanishing condition of Theorem \ref{thm:qp-vosnex} but is not a higher Jacobi algebra. 
\end{remark}

\section{Resolutions of simple modules over higher preprojective algebras}
The aim of this section is to construct projective resolutions of simple modules for preprojective algebras of $d$-hereditary algebras.

\subsection{Preliminaries on $d$-hereditary algebras }
Let $\Lambda$ be a finite dimensional $\F$-algebra with $\gl\Lambda\le d$, and $\Db(\Lambda)$ the derived category of finitely generated left $\Lambda$-modules with bounded homology. Then we have the following result on formality.

\begin{lemma}{\cite[Lemma 5.2]{iya-ct-higher}}\label{formality}
If $X\in\der(\Lambda)$ satisfies $H^i(X)=0$ for any $i\notin d\Z$, then $X\cong \bigoplus_{i\in d\Z}H^i(X)[-i]$.
\end{lemma}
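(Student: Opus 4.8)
The plan is to induct on the number $k$ of nonzero cohomology groups of $X$, which is finite since $X\in\der(\Lambda)$ has bounded cohomology. Write the nonzero cohomology of $X$ in degrees $di_1<di_2<\cdots<di_k$ (all lying in $d\Z$ by hypothesis) and set $M_j:=H^{di_j}(X)$.

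If $k\le1$ then $X$ has cohomology concentrated in a single degree $di_1$, and the two truncation quasi-isomorphisms give $X\cong M_1[-di_1]$, so the claim holds. For $k\ge2$, consider the truncation triangle
\[\tau_{\le di_1}X\to X\to\tau_{>di_1}X\to(\tau_{\le di_1}X)[1]\]
in $\der(\Lambda)$. Here $\tau_{\le di_1}X$ has cohomology concentrated in degree $di_1$, so $\tau_{\le di_1}X\cong M_1[-di_1]$, while $\tau_{>di_1}X$ has cohomology in the degrees $di_2,\dots,di_k$, hence still in $d\Z$ but with only $k-1$ nonzero groups, so by the induction hypothesis $\tau_{>di_1}X\cong\bigoplus_{j=2}^kM_j[-di_j]$.

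The key point is that the connecting morphism $\tau_{>di_1}X\to(\tau_{\le di_1}X)[1]\cong M_1[1-di_1]$ vanishes. Indeed, using $\Hom_{\der(\Lambda)}(A[-a],B[-b])\cong\Ext^{a-b}_\Lambda(A,B)$, this morphism lies in
\[\Hom_{\der(\Lambda)}\Bigl(\bigoplus_{j=2}^kM_j[-di_j],\,M_1[1-di_1]\Bigr)\cong\bigoplus_{j=2}^k\Ext^{\,d(i_j-i_1)+1}_\Lambda(M_j,M_1),\]
and for $j\ge2$ we have $i_j-i_1\ge1$, so $d(i_j-i_1)+1\ge d+1>\gl\Lambda$, whence each summand vanishes. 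A distinguished triangle whose connecting morphism is zero splits, so
\[X\cong\tau_{\le di_1}X\oplus\tau_{>di_1}X\cong M_1[-di_1]\oplus\bigoplus_{j=2}^kM_j[-di_j]=\bigoplus_{i\in d\Z}H^i(X)[-i],\]
completing the induction.

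The argument is short, and the only facts used as black boxes are standard: that a complex with cohomology concentrated in one degree is isomorphic in $\der(\Lambda)$ to that cohomology placed in that degree, and that a triangle with vanishing connecting map splits. The one thing to be careful about is the bookkeeping of shifts in the identification of the connecting map with an Ext-group — this is precisely where the hypothesis $\gl\Lambda\le d$ enters, since the gap between consecutive cohomological degrees is at least $d$, forcing the obstruction class into $\Ext^{\ge d+1}_\Lambda$, which vanishes.
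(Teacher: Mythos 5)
Your proof is correct. The paper does not prove this lemma itself but cites it from \cite{iya-ct-higher}, and your argument is essentially the standard one used there: induct on the number of nonzero cohomology groups, split the truncation triangle by observing that the connecting morphism lies in a sum of groups $\Ext^{d(i_j-i_1)+1}_\Lambda(M_j,M_1)$ with $d(i_j-i_1)+1\ge d+1>\gl\Lambda$, which therefore vanish. The shift bookkeeping is right, and this is exactly where the hypothesis $\gl\Lambda\le d$ is used.
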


Let $\nu$ denote the Nakayama functor
$$\nu:=\Lambda^*\dert_\Lambda-:\der(\Lambda)\arr{\sim}\der(\Lambda)$$
of $\Lambda$ and let
$\nu^{-1}$ denote its quasi-inverse, defined using the internal hom,
$$\nu^{-1}:=\RHom_\Lambda(\Lambda^*,-):\der(\Lambda)\arr{\sim}\der(\Lambda).$$
Let $\nu_d$ denote the shifted Nakayama functor $\nu_d=\nu\circ[-d]$ and $\nu_d^{-1}=\nu^{-1}\circ[d]$. Then we have
\begin{equation}\label{tau and nu}
\tau_d=H^0(\nu_d-):\mod\Lambda\to\mod\Lambda\ \mbox{ and }\ \tau_d^-=H^0(\nu_d^{-1}-):\mod\Lambda\to\mod\Lambda.
\end{equation}

\begin{definition}{\cite[Definition 3.2]{hio}}\label{def:nhered}
A finite dimensional algebra $\Lambda$ with $\gldim\Lambda= d$ is \emph{$d$-hereditary} if $H^i(\nu_d^j(\Lambda))=0$ for all $i,j\in \Z$ such that $i\notin d\Z$.
\end{definition}

One of the important properties of $d$-hereditary algebras $\Lambda$ follows from Lemma \ref{formality}: for any $j\in\Z$ and an indecomposable projective $\Lambda$-module $P$, there exists $i\in\Z$ such that
\begin{equation}\label{eq:nu_d^j(P)}
\nu_d^j(P)\cong H^{di}(\nu_d^j(P))[-di].
\end{equation}
Note that in \cite{hio}, the weaker condition $\gldim\Lambda\leq d$ instead of $\gldim\Lambda=d$ was imposed. The only difference between the two definitions is whether we allow $\Lambda$ to be semisimple, which is a case we are not interested in. Therefore we always assume $\gldim\Lambda=d$.

The following result is an immediate consequence of Theorem \ref{thm:qp-vosnex}.

\begin{corollary}\label{cor:kos-nher-prep}
Let $\Lambda=\Tens_S(V)/(R)$ be a Koszul $d$-hereditary algebra over a separable $\F$-algebra $S$ and $(\overline{V},W)$ the associated superpotential. Then we have $\Pi\cong P_S^d(\overline{V},W)$.
\end{corollary}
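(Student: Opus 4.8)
The plan is to obtain the corollary directly from Theorem \ref{thm:qp-vosnex}, whose conclusion is precisely the higher Jacobi description of $\Pi$; the only thing needing proof is that a Koszul $d$-hereditary algebra satisfies that theorem's cohomological hypothesis, namely $\Ext^i_{\Lambda^\en}(\Lambda,\Lambda^\en)_{-i}=0$ for $2\le i\le d-1$. I would in fact establish the stronger ungraded vanishing $\Ext^i_{\Lambda^\en}(\Lambda,\Lambda^\en)=0$ on this whole range, since that is exactly what $d$-hereditariness controls and it trivially implies the vanishing of the graded piece $(-)_{-i}$. When $d=2$ the range $2\le i\le d-1$ is empty, so the hypothesis holds vacuously and the corollary reduces to Keller's case.

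The verification has two steps. First I would observe that the computation proving Lemma \ref{lem:alt-E} is insensitive to the homological degree: the isomorphism $\Lambda^\en\cong\Hom_\F(\Lambda^*,\Lambda)$ followed by the tensor--hom adjunction is valid in every degree, giving $\Ext^i_{\Lambda^\en}(\Lambda,\Lambda^\en)\cong\Ext^i_\Lambda(\Lambda^*,\Lambda)$ for all $i\ge0$. Second I would translate the right-hand side into the language of the inverse $d$-Nakayama functor. Since $\nu_d^{-1}=\RHom_\Lambda(\Lambda^*,-)\circ[d]$, we have $\nu_d^{-1}(\Lambda)\cong\RHom_\Lambda(\Lambda^*,\Lambda)[d]$, and taking cohomology yields $\Ext^i_\Lambda(\Lambda^*,\Lambda)\cong H^{i-d}(\nu_d^{-1}(\Lambda))$. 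For $2\le i\le d-1$ the cohomological degree $i-d$ lies in $\{2-d,\dots,-1\}$, i.e.\ strictly between $-d$ and $0$, and hence is not a multiple of $d$. Applying Definition \ref{def:nhered} to $\nu_d^{j}(\Lambda)$ with $j=-1$ then forces $H^{i-d}(\nu_d^{-1}(\Lambda))=0$, which is the required vanishing.

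With the hypothesis in hand, Theorem \ref{thm:qp-vosnex} applies and gives $\Pi\cong P_S^d(\overline{V},W)$, identifying the higher preprojective algebra of a Koszul $d$-hereditary algebra with the higher Jacobi algebra of its associated superpotential. The one genuinely substantive step is the middle identification: passing between the bimodule group $\Ext^i_{\Lambda^\en}(\Lambda,\Lambda^\en)$ — which encodes the relations of $\Pi$ through the Koszul bimodule complex and the associated superpotential — and the module-level group $\Ext^i_\Lambda(\Lambda^*,\Lambda)$ computed by $\nu_d^{-1}$. Once Lemma \ref{lem:alt-E} is seen to hold in all degrees and the shift conventions for $\nu_d^{-1}$ are pinned down, the $d$-hereditary formality vanishing of Definition \ref{def:nhered} is immediate, so I expect no obstacle beyond careful bookkeeping with these two identifications.
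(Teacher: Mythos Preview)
Your proposal is correct and follows essentially the same route as the paper: reduce to Theorem \ref{thm:qp-vosnex}, then verify its Ext-vanishing hypothesis by identifying $\Ext^i_{\Lambda^\en}(\Lambda,\Lambda^\en)\cong\Ext^i_\Lambda(\Lambda^*,\Lambda)$ via the Lemma \ref{lem:alt-E} argument and then rewriting the latter as a cohomology group of $\nu_d^{-1}(\Lambda)$, which the $d$-hereditary hypothesis kills. The paper's proof is the one-line chain $\Ext^{d-i}_{\Lambda^\en}(\Lambda,\Lambda^\en)\cong\Hom_{\Db(\Lambda)}(\Lambda^*,\Lambda[d-i])\cong\Hom_{\Db(\Lambda)}(\Lambda[i],\nu_d^{-1}(\Lambda))\cong H^i(\nu_d^{-1}(\Lambda))=0$, which is exactly your argument reindexed (the second isomorphism comes from applying the autoequivalence $\nu_d^{-1}$ and using $\nu_d^{-1}(\Lambda^*)\cong\Lambda[d]$, equivalent to your direct computation with $\nu_d^{-1}(\Lambda)\cong\RHom_\Lambda(\Lambda^*,\Lambda)[d]$).
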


\begin{proof}
The assertion is immediate from Theorem \ref{thm:qp-vosnex} since
$$\Ext^{d-i}_{\Lambda^\en}(\Lambda,\Lambda^\en)\cong\Hom_{\der(\Lambda)}(\Lambda^*,\Lambda[d-i])\cong\Hom_{\der(\Lambda)}(\Lambda[i],\nu_d^{-1}(\Lambda))\cong
H^i(\nu_d^{-1}(\Lambda))=0$$
holds for any $0<i<d$.
\end{proof}

\begin{definition}{\cite{io-stab,hio}}\label{define d-RF and d-RI}
We say that a finite-dimensional $\F$-algebra $\Lambda$ with $\gldim\Lambda=d$ is:
\begin{enumerate}[$\bullet$]
 \item \emph{$d$-representation finite} (or \emph{$d$-RF}) if there exists an \emph{$d$-cluster tilting} $\Lambda$-module $M$, that is, 
\begin{eqnarray*}
\add M&=&\left\{X\in\Lambda\mMod \st \Ext_\Lambda^i(X,M)=0\text{ for all }0<i<d\right\} \\
      &=&\left\{Y\in\Lambda\mMod \st \Ext_\Lambda^i(M,Y)=0\text{ for all }0<i<d\right\}\fs
\end{eqnarray*}
 \item \emph{$d$-representation infinite} (or \emph{$d$-RI}) if $\nu_d^{-i}(\Lambda)$ is concentrated in degree $0$ for any $i\geq0$.
\end{enumerate}
\end{definition}

Then we have a dichotomy theorem:
\begin{theorem}{\cite[Theorem 3.4]{hio}}
Every ring-indecomposable finite-dimensional $\F$-algebra is $d$-hereditary if and only if it is either $d$-RF or $d$-RI.
\end{theorem}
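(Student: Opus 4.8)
The plan is to prove the two implications separately; ``$d$-RF or $d$-RI $\Rightarrow$ $d$-hereditary'' is largely bookkeeping with the formality Lemma~\ref{formality}, while the converse carries the real content. For the first direction I must check $H^i(\nu_d^j(\Lambda))=0$ for all $j\in\Z$ and all $i\notin d\Z$. When $\Lambda$ is $d$-RI this holds for $j\le0$ by definition, and I would extend it to $j>0$ by induction, the point being that for $X$ a module underlying some $\nu_d^{j}(\Lambda)$ one has $\Ext^i_\Lambda(X,\Lambda)\cong H^{i}(\nu_d^{-j}(\Lambda))$ up to shift, which vanishes for $i\notin d\Z$ since $\nu_d^{-j}(\Lambda)$ is concentrated in degree $0$; combined with the identity $H^{-i}(\nu(X))\cong(\Ext^i_\Lambda(X,\Lambda))^*$ and Lemma~\ref{formality}, this shows $\nu_d^{j+1}(\Lambda)$ is again a stalk in a degree divisible by $d$. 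When $\Lambda$ is $d$-RF with $d$-cluster tilting module $M$, I would instead use that $\Lambda,\Lambda^*\in\add M$ and that, in the higher Auslander--Reiten formalism for a $d$-cluster tilting subcategory, $\nu_d$ carries $\add M$ back into $\add M$ up to shifts by multiples of $d$; Lemma~\ref{formality} then forces each $\nu_d^j(\Lambda)$ into a single degree lying in $d\Z$, which is Definition~\ref{def:nhered}.

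For the converse, suppose $\Lambda$ is $d$-hereditary and ring-indecomposable, and fix an indecomposable projective $P$. By \eqref{eq:nu_d^j(P)} and the amplitude bound ``$\nu_d^{-1}$ of a module lies in degrees $[-d,0]$'', we have $\nu_d^{-j}(P)\cong N_j^P[-dc_j^P]$ for modules $N_j^P$ and integers $c_j^P$ with $c_0^P=0$ and $c_{j+1}^P-c_j^P\in\{-1,0\}$, the value $-1$ occurring precisely when $N_j^P$ is injective, in which case $N_{j+1}^P=\nu^{-1}(N_j^P)$ is again projective, while otherwise $N_{j+1}^P=\tau_d^-(N_j^P)$ by \eqref{tau and nu}. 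So the $\nu_d^{-1}$-orbit of $P$ runs through the $\tau_d^-$-orbit of $P$ until, if ever, it reaches an injective, where it restarts at a new projective. If for every indecomposable projective $P$ no member of its $\tau_d^-$-orbit is injective, then every $\nu_d^{-j}(\Lambda)$ is concentrated in degree $0$ and $\Lambda$ is $d$-RI. Otherwise some $\tau_d^-$-orbit reaches an injective, and I would argue that ring-indecomposability --- via connectedness of the quiver of $\Lambda$ and the adjunction between $\tau_d^-$ and $\tau_d$ of Proposition~\ref{prop:adjunction} --- forces every orbit to reach an injective after finitely many steps, so that only finitely many indecomposable $\Lambda$-modules occur among all the orbits. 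Their direct sum $M$ is the candidate $d$-cluster tilting module, and one verifies the two equalities of Definition~\ref{define d-RF and d-RI} by showing that $\curlU:=\add\{\nu_d^i(\Lambda)\mid i\in\Z\}$ is a $d$-cluster tilting subcategory of $\Db(\Lambda)$ (again via Lemma~\ref{formality}) equal to $\add\{M[di]\mid i\in\Z\}$, and restricting its Ext-vanishing to $\mod\Lambda$.

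The main obstacle is this last step of the converse: proving that in the non-$d$-RI case all $\nu_d^{-1}$-orbits of projectives are genuinely \emph{finite}, so that $M$ is finite-dimensional, and that $M$ satisfies \emph{both} Ext-vanishing conditions of Definition~\ref{define d-RF and d-RI}. Lemma~\ref{formality} controls individual objects, but assembling the orbits coherently requires a global understanding of $\curlU$ as a $d$-cluster tilting subcategory of $\Db(\Lambda)$ together with a transfer of homological vanishing between $\Db(\Lambda)$ and $\mod\Lambda$; and the ring-indecomposability hypothesis is exactly what excludes ``mixed'' behaviour (one block $d$-RF, another $d$-RI) and guarantees that the orbit of a single projective interacts with all the others.
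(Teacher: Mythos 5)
The paper does not actually prove this statement: it is imported verbatim from \cite[Theorem 3.4]{hio}, so there is no internal argument to compare yours against. Judged on its own terms, your outline reproduces the strategy of the original proof, but at the decisive points it records what must be shown rather than showing it, and at least one asserted equivalence is only half-justified.

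Concretely: (1) In the direction ``$d$-RF $\Rightarrow$ $d$-hereditary,'' the statement that $\nu_d$ carries $\add\{M[di]\mid i\in\Z\}$ into itself is not bookkeeping; it is the content of \cite[Theorem 1.23]{iya-ct-higher} (recorded in this paper as Proposition \ref{P and I for n-hereditary}(a)), and your argument is incomplete unless you either cite it or reprove it. By contrast, your treatment of the $d$-RI case via Serre duality and Lemma \ref{formality} is essentially correct. (2) In the converse, your claim that $c_{j+1}^P-c_j^P=-1$ occurs ``precisely when $N_j^P$ is injective'' is proved in only one direction: if $N_j^P$ is injective the degree drop is clear, but the converse --- that $\nu_d^{-1}(N)$ being concentrated in degree $-d$ forces the indecomposable module $N$ to be injective and its successor to be projective --- is a genuine lemma requiring an argument (one must show that $\Hom_{\Db(\Lambda)}(\Lambda^*,N)\neq0$ together with formality pins $N$ down), not a consequence of \eqref{eq:nu_d^j(P)}. (3) The two steps you yourself flag are exactly where the content lies: that in the non-$d$-RI case ring-indecomposability forces \emph{every} orbit to terminate after finitely many steps needs the vanishing $\Hom_\Lambda(\II,\PP)=0$ and a Serre-duality argument linking the orbits of different projectives, not merely ``connectedness of the quiver''; and the verification that $M$ satisfies \emph{both} orthogonality conditions of Definition \ref{define d-RF and d-RI} cannot be delegated to ``showing $\UU$ is a $d$-cluster tilting subcategory,'' because establishing that for a $d$-hereditary algebra which is not $d$-RI is essentially equivalent to proving it is $d$-RF --- i.e., to the very implication you are trying to establish. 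As written, the proposal is an accurate road map of the proof in \cite{hio}, but it is not yet a proof.
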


In the study of $d$-hereditary algebras, the subcategory
$$\UU:=\add\{\nu_d^i(\Lambda)\mid i\in\Z\}$$
of $\der(\Lambda)$ plays an important role.

We give a few properties of $\UU$ and the categories $\PP$ and $\II$ of $d$-preprojective $\Lambda$-modules and $d$-preinjective $\Lambda$-modules (Definition \ref{def:prep-prei2}).
By the following result, any $d$-RF algebra has a unique $d$-cluster tilting module up to additive equivalence, which is given by $\Pi$.  
For a full subcategory $\XX$ and $\YY$ of an additive category $\CC$, we denote by $\XX\vee\YY$ the full subcategory $\add(\XX\cup\YY)$ of $\CC$.

\begin{proposition}\label{P and I for n-hereditary}
\begin{enumerate}[\rm(a)]
\item \cite[Theorem 1.6]{iya-ct-higher}
If $\Lambda$ is $d$-RF, then $\Pi$ is a $d$-cluster tilting $\Lambda$-module, $\PP=\II=\add\Pi$, and $\UU=\add\{\Pi[di]\mid i\in\Z\}$.
\item \cite[Proposition 4.10(d)]{hio} If $\Lambda$ is $d$-RI, then $\PP=\add\{\nu_d^{-i}(\Lambda)\mid i\ge0\}$, $\II=\add\{\nu_d^i(D\Lambda)\mid i\ge0\}$, and $\UU=\II[-d]\vee\PP$. Moreover, $\Hom_\Lambda(\II,\PP)=0$ and $\PP\cap\II=0$.
\end{enumerate}
\end{proposition}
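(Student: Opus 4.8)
The plan is to handle the two parts by different routes: part (a) essentially repackages the higher Auslander--Reiten theory of \cite{iya-ct-higher}, while part (b) can be assembled from the definitions of $\PP$, $\II$ and $\UU$ together with the degree concentration built into the notion of a $d$-RI algebra.

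For part (a), the equality $\add\Pi=\PP$ is immediate: by Proposition \ref{Pi as module2} the left $\Lambda$-module $\Pi$ is the direct sum of all indecomposable $d$-preprojective modules, so its additive closure is $\PP$ by Definition \ref{def:prep-prei2}. The substantive claims are that $\Pi$ is $d$-cluster tilting and that $\PP=\II$, and for these I would appeal to \cite[Theorem 1.6]{iya-ct-higher}: a $d$-RF algebra has a $d$-cluster tilting module $M$, unique up to additive equivalence, the functors $\tau_d$ and $\tau_d^-$ restrict to mutually inverse bijections between the non-injective and the non-projective indecomposable summands of $M$, and every indecomposable summand of $M$ is of the form $\tau_d^{-i}(P)$ for some indecomposable projective $P$ and $i\ge0$, equivalently a summand of $\tau_d^{j}(\Lambda^*)$ for some $j\ge0$. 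The last statement gives $\add M=\PP=\II$, so $\Pi\cong M$ is $d$-cluster tilting. For the description of $\UU$, I would combine \eqref{eq:nu_d^j(P)}, which says each $\nu_d^j(P)$ is concentrated in a single degree $dk$, with the identification (again from \cite{iya-ct-higher}) of $H^{dk}(\nu_d^j(P))$ with an indecomposable summand of $\Pi$; conversely each nonzero summand $\tau_d^{-i}(P)$ of $\Pi$ equals $\nu_d^{-i}(P)$ by \eqref{eq:nu_d^j(P)}. This yields $\UU=\add\{\Pi[di]\mid i\in\Z\}$.

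For part (b), note first that being $d$-RI is left--right symmetric: $\Lambda^\op$ is again $d$-hereditary and cannot be $d$-RF, since a $d$-cluster tilting $\Lambda^\op$-module would dualise (via $(-)^*$) to one for $\Lambda$, contradicting the dichotomy theorem; hence $\Lambda^\op$ is $d$-RI. Therefore $\nu_d^{-i}(\Lambda)$ is concentrated in degree $0$ for all $i\ge0$, and, applying the dual statement over $\Lambda^\op$ together with the compatibility of $\nu_d$ with $(-)^*$, so is $\nu_d^i(\Lambda^*)$ for all $i\ge0$. Using \eqref{tau and nu} and induction on $i$ (as $H^0$ of a module is that module) we get $\tau_d^{-i}(\Lambda)\cong\nu_d^{-i}(\Lambda)$ and $\tau_d^i(\Lambda^*)\cong\nu_d^i(\Lambda^*)$, which gives the stated descriptions of $\PP$ and $\II$. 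For $\UU$, rewrite $\nu_d^i(\Lambda^*)[-d]\cong\nu_d^{i+1}(\Lambda)$ using $\Lambda^*\cong\nu_d(\Lambda)[d]$; then $\II[-d]=\add\{\nu_d^j(\Lambda)\mid j\ge1\}$ and $\PP=\add\{\nu_d^j(\Lambda)\mid j\le0\}$, so $\II[-d]\vee\PP=\add\{\nu_d^j(\Lambda)\mid j\in\Z\}=\UU$. For $\Hom_\Lambda(\II,\PP)=0$ it suffices, by additivity, to check the generators: for $i,j\ge0$,
\begin{align*}
\Hom_\Lambda(\nu_d^i(\Lambda^*),\nu_d^{-j}(\Lambda))
&\cong\Hom_{\der(\Lambda)}(\Lambda^*,\nu_d^{-i-j}(\Lambda))\\
&\cong\Hom_{\der(\Lambda)}(\Lambda[d],\nu_d^{-i-j-1}(\Lambda))\cong H^{-d}(\nu_d^{-i-j-1}(\Lambda))=0,
\end{align*}
where the last equality holds because $\nu_d^{-i-j-1}(\Lambda)$ is concentrated in degree $0$ and $d>0$. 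Finally $\PP\cap\II=0$: any object in $\add\PP\cap\add\II$ has its identity endomorphism in $\Hom_\Lambda(\II,\PP)=0$, so it is zero.

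The step I expect to be the main obstacle is part (a). The equality $\PP=\II$ and the $d$-cluster tilting property of $\Pi$ are not formal consequences of the material in the excerpt; they depend on the existence of $d$-almost split sequences and on the finiteness and $\tau_d$-connectedness of the $d$-cluster tilting subcategory, i.e.\ on the full force of \cite{iya-ct-higher}. Part (b) is, by comparison, routine manipulation of $\nu_d$, $\tau_d^\pm$, the $\F$-duality, and the degree concentration that is part of the definition of a $d$-RI algebra.
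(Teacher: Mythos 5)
Your proposal is correct. Note that the paper supplies no proof of this proposition at all: both parts are stated as citations, to \cite[Theorem 1.6]{iya-ct-higher} and \cite[Proposition 4.10(d)]{hio} respectively. Your treatment of part (a) matches the paper's in substance, since you correctly identify $\add\Pi=\PP$ as immediate from Proposition \ref{Pi as module2} and defer the genuinely hard content ($\Pi$ being $d$-cluster tilting, $\PP=\II$, and the description of $\UU$) to the cited theorem, which is exactly what the paper does. For part (b) you go further than the paper by giving a self-contained derivation from the definitions, and the argument is sound: the inductive identification $\tau_d^{-i}(\Lambda)\cong\nu_d^{-i}(\Lambda)$ and $\tau_d^{i}(\Lambda^*)\cong\nu_d^{i}(\Lambda^*)$ via \eqref{tau and nu}, the rewriting $\nu_d^i(\Lambda^*)[-d]\cong\nu_d^{i+1}(\Lambda)$, and the vanishing computation $\Hom_\Lambda(\nu_d^i(\Lambda^*),\nu_d^{-j}(\Lambda))\cong H^{-d}(\nu_d^{-i-j-1}(\Lambda))=0$ are all correct. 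The only looseness is your appeal to ``the dichotomy theorem'' to conclude that $\Lambda^\op$ is $d$-RI: as stated in the paper the dichotomy is an ``either\dots or'' that does not literally assert exclusivity, so you are implicitly also using that $d$-RF and $d$-RI are mutually exclusive (true, since for a $d$-RF algebra the objects $\nu_d^{-i}(\Lambda)$ eventually fail to be modules, but worth saying explicitly, or simply citing the left--right symmetry of $d$-RI from \cite{hio} directly).
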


In the final part of our preparations for this section, we recall the generalization of almost split sequences, or Auslander-Reiten sequences, to $d$-hereditary algebras.

\begin{definition}[{\cite{iya-higher-ar}}]\label{def:nASS}
Let $\CC$ be a Krull-Schmidt $\F$-linear category with Jacobson radical $\rad_\CC$ and let
\begin{equation}\label{d-ass}
Y\xrightarrow{f_d} C_{d-1}\xrightarrow{f_{d-1}} C_{d-2}\xrightarrow{f_{d-2}}\cdots\xrightarrow{f_2}C_1\xrightarrow{f_1}C_0\xrightarrow{f_0} X
\end{equation}
be a complex in $\CC$ where $X$ and $Y$ are indecomposable and each $f_i$ belongs to $\rad_\CC$.
Then we say the sequence \eqref{d-ass} is \emph{$d$-almost split in $\CC$} if both of the following sequences are exact for all objects $M$ in $\CC$:
\begin{eqnarray*}
&0\to\Hom_\CC(M,Y)\arrr{{f_{d}}_*} \Hom_\CC(M,C_{d-1})\arrr{{f_{d-1}}_*} \cdots\arrr{{f_{1}}_*} \Hom_\CC(M,C_0)\arrr{{f_{0}}_*} \rad_{\CC}(M,X)\to 0;&\\
&0\to\Hom_\CC(X,M)\arrr{{f_{0}}^*} \Hom_\CC(C_0,M)\arrr{{f_{1}}^*} \cdots\arrr{{f_{d-1}}^*} \Hom_\CC(C_{d-1},M)\arrr{{f_{d}}^*} \rad_{\CC}(Y,M) \to0.&
\end{eqnarray*}
More generally, we say the sequence \eqref{d-ass} is \emph{weak $d$-almost split in $\CC$} if the above sequences are exact except at $\Hom_\CC(M,Y)$ and $\Hom_\CC(X,M)$ respectively.
\end{definition}

\begin{example}
Let $Q=\left[1\to2\right]$ and $\Lambda=\F Q$.  Then the short exact sequence corresponding to the non-split extension of one simple module by the other is $1$-almost split in $\Lambda\mMod$ but is only weak $1$-almost split in $\Db(\Lambda)$.
\end{example}

It was shown in \cite{hio} (respectively, \cite{iya-higher-ar}) that the category $\PP\vee\II$ has $d$-almost split sequences when $\Lambda$ is $d$-RI (respectively, $d$-RF).
Also it was shown in \cite{iy,io-stab} that $d$-cluster tilting subcategories of triangulated categories have certain analogue of $d$-almost split sequences called AR $(d+2)$-angles.
From these results, one can deduce the following results on $d$-almost split sequences in the category $\UU$, which play a key role in this section.

\begin{theorem}\label{existence of d-ass}
Let $\Lambda$ be a $d$-hereditary algebra.
\begin{enumerate}[\rm(a)]
\item If $\Lambda$ is $d$-RI, then any indecomposable object $X$ (respectively, $Y$) in $\UU$ has a $d$-almost split sequence in $\UU$
\[Y\xrightarrow{f_d} C_{d-1}\xrightarrow{f_{d-1}} C_{d-2}\xrightarrow{f_{d-2}}\cdots\xrightarrow{f_2}C_1\xrightarrow{f_1}C_0\xrightarrow{f_0} X.\]
Moreover, we have $Y\cong\nu_d(X)$ (respectively, $X\cong\nu_d^{-1}(Y)$).
\item If $\Lambda$ is $d$-RF, then any indecomposable object $X$ (respectively, $Y$) in $\UU$ has a weak $d$-almost split sequence in $\UU$
\[Y\xrightarrow{f_d} C_{d-1}\xrightarrow{f_{d-1}} C_{d-2}\xrightarrow{f_{d-2}}\cdots\xrightarrow{f_2}C_1\xrightarrow{f_1}C_0\xrightarrow{f_0} X.\]
Moreover, we have $Y\cong\nu_d(X)$ (respectively, $X\cong\nu_d^{-1}(Y)$), $\Ker(f_d{}_*)=\soc\Hom_{\UU}(-,Y)$ and $\Ker(f_0{}^*)=\soc\Hom_{\UU}(X,-)$.
\end{enumerate}
\end{theorem}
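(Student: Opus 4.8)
The plan is to derive Theorem \ref{existence of d-ass} by translating known existence results for $d$-almost split sequences from the module categories $\PP\vee\II$ and from $d$-cluster tilting subcategories of triangulated categories into the subcategory $\UU$ of $\der(\Lambda)$. The key structural input is Proposition \ref{P and I for n-hereditary}, which describes $\UU$ very explicitly in each of the two cases: in the $d$-RI case $\UU=\II[-d]\vee\PP$ with $\Hom_\Lambda(\II,\PP)=0$ and $\PP\cap\II=0$, and in the $d$-RF case $\UU=\add\{\Pi[di]\mid i\in\Z\}$, so that modulo the shift $[d]$ the category $\UU$ is governed by $\add\Pi=\PP=\II$.

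First I would handle part (a), the $d$-RI case. Every indecomposable object of $\UU$ lies (up to the shift $[-d]$) in $\PP$ or in $\II$, so it suffices to produce the $d$-almost split sequence for indecomposables in $\PP\vee\II\subset\Lambda\mMod$; this is exactly the content of the cited result of \cite{hio} that $\PP\vee\II$ has $d$-almost split sequences, combined with the fact from Proposition \ref{P and I for n-hereditary}(b) that $\Hom_\Lambda(\II,\PP)=0$, which guarantees that a sequence which is $d$-almost split in $\PP\vee\II$ remains $d$-almost split after we enlarge the test objects $M$ to all of $\UU$ (the extra Hom-spaces that appear either vanish or reduce to the module-category ones via the shift). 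The identification $Y\cong\nu_d(X)$ is then read off from the standard description of the $d$-AR translate; one checks that $\nu_d$ preserves $\UU$ and sends indecomposables to indecomposables, using \eqref{eq:nu_d^j(P)} and the $d$-RI hypothesis that $\nu_d^{-i}(\Lambda)$ is concentrated in degree $0$.

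For part (b), the $d$-RF case, I would instead invoke the existence of AR $(d+2)$-angles in $d$-cluster tilting subcategories of triangulated categories, due to \cite{iy,io-stab}. Since $\Lambda$ is $d$-RF, $\Pi$ is a $d$-cluster tilting $\Lambda$-module and $\UU=\add\{\Pi[di]\mid i\in\Z\}$ is a $d$-cluster tilting subcategory of $\der(\Lambda)$; applying those results to $\UU$ directly produces the complex \eqref{d-ass} together with the exactness of the relevant Hom-sequences \emph{except} at the two ends $\Hom_\CC(M,Y)$ and $\Hom_\CC(X,M)$ — this is precisely the ``weak'' part of the statement. The supplementary identifications $\Ker(f_d{}_*)=\soc\Hom_\UU(-,Y)$ and $\Ker(f_0{}^*)=\soc\Hom_\UU(X,-)$ follow from the standard characterization of the maps $f_d$ and $f_0$ in an AR $(d+2)$-angle as minimal, i.e.\ lying in the radical and being ``sink'' resp.\ ``source'' maps; concretely, $f_0$ being a sink map means its image in $\Hom_\UU(C_0,-)\to\rad_\UU(Y,-)$ has kernel the socle, and dually for $f_d$. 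Again $Y\cong\nu_d(X)$ is the $d$-AR translate computed inside $\UU$, which in the $d$-RF case agrees with $\nu_d$ applied within $\der(\Lambda)$ by \eqref{eq:nu_d^j(P)}.

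\textbf{The main obstacle} I anticipate is the bookkeeping needed to pass from ``$d$-almost split in $\PP\vee\II$'' (or ``AR $(d+2)$-angle in the $d$-cluster tilting subcategory'') to ``$d$-almost split in $\UU$'': one must verify that testing against the larger class of objects in $\UU$ — which includes shifts $\Pi[di]$ or $\II[-d]$ that are not modules — does not destroy exactness. In the $d$-RI case this is where the vanishing $\Hom_\Lambda(\II,\PP)=0$ and $\PP\cap\II=0$ from Proposition \ref{P and I for n-hereditary}(b) does the essential work, letting one decompose $\Hom_\UU(M,-)$ into pieces that are either zero or already known to be exact. In the $d$-RF case the subtlety is instead the precise formulation of ``weak'': one has to be careful that the AR $(d+2)$-angle, which is a genuine $(d+2)$-angle in the triangulated category, yields exactness of the long Hom-sequences only after removing the contributions at the extreme terms, and to match this with Definition \ref{def:nASS}. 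Everything else — preservation of $\UU$ under $\nu_d$, indecomposability, and the socle identifications — is routine given the cited results.
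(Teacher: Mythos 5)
Your overall strategy coincides with the paper's: part (a) is deduced from the $d$-almost split sequences in $\PP\vee\II$ of \cite{hio}, and part (b) from AR $(d+2)$-angles in the $d$-cluster tilting subcategory $\UU$ via \cite{iy,io-stab}. However, two steps are not justified correctly. In (a), the extension of exactness from test objects in $\PP\vee\II$ to test objects in $\UU=\II[-d]\vee\PP$ is not a matter of the extra Hom-spaces ``vanishing or reducing to module-category ones via the shift'': the spaces $\Hom_\UU(\II[-d],\PP)$ are genuinely nonzero, and the paper obtains exactness of $\Hom_\UU(\II[-d],-)$ on the sequence by Serre duality, $\Hom_\Lambda(\PP,\II)\cong\Hom_{\UU}(\nu_d^{-1}(\II)[-d],\PP)^*$, together with $\II\subseteq\nu_d^{-1}(\II)$; this converts the already-established exactness of the \emph{contravariant} sequence $\Hom_\Lambda(-,\II)$ into the needed \emph{covariant} exactness against $\II[-d]$. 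Without this duality step your claim is unsubstantiated. You also omit the preliminary reduction to non-projective $X\in\PP$ (for projective $X$ one applies the equivalence $\nu_d$ to the sequence ending at the non-projective object $\nu_d^{-1}(X)$), since \cite{hio} does not produce a $d$-almost split sequence in $\mod\Lambda$ ending at a projective module.

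In (b), the identifications $\Ker(f_d{}_*)=\soc\Hom_\UU(-,Y)$ and $\Ker(f_0{}^*)=\soc\Hom_\UU(X,-)$ do not follow from $f_0$ being a sink map: that property only gives surjectivity of $f_0{}_*$ onto $\rad_\UU(-,X)$, a statement about a cokernel in the other variance, and says nothing directly about $\Ker(f_0{}^*)$. The paper's argument is genuinely different: using $\UU[d]=\UU$ (which holds since $\nu(\UU)=\UU$ in the $d$-RF case) one splices the $(d+2)$-angle with its $[-d]$-shift via \cite[Lemma 4.3]{io-stab} to obtain a long exact sequence identifying $\Ker(f_d{}_*)$ with $\Cok(f_0[-d]{}_*)$, which is the simple top of $\Hom_\UU(-,X[-d])$; since $\Hom_\UU(-,\nu_d(X))\cong\Hom_\UU(X[-d],-)^*$ is the dual of an indecomposable projective functor and hence has simple socle, the simple submodule $\Ker(f_d{}_*)$ must equal that socle. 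This splicing argument, which also yields the exactness at the middle terms of the weak $d$-almost split sequence, is the missing idea in your proposal.
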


\begin{proof}
In both cases, we only show the assertion for $X$ since the assertion for $Y$ is the dual.

(a) Let $X\in\UU$ be an indecomposable object. 
If $X$ is a projective $\Lambda$-module then $\nu_d^{-1}(X)$ is not projective, as otherwise $X\cong\nu_d\nu_d^{-1}(X)$ would be concentrated in degree $d$ which contradicts our assumption that $\Lambda$ is $d$-RI.
Since $\nu_d:\UU\to\UU$ is an equivalence, it preserves $d$-almost split sequences in $\UU$. Thus we can assume that $X$ is a non-projective object in $\PP$.

It was shown in \cite[Theorem 4.25]{hio} that there exists an exact sequence
\begin{equation}\label{d-ass in P}
0\to Y\xrightarrow{f_d} C_{d-1}\xrightarrow{f_{d-1}} C_{d-2}\xrightarrow{f_{d-2}}\cdots\xrightarrow{f_2}C_1\xrightarrow{f_1}C_0\xrightarrow{f_0} X\to0
\end{equation}
in $\mod\Lambda$ which has terms in $\PP$, $Y=\nu_d(X)$, and gives a $d$-almost split sequence in $\PP\vee\II$. Thus, since Proposition \ref{P and I for n-hereditary}(b) implies $Y\notin\II$, which implies $\rad_\Lambda(Y,\II)=\Hom_\Lambda(Y,\II)$, the following sequences are exact:
\begin{eqnarray*}\label{(P,-)}
&0\to\Hom_\Lambda(\PP,Y)\arrr{{f_{d}}_*} \Hom_\Lambda(\PP,C_{d-1})\arrr{{f_{d-1}}_*} \cdots\arrr{{f_{1}}_*} \Hom_\Lambda(\PP,C_0)\arrr{{f_{0}}_*} \rad_{\Lambda}(\PP,X)\to 0;&\\ \label{(-,I)}
&0\to\Hom_\Lambda(X,\II)\arrr{{f_{0}}^*} \Hom_\Lambda(C_0,\II)\arrr{{f_{1}}^*} \cdots\arrr{{f_{d-1}}^*} \Hom_\Lambda(C_{d-1},\II)\arrr{{f_{d}}^*} \Hom_\Lambda(Y,\II) \to0.&
\end{eqnarray*}
Using Serre duality, we have $\Hom_\Lambda(\PP,\II)=\Hom_{\UU}(\nu^{-1}_d(\II)[-d],\PP)^*$. 
As $\Lambda$ is $d$-RI, we have $\II\subseteq\nu^{-1}_d(\II)$ by Proposition \ref{P and I for n-hereditary}(b).
Therefore, the latter exact sequence gives an exact sequence
\begin{eqnarray*}\label{(I[-d],-)}
0\to\Hom_\UU(\II[-d],Y)\arrr{{f_{d}}_*} \Hom_\UU(\II[-d],C_{d-1})\arrr{{f_{d-1}}_*}\cdots&\arrr{{f_{1}}_*}&\Hom_\UU(\II[-d],C_0)\\
&\arrr{{f_{0}}_*}&\Hom_\UU(\II[-d],X)\to 0.
\end{eqnarray*}
Since $\UU=\II[-d]\vee\PP$ by \cite[Proposition 4.10(c)]{hio}, the above exact sequences give an exact sequence
\[0\to\Hom_{\UU}(\UU,Y)\arrr{{f_{d}}_*} \Hom_\UU(\UU,C_{d-1})\arrr{{f_{d-1}}_*} \cdots\arrr{{f_{1}}_*} \Hom_\UU(\UU,C_0)\arrr{{f_{0}}_*} \rad_{\UU}(\UU,X)\to 0.\]
Dually, the following sequence is exact.
\[0\to\Hom_\UU(X,\UU)\arrr{{f_{0}}^*} \Hom_\UU(C_0,\UU)\arrr{{f_{1}}^*} \cdots\arrr{{f_{d-1}}^*} \Hom_\UU(C_{d-1},\UU)\arrr{{f_{d}}^*} \rad_{\UU}(Y,\UU) \to0.\]
Thus the sequence \eqref{d-ass in P} is a $d$-almost split sequence in $\UU$.

(b) 
By \cite[Theorem 1.23]{iya-ct-higher}, $\UU$ is a $d$-cluster tilting subcategory of $\Db(\Lambda)$. By \cite[Theorem 3.10]{iy}, there exist triangles
\[X_{i+1}\arrr{h_{i+1}} C_i\arrr{{g_i}} X_i\to X_{i+1}[1]\]
in $\Db(\Lambda)$ for $0\le i\le d-1$ satisfying the following conditions:
\begin{enumerate}[$\bullet$]
\item $X_0=X$, $X_{d}=\nu_d(X)$, and $C_i\in\UU$ for any $0\le i\le d-1$;
\item $\Hom_\UU(\UU,C_0)\arrr{{g_{0}}_*} \rad_{\UU}(\UU,X)\to 0$ and $\Hom_\UU(C_{d-1},\UU)\arrr{{h_{d}}^*} \rad_{\UU}(\nu_d(X),\UU)\to 0$ are exact. 
\end{enumerate}
Let $f_d:=h_d$, $f_i:=h_ig_i$ and $f_0:=g_0$. Then we have a complex
\[\nu_d(X)\arrr{f_d} C_{d-1}\arrr{f_{d-1}} C_{d-2}\arrr{f_{d-2}}\cdots\arrr{f_2} C_1\arrr{f_1} C_0\arrr{f_0} X.\]
Moreover, as $\Lambda$ is $d$-RF, $\nu(\UU)=\UU$ by \cite[Theorem 3.1(1)$\Rightarrow$(3)]{io-stab} and hence $\UU[d]=\UU$. So, by \cite[Lemma 4.3]{io-stab}, we have an exact sequence 
\begin{eqnarray*}
\cdots\to\Hom_\UU(\UU,C_0[-d])\arrr{{f_{0}[-d]}_*}\Hom_\UU(\UU,X[-d])\to\\
\Hom_\UU(\UU,\nu_d(X))\arrr{{f_{d}}_*} \Hom_\UU(\UU,C_{d-1})\arrr{{f_{d-1}}_*} \cdots\arrr{{f_{1}}_*} \Hom_\UU(\UU,C_0)\arrr{{f_{0}}_*} \rad_{\UU}(\UU,X)\to0.
\end{eqnarray*}
Thus $\Cok(f_{0}{}_*:\Hom_\UU(-,C_0)\to \Hom_{\UU}(-,X))$ is a simple $\UU$-module, and hence $\Ker(f_d{}_*)=\Cok(f_{0}[-d]{}_*)$ is a simple $\UU$-module since $[d]:\UU\to\UU$ is an autoequivalence.
Because $X[-d]\in\UU$ is indecomposable, $\Hom_{\UU}(X[-d],-)$ is an indecomposable projective functor and thus it has a simple top.  Hence the $\UU$-module $\Hom_{\UU}(-,\nu_d(X))\cong\Hom_{\UU}(X[-d],-)^*$ has a simple socle.  Therefore $\Ker(f_d{}_*)=\soc\Hom_{\UU}(\UU,\nu_d(X))$.

Dually, we have an exact sequence
\[\Hom_\UU(X,\UU)\arrr{{f_{0}}^*} \Hom_\UU(C_0,\UU)\arrr{{f_{1}}^*} \cdots\arrr{{f_{d-1}}^*} \Hom_\UU(C_{d-1},\UU)\arrr{{f_{d}}^*} \rad_{\UU}(Y,\UU) \to0\]
such that $\Ker({f_{0}}^*)=\soc\Hom_\UU(X,\UU)$. Thus the assertions hold.
\end{proof}

\subsection{Resolutions of simple modules over higher preprojective algebras}

For the rest of this section, $\Lambda$ is a $d$-hereditary algebra and $\Pi$ is its higher preprojective algebra. We will assume that $\Lambda$ is basic and ring-indecomposable.
We regard $\Pi$ as a $\Z$-graded algebra with the tensor grading. Then we have an isomorphism
\[\Pi\cong\bigoplus_{i\in\Z}\Hom_{\Db(\Lambda)}(\Lambda,\nu_d^{-i}(\Lambda))\]
of $\Z$-graded algebras.

For a group $\Psi$ and a $\Psi$-graded ring $\Gamma$, we denote by $\Gamma\mGgrmod$ (respectively, $\Gamma\mGgrproj$) the category of finitely generated (respectively, finitely generated projective) $\Psi$-graded $\Gamma$-modules.
We start with the following easy observation.
\begin{lemma}\label{Gproj}
Let $\CC$ be an additive category and $\Psi$ a group acting on $\CC$.
Assume that $M\in\CC$ is an object satisfying $\CC=\add\{\psi M\mid \psi\in \Psi\}$.
Define a $\Psi$-graded ring by $\Gamma:=\bigoplus_{\psi\in \Psi}\Hom_{\CC}(M,\psi M)$.
Then there are equivalences of additive categories
\begin{eqnarray*}
\bigoplus_{\psi\in \Psi}\Hom_{\CC}(M,\psi -):\CC\to\Ggrprojm\Gamma\ \mbox{ and }\ 
\bigoplus_{\psi\in \Psi}\Hom_{\CC}(-,\psi M):\CC\to\Gamma\mGgrproj.
\end{eqnarray*}
\end{lemma}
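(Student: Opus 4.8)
The plan is to construct the two functors explicitly, verify they are well-defined and additive, and then show each is fully faithful with essential image the full subcategory of projectives. I will only write out the argument for the first functor $F := \bigoplus_{\psi\in\Psi}\Hom_\CC(M,\psi-)\colon\CC\to\Ggrprojm\Gamma$; the second is dual (apply the first to $\CC^\op$, using that $\Psi$ acts on $\CC^\op$ and $\Gamma^\op$ is the associated graded ring, and note $\Gamma^\op\mGgrproj\simeq\projm\text{-}^\op$ matches up). First I would check that for $N\in\CC$ the graded abelian group $FN=\bigoplus_\psi\Hom_\CC(M,\psi N)$ carries a graded right $\Gamma$-module structure: given $f\in\Hom_\CC(M,\psi N)$ of degree $\psi$ and $g\in\Hom_\CC(M,\varphi M)$ of degree $\varphi$, set $f\cdot g := (\psi g)\circ f\in\Hom_\CC(M,\psi\varphi N)$, which has degree $\psi\varphi$; associativity and unitality follow since $\Psi$ acts by functors. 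A morphism $h\colon N\to N'$ in $\CC$ induces $Fh=\bigoplus_\psi\Hom_\CC(M,\psi h)$, which is visibly a graded $\Gamma$-module map, so $F$ is an additive functor.

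Next I would identify the image on the generator: $FM=\bigoplus_\psi\Hom_\CC(M,\psi M)=\Gamma$ as a graded right $\Gamma$-module, and more generally $F(\psi M)\cong\Gamma(\psi)$ (a degree shift), since $\Hom_\CC(M,\varphi(\psi M))=\Hom_\CC(M,(\varphi\psi)M)=\Gamma_{\varphi\psi}$. Because $F$ is additive and every object of $\CC$ is a summand of a finite sum of objects $\psi M$ by hypothesis, $F$ sends $\CC$ into the category of finitely generated projective graded $\Gamma$-modules, and conversely every finitely generated projective graded $\Gamma$-module is a summand of a finite sum of shifts $\Gamma(\psi)$, hence lies in the image of $F$ up to isomorphism (using that $F$, once shown fully faithful, reflects direct sum decompositions and idempotents). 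So essential surjectivity onto $\Ggrprojm\Gamma$ reduces to full faithfulness plus the computation $F(\psi M)\cong\Gamma(\psi)$.

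For full faithfulness I would use a Yoneda-type argument: $F$ is fully faithful on $\add\{\psi M\mid\psi\in\Psi\}$ because
\[
\Hom_{\Ggrprojm\Gamma}(\Gamma(\psi),\Gamma(\varphi))\cong\Gamma_{\varphi\psi^{-1}}=\Hom_\CC(M,\varphi\psi^{-1}M)\cong\Hom_\CC(\psi M,\varphi M),
\]
and one checks this bijection is exactly the map induced by $F$; additivity then upgrades this to all of $\add\{\psi M\}=\CC$. The one point requiring a little care — and the main obstacle — is bookkeeping the grading shifts and the left/right conventions so that the $\Psi$-action composes on the correct side and $F(\psi M)$ really is $\Gamma(\psi)$ rather than $\Gamma(\psi^{-1})$; getting this consistent is what makes the Hom-space identification above come out right. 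Once the conventions are pinned down, everything else is a routine verification, and I would leave those details to the reader as the statement of the lemma suggests.
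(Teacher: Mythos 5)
The paper gives no proof of this lemma at all --- it is stated as ``the following easy observation'' and left to the reader --- so there is nothing to compare against; your strategy (the graded version of the classical projectivization equivalence $\add M\simeq\projm\End_{\CC}(M)$: compute $F(\psi M)\cong\Gamma(\psi)$, get full faithfulness on the generators from the Hom-space identification, extend by additivity, and deduce essential surjectivity since every finitely generated graded projective is a summand of a finite sum of shifts) is exactly the expected one and is sound in outline.

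Two points need repair, though. First, your formula for the right action does not typecheck: with $f\in\Hom_\CC(M,\psi N)$ and $g\in\Hom_\CC(M,\varphi M)$, the map $\psi g$ has source $\psi M$, not $\psi N$, so $(\psi g)\circ f$ is undefined unless $N=M$. The correct formula is $f\cdot g:=(\varphi f)\circ g\colon M\to\varphi M\to\varphi\psi N$, which matches the paper's own convention for $\Gamma$ itself ($gf=\tau_d^{-i}(g)\circ f$ in the discussion after Proposition \ref{Pi as module2}). Note that this places $f\cdot g$ in degree $\varphi\psi$ rather than $\psi\varphi$, so the ``$\Psi$-graded'' structure is only literally a grading when $\Psi$ is abelian --- harmless here, since the paper only applies the lemma with $\Psi\cong\Z$ or $\Z^2$, but worth recording since you state the lemma for arbitrary $\Psi$. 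Second, your essential-surjectivity step silently uses that the idempotent of $\End_\CC(\bigoplus_i\psi_i M)$ corresponding to a graded projective summand actually splits in $\CC$; for a bare additive category this can fail, so one should either add idempotent-completeness of $\CC$ as a (tacit) hypothesis or observe that in every application $\CC$ is $\add$ of objects in the Krull--Schmidt category $\Db(\Lambda)$ or $\Db(\Lambda\mgrMod)$, where splitting is automatic. With these two fixes the remaining verifications are routine, as you say.
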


Applying Lemma \ref{Gproj} to the category $\UU$ and the group $\{\nu_d^{-i}\mid i\in\Z\}\cong\Z$, we have the following description of the category $\UU$.

\begin{proposition}\label{Zproj}
\begin{enumerate}[\rm(a)]
\item There are equivalences of additive categories
\begin{eqnarray*}
&G:=\bigoplus_{i\in\Z}\Hom_{\Db(\Lambda)}(\Lambda,\nu_d^{-i}(-)):\UU\to\grprojm\Pi,&\\
&H:=\bigoplus_{i\in\Z}\Hom_{\Db(\Lambda)}(-,\nu_d^{-i}(\Lambda)):\UU\to\Pi\mgrproj.&
\end{eqnarray*}
In particular, there are equivalences of additive categories
\begin{eqnarray*}
G_*:\grModm\Pi\to\Modm\UU\ \mbox{ and }\ H_*:\Pi\mgrMod\to\UU\mMod.
\end{eqnarray*}
\item The following diagram commutes up to natural isomorphism.
\[\xymatrix@R=2em{
\UU\ar[rr]^G\ar@{=}[d]&&\grprojm\Pi\ar@<.2em>[d]^{\Hom_{\Pi^{\op}}(-,\Pi)}\\
\UU\ar[rr]_H&&\Pi\mgrproj\ar@<.2em>[u]^{\Hom_\Pi(-,\Pi)}.
}\]
\end{enumerate}
\end{proposition}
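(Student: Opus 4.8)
The plan is to deduce everything from Lemma~\ref{Gproj}. For part~(a) I would apply that lemma to the additive category $\CC=\UU$, the group $\Psi=\{\nu_d^{-i}\mid i\in\Z\}\cong\Z$ acting on $\UU$ by autoequivalences, and the object $M=\Lambda$. The hypothesis $\CC=\add\{\psi M\mid\psi\in\Psi\}$ is exactly the definition of $\UU$, since $\add\{\nu_d^i(\Lambda)\mid i\in\Z\}=\add\{\nu_d^{-i}(\Lambda)\mid i\in\Z\}$. The $\Z$-graded ring produced by the lemma is $\bigoplus_{i\in\Z}\Hom_\UU(\Lambda,\nu_d^{-i}(\Lambda))$, which, as $\UU$ is a full subcategory of $\Db(\Lambda)$, equals $\bigoplus_{i\in\Z}\Hom_{\Db(\Lambda)}(\Lambda,\nu_d^{-i}(\Lambda))$, i.e.\ $\Pi$ with the tensor grading, as recalled just above the proposition. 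So Lemma~\ref{Gproj} yields the equivalences $G$ and $H$ directly, and the ``in particular'' clause then follows since an equivalence of additive categories induces an equivalence between the associated categories of finitely presented modules, under the standard identifications of finitely presented $\grprojm\Pi$-modules (resp.\ $\Pi\mgrproj$-modules) with $\grModm\Pi$ (resp.\ $\Pi\mgrMod$).

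For part~(b), since $\Hom_{\Pi^\op}(-,\Pi)$ and $\Hom_\Pi(-,\Pi)$ are mutually quasi-inverse dualities between $\grprojm\Pi$ and $\Pi\mgrproj$, it suffices to exhibit a natural isomorphism $H(X)\cong\Hom_{\Pi^\op}(G(X),\Pi)$ of $\Z$-graded left $\Pi$-modules for $X\in\UU$. The key device is a pairing: for $g\in H(X)_i=\Hom_{\Db(\Lambda)}(X,\nu_d^{-i}(\Lambda))$ and $h\in G(X)_j=\Hom_{\Db(\Lambda)}(\Lambda,\nu_d^{-j}(X))$, set $\langle g,h\rangle:=\nu_d^{-j}(g)\circ h\in\Hom_{\Db(\Lambda)}(\Lambda,\nu_d^{-(i+j)}(\Lambda))=\Pi_{i+j}$. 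Unwinding the composition rule that defines the multiplication of $\Pi$ and the $\Pi$-module structures on $G(X)$ and $H(X)$ coming from Lemma~\ref{Gproj}, one checks $\langle\pi g,h\rangle=\pi\langle g,h\rangle$ and $\langle g,h\pi\rangle=\langle g,h\rangle\pi$ for homogeneous $\pi\in\Pi$; hence $g\mapsto\langle g,-\rangle$ defines a morphism $\Phi_X\colon H(X)\to\Hom_{\Pi^\op}(G(X),\Pi)$ of $\Z$-graded left $\Pi$-modules, which is manifestly natural in $X$.

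It remains to see $\Phi_X$ is bijective; since both sides are additive in $X$ and $\UU=\add\{\nu_d^{-k}(\Lambda)\mid k\in\Z\}$, it is enough to take $X=\nu_d^{-k}(\Lambda)$, where $H(\nu_d^{-k}(\Lambda))\cong\Pi\grsh{-k}$ and $G(\nu_d^{-k}(\Lambda))\cong\Pi\grsh{k}$ as $\Z$-graded $\Pi$-modules (obtained by applying $\nu_d^{k}$ to $\id_\Lambda$), under which $\langle-,-\rangle$ becomes the multiplication of $\Pi$ and $\Phi$ becomes the tautological isomorphism $\Pi\grsh{-k}\cong\Hom_{\Pi^\op}(\Pi\grsh{k},\Pi)$. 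This proves the claim, and hence the commutativity of the diagram (the other triangle follows from the duality). The only genuine work is bookkeeping: keeping the grading-shift and $\nu_d$-twist conventions consistent so that the $\Pi$-actions transported from $\UU$ via Lemma~\ref{Gproj} are exactly the ones making the pairing $\Pi$-bilinear, and checking the identifications $H(\nu_d^{-k}(\Lambda))\cong\Pi\grsh{-k}$, $G(\nu_d^{-k}(\Lambda))\cong\Pi\grsh{k}$ are natural in $k$. A marginally slicker alternative for~(b) is to observe that $\Hom_\Pi(-,\Pi)\circ H$ and $G$ are both equivalences $\UU\to\grprojm\Pi$ sending $\Lambda$ to $\Pi$ and intertwining the $\nu_d$-action with the grading shift, hence naturally isomorphic; but making ``intertwining'' precise costs about the same, so I would present the explicit pairing.
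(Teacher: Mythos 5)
Your proposal is correct and follows the paper's route exactly: the paper presents Proposition \ref{Zproj} as an immediate application of Lemma \ref{Gproj} to $\UU$ with the $\Z$-action $i\mapsto\nu_d^{-i}$ and $M=\Lambda$, which is precisely your part (a). The paper leaves part (b) unproved, and your explicit pairing $\langle g,h\rangle=\nu_d^{-j}(g)\circ h$, checked to be $\Pi$-bilinear for the module structures induced by Lemma \ref{Gproj} and an isomorphism on the additive generators $\nu_d^{-k}(\Lambda)$, is the standard verification the authors evidently had in mind.
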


Now we are ready to state the main result of this subsection.
It asserts that minimal projective resolutions of $\Z$-graded simple modules over the higher preprojective algebra $\Pi$ of a $d$-hereditary algebra $\Lambda$ are induced from $d$-almost split sequences in $\UU$.

\begin{theorem}\label{d-ass gives projective resolution}
Let $X$ be an indecomposable object in $\UU$, and
\[Y\xrightarrow{f_d} C_{d-1}\xrightarrow{f_{d-1}} C_{d-2}\xrightarrow{f_{d-2}}\cdots\xrightarrow{f_2}C_1\xrightarrow{f_1}C_0\xrightarrow{f_0} X\]
a $d$-almost split sequence in $\UU$.
\begin{enumerate}[\rm(a)]
\item There exist exact sequences
\begin{eqnarray*}
&GY\xrightarrow{Gf_d} GC_{d-1}\xrightarrow{Gf_{d-1}}\cdots\xrightarrow{Gf_2} GC_1\xrightarrow{Gf_1} GC_0\xrightarrow{Gf_0} GX\to T\to0&\\
&HX\xrightarrow{Hf_0} HC_0\xrightarrow{Hf_1}HC_1\xrightarrow{Hf_2}\cdots\xrightarrow{Hf_{d-1}} HC_{d-1}\xrightarrow{Hf_d} HY\to U\to0&
\end{eqnarray*}
in $\grModm\Pi$ and $\Pi\mgrMod$, where $T$ and $U$ are simple.
\item If $\Lambda$ is $d$-RI, then $Gf_d$ and $Hf_0$ are monomorphisms.
\item If $\Lambda$ is $d$-RF, then $\Ker Gf_d=\soc GY$ and $\Ker Hf_0=\soc HX$. Moreover these are simple.
\end{enumerate}
\end{theorem}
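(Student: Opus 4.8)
The plan is to reduce the theorem to the defining exactness properties of (weak) $d$-almost split sequences (Definition~\ref{def:nASS}) by transporting everything through the equivalences of Proposition~\ref{Zproj}. So the first step is to fix the dictionary. By Proposition~\ref{Zproj}(a), $G$ and $H$ induce equivalences of abelian categories $G_*:\grModm\Pi\arr{\sim}\Modm\UU$ and $H_*:\Pi\mgrMod\arr{\sim}\UU\mMod$. Since $G$ and $H$ are fully faithful, $G_*$ carries the projective graded module $GM$ to the representable functor $\Hom_\UU(-,M)$ and a morphism $Gf$ to $(f)_*=\Hom_\UU(-,f)$, while $H_*$ carries $HM$ to $\Hom_\UU(M,-)$ and $Hf$ to $(f)^*=\Hom_\UU(f,-)$. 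As $G_*$ and $H_*$ are equivalences of abelian categories they are exact, reflect exactness of complexes, send simple objects to simple objects, and commute with $\soc$; so it suffices to prove the $\UU$-module versions of (a)--(c) and pull them back.

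For part~(a), I would take the $d$-almost split sequence supplied by Theorem~\ref{existence of d-ass} (a genuine one when $\Lambda$ is $d$-RI, a weak one satisfying the stated socle conditions when $\Lambda$ is $d$-RF) and read the two complexes of Definition~\ref{def:nASS} as complexes of $\UU$-modules. Evaluating at an arbitrary object of $\UU$ shows that
\[\Hom_\UU(-,Y)\xrightarrow{(f_d)_*}\Hom_\UU(-,C_{d-1})\to\cdots\to\Hom_\UU(-,C_0)\xrightarrow{(f_0)_*}\Hom_\UU(-,X)\]
is exact except possibly at $\Hom_\UU(-,Y)$, and that $\Image(f_0)_*=\rad_\UU(-,X)$, so the cokernel of $(f_0)_*$ is the simple functor $\Hom_\UU(-,X)/\rad_\UU(-,X)$. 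Applying $G_*^{-1}$ yields the first exact sequence with $T$ simple; the second exact sequence, with $U$ simple, follows dually from the second complex of Definition~\ref{def:nASS} and $H_*^{-1}$.

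For parts~(b) and~(c) I would invoke the finer information of Theorem~\ref{existence of d-ass}. In the $d$-RI case its part~(a) gives genuine exactness of the complexes of Definition~\ref{def:nASS}, so $(f_d)_*$ and $(f_0)^*$ are monomorphisms of $\UU$-modules, whence $Gf_d$ and $Hf_0$ are monomorphisms after applying the exact functors $G_*^{-1}$ and $H_*^{-1}$. In the $d$-RF case its part~(b) gives $\Ker(f_d)_*=\soc\Hom_\UU(-,Y)$ and $\Ker(f_0)^*=\soc\Hom_\UU(X,-)$, so $\Ker Gf_d=\soc GY$ and $\Ker Hf_0=\soc HX$ because $G_*$ and $H_*$ commute with $\soc$. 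For the simplicity of these socles, recall that, $\Lambda$ being $d$-RF, $\UU$ is a $d$-cluster tilting subcategory of $\Db(\Lambda)$ stable under the Serre functor (\cite{iya-ct-higher,io-stab}), so representable functors on $\UU$ have simple socle --- this is precisely the computation carried out inside the proof of Theorem~\ref{existence of d-ass}(b).

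The argument is essentially a formal transport of structure, so I do not anticipate a serious obstacle. The two points requiring care are the dictionary itself --- checking that $G_*$ sends $GM$ to $\Hom_\UU(-,M)$ and $Gf$ to $(f)_*$, and dually for $H$, which is where full faithfulness of $G$ and $H$ enters --- and the simple-socle claim in the $d$-RF case, which rests on $\UU$ carrying a Serre functor; both are already implicit in Proposition~\ref{Zproj} and in the proof of Theorem~\ref{existence of d-ass}(b).
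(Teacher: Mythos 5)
Your proposal is correct and follows essentially the same route as the paper: the paper likewise evaluates the defining exactness conditions of the ($d$-almost split) sequence on the additive generators $\nu_d^i(\Lambda)$ of $\UU$ and transports the result through the equivalences of Proposition \ref{Zproj}, deducing (b) and (c) from the corresponding statements in Theorem \ref{existence of d-ass}(a) and (b). Your extra care over the dictionary $G_*(GM)\cong\Hom_\UU(-,M)$ and over the simple-socle claim in the $d$-RF case only makes explicit what the paper leaves implicit.
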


\begin{proof}
(a) By Theorem \ref{existence of d-ass}(a), we have an exact sequence
\begin{eqnarray*}
\bigoplus_{i\in\Z}\Hom_{\UU}(\nu_d^i(\Lambda),Y)\xrightarrow{f_d{}_*}\bigoplus_{i\in\Z}\Hom_{\UU}(\nu_d^i(\Lambda),C_{d-1})\xrightarrow{f_{d-1}{}_*}\cdots&\xrightarrow{f_1{}_*}&\bigoplus_{i\in\Z}\Hom_{\UU}(\nu_d^i(\Lambda),C_0)\\
&\xrightarrow{f_0{}_*}&\bigoplus_{i\in\Z}\rad_{\UU}(\nu_d^i(\Lambda),X)\to0.
\end{eqnarray*}
This gives the first sequence. Dually, we obtain the second sequence.
It follows from Proposition \ref{Zproj}(a) that $T$ and $U$ are simple.

(b)(c) These follow from Theorem \ref{existence of d-ass}(a)(b). {It follows from Proposition \ref{Zproj}(a) that $\Ker Gf_d$ and $\Ker Hf_0$ are simple if $\Lambda$ is $d$-RF.}  
\end{proof}

We say that an algebra $A$ is \emph{twisted-periodic} if, for some $i\geq1$, $\Omega_{A^\en}^i(A)\cong A_\sigma$ as $A^\en$-modules for some $\sigma\in\Aut(A)$, i.e., the projective resolution of the identity bimodule is periodic up to a twist by some algebra automorphism.

As an application of our results, we have the following result for $d$-RF case. The selfinjectivity was first proved in \cite{io-stab}, and the twisted-periodicity was first proved by Dugas \cite{dug}.

\begin{corollary}\label{cor:nrf-pi-twper}
Let $\Lambda$ be a $d$-RF algebra and $\Pi$ its $(d+1)$-preprojective algebra.
\begin{enumerate}[\rm(a)]
\item $\Pi$ is self-injective.
\item $\Pi$ is twisted-periodic of period $d+2$. 
\end{enumerate}
\end{corollary}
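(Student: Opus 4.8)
The plan is to derive both parts from Theorem~\ref{d-ass gives projective resolution} and the two equivalences of Proposition~\ref{Zproj}. First I would fix the tensor grading on $\Pi$; since $\Lambda$ is $d$-RF, $\Pi$ is a $d$-cluster tilting $\Lambda$-module by Proposition~\ref{P and I for n-hereditary}(a), hence finite dimensional over $\F$, positively graded, and basic (with $\Pi/\rad\Pi\cong\Lambda/\rad\Lambda$), so it suffices to prove the graded versions of (a) and (b). I would also record two bookkeeping facts: $\nu_d$ is an autoequivalence of $\UU$, so $Y=\nu_d(X)$ ranges over all indecomposable objects of $\UU$ as $X$ does; and via $G$ (resp.\ $H$) the indecomposable objects of $\UU$ are identified with all indecomposable graded projective right (resp.\ left) $\Pi$-modules, hence, on taking tops, with all simple graded right (resp.\ left) $\Pi$-modules.

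For part~(a) I would take an indecomposable $X\in\UU$, its $d$-almost split sequence $Y\xrightarrow{f_d}C_{d-1}\to\cdots\to C_0\xrightarrow{f_0}X$ in $\UU$ (which exists by Theorem~\ref{existence of d-ass}(b), with $Y=\nu_d(X)$), and apply the left exact functor $\Hom_{\Pi^\op}(-,\Pi)$ to the first exact sequence of Theorem~\ref{d-ass gives projective resolution}(a). By Proposition~\ref{Zproj}(b) this functor carries $GX$, $GC_0$ to $HX$, $HC_0$ and $Gf_0$ to $Hf_0$, so the tail $GC_0\xrightarrow{Gf_0}GX\to T\to 0$ becomes an exact sequence $0\to\Hom_{\Pi^\op}(T,\Pi)\to HX\xrightarrow{Hf_0}HC_0$; combined with Theorem~\ref{d-ass gives projective resolution}(c) this identifies $\Hom_{\Pi^\op}(T,\Pi)\cong\Ker Hf_0=\soc HX$, which is simple and in particular nonzero. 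As $T=\top GX$ runs over all simple graded right $\Pi$-modules, every simple graded right $\Pi$-module embeds into $\Pi$. On the other hand, Theorem~\ref{d-ass gives projective resolution}(c) together with $\nu_d$ being an autoequivalence shows $\soc GY$ is simple for every indecomposable graded projective right $\Pi$-module $GY$. Writing $\Pi=\bigoplus_j e_j\Pi$ (possible since $\Pi$ is basic), the assignment $j\mapsto\soc(e_j\Pi)$ is then a surjection, hence a bijection, onto the simple right $\Pi$-modules; this is the Nakayama permutation, so $\Pi$ is self-injective. (One side suffices, since left and right self-injectivity coincide for finite-dimensional algebras.)

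For part~(b) I would use~(a). For each indecomposable $X\in\UU$, splicing the first exact sequence of Theorem~\ref{d-ass gives projective resolution}(a) with $\Ker Gf_d=\soc G(\nu_d X)$ from part~(c) gives a minimal graded projective resolution
\[0\to\soc G(\nu_d X)\to G(\nu_d X)\xrightarrow{Gf_d}GC_{d-1}\to\cdots\to GC_0\to GX\to T\to 0\]
of the simple module $T=\top GX$, whose top term is simple; thus $\Omega^{d+2}_{\Pi}$ sends every simple graded $\Pi$-module to a simple one. More precisely, since under $G$ the autoequivalence $\nu_d$ of $\UU$ becomes the grading shift $\langle -1\rangle$ on $\grprojm\Pi$, one gets $\Omega^{d+2}_{\Pi}(T)\cong\mathcal N(T)\langle-1\rangle$ uniformly in $T$, where $\mathcal N$ denotes the Nakayama permutation. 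It remains to lift this uniform periodicity of the simple resolutions to the bimodule resolution: taking a minimal projective bimodule resolution $\cdots\to Q_1\to Q_0\to\Pi\to 0$, each $Q_i$ is projective as a one-sided $\Pi$-module and $\Tor^{\Pi}_{>0}(\Pi,-)=0$, so dimension shifting yields $\Omega^{d+2}_{\Pi^\en}(\Pi)\otimes_\Pi S\cong\Omega^{d+2}_{\Pi}(S)\cong\mathcal N(S)\langle-1\rangle$ for every simple $S$; using that $\Pi^\en$ is again self-injective, one concludes $\Omega^{d+2}_{\Pi^\en}(\Pi)\cong{}_1\Pi_\sigma$ (up to a grading shift) for the Nakayama automorphism $\sigma$, so that, forgetting the grading, $\Pi$ is twisted-periodic of period $d+2$.

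I expect the main obstacle to be exactly this last lifting: passing from ``$\Omega^{d+2}_{\Pi^\en}(\Pi)$ becomes simple after tensoring with every simple module'' to ``$\Omega^{d+2}_{\Pi^\en}(\Pi)$ is an invertible bimodule''. This is the standard principle that over a self-injective algebra the periodicity of the simple resolutions propagates to the Hochschild bimodule resolution; in the writeup I would either invoke it directly or reprove it in this setting, recovering the periodicity theorem of Dugas~\cite{dug}. Everything else---part~(a) and the ``simple-by-simple'' half of~(b)---follows rather directly from Theorem~\ref{d-ass gives projective resolution}.
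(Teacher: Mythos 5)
Your argument is correct, and for part (b) it is essentially the paper's: both establish that $\Omega^{d+2}$ permutes the ($\Z$-graded) simple $\Pi$-modules via Theorem \ref{d-ass gives projective resolution}(a),(c), and then lift this to twisted periodicity of the bimodule $\Pi$ by the standard propagation principle for self-injective algebras --- the paper simply cites \cite[Theorem 1.4]{gss} for exactly the step you flag as the main obstacle, so invoking it directly (rather than your sketched dimension-shifting reproof, which as written does not quite get from ``$\Omega^{d+2}_{\Pi^\en}(\Pi)\otimes_\Pi S$ simple for all $S$'' to invertibility of the bimodule) is the cleanest route. For part (a), however, you take a genuinely different path. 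The paper applies $\Hom_{\Pi^{\op}}(-,\Pi)$ to the whole resolution and uses the exactness of the second sequence in Theorem \ref{d-ass gives projective resolution}(a) to conclude $\Ext^i_\Pi(T,\Pi)=0$ for $0<i<d+1$ and every simple $T$; taking $i=1$ and inducting on length gives $\Ext^1_\Pi(-,\Pi)=0$, i.e.\ $\Pi$ is injective over itself. You instead read off from $\Hom_{\Pi^{\op}}(T,\Pi)\cong\Ker Hf_0=\soc HX$ and from $\Ker Gf_d=\soc GY$ that every indecomposable projective has simple socle and that the socle assignment permutes the simples, and then conclude self-injectivity by the Nakayama/dimension-count criterion (each $e_j\Pi$ embeds in the indecomposable injective with the same socle, and $\sum_j\dim e_j\Pi=\dim\Pi=\sum_j\dim I(S_j)$ forces equality). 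Both arguments rest on the same two inputs (Theorem \ref{d-ass gives projective resolution} and Proposition \ref{Zproj}(b)); the paper's is shorter and needs only the exactness statements, while yours additionally produces the Nakayama permutation explicitly, which dovetails nicely with the twist appearing in part (b). The only place you should be a bit more careful in a writeup is the sentence ``this is the Nakayama permutation, so $\Pi$ is self-injective'': spell out the counting argument (or cite the precise one-sided criterion), since the bare existence of a permutation on socles is doing real work there.
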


\begin{proof}
(a) It follows from Theorem \ref{d-ass gives projective resolution} that $\Ext^i_{\Pi}(T,\Pi)=0$ holds for any $\Z$-graded simple $\Pi$-modules and $0<i<d+1$.
Thus $\Ext^1_{\Pi}(-,\Pi)=0$ holds on $\mod\Pi$, and therefore $\Pi$ is injective as a $\Pi$-module.

(b) Since $\Lambda$ is a factor algebra of $\Pi$ by the ideal $\bigoplus_{i>0}\Pi_i$ contained in the radical, each simple $\Pi$-module $S$ is realized as the top of $GP$, where $P$ is an indecomposable projective $\Lambda$-module.
Thus, by Theorem \ref{d-ass gives projective resolution}(c), the sum $S=\bigoplus S_i$ of the simple $\Pi$-modules is periodic of period $d+2$.
This implies the assertion by \cite[Theorem 1.4]{gss}.
\end{proof}

We note that the twisted-periodicity is closely related to the stably Calabi-Yau property
(e.g.\ \cite[Theorem 1.8]{iv}). In fact, $\Pi$ is known to be stably $(d+1)$-Calabi-Yau \cite[Theorem 1.1(a)]{io-stab}.

As another application our results, we have the following result for $d$-RI case.  

\begin{corollary}\label{cor:gldimri}
Let $\Lambda$ be a $d$-RI algebra and $\Pi$ its $(d+1)$-preprojective algebra.
\begin{enumerate}[\rm(a)]
\item $\Pi$ has left and right global dimension $d+1$ (c.f.\ Appendix A).
\item Any $\Z$-graded simple right $\Pi$-module $T$ satisfies
\[\Ext^i_{\Pi^{\op}}(T,\Pi) \cong
\begin{cases}
T^*\grsh{1} & \text{if } i=d+1;\\
0 & \text{otherwise.}
\end{cases}\]
\item Any $\Z$-graded simple left $\Pi$-module $U$ satisfies
\[\Ext^i_\Pi(U,\Pi) \cong
\begin{cases}
U^*\grsh{1} & \text{if } i=d+1;\\
0 & \text{otherwise.}
\end{cases}\]
\end{enumerate}
\end{corollary}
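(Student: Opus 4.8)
The strategy is to read everything off the projective resolutions produced by Theorem~\ref{d-ass gives projective resolution}, combined with the duality of Proposition~\ref{Zproj}(b). Fix a $\Z$-graded simple right $\Pi$-module $T$. Since $G\colon\UU\to\grprojm\Pi$ is an equivalence, the projective cover of $T$ is of the form $GX$ for an indecomposable $X\in\UU$, so $T\cong\top GX$; take the $d$-almost split sequence $Y\xrightarrow{f_d}C_{d-1}\to\cdots\to C_0\xrightarrow{f_0}X$ in $\UU$ ending at $X$ (Theorem~\ref{existence of d-ass}(a)), where $Y\cong\nu_d(X)$. Applying $G$ and using Theorem~\ref{d-ass gives projective resolution}(a) together with the $d$-RI case of part~(b), we obtain a minimal projective resolution
\[
0\to GY\xrightarrow{Gf_d}GC_{d-1}\xrightarrow{Gf_{d-1}}\cdots\xrightarrow{Gf_1}GC_0\xrightarrow{Gf_0}GX\to T\to0
\]
of length $d+1$: minimal because the $f_i$ lie in $\rad_\UU$ and $G$ is an equivalence of additive categories, and of length exactly $d+1$ because $GY=G(\nu_dX)\ne0$. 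Hence $\pd_{\Pi^{\op}}T=d+1$, and dually every $\Z$-graded simple left $\Pi$-module has projective dimension $d+1$; this shows the graded global dimension of $\Pi$ equals $d+1$ on both sides, and we refer to Appendix~A for the comparison with the ungraded global dimension. This gives (a) modulo the identification in (b) and (c), though the lower bound already follows from the minimal resolution above.

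For (b), apply $\Hom_{\Pi^{\op}}(-,\Pi)$ to the resolution of $T$. By Proposition~\ref{Zproj}(b) there are natural isomorphisms $\Hom_{\Pi^{\op}}(GZ,\Pi)\cong HZ$ identifying $\Hom_{\Pi^{\op}}(Gf,\Pi)$ with $Hf$, so the resulting cochain complex (in cohomological degrees $0,\dots,d+1$) is
\[
0\to HX\xrightarrow{Hf_0}HC_0\xrightarrow{Hf_1}\cdots\xrightarrow{Hf_{d-1}}HC_{d-1}\xrightarrow{Hf_d}HY\to0.
\]
By Theorem~\ref{d-ass gives projective resolution}(a) the augmented sequence $HX\to HC_0\to\cdots\to HC_{d-1}\to HY\to U\to0$ is exact with $U$ a $\Z$-graded simple left $\Pi$-module, and by the $d$-RI case of part~(b) the map $Hf_0$ is a monomorphism. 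Thus the complex above is acyclic except in top degree $d+1$, where the cohomology is $\Cok(Hf_d)=U$; that is, $\Ext^i_{\Pi^{\op}}(T,\Pi)=0$ for $i\ne d+1$ and $\Ext^{d+1}_{\Pi^{\op}}(T,\Pi)\cong U$. Part~(c) is obtained by the same argument with left and right interchanged, starting from the $H$-resolution of a $\Z$-graded simple left module in Theorem~\ref{d-ass gives projective resolution}(a) and applying $\Hom_\Pi(-,\Pi)$ via the other triangle of Proposition~\ref{Zproj}(b).

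It remains to identify $U$ with $T^*\grsh1$, and I expect this bookkeeping to be the only genuinely delicate point, since it combines the left/right swap with the grading shift. From $H(-)_i=\Hom_{\der(\Lambda)}(-,\nu_d^{-i}(\Lambda))$ and the fact that $\nu_d$ is an autoequivalence one gets a natural isomorphism $H(\nu_dX)\cong(HX)\grsh1$. Writing the indecomposable projective as $GX\cong e_j\Pi\grsh m$, so that $T=\top GX\cong S_j\grsh m$, Proposition~\ref{Zproj}(b) gives $HX\cong\Hom_{\Pi^{\op}}(e_j\Pi\grsh m,\Pi)\cong(\Pi e_j)\grsh{-m}$, whose top is $S_j^*\grsh{-m}\cong(S_j\grsh m)^*\cong T^*$, where $(-)^*$ is the graded $\F$-dual interchanging left and right modules. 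Therefore
\[
U=\top HY=\top H(\nu_dX)\cong\top\bigl((HX)\grsh1\bigr)\cong(\top HX)\grsh1\cong T^*\grsh1 ,
\]
proving (b); the same computation with left and right exchanged gives $\Ext^{d+1}_\Pi(U,\Pi)\cong U^*\grsh1$, proving (c). In particular $\Ext^{d+1}_{\Pi^{\op}}(T,\Pi)\ne0$ for every $\Z$-graded simple right module $T$, so the right, and dually left, global dimension of $\Pi$ is exactly $d+1$.
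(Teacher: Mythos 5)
Your proposal is correct and takes essentially the same route as the paper: the paper's proof simply cites Theorem~\ref{d-ass gives projective resolution} for the length-$(d+1)$ minimal resolution and the Ext computation, and Theorem~\ref{global dimension} for part (a), and your argument is a faithful unpacking of exactly those steps (duality via Proposition~\ref{Zproj}(b), exactness of the $H$-sequence, and the shift $H(\nu_dX)\cong(HX)\grsh{1}$ to identify the top cohomology with $T^*\grsh{1}$). The bookkeeping identifying $U$ with $T^*\grsh{1}$ is carried out correctly and is the only part the paper leaves entirely implicit.
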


\begin{proof}
It follows from Theorem \ref{d-ass gives projective resolution} that any $\Z$-graded simple $\Pi^{\op}$-module $T$ has projective dimension $d+1$ and satisfies the equalities of extension groups. Thus (b) holds, and dually (c) holds. They imply (a) by Theorem \ref{global dimension}.
\end{proof}

Corollary \ref{cor:gldimri} says that $\Pi$, with the tensor grading, is a generalized Artin-Schelter regular algebra of dimension $d+1$ and Gorenstein parameter $1$ in the sense of \cite{mv-serre,ms11,mm,rr} (see also \cite{as}).  This is equivalent to a result \cite[Theorem 4.2]{mm} of Minamoto-Mori up to  \cite[Theorem 5.2]{rr}, and also can be deduced from results of Keller \cite{kel-dcy}.

\subsection{$\Z^2$-graded higher preprojective algebras}

Here, we consider gradings on higher preprojective algebras, which will be used in Section \ref{Koszul higher}.
Let $\Lambda$ be a positively $\Z$-graded algebra
$$\Lambda =\bigoplus_{i\in\Z}\Lambda_i$$
with radical grading (see Section \ref{subsec:kos}).
The enveloping algebra $\Lambda^\en$ of $\Lambda$ has a $\Z$-grading given by $$(\Lambda^\en)_i=\bigoplus_{i=j+k}\Lambda_j\otimes_\F\Lambda_k.$$
Using the $\Z$-grading on $\Lambda$, we define a new $\Z$-grading on the higher preprojective algebra $\Pi$.

For $i>0$ and finitely generated $\Z$-graded $\Lambda$-modules $M$ and $N$, let $\ext^i_\Lambda(M,N)$ denote the $\Z$-graded $i$th ext space (our notation follows \cite[Section 2.1]{bgs}). 
Then we have an equality
$$\Ext^i_\Lambda(M,N)=\bigoplus_{j\in\Z}\ext^i_\Lambda(M,N\grsh{j}).$$
Hence $\Ext^i_\Lambda(M,N)$ has a $\Z$-grading whose degree $j$ part is $\ext^i_\Lambda(M,N\grsh{j})$.

Now we define the $\Z$-grading on the $\Lambda^\en$-module $E=\Ext^d_\Lambda(\Lambda^*,\Lambda)$ by
\begin{equation}\label{grading on E}
E=\bigoplus_{j\in \Z}\ext^d_\Lambda(\Lambda^*,\Lambda\grsh{j}).
\end{equation}
Then, as in Lemma \ref{lem:alt-E}, we can show that there are isomorphisms
$$E\cong\bigoplus_{j\in \Z}\ext^d_{\Lambda^\en}(\Lambda,\Lambda^\en\grsh{j})\cong\bigoplus_{j\in \Z}\ext^d_{\Lambda^\op}(\Lambda^*,\Lambda\grsh{j})$$
of $\Z$-graded $\Lambda^\en$-modules.
Let $\Lambda\mgrMod$ denote the category of finitely generated $\Z$-graded left $\Lambda$-modules.
We lift the functors $\tau_d$ and $\tau_d^-$ to $\Z$-graded $\Lambda$-modules as follows.
\[\tau_d:=\Hom_\Lambda(E,-):\Lambda\mgrMod\to\Lambda\mgrMod
\ \mbox{ and }\ \tau_d^-:=E\otimes_\Lambda-:\Lambda\mgrMod\to\Lambda\mgrMod.\]

\begin{definition}\label{def-grpreproj}
\begin{enumerate}[\rm(a)]
\item The \emph{$\Z^2$-graded $(d+1)$-preprojective algebra} of a $\Z$-graded algebra $\Lambda =\bigoplus_{i\in\Z}\Lambda_i$ is the tensor algebra of the $\Z$-graded $\Lambda^\en$-module $E$:
$$\Pi(\Lambda)=\Tens_\Lambda(E).$$
The first part of the $\Z^2$-grading is the tensor grading (Definition \ref{ungraded preprojective algebra}). 
The second part of the $\Z^2$-grading is called the \emph{$\Lambda$-grading}, which is a natural grading on $E^i$ for any $i\ge0$ given by the $\Z$-grading on $E$ in \eqref{grading on E}.
\item We consider a single $\Z$-grading on $\Pi$, called the \emph{$(d+1)$-total grading}, by defining
$$\Pi_\ell:=\bigoplus_{(d+1)i+j=\ell}\Pi_{i,j}$$
where $\Pi_{i,j}=(E^i)_j$ denotes the $j$th graded component of $E^i$.
\end{enumerate}
\end{definition}

Later we will use the following observation.

\begin{proposition}\label{prop:e-gen-pos-deg}
If $\Lambda$ is Koszul, then $E$ is generated in degree $-d$. Therefore the $(d+1)$-total grading of $\Pi$ gives a radical grading. 
\end{proposition}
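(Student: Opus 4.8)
The plan is to identify the generating degree of $E = \Ext^d_\Lambda(\Lambda^\en)$ precisely, using the fact that $\Lambda$ is Koszul. By Theorem \ref{characterize koszul}, the minimal projective resolution of $\Lambda$ as a $\Lambda^\en$-module is the Koszul bimodule complex \eqref{Koszul bimodule complex}, so each $P_i = \Lambda \otimes_S K_i \otimes_S \Lambda$ with $K_i$ concentrated in degree $i$ (where the degree refers to the $\Lambda$-grading inherited from the radical grading). By Proposition \ref{prop-describee} we have $E \cong (\Lambda \otimes_S K_d^\vee \otimes_S \Lambda)/\Image\delta_d'$ as $\Lambda^\en$-modules, and $\hd E \cong K_d^\vee$ as $S^\en$-modules. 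So the plan reduces to checking that $K_d^\vee$ sits in degree $-d$ with respect to the $\Lambda$-grading on $E$ given by \eqref{grading on E}, and then invoking that the head already generates.

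First I would pin down the grading convention. Since $K_d \subseteq V^d$ and $V$ is concentrated in degree $1$, the module $K_d$ is concentrated in degree $d$. Under the duality $(-)^\vee = \Hom_{S^\en}(-,S^\en)$, with $S^\en$ in degree $0$, the dual $K_d^\vee$ is concentrated in degree $-d$. The grading on $E$ in \eqref{grading on E} is defined via $\ext^d_\Lambda(\Lambda^*,\Lambda\grsh{j})$, equivalently (by the graded analogue of Lemma \ref{lem:alt-E}) via $\ext^d_{\Lambda^\en}(\Lambda, \Lambda^\en\grsh{j})$; computing this from the Koszul bimodule resolution, the degree-$j$ part of $E$ is the appropriate homology of $\Hom_{\Lambda^\en}(P_\bullet, \Lambda^\en\grsh{j})$, and the top term $\Lambda \otimes_S K_d^\vee \otimes_S \Lambda$ contributes exactly in $\Lambda$-degree $-d$ at its head. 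Hence $\hd E$ lives in degree $-d$, and since $E$ is a cyclic-type quotient generated by its head (it is a quotient of the bimodule generated in degree $-d$, by Proposition \ref{prop-describee}), $E$ is generated in degree $-d$. This is essentially a graded refinement of Lemma \ref{lem:e-gen-pos-deg2} combined with Proposition \ref{prop-describee}.

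For the second assertion, I would compute the $(d+1)$-total degree of the generators of $\Pi$. The generators of $\Pi = \Tens_\Lambda(E)$ over $\Lambda$ sit in tensor degree $i=1$ and, by the first part, in $\Lambda$-degree $j = -d$, so their $(d+1)$-total degree is $(d+1)\cdot 1 + (-d) = 1$. The elements of $\Lambda = \Pi_{0,\bullet}$ in $\Lambda$-degree $j$ have $(d+1)$-total degree $j \ge 0$, with the degree-$0$ part being $S$, which is semisimple. Thus $\Pi_0 = S$ (the degree-$0$ component of the $(d+1)$-total grading), $\Pi$ is generated in degrees $0$ and $1$, and $\Pi_{>0}$ is the radical: indeed $\Pi_{>0}$ contains $\bigoplus_{i\ge1}E^i$ (the tensor-positive part, which is nilpotent-like in the sense of being contained in the Jacobson radical as in Definition \ref{ungraded preprojective algebra}) together with $\Lambda_{>0}$, and modulo this ideal we are left with $S$. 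Therefore the $(d+1)$-total grading is a radical grading in the sense of Section \ref{subsec:kos}.

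The main obstacle I anticipate is bookkeeping the two gradings carefully — making sure the $\Lambda$-grading on $E$ coming from \eqref{grading on E} genuinely places $\hd E = K_d^\vee$ in degree exactly $-d$ rather than somewhere in degrees $\le -d$ or with possible contributions from lower terms of the resolution. This is where Koszulness is essential: for a general algebra of global dimension $d$, one only gets that $E$ is generated in degrees $\ge -d$ (Lemma \ref{lem:e-gen-pos-deg2}), whereas the linearity of the Koszul bimodule complex forces $K_d$ to be concentrated in the single degree $d$, hence $K_d^\vee$ in the single degree $-d$, which is what upgrades "generated in degrees $\ge -d$" to "generated in degree $-d$". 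Once that point is secured, the rest is the routine degree arithmetic sketched above.
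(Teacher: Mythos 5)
Your proposal is correct and follows essentially the same route as the paper: linearity of the Koszul bimodule complex forces $P_d$ (hence $K_d^\vee$ and the head of $E$) into the single degree $-d$, and the radical-grading claim is the same total-degree arithmetic $(d+1)\cdot 1 - d = 1$ together with $\Pi_0=\Lambda_0=S$. The only blemish is the typo $E=\Ext^d_\Lambda(\Lambda^\en)$ in your first line, which should read $E=\Ext^d_{\Lambda^\en}(\Lambda,\Lambda^\en)$ (or $\Ext^d_\Lambda(\Lambda^*,\Lambda)$).
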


\begin{proof}
If $\Lambda$ is Koszul, then $P_d$ is generated in degree $d$ by Theorem \ref{characterize koszul}, and the former assertion follows.  Since $\Pi_0=\Lambda_0$, $\Pi_1=\Lambda_1\oplus E_{-d}$ and $E_{-d}=\head_{\Lambda^\en}E$, the latter assertion follows.
\end{proof}

\subsection{Koszul properties of higher preprojective algebras}\label{Koszul higher}

Let $\Lambda$ be a $d$-hereditary $\F$-algebra. 
In this section, we further assume that $\Lambda$ is a $\Z$-graded algebra $\Lambda=\bigoplus_{i\in\Z}\Lambda_i$. 
We denote by $\Db(\Lambda\mgrMod)$ the bounded derived category of $\Lambda\mgrMod$.
As in the ungraded case, we define an autoequivalence
\begin{eqnarray*}
\nu_d=\Lambda^*[-d]\dert_\Lambda-:\Db(\Lambda\mgrMod)\to\Db(\Lambda\mgrMod)
\end{eqnarray*}
and a full subcategory
\[\UU^{\Z}:=\add\{\nu_d^{-i}(\Lambda)(j)\mid i,j\in\Z\}\subseteq\Db(\Lambda\mgrMod).\]
We have the following graded version of Theorem \ref{existence of d-ass}.

\begin{theorem}\label{existence of d-ass2}
Let $\Lambda$ be a $\Z$-graded $d$-hereditary algebra. 
\begin{enumerate}[\rm(a)]
\item If $\Lambda$ is $d$-RI, then any indecomposable object $X$ (respectively, $Y$) in $\UU^{\Z}$ has a $d$-almost split sequence in $\UU^{\Z}$
\[Y\xrightarrow{f_d} C_{d-1}\xrightarrow{f_{d-1}} C_{d-2}\xrightarrow{f_{d-2}}\cdots\xrightarrow{f_2}C_1\xrightarrow{f_1}C_0\xrightarrow{f_0} X.\]
Moreover, we have $Y\cong\nu_d(X)$ (respectively, $X\cong\nu_d^{-1}(Y)$).
\item If $\Lambda$ is $d$-RF, then any indecomposable object $X$ (respectively, $Y$) in $\UU^{\Z}$ has a weak $d$-almost split sequence in $\UU^{\Z}$
\[Y\xrightarrow{f_d} C_{d-1}\xrightarrow{f_{d-1}} C_{d-2}\xrightarrow{f_{d-2}}\cdots\xrightarrow{f_2}C_1\xrightarrow{f_1}C_0\xrightarrow{f_0} X.\]
Moreover, we have $Y\cong\nu_d(X)$ (respectively, $X\cong\nu_d^{-1}(Y)$), $\Ker(f_d{}_*)=\soc\Hom_{\UU}(-,Y)$ and $\Ker(f_0{}^*)=\soc\Hom_{\UU}(X,-)$.
\end{enumerate}
\end{theorem}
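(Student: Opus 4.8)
The plan is to deduce Theorem \ref{existence of d-ass2} from the ungraded version Theorem \ref{existence of d-ass} by a covering-theory/forgetful-functor argument. The key point is that the category $\UU^{\Z}\subseteq\Db(\Lambda\mgrMod)$ sits over the ungraded category $\UU\subseteq\Db(\Lambda)$ via the forgetful functor $F:\Db(\Lambda\mgrMod)\to\Db(\Lambda)$, and this functor restricts to a functor $F:\UU^{\Z}\to\UU$ which is a $\Z$-covering: for indecomposables $X,Y$ in $\UU^{\Z}$ one has $\Hom_{\Db(\Lambda)}(FX,FY)\cong\bigoplus_{j\in\Z}\Hom_{\UU^{\Z}}(X,Y(j))$, and every indecomposable of $\UU$ lifts to $\UU^{\Z}$. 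Since $\nu_d$ on $\Db(\Lambda\mgrMod)$ is compatible with $F$ (it commutes with the grading shift $(j)$ and lifts the ungraded $\nu_d$), the isomorphism $Y\cong\nu_d(X)$ in the graded setting will follow once we know it after applying $F$, because a lift of an indecomposable is unique up to shift and we can normalise the grading.

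First I would set up the forgetful functor and record the covering property of $\Hom$-spaces; this is where the hypothesis that $\Lambda$ is positively $\Z$-graded with radical grading (so the graded pieces are finite-dimensional and modules are locally finite) is used, together with the fact that $\UU^{\Z}$ consists of shifts of $\nu_d^{-i}(\Lambda)$, whose underlying ungraded objects are exactly the generators of $\UU$. Next I would take an indecomposable $X\in\UU^{\Z}$, apply $F$ to get an indecomposable $FX\in\UU$, invoke Theorem \ref{existence of d-ass} to obtain a ($d$-almost or weak $d$-almost) split sequence $FY\to C_{d-1}'\to\cdots\to C_0'\to FX$ in $\UU$ with $FY\cong\nu_d(FX)$, and then \emph{lift} this sequence term by term to $\UU^{\Z}$: each $C_i'$ decomposes into indecomposables of $\UU$, each of which lifts (uniquely up to shift), and each radical map $f_i$ between them lifts to a finite sum of homogeneous maps of various degrees by the covering property; choosing the grading shifts consistently (pin down $X$, then propagate) produces a complex in $\UU^{\Z}$ whose image under $F$ is (isomorphic to) the original one. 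The exactness of the $\Hom$-sequences in Definition \ref{def:nASS} for all test objects $M\in\UU^{\Z}$ then follows from exactness of the corresponding ungraded sequences together with the covering isomorphism, checked degree by degree; likewise the socle statements $\Ker(f_d{}_*)=\soc\Hom_{\UU}(-,Y)$ and $\Ker(f_0{}^*)=\soc\Hom_{\UU}(X,-)$ in the $d$-RF case transport along $F$ because socles of locally finite modules are compatible with the covering.

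The main obstacle I anticipate is bookkeeping the grading shifts when lifting the maps $f_i$: a priori an ungraded map in $\rad_\UU$ lifts to a sum $\sum_j \tilde f_i^{(j)}$ of components of different internal degrees, and one must argue that only the degree-$0$ (with respect to the chosen lifts) component is needed, i.e.\ that the lifted complex can be taken homogeneous. This is handled by observing that $\UU^{\Z}$ is a $\Z$-graded category in the sense that $\Hom$-spaces are $\Z$-graded with the grading induced by $(j)$, and that the $d$-almost split sequence, being essentially unique, must be homogeneous; concretely, the exact sequences in Theorem \ref{existence of d-ass}, being $\Z$-graded (all the relevant $\Hom$-spaces $\Hom_\UU(\nu_d^i(\Lambda),-)$ already carry the internal $\Z$-grading inherited from $\Lambda\mgrMod$), split as direct sums over the internal degree, and the piece living in the appropriate single degree is exactly the graded $d$-almost split sequence we want. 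Once this is said cleanly, the verification of the two exact sequences in Definition \ref{def:nASS} and of the socle conditions is routine, being the degreewise statement of Theorem \ref{existence of d-ass}.
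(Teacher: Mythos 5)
Your strategy---deducing the graded statement from Theorem \ref{existence of d-ass} via the forgetful functor $F:\UU^{\Z}\to\UU$ and a covering/lifting argument---is genuinely different from the paper's, which simply reruns the proof of Theorem \ref{existence of d-ass} inside $\Db(\Lambda\mgrMod)$ (the graded analogues of \cite[Theorem 4.25]{hio}, of \cite[Theorem 3.10]{iy} and of Serre duality all hold verbatim, with $\nu_d$ the graded Nakayama functor). Your route has the merit of reusing the ungraded theorem as a black box, and the homogenization issue you flag is real but can be handled essentially as you say: using that the $\Z$-action by $(1)$ is free on indecomposables of $\UU^{\Z}$, one gets $\rad_\UU(FM,FX)\cong\bigoplus_j\rad_{\UU^{\Z}}(M,X(j))$, and taking graded projective covers of $\rad_{\UU^{\Z}}(-,X)$ degreewise produces a homogeneous complex, exactly as in Gabriel's covering theory for ordinary almost split sequences. (Be aware that ``the sequence is essentially unique, hence homogeneous'' is not by itself an argument; it is the projective-cover/minimality mechanism that does the work.)

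There is, however, a genuine gap in the identification $Y\cong\nu_d(X)$. Your argument only shows $FY\cong\nu_d(FX)$, hence $Y\cong\nu_d(X)(j)$ for some $j\in\Z$, since a lift of an indecomposable object is unique only up to grading shift; and you cannot ``normalise the grading'' of $Y$, because once $X$ is fixed with its grading the sequence, and hence $Y$ together with its grading, is already determined. Moreover $j=0$ cannot be detected by any purely formal covering argument: the entire $d$-almost split sequence is permuted by the autoequivalence $(1)$, so the situation is invariant under replacing $j$ by any other integer. The precise value $j=0$ matters downstream --- the proof of Theorem \ref{thm:kos-alm-kos} uses $G^{\Z}Y=(G^{\Z}X)(-1,0)$ with \emph{zero} internal shift to obtain the (almost) Koszul property. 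To close the gap you need a genuinely graded input, e.g.\ graded Serre duality $\Hom_{\Db(\Lambda\mgrMod)}(M,\nu N)\cong\Hom_{\Db(\Lambda\mgrMod)}(N,M)^*$, which identifies $\Hom_{\UU^{\Z}}(-,\nu_d(X))$ with $\Hom_{\UU^{\Z}}(X[-d],-)^*$ and pins down the left end term on the nose (in the $d$-RI case one can instead invoke the graded version of \cite[Theorem 4.25]{hio}, which is what the paper implicitly does).
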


\begin{proof}
The proof is very similar to Theorem \ref{existence of d-ass}.
\end{proof}

Let $\Pi$ be the $\Z^2$-graded $(d+1)$-preprojective algebra. Recall from Definition \ref{def-grpreproj} that the first entry of the $\Z^2$-grading is the tensor grading, and the second one is the $\Lambda$-grading. 

On the other hand, we consider the action of $\Z^2$ on $\UU^{\Z}$ given by $(i,j)\mapsto\nu_d^{-i}(j)$.
The following description of the category $\UU^{\Z}$ follows directly from Lemma \ref{Gproj} and the definition.

\begin{proposition}\label{Z*Zproj}
\begin{enumerate}[\rm(a)]
\item There are equivalences of additive categories
\begin{eqnarray*}
&G^{\Z}=\bigoplus_{i,j\in\Z}\Hom_{\Db(\Lambda\mgrMod)}(\Lambda,\nu_d^{-i}(-)(j)):\UU^{\Z}\cong\bigrprojm\Pi,&\\
&H^{\Z}=\bigoplus_{i,j\in\Z}\Hom_{\Db(\Lambda\mgrMod)}(-,\nu_d^{-i}(\Lambda)(j)):\UU^{\Z}\cong\Pi\mbigrproj.&
\end{eqnarray*}
\item The following diagram commutes up to natural isomorphism.
\[\xymatrix@R=2em{
\UU^\Z\ar[rr]^{G^{\Z}}\ar@{=}[d]&&\bigrprojm\Pi\ar@<.2em>[d]^{\Hom_{\Pi^{\op}}(-,\Pi)}\\
\UU^\Z\ar[rr]_{H^{\Z}}&&\Pi\mbigrproj\ar@<.2em>[u]^{\Hom_\Pi(-,\Pi)}.
}\]
\item We have the following commutative diagrams.
\begin{equation*}
\xymatrix@R=2em{\UU^{\Z}\ar[rr]^{G^{\Z}}\ar[d]^{\nu_d^{-1}}&&\bigrprojm\Pi\ar[d]^{(1,0)}\\
\UU^{\Z}\ar[rr]^{G^{\Z}}&&\bigrprojm\Pi}\ \ \ \ \ 
\xymatrix@R=2em{\UU^{\Z}\ar[rr]^{H^{\Z}}\ar[d]^{\nu_d}&&\Pi\mbigrproj\ar[d]^{(1,0)}\\
\UU^{\Z}\ar[rr]^{H^{\Z}}&&\Pi\mbigrproj}
\end{equation*}
\end{enumerate}
\end{proposition}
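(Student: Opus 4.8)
The plan is to obtain all three parts as formal consequences of Lemma \ref{Gproj}. I would apply that lemma to the additive category $\CC=\UU^{\Z}$, the group $\Psi=\Z^2$ acting on $\UU^{\Z}$ by $(i,j)\cdot X:=\nu_d^{-i}(X)(j)$, and the object $M=\Lambda$. The hypotheses are immediate: the Nakayama autoequivalence $\nu_d=\Lambda^*[-d]\dert_\Lambda-$ of $\Db(\Lambda\mgrMod)$ commutes with the internal grading shift $(1)$ (since $[-d]$ and $-\dert_\Lambda-$ in its second variable are compatible with internal shifts), so $\Z^2$ acts on $\UU^{\Z}$; and $\UU^{\Z}=\add\{\nu_d^{-i}(\Lambda)(j)\mid i,j\in\Z\}=\add\{\psi\Lambda\mid\psi\in\Psi\}$ by the very definition of $\UU^{\Z}$. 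Lemma \ref{Gproj} then hands us additive equivalences $G^{\Z}:\UU^{\Z}\to\bigrprojm\Gamma$ and $H^{\Z}:\UU^{\Z}\to\Gamma\mbigrproj$, where $\Gamma$ is the $\Z^2$-graded ring $\bigoplus_{(i,j)\in\Z^2}\Hom_{\Db(\Lambda\mgrMod)}(\Lambda,\nu_d^{-i}(\Lambda)(j))$.

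The one point that genuinely needs checking is the identification of $\Gamma$ with the $\Z^2$-graded preprojective algebra $\Pi$. Forgetting the second grading, this is the isomorphism of $\Z$-graded algebras $\Pi\cong\bigoplus_{i\in\Z}\Hom_{\Db(\Lambda)}(\Lambda,\nu_d^{-i}(\Lambda))$ established earlier, under which the tensor grading is the exponent of $\nu_d^{-1}$ and the multiplication $gf=\nu_d^{-i}(g)\circ f$ corresponds to the tensor-algebra multiplication of $\Pi=\Tens_\Lambda(E)$. I would then verify that the $\Lambda$-grading, i.e.\ the internal grading on $\Pi_{i,j}=(E^i)_j$, matches the decomposition $\Hom_{\Db(\Lambda)}(\Lambda,\nu_d^{-i}(\Lambda))=\bigoplus_{j}\Hom_{\Db(\Lambda\mgrMod)}(\Lambda,\nu_d^{-i}(\Lambda)(j))$: for $i=1$ this is precisely the grading \eqref{grading on E} used to define $E$, and for general $i$ it follows because that internal grading is multiplicative on $\Tens_\Lambda(E)$. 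Granting this dictionary, part (a) is exactly the output of Lemma \ref{Gproj}.

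For (b), I would observe that $\Hom_{\Pi^{\op}}(-,\Pi)$ and $\Hom_{\Pi}(-,\Pi)$ restrict to mutually quasi-inverse dualities between $\bigrprojm\Pi$ and $\Pi\mbigrproj$, so $\Hom_{\Pi^{\op}}(-,\Pi)\circ G^{\Z}$ and $H^{\Z}$ are both additive equivalences $\UU^{\Z}\to\Pi\mbigrproj$. Since every object of $\UU^{\Z}$ is a summand of a finite direct sum of twists $\nu_d^{-i}(\Lambda)(j)$, it is enough to produce a natural isomorphism between these two equivalences on $\Lambda$ that is compatible with the $\Z^2$-action; on $\Lambda$ both produce $\Pi$ (on the $G^{\Z}$ side as a right $\Pi$-module, and $\Hom_{\Pi^{\op}}(\Pi,\Pi)={}_\Pi\Pi$ on the other), and compatibility with the twists is exactly part (c). This is the $\Z^2$-graded version of Proposition \ref{Zproj}(b) and has the same proof. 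Finally (c) is checked directly on objects: re-indexing shows $G^{\Z}(\nu_d^{-1}X)=\bigoplus_{i,j}\Hom_{\Db(\Lambda\mgrMod)}(\Lambda,\nu_d^{-(i+1)}(X)(j))$, which is $G^{\Z}X$ shifted by $(1,0)$ in the tensor grading; similarly, the adjunction $\Hom(\nu_dX,-)\cong\Hom(X,\nu_d^{-1}-)$ shows $H^{\Z}(\nu_dX)$ is $H^{\Z}X$ shifted by $(1,0)$, and naturality in $X$ is routine.

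I expect the only real obstacle to be the grading bookkeeping of the second paragraph — pinning down that the tensor grading and the $\Lambda$-grading on $\Pi=\Tens_\Lambda(E)$ correspond respectively to the power of $\nu_d^{-1}$ and to the internal shift in $\Gamma$, and that composition in $\UU^{\Z}$ matches the tensor multiplication. Once that dictionary is fixed, parts (a)--(c) drop out formally from Lemma \ref{Gproj}.
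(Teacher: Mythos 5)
Your proposal is correct and follows exactly the route the paper takes: the paper simply states that the proposition ``follows directly from Lemma \ref{Gproj} and the definition,'' applied to the $\Z^2$-action $(i,j)\mapsto\nu_d^{-i}(-)(j)$ on $\UU^{\Z}$ with $M=\Lambda$, which is precisely your setup. Your additional bookkeeping identifying the resulting $\Z^2$-graded ring with $\Pi$ (tensor grading as the power of $\nu_d^{-1}$, $\Lambda$-grading as the internal shift) and the re-indexing for parts (b) and (c) correctly fill in the details the paper leaves implicit.
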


Immediately, we have the following $\Z$-graded version of Theorem \ref{d-ass gives projective resolution}.

\begin{theorem}\label{d-ass gives projective resolution2}
Let $\Lambda$ be a $\Z$-graded $d$-hereditary algebra.
For an indecomposable object $X$ in $\UU^{\Z}$, we consider a $d$-almost split sequence in $\UU^{\Z}$:
\[Y\xrightarrow{f_d} C_{d-1}\xrightarrow{f_{d-1}}C_{d-2}\xrightarrow{f_{d-2}}\cdots\xrightarrow{f_2} C_1\xrightarrow{f_1} C_0\xrightarrow{f_0} X.\]
\begin{enumerate}[\rm(a)]
\item There exist exact sequences
\begin{eqnarray*}
&G^{\Z}Y\xrightarrow{G^{\Z}f_d} G^{\Z}C_{d-1}\xrightarrow{G^{\Z}f_{d-1}}\cdots\xrightarrow{G^{\Z}f_2} G^{\Z}C_1\xrightarrow{G^{\Z}f_1} G^{\Z}C_0\xrightarrow{G^{\Z}f_0} G^{\Z}X\to S\to0&\\
&H^{\Z}X\xrightarrow{H^{\Z}f_0} H^{\Z}C_0\xrightarrow{H^{\Z}f_1}H^{\Z}C_1\xrightarrow{H^{\Z}f_2}\cdots\xrightarrow{H^{\Z}f_{d-1}} H^{\Z}C_{d-1}\xrightarrow{H^{\Z}f_d} H^{\Z}Y\to T\to0&
\end{eqnarray*}
in $\bigrModm\Pi$ and $\Pi\mbigrMod$, where $S$ and $T$ are simple.
\item If $\Lambda$ is $d$-RI, then $G^{\Z}f_d$ and $H^{\Z}f_0$ are monomorphisms.
\item If $\Lambda$ is $d$-RF, then $\Ker G^{\Z}f_d=\soc G^{\Z}Y$ and $\Ker H^{\Z}f_0=\soc H^{\Z}X$ hold. Moreover these are simple.
\end{enumerate}
\end{theorem}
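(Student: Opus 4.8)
The plan is to imitate the proof of Theorem \ref{d-ass gives projective resolution} verbatim, simply replacing the category $\UU$ by $\UU^{\Z}$, the functors $G,H$ by $G^{\Z},H^{\Z}$, and the equivalences of Proposition \ref{Zproj} by those of Proposition \ref{Z*Zproj}. First I would invoke Theorem \ref{existence of d-ass2}(a): for the indecomposable $X\in\UU^{\Z}$ and its $d$-almost split sequence, applying the functor $\bigoplus_{i,j\in\Z}\Hom_{\Db(\Lambda\mgrMod)}(\nu_d^{-i}(\Lambda)(j),-)$ to the defining exactness property yields an exact sequence of $\Z^2$-graded abelian groups
\[
\bigoplus_{i,j}\Hom(\nu_d^{-i}(\Lambda)(j),Y)\xrightarrow{f_d{}_*}\cdots\xrightarrow{f_1{}_*}\bigoplus_{i,j}\Hom(\nu_d^{-i}(\Lambda)(j),C_0)\xrightarrow{f_0{}_*}\bigoplus_{i,j}\rad(\nu_d^{-i}(\Lambda)(j),X)\to0.
\]
Via the equivalence $G^{\Z}:\UU^{\Z}\xrightarrow{\sim}\bigrprojm\Pi$ of Proposition \ref{Z*Zproj}(a) this is exactly the asserted complex
$G^{\Z}Y\to\cdots\to G^{\Z}C_0\to G^{\Z}X$, whose cokernel $S$ is the image of $\rad_{\UU^{\Z}}(-,X)\hookrightarrow\Hom_{\UU^{\Z}}(-,X)$; since $X$ is indecomposable, $\Hom_{\UU^{\Z}}(-,X)$ is an indecomposable projective object of $\bigrprojm\Pi$ and hence has simple top, so $S$ is simple. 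The left-module statement is dual, using $H^{\Z}$ and the second exact sequence in Definition \ref{def:nASS}, giving part (a).

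For part (b), if $\Lambda$ is $d$-RF then by Theorem \ref{existence of d-ass2}(a) the sequence of functors is exact already at $\Hom_{\UU^{\Z}}(-,Y)$ (i.e. $f_d{}_*$ is injective on $\Hom_{\UU^{\Z}}(-,Y)$, and dually $f_0{}^*$ on $\Hom_{\UU^{\Z}}(X,-)$); transporting along the equivalences $G^{\Z},H^{\Z}$ gives that $G^{\Z}f_d$ and $H^{\Z}f_0$ are monomorphisms. For part (c), if $\Lambda$ is $d$-RF then Theorem \ref{existence of d-ass2}(b) tells us $\Ker(f_d{}_*)=\soc\Hom_{\UU^{\Z}}(-,Y)$ and $\Ker(f_0{}^*)=\soc\Hom_{\UU^{\Z}}(X,-)$; since $G^{\Z}$ and $H^{\Z}$ are equivalences onto the respective projective module categories they send socles to socles, so $\Ker G^{\Z}f_d=\soc G^{\Z}Y$ and $\Ker H^{\Z}f_0=\soc H^{\Z}X$. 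Finally, as in the ungraded case, $Y[-d]\cong\nu_d^{-1}(Y)[\text{(appropriate shift)}]$ is indecomposable in $\UU^{\Z}$ when $X$ is, so $\Hom_{\UU^{\Z}}(-,Y)\cong\Hom_{\UU^{\Z}}(Y',-)^{*}$ for an indecomposable $Y'$, hence has simple socle, and thus $\Ker G^{\Z}f_d$ is simple; dually for $\Ker H^{\Z}f_0$.

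The only real content beyond bookkeeping is already packaged in the two cited results, Theorem \ref{existence of d-ass2} and Proposition \ref{Z*Zproj}, so I expect no genuine obstacle here; the single point requiring a little care is that the $\Z^2$-grading is respected throughout, i.e. that the functors $\Hom_{\Db(\Lambda\mgrMod)}(\nu_d^{-i}(\Lambda)(j),-)$ assemble into the $\Z^2$-graded functor $G^{\Z}$ compatibly with the $d$-almost split maps $f_k$ (which are morphisms in $\UU^{\Z}$, hence of degree $(0,0)$), so that all the exact sequences produced are exact sequences of $\Z^2$-graded $\Pi$-modules and not merely of underlying abelian groups. This is immediate from the construction of the $\Z^2$-grading in Definition \ref{def-grpreproj} and the fact that $\nu_d$ and the shift $(j)$ are autoequivalences of $\Db(\Lambda\mgrMod)$. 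Hence the proof reduces to citing Theorem \ref{existence of d-ass2} and Proposition \ref{Z*Zproj} and repeating the argument of Theorem \ref{d-ass gives projective resolution} word for word.
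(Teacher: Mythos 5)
Your proposal is correct and takes essentially the same route as the paper, whose entire proof is the one-line remark that the assertions follow from Theorem \ref{existence of d-ass2} by the same argument as Theorem \ref{d-ass gives projective resolution}; your fleshed-out version (transport the exactness of the (weak) $d$-almost split sequence along the equivalences of Proposition \ref{Z*Zproj}, identify $S$ and $T$ as tops of indecomposable projectives, and use Serre duality to see the socle is simple) is exactly what is intended. The only blemish is that in your paragraph on part (b) you wrote ``$d$-RF'' where you meant ``$d$-RI''; since you cite Theorem \ref{existence of d-ass2}(a), which is the $d$-RI case, and draw precisely the conclusion it gives (injectivity of $f_d{}_*$ and $f_0{}^*$), this is a typo rather than a gap.
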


\begin{proof}
The assertions follow from Theorem \ref{existence of d-ass2} by a similar argument to the proof of Theorem \ref{d-ass gives projective resolution}.
\end{proof}

In the rest of this section, we further assume that $\Lambda=\bigoplus_{i\ge0}\Lambda_i=\Tens_S(V)/I$ is a Koszul algebra and $S=\Lambda_0$ is a semisimple $\F$-algebra.

We now recall the theory of almost Koszul duality due to Brenner, Butler, and King \cite{bbk}.
Let $S$ be a semisimple finite-dimensional $\F$-algebra and $A=\bigoplus_{i\geq0}A_i$ a nonnegatively $\Z$-graded $S$-algebra with $A_0=S$.

\begin{definition}\label{define almost Koszul}
The $\Z$-graded algebra $A$ is \emph{almost Koszul}, or \emph{$(p,q)$-Koszul}, if there exist integers $p,q\geq1$ such that $A_i=0$ for all $i>p$ and there is an exact sequence
$$0\to S'\to P_q\to\dots\to P_0\to S\to0$$
of $\Z$-graded $A$-modules with projective $A$-modules $P_i$ generated in degree $i$ and a semisimple $A$-module $S'$ concentrated in degree $p+q$.
\end{definition}
\noindent
Note that it does not matter whether we consider left or right $A$-modules \cite[Proposition 3.4]{bbk}.

\begin{theorem}\label{thm:kos-alm-kos}
Let $\Lambda$ be a Koszul $d$-hereditary algebra, and $\Pi$ its $(d+1)$-preprojective algebra with the $(d+1)$-total grading given in Definition \ref{def-grpreproj}.
\begin{enumerate}[\rm(a)]
\item If $\Lambda$ is $d$-RI, then $\Pi$ is Koszul.
\item If $\Lambda$ is $d$-RF, then $\Pi$ is almost Koszul. It is $(p,d+1)$-Koszul, where $p=\max\{i\ge0\mid\Pi_i\neq0\}$ with respect to the total grading.
\end{enumerate}
\end{theorem}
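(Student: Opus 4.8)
The plan is to deduce both statements from the graded version of the main resolution result (Theorem \ref{d-ass gives projective resolution2}), combined with the description of $\Pi$ as a $\Z^2$-graded algebra whose second grading (the $\Lambda$-grading) restricts, on each $E^i$, to a radical grading shifted by $-d$. The key point is that for a Koszul algebra the $d$-almost split sequences in $\UU^{\Z}$ are \emph{linear} with respect to the $(d+1)$-total grading: each $C_j$ sits in $(d+1)$-total degree $j$, the term $X$ in $(d+1)$-total degree $0$, and $Y$ in $(d+1)$-total degree $d+1$. Once linearity of these resolutions is in hand, part (a) says exactly that simple $\Pi$-modules have linear projective resolutions (so $\Pi$ is Koszul), and part (b) says they have projective resolutions of the Koszul-type length $d+1$ terminating in a semisimple module concentrated in degree $p+(d+1)$ (so $\Pi$ is $(p,d+1)$-Koszul).

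\textbf{Step 1: the $\Lambda$-grading is determined by the tensor grading up to a shift.} First I would record that for Koszul $\Lambda$, the $\Lambda^\en$-module $E$ is generated in $\Lambda$-degree $-d$ by Proposition \ref{prop:e-gen-pos-deg}, so on $E^i$ the $\Lambda$-grading is concentrated in degrees $\ge -di$ with $\head_{\Lambda^\en}(E^i)$ in degree exactly $-di$. More precisely, because $\Lambda$ is Koszul the whole bimodule Koszul complex is linear, hence the generators of $E$ live in a single $\Lambda$-degree and $E^i$ is \emph{generated} in $\Lambda$-degree $-di$; combining with the tensor degree $i$, the $(d+1)$-total degree of those generators is $(d+1)i - di = i$. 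The upshot: an object $\nu_d^{-i}(\Lambda)(j)$ of $\UU^{\Z}$ corresponds under $G^{\Z}$ to a projective $\Pi$-module generated in $(d+1)$-total degree $-i-j+(\text{contribution of }j)$ — I would pin down the exact bookkeeping so that, for an indecomposable projective $\Lambda$-module $P$, the object $\nu_d^{-i}(P)$ lands in $(d+1)$-total degree $i$ (this is essentially the statement that $P_0=\Pi_0$, $P_1$-part of $\Pi$ is $\Lambda_1\oplus E_{-d}$ in total degree $1$, etc., as in the proof of Proposition \ref{prop:e-gen-pos-deg}).

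\textbf{Step 2: the $d$-almost split sequences are linear.} Using Theorem \ref{existence of d-ass2}, for an indecomposable $X=\nu_d^{-i}(P)(\ell)$ in $\UU^{\Z}$ the $d$-almost split sequence $Y\to C_{d-1}\to\cdots\to C_0\to X$ has $Y\cong\nu_d(X)=\nu_d^{-i+1}(P)(\ell)$, and each $C_j\in\UU^{\Z}$. The Nakayama shift $\nu_d$ raises the tensor grading of the corresponding projective $\Pi$-module by exactly $1$ (Proposition \ref{Z*Zproj}(c)), so $GY$ is generated in $(d+1)$-total degree one more than $GX$. To get linearity I would argue by induction on $j$ along the sequence that $C_j$ is generated in $(d+1)$-total degree $j$ above that of $X$: the map $f_j$ being in the radical forces a strict increase of at least $1$ in total degree, while the fact that $Y$ is only $d+1$ degrees above $X$ and there are $d$ intermediate maps forces each increase to be exactly $1$. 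This is where I expect the main work to lie — one must know that each $f_j$ raises total degree by \emph{exactly} one, not merely at least one; equivalently that the complex $G^{\Z}Y\to\cdots\to G^{\Z}X\to S\to 0$ of Theorem \ref{d-ass gives projective resolution2} is a \emph{linear} complex of graded projectives. I would extract this from the Koszulity of $\Lambda$: the bimodule Koszul resolution being linear means the relations defining $\Pi$ are quadratic (Proposition \ref{prop:pi-quadratic}), and a graded algebra whose minimal projective resolutions of simples are computed by a ``$d$-almost split'' pattern of this shape is linear precisely when there is no room for the maps to jump by more than one; the degree count from Step 1 closes this.

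\textbf{Step 3: conclude (a) and (b).} Every graded simple $\Pi$-module $S_\Pi$ is the top $G^{\Z}P/\rad$ of $G^{\Z}P$ for an indecomposable projective $\Lambda$-module $P$ (as in the proof of Corollary \ref{cor:nrf-pi-twper}(b)), so Step 2 gives it a projective resolution $0\to G^{\Z}Y\to G^{\Z}C_{d-1}\to\cdots\to G^{\Z}C_0\to G^{\Z}P\to S_\Pi\to 0$ (in the $d$-RI case this is exact on the left by Theorem \ref{d-ass gives projective resolution2}(b)) that is linear, with $d+2$ nonzero terms. In the $d$-RI case this is a linear resolution of $S_\Pi$, so $\Pi$ is Koszul, proving (a). In the $d$-RF case $\Pi$ is finite-dimensional, so with the total grading $\Pi_i=0$ for $i>p:=\max\{i\ge0\mid\Pi_i\neq 0\}$; moreover by Theorem \ref{d-ass gives projective resolution2}(c) the kernel $\Ker G^{\Z}f_d=\soc G^{\Z}Y$ is simple, and by linearity (Step 2) together with $Y$ being $d+1$ total degrees above $X$ and $G^{\Z}Y$ having its socle in the top degree $\le p$, this simple module is concentrated in total degree exactly $p+(d+1)$. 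Feeding $\bigoplus S_\Pi$ into Definition \ref{define almost Koszul} (which by \cite[Proposition 3.4]{bbk} may be checked on one side) shows $\Pi$ is $(p,d+1)$-Koszul, proving (b). The one genuine obstacle is Step 2 — establishing that the induced resolutions are \emph{minimal and linear}; minimality follows because the $f_j$ lie in $\rad_{\UU^{\Z}}$ and $G^{\Z}$, $H^{\Z}$ are equivalences onto categories of graded projectives, and linearity follows from the degree bookkeeping forced by Koszulity of $\Lambda$ and the single-degree shift of $\nu_d$.
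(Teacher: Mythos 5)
Your overall skeleton matches the paper's: collapse the $\Z^2$-grading to the $(d+1)$-total grading, observe via Proposition \ref{prop:e-gen-pos-deg} that this is a radical grading, feed the graded $d$-almost split sequences of Theorem \ref{d-ass gives projective resolution2} into the definition of (almost) Koszulity, and use Proposition \ref{Z*Zproj}(c) to place $G'Y=(G'X)(-d-1)$ in total degree $d+1$. But there is a genuine gap exactly where you flag ``the main work'': your argument that each $C_j$ must land in total degree $j+1$. Minimality of the resolution $G'Y\to G'C_{d-1}\to\cdots\to G'C_0\to G'X\to S\to0$ only yields the \emph{lower} bounds, namely that $G'C_j$ is generated in degrees $\ge j+1$. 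Your claim that ``$Y$ is only $d+1$ degrees above $X$ and there are $d$ intermediate maps, forcing each increase to be exactly $1$'' does not close this, because the set of generation degrees of a graded projective module need not be a single number: nothing in a one-sided minimal resolution prevents some $G'C_j$ from carrying additional generators in degrees strictly above $j+1$. This is precisely the phenomenon that distinguishes a quadratic algebra from a Koszul one, so appealing to the quadraticity of $\Pi$ (Proposition \ref{prop:pi-quadratic}) cannot rescue the argument either.

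The paper closes the gap by a two-sided squeeze using the \emph{dual} exact sequence $H'X\to H'C_0\to\cdots\to H'C_{d-1}\to H'Y\to T\to0$ from Theorem \ref{d-ass gives projective resolution2}(a). By Proposition \ref{Z*Zproj}(b) and (c), $H'Y$ is generated in degree $-d-1$; since the dual sequence is also minimal, one obtains $H'C_j$ generated in degrees $\ge -j-1$, and applying $\Hom_\Pi(-,\Pi)$, which interchanges $G'$ and $H'$ on projectives, converts this into the missing \emph{upper} bound: $G'C_j$ is generated in degrees $\le j+1$. Combining the two bounds gives linearity. You would need to add this dual argument (or an equivalent mechanism for the upper bound) to make Step 2, and hence the whole proof, complete; the remaining bookkeeping in your Steps 1 and 3 is essentially what the paper does.
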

\begin{proof}
Let $\grModm\Pi$ be the category of $\Z$-graded $\Pi$-modules with respect to the $(d+1)$-total grading on $\Pi$.
Let $S$ be a $\Z$-graded simple $\Pi$-module $S$ concentrated in degree 0.
Consider the functor $F:\bigrModm\Pi\to\grModm\Pi$ given by $\bigoplus_{(i,j)\in\Z^2}X_{i,j}\mapsto \bigoplus_{\ell\in\Z}X_\ell$, where $X_\ell=\bigoplus_{(d+1)i+j=\ell}X_{i,j}$.
Let $G'=F\circ G^{\Z}$ and $H'=F\circ H^{\Z}$. 
Then Theorem \ref{d-ass gives projective resolution2}(a) gives the first $d+1$ terms of minimal $\Z$-graded projective resolution
\begin{eqnarray}\label{GZ sequence}
G'Y\xrightarrow{G'f_d} G'C_{d-1}\xrightarrow{G'f_{d-1}}\cdots\xrightarrow{G'f_2} G'C_1\xrightarrow{G'f_1} G'C_0\xrightarrow{G'f_0} G'X\to S\to0
\end{eqnarray}
and the exact sequence
\begin{eqnarray}\label{HZ sequence}
H'X\xrightarrow{H'f_0} H'C_0\xrightarrow{H'f_1}H'C_1\xrightarrow{H'f_2}\cdots\xrightarrow{H'f_{d-1}} H'C_{d-1}\xrightarrow{H'f_d} H'Y\to T\to0.
\end{eqnarray}
To prove both assertions, we only have to show that $G'C_i$ is generated in degree $i+1$.
Since $\Lambda$ is Koszul, by Proposition \ref{prop:e-gen-pos-deg}, the $(d+1)$-total grading and the radical grading on $\Pi$ agree.
Since $G'X$ is generated in degree $0$ and \eqref{GZ sequence} is minimal, $G'C_i$ is generated in degrees at least $i+1$.

By Proposition \ref{Z*Zproj}(c), we have $G^{\Z}Y=G^{\Z}\nu_d(X)=(G^{\Z}X)(-1,0)$ and hence $G'Y=(G'X)(-d-1)$.
Thus $G'Y$ is generated in degree $d+1$, and hence $H'Y$ is generated in degree $-d-1$ by Proposition \ref{Z*Zproj}(b).
Since \eqref{HZ sequence} is minimal, $H'C_i$ is generated in degrees at least $-i-1$ and hence $G'C_i$ is generated in degrees at most $i+1$. Thus the assertion follows.
\end{proof}

\section{Quadratic duals of higher preprojective algebras}

The aim of this section is to compare the quadratic duals of the higher preprojective algebras and certain twisted trivial extension algebras of the quadratic duals for Koszul algebras.

\subsection{Graded trivial extension algebras}\label{ss:gtea}

For any finite dimensional $\F$-algebra $\Gamma$, there is a well-known way to construct a new algebra called the trivial extension algebra.  We describe a graded version of this, which can be seen as an extension of $\Gamma$ by a twist of the dual bimodule $\Gamma^*$.
\begin{definition}
Let $\Gamma$ be a non-negatively $\Z$-graded finite-dimensional algebra and $n\in\Z$.
The \emph{graded $(d+1)$-trivial extension algebra of $\Gamma$}, denoted $\Triv_{d+1}(\Gamma)$, is the $\Z$-graded vector space $\Gamma\oplus \Gamma^*\grsh{-d-1}$ with multiplication given by 
\[ (a,f)\cdot (b,g)=(ab,ag+(-1)^{di}fb)\]
when $b\in\Gamma_i$ is a homogeneous element of degree $i$.
\end{definition}
We have used the fact that $\Gamma$, and hence $\Gamma^*$, has a natural structure of a $\Gamma^\en$-module.

One can interpret $\Z$-graded $d$-trivial extensions in the following way.  First, let $\sigma:\Gamma\to\Gamma$ be the algebra automorphism defined by $\sigma(a)=(-1)^ia$ for $a\in\Gamma_i$.  Then $\Triv_{d+1}(\Gamma)$ is the trivial extension of $\Gamma$ by the twisted bimodule ${}_{\sigma^d}\Gamma^*$.
Note that another multiplication rule $(a,f)\cdot (b,g)=(ab,(-1)^{di}ag+fb))$ with $a\in\Gamma_i$ used in \cite{g-zz} gives an isomorphic $\Z$-graded algebra.

In the rest of this section, we assume that
\begin{equation}\label{presentation of Lambda}
\Lambda=\Tens_S(V)/(R)
\end{equation}
is a Koszul algebra with a separable $\F$-algebra $S$, and $\Gamma$ is its quadratic dual $\Gamma=\Lambda^!$.  Recall that we have $S^\en$-modules $K_i$ with $K_0=S$, $K_1=V$ and $K_2=R$ and maps $\iota^\ell_i:K_i\into V\otimes_SK_{i-1}$ and $\iota^r_i:K_i\into K_{i-1}\otimes_SV$.  
By Lemma \ref{Koszul dual is K*}, we have an isomorphism of $\Z$-graded algebras
$$\Lambda^!=\bigoplus_{i\geq 0}(\Lambda^!)_i\cong\bigoplus_{i\geq0}K_i^{*\ell}$$
where the algebra structure on $\bigoplus_{i\geq0}K_i^{*\ell}$ is given by $(\iota^\ell_i)^{*\ell}:K_{i-1}^{*\ell}\otimes_SV^{*\ell}\to K_i^{*\ell}$ and $(\iota^r_i)^{*\ell}:V^{*\ell}\otimes_SK_{i-1}^{*\ell}\to K_i^{*\ell}$.
Since $(\Lambda^!)_i=\Ext^i_\Lambda(S,S)$, the global dimension $d$ of $\Lambda$ is the maximal $i$ such that $(\Lambda^!)_i\neq0$, and we have
\begin{equation*}\label{eq:graded-parts-of-triv}
\Triv_{d+1}(\Lambda^!)_i=K_i^{*\ell}\oplus K_{d+1-i}^{*\ell*}
\end{equation*}
where $K_i=0$ for $i<0$ or $i>n$, and $\Triv_{d+1}(\Lambda^!)$ is concentrated in degrees $0$ to $d+1$.

Recall from Proposition \ref{prop:pi-quadratic} that, if $\Lambda$ is a Koszul algebra, then its higher preprojective algebra $\Pi$ is quadratic.
We are now able to state the following result for the quadratic dual $\Pi^!$ of $\Pi$.

\begin{theorem}\label{thm:canonical map}
Let $\Lambda$ be a finite dimensional Koszul $\F$-algebra of global dimension $d$ such that $S=\Lambda_0$ is a separable $\F$-algebra, and let $\Pi$ be its higher preprojective algebra with radical grading.
\begin{enumerate}[\rm(a)]
\item There exists a morphism $\phi:\Pi^!\to\Triv_{d+1}(\Lambda^!)$ of $\Z$-graded $\F$-algebras, which is an isomorphism in degrees $0$ and $1$ and is injective in degree $2$.
\item $\phi$ is surjective if and only if $(\Lambda^!)_d=\soc_{\Lambda^{!\en}}(\Lambda^!)$.  In this case $\phi$ is an isomorphism in degrees $0$, $1$ and $2$.
\item $\phi$ is an isomorphism if and only if $(\Lambda^!)_d=\soc_{\Lambda^{!\en}}(\Lambda^!)$ holds and $\Triv_{d+1}(\Lambda^!)$ is quadratic.
\end{enumerate}
\end{theorem}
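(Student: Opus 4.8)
The plan is to construct the map $\phi$ directly from the presentations of $\Pi$ and $\Triv_{d+1}(\Lambda^!)$ and then check the three claimed properties degree by degree. First I would assemble the ingredients from the earlier sections. By Proposition \ref{prop:pi-quadratic} we have $\Pi\cong\Tens_S(\overline{V})/(R+\delta_d'(K_{d-1}^\vee))$ with $\overline{V}=V\oplus K_d^\vee$, so $\Pi$ is quadratic with degree-one part $\overline{V}$ and relation space $R_\Pi := R+\delta_d'(K_{d-1}^\vee)\subseteq\overline{V}^2$. Hence $\Pi^!=\Tens_S(\overline{V}^{*\ell})/(R_\Pi^\perp)$, and $(\Pi^!)_2=(\overline{V}^2)^{*\ell}/R_\Pi^\perp\cong R_\Pi^{*\ell}$ (using \eqref{XY2} to identify $(\overline V^{*\ell})^2$ with $(\overline V^2)^{*\ell}$). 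On the other side, $\Triv_{d+1}(\Lambda^!)$ has degree-one part $(\Lambda^!)_1\cong V^{*\ell}$ (since $K_{d+1}=0=K_d^{*\ell*}$ in degree $1$ unless $d=1$, a case to handle separately or absorb), and $\Triv_{d+1}(\Lambda^!)_1 = K_1^{*\ell}\oplus K_d^{*\ell*}=V^{*\ell}\oplus(K_d^{*\ell})^*$. The key observation is that there is a canonical identification $\overline{V}^{*\ell}=V^{*\ell}\oplus(K_d^\vee)^{*\ell}\cong V^{*\ell}\oplus K_d^{*\ell*}\cong V^{*\ell}\oplus(K_d^{*\ell})^*$, using that the four dualities $(-)^*,(-)^{*\ell},(-)^{*r},(-)^\vee$ agree (the remarks before Lemma \ref{duals}) together with a natural iso $(K_d^\vee)^{*\ell}\cong K_d^{*\ell*}$. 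So $\phi_1$ is forced to be an isomorphism. The content is then to show that the degree-one map extends to an algebra map, i.e.\ that $R_\Pi^\perp$ is contained in the kernel of $\Tens_S(\overline V^{*\ell})\to\Triv_{d+1}(\Lambda^!)$, equivalently that the relation space $R_{\triv}\subseteq (\overline V^{*\ell})^2$ of the quadratic cover of $\Triv_{d+1}(\Lambda^!)$ satisfies $R_{\triv}\subseteq R_\Pi^\perp$, i.e.\ $R_\Pi\subseteq R_{\triv}^\perp$.

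For part (a), after setting up $\phi$ I would compute the degree-two part of $\Triv_{d+1}(\Lambda^!)$ explicitly: $\Triv_{d+1}(\Lambda^!)_2 = K_2^{*\ell}\oplus K_{d-1}^{*\ell*} = R^{*\ell}\oplus K_{d-1}^{*\ell*}$. The multiplication $\Triv_{d+1}(\Lambda^!)_1\times\Triv_{d+1}(\Lambda^!)_1\to\Triv_{d+1}(\Lambda^!)_2$ has two pieces: the $V^{*\ell}\otimes V^{*\ell}\to K_2^{*\ell}=R^{*\ell}$ piece is just the Koszul dual multiplication (surjective, kernel $R^\perp$), and the cross term $V^{*\ell}\otimes(K_d^{*\ell})^*\to K_{d-1}^{*\ell*}$ (and its transpose with a sign $(-1)^d$) is given by dualizing the $\Lambda^!$-bimodule action, which via Lemma \ref{Koszul dual is K*} is built from $(\iota_d^\ell)^{*\ell}$ and $(\iota_d^r)^{*\ell}$ — precisely the data of $\theta_d^\ell,\theta_d^r$ from \eqref{theta} after dualizing again. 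The upshot, which I would verify by a diagram chase combining Lemma \ref{XYZ}, Lemma \ref{derivation}, and Lemma \ref{lem:easydual}, is that the image of $\overline V^{*\ell}\otimes_S\overline V^{*\ell}$ in $\Triv_{d+1}(\Lambda^!)_2$ is $R^{*\ell}\oplus\delta_d'(K_{d-1}^\vee)^{*\ell}$, sitting inside $R^{*\ell}\oplus K_{d-1}^{*\ell*}$, and its kernel is exactly $(R+\delta_d'(K_{d-1}^\vee))^\perp=R_\Pi^\perp=(\Pi^!)_2$-relations. This simultaneously shows $\phi$ is well-defined, is an isomorphism in degrees $0,1$, and is injective in degree $2$ with cokernel in degree $2$ isomorphic to $(\Lambda^!)_{d-1}/\!\!\left((\Lambda^!)_d\text{-bimodule socle complement}\right)^{*}$ — more precisely, $\coker\phi_2\cong (K_{d-1}^{*\ell}/\langle\text{image of }\iota_d^\ell,\iota_d^r\rangle)^{*}$.

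For parts (b) and (c): from the description of $\coker\phi_2$ just obtained, $\phi_2$ is surjective iff the multiplication maps $(\Lambda^!)_1\otimes(\Lambda^!)_d\to (\Lambda^!)_{d+1}=0$ and $(\Lambda^!)_d\otimes(\Lambda^!)_1\to(\Lambda^!)_{d+1}=0$ witnessing $(\Lambda^!)_d$ inside the bimodule socle of $\Lambda^!$ — equivalently $K_{d-1}^{*\ell}$ is spanned by $\theta_d^\ell((K_d^\vee)^{*\ell})$ and $\theta_d^r((K_d^\vee)^{*\ell})$, which after translating duals is exactly the condition $(\Lambda^!)_d=\soc_{\Lambda^{!\en}}(\Lambda^!)$. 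Since $\phi$ is a map of quadratic algebras that is an iso in degree $0,1$ and surjective in degree $2$, Lemma \ref{lem-quad012} immediately upgrades "iso in degrees $0,1,2$" to "iso", and in degree $2$ surjectivity plus the already-established injectivity gives the isomorphism there; so $\phi$ is surjective iff this socle condition holds, and then it is an iso in degrees $\le 2$. For the full isomorphism in (c): when $\phi$ is surjective it is a surjective algebra map from the quadratic algebra $\Pi^!$ onto $\Triv_{d+1}(\Lambda^!)$, inducing an iso onto the quadratic cover $q\Triv_{d+1}(\Lambda^!)$ of the latter (the quadratic cover being generated in degree $1$ with degree-$2$ relations, and $\phi$ being iso in degrees $\le 2$, Lemma \ref{lem-quad012} forces $\Pi^!\cong q\Triv_{d+1}(\Lambda^!)$); hence $\phi$ itself is an isomorphism precisely when $\Triv_{d+1}(\Lambda^!)=q\Triv_{d+1}(\Lambda^!)$, i.e.\ when $\Triv_{d+1}(\Lambda^!)$ is quadratic.

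The main obstacle I expect is the bookkeeping in part (a): identifying the cross term of the $\Triv$ multiplication with the dual of $\delta_d'$ requires carefully threading the various duality isomorphisms \eqref{XY1}--\eqref{XY4}, the compatibility statement of Lemma \ref{duals}, Lemma \ref{XYZ}, and the description of $\theta_d^{\ell},\theta_d^r$ in Lemma \ref{derivation} and Lemma \ref{lem:easydual}, all while tracking signs (the $(-1)^{di}$ in the $\Triv$ multiplication versus the $(-1)^i$ in $\delta_i'$). Once the degree-$2$ picture is pinned down correctly, parts (b) and (c) are short formal consequences via Lemma \ref{lem-quad012}.
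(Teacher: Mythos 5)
Your treatment of part (a) follows essentially the same route as the paper: both identify $(\Pi^!)_2$ via Proposition \ref{prop:pi-quadratic}, identify the cross term of the $\Triv_{d+1}(\Lambda^!)$ multiplication with $(\theta_d^\ell+(-1)^d\theta_d^r)^{*\ell}$ (the paper packages the duality bookkeeping you worry about into Lemma \ref{Lambda!*}, proved from Lemma \ref{duals}), and deduce injectivity in degree $2$ from the exactness of $0\to\gamma(X)^\perp\to Y^{*\ell}\to X^{*\ell}$ applied to the restriction of $\delta_d'$ to $K_{d-1}^\vee$. The structure of your part (c) is also the paper's (quadraticity of $\Pi^!$ for ``only if'', Lemma \ref{lem-quad012} for ``if'').

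The genuine gap is in part (b): you characterize surjectivity of $\phi$ by surjectivity of $\phi_2$, and these are not equivalent. What the degree-$2$ picture actually gives is that $\phi_2$ is surjective iff $L'_2=K_{d-1}^{\vee*\ell}$ is spanned by the images of $\theta_d^\ell$ and $\theta_d^r$, which dualizes to $(\soc_{\Lambda^{!\en}}\Lambda^!)_{d-1}=0$ --- only the degree-$(d-1)$ piece of the condition $(\Lambda^!)_d=\soc_{\Lambda^{!\en}}(\Lambda^!)$, which requires the socle to vanish in \emph{every} degree $i<d$. Concretely, take $Q=[1\xrightarrow{\alpha}2\xrightarrow{\beta}3\xrightarrow{\gamma}4\xrightarrow{\varepsilon}5]$ and $\Lambda=\F Q/(\alpha\beta,\beta\gamma)$: this is a quadratic monomial (hence Koszul) algebra of global dimension $3$ with $K_3=\langle\alpha\beta\gamma\rangle$; one checks $(\soc\Lambda^!)_2=0$, so $\phi_2$ is surjective, yet $\varepsilon^*$ is a nonzero element of $(\soc\Lambda^!)_1$, so $(\Lambda^!)_3\neq\soc\Lambda^!$ and $\phi_3$ fails to be surjective (its cokernel sees $\varepsilon\in K_1^{\vee*\ell}=L'_3$). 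The correct argument, as in the paper, works in all degrees at once: since $\phi$ is an isomorphism in degrees $0$ and $1$ and $\Pi^!$ is generated in degrees $\le1$, $\phi$ is surjective iff $\Triv_{d+1}(\Lambda^!)$ is generated in degrees $0$ and $1$, iff the $\Lambda^{!\en}$-module $L'\cong{}_{\sigma^d}\Lambda^{!*}$ is generated in degree $1$, iff $\head_{\Lambda^{!\en}}(\Lambda^{!*})=(\Lambda^{!*})_1$, which dualizes to $\soc_{\Lambda^{!\en}}(\Lambda^!)=(\Lambda^!)_d$. Your appeal to Lemma \ref{lem-quad012} inside part (b) is also misplaced, since $\Triv_{d+1}(\Lambda^!)$ need not be quadratic; your part (c) handles that point correctly via the quadratic cover, and goes through once (b) is repaired.
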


To prove this, we need the following technical observation. Consider the $\Z$-graded $\Lambda^{!\en}$-module
\[L:=\bigoplus_{i\in\Z}K_{d+1-i}^{\vee*\ell}\]
whose structure is given by $(\theta_i^\ell)^{*\ell}:V^{*\ell}\otimes_SK_{i-1}^{\vee*\ell}\to K_i^{\vee*\ell}$ and $(\theta_i^r)^{*\ell}:K_{i-1}^{\vee*\ell}\otimes_SV^{*\ell}\to K_i^{\vee*\ell}$ obtained from \eqref{theta}.

\begin{lemma}\label{Lambda!*}
We have an isomorphism $\Lambda^{!*}\cong L$ of $\Z$-graded $\Lambda^{!\en}$-modules.
\end{lemma}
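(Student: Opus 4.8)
The plan is to identify both sides as $\Z$-graded $\Lambda^{!\en}$-modules degree by degree and then check that the module structure maps agree. For the underlying graded vector space, recall that $\Lambda^!\cong\bigoplus_{i\ge0}K_i^{*\ell}$ by Lemma \ref{Koszul dual is K*}, so $\Lambda^{!*}\cong\bigoplus_{i\in\Z}(K_i^{*\ell})^*$, placed in degree $-i$ before the shift; after shifting by $d+1$ the degree-$i$ part becomes $(K_{d+1-i}^{*\ell})^*$. On the other side, $L=\bigoplus_{i\in\Z}K_{d+1-i}^{\vee*\ell}$ has degree-$i$ part $K_{d+1-i}^{\vee*\ell}$. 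So the first step is the purely functorial identification $(K_j^{*\ell})^*\cong K_j^{\vee*\ell}$ for each $j$, which follows from the natural isomorphisms among the four dualities $(-)^*$, $(-)^{*\ell}$, $(-)^{*r}$, $(-)^\vee$ recorded after Lemma \ref{lem-tensorhomiso} (concretely, apply $(-)^{*\ell}$ to the isomorphism $(-)^\vee\cong(-)^{*}$ obtained by composing $\gamma$, $\beta$; since $S^\en$ is semisimple all modules here are projective so no finiteness issues arise). This gives a graded vector space isomorphism; the content is that it is $\Lambda^{!\en}$-linear.

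Second, I would pin down the two module structures explicitly. The $\Lambda^{!\en}$-module structure on $\Lambda^{!*}$ is dual to the regular bimodule structure of $\Lambda^!$, which by Lemma \ref{Koszul dual is K*} is implemented by $(\iota_i^\ell)^{*\ell}$ and $(\iota_i^r)^{*\ell}$. Dualizing these (and reindexing $i\mapsto d+1-i$ because of the shift) should produce precisely the maps $(\theta_i^\ell)^{*\ell}$ and $(\theta_i^r)^{*\ell}$ defining $L$: the point is that $\theta_i^\ell,\theta_i^r$ in \eqref{theta} are by construction the images of $\iota_i^\ell,\iota_i^r$ under the isomorphisms of Lemma \ref{XYZ} relating $\Hom_{S^\en}(K_i,V\otimes_SK_{i-1})$ with $\Hom_{S^\en}(K_{i-1}^\vee,K_i^\vee\otimes_SV)$ (and similarly on the right). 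So the natural strategy is to set up a commutative diagram — one square for each of $\ell$ and $r$ — whose vertical maps are the duality isomorphisms $(K_j^{*\ell})^*\cong K_j^{\vee*\ell}$, whose top row is the dualized $\iota$-structure map on $\Lambda^{!*}$, and whose bottom row is the $\theta$-structure map on $L$, and verify commutativity by chasing a single element through both composites, exactly in the style of the proof of Lemma \ref{duals}. The relevant compatibilities between $(-)^{*\ell}$, the tensor-hom adjunction used in Lemma \ref{XYZ}, and the isomorphisms \eqref{XY1}--\eqref{XY4} have all been established, so this is a diagram assembled from already-proven pieces.

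The step I expect to be the main obstacle is bookkeeping rather than mathematics: keeping the three separate "transposes" straight — the vector-space dual $(-)^*$ applied to get from $\Lambda^!$ to $\Lambda^{!*}$, the $S$-dual $(-)^{*\ell}$ built into the very definition of $\Lambda^!$ and of $L$, and the bimodule dual $(-)^\vee$ appearing in $K_j^\vee$ — together with the degree reindexing forced by the shift $\grsh{-d-1}$ and the sign/order reversals that come from passing from a left action to the opposite-side action under a dual. The cleanest way to control this is to fix, once and for all, explicit elements: a basis-free "evaluation" description of each map as in Lemma \ref{XYV} and Lemma \ref{duals}, and then check equality of the two resulting maps $V^{*\ell}\otimes_S L_{i+1}\to L_i$ by evaluating on a general simple tensor, using the symmetrizing form $t:S\to\F$ to move between $(-)^{*\ell}$ and $(-)^*$. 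Once the diagram is seen to commute for all $i$, assembling the graded pieces gives the claimed isomorphism of $\Z$-graded $\Lambda^{!\en}$-modules, completing the proof.
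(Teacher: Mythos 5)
Your proposal is correct and follows essentially the same route as the paper: the whole content is the compatibility of the four dualities $(-)^*$, $(-)^{*\ell}$, $(-)^{*r}$, $(-)^\vee$ with the graded module structures, which is exactly Lemma \ref{duals} (whose proof is the diagram chase you describe), combined with the fact that $\theta_i^{\ell},\theta_i^r$ are the images of $\iota_i^{\ell},\iota_i^r$ under the isomorphisms of Lemma \ref{XYZ}. The only cosmetic difference is that the paper applies Lemma \ref{duals} and its dual to identify both $\Lambda^{!*}=\bigoplus_iK_i^{*\ell*}$ and $L=\bigoplus_iK_i^{\vee*\ell}$ with the intermediate object $\bigoplus_iK_i$, rather than comparing the two sides directly as you do.
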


\begin{proof}
Applying Lemma \ref{duals} and its dual to the $\Z$-graded $\Lambda^{!\en}$-module $\bigoplus_{i\in\Z}K_i^{*\ell}$, we obtain isomorphisms of $\Z$-graded $\Lambda^{!\en}$-modules
$\Lambda^{!*}=\bigoplus_{i\in\Z}K_i^{*\ell*}\cong\bigoplus_{i\in\Z}K_i^{*\ell*r}\cong\bigoplus_{i\in\Z}K_i$.
Similarly we obtain isomorphisms of $\Z$-graded $\Lambda^{!\en}$-modules $L=\bigoplus_{i\in\Z}K_i^{\vee*\ell}\cong\bigoplus_{i\in\Z}K_i^{\vee\vee}\cong\bigoplus_{i\in\Z}K_i$.
Thus the assertion follows.
\end{proof}

We are ready to prove Theorem \ref{thm:canonical map}.

\begin{proof}[Proof of Theorem \ref{thm:canonical map}]
Twisting the right action of $\Lambda^!$ on $L$ as $f\cdot a:=(-1)^{di}fa$ for $f\in L$ and $a\in(\Lambda^!)_i$, we obtain an $\Lambda^{!\en}$-module $L'$.
Thanks to Lemma \ref{Lambda!*}, we can regard $\triv_{d+1}(\Lambda^!)$ as $\triv_{d+1}(\Lambda^!)=\Lambda^!\oplus L'=\bigoplus_{i\in\Z}(K_i^{*\ell}\oplus K_{d+1-i}^{\vee*\ell})$.

(a) By Proposition \ref{prop:piquiv}, $\Pi$ is a quotient of $\Tens_S(\overline{V})$, so $\Pi^!$ is a quotient of $\Tens_S(\overline{V}^{*\ell})$. Since
\[\Tens_S(\overline{V}^{*\ell})_0=S=\triv_{d+1}(\Lambda^!)_0\ \mbox{ and }\ \Tens_S(\overline{V}^{*\ell})_1=\overline{V}^{*\ell}=V^{*\ell}\oplus K_d^{\vee*\ell}=\triv_{d+1}(\Lambda^!)_1,\]
we have a morphism $\phi':\Tens_S(\overline{V}^{*\ell})\to\triv_{d+1}(\Lambda^!)$ of $\F$-algebras

By Proposition \ref{prop:pi-quadratic}, 
$\Pi$ is a quadratic algebra whose degree $2$ part is 
$$\Pi_2
={\displaystyle \frac{V\otimes_SV}{R}\oplus\frac{(V\otimes_SK_d^\vee)\oplus(K_d^\vee\otimes_SV)}{\delta_d'(K_{d-1}^\vee)}\oplus K_d^\vee\otimes_SK_d^\vee}$$
where we use the notation \eqref{presentation of Lambda}. Therefore $\Pi^!$ is also a quadratic algebra whose degree $2$ part is
\begin{equation*}\label{degree 2 of Pi^!}
(\Pi^!)_2={\displaystyle \frac{V^{*\ell}\otimes_SV^{*\ell}}{R^\perp}\oplus\frac{(K_d^{\vee*\ell}\otimes_SV^{*\ell})\oplus(V^{*\ell}\otimes_SK_d^{\vee*\ell})}{\delta_d'(K_{d-1}^\vee)^\perp}\oplus\frac{K_d^{\vee*\ell}\otimes_SK_d^{\vee*\ell}}{K_d^{\vee*\ell}\otimes_SK_d^{\vee*\ell}}}.
\end{equation*}
On the other hand, we have
\[\triv_{d+1}(\Lambda^!)_2=(\Lambda^!\oplus L')_2=\frac{V^{*\ell}\otimes_SV^{*\ell}}{R^\perp}\oplus K_{d-1}^{\vee*\ell}.\]
Now we compare $(\Pi^!)_2$ with $\triv_{d+1}(\Lambda^!)_2$.
To prove that $\phi'$ induces the desired morphism $\phi:\Pi^!\to\triv_{d+1}(\Lambda^!)$, it suffices to show that 
the following sequence is exact.
\begin{eqnarray}\label{exact1}
&0\to \delta_d'(K_{d-1}^\vee)^\perp\to(V^{*\ell}\otimes_SK_d^{\vee*\ell})\oplus(K_d^{\vee*\ell}\otimes_SV^{*\ell})\xrightarrow{\phi'}K_{d-1}^{\vee*\ell}
\end{eqnarray}
By our definition of the $\Lambda^{!\en}$-module structure on $L'$, the morphism $\phi'$ in \eqref{exact1} is $(\theta_d^\ell+(-1)^d\theta_d^r)^{*\ell}$. 
Since $\theta_d^\ell+(-1)^d\theta_d^r:K_{d-1}^\vee\to (K_d^\vee\otimes_SV)\oplus(V\otimes_SK_d^\vee)$ is the restriction of $\delta_d'$, the sequence \eqref{exact1} is exact.
In fact, for a morphism $\gamma:X\to Y$ of $S^\en$-modules, the sequence $0\to\gamma(X)^\perp\to Y^{*\ell}\xrightarrow{\gamma^{*\ell}}X^{*\ell}$ is clearly exact.
This completes the proof.

(b) Since $\phi$ is an isomorphism in degrees 0 and 1 by (a), 
we have that $\phi$ is surjective if and only if $\triv_{d+1}(\Lambda^!)$ is generated in degrees 0 and 1 as an algebra.  We know that the algebras $\Pi^!$ and $\triv_{d+1}(\Lambda^!)$ are generated by $V^{*\ell}\oplus K_d^{\vee*\ell}$ and $V^{*\ell}\oplus\hd_{\Lambda^{!\en}}L'$ respectively.
So $\phi$ is surjective if and only if $\phi$ gives a surjection $V^{*\ell}\oplus K_d^{\vee*\ell}\to V^{*\ell}\oplus \head_{\Lambda^{!\en}}L'$ if and only if $L'_1=\head_{\Lambda^{!\en}}L'$ if and only if $(\Lambda^{!*})_1=\head_{\Lambda^{!\en}}(\Lambda^{!*})$.
Applying $(-)^*$, this is equivalent to $(\Lambda^!)_d=\soc_{\Lambda^{!\en}}(\Lambda^!)$.

The latter assertion is immediate from (a).

(c)
The `only if' part is clear from part (b) and the fact that $\Pi^!$ is quadratic.
The `if' part follows from Lemma \ref{lem-quad012} because both algebras are quadratic and $\phi$ is an isomorphism in degrees $0$, $1$, and $2$.
\end{proof}

We have the following nice property of $\phi$.

\begin{theorem}\label{cor:nhered-surj}
If $\Lambda$ is Koszul and $d$-hereditary, then the natural morphism $\phi:\Pi^!\onto\Triv_{d+1}(\Lambda^!)$ is surjective.
\end{theorem}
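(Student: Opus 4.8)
The plan is to deduce this from the socle criterion of Theorem \ref{thm:canonical map}(b): since $\phi$ is surjective precisely when $(\Lambda^!)_d=\soc_{\Lambda^{!\en}}(\Lambda^!)$, it suffices to establish this equality for a Koszul $d$-hereditary $\Lambda$. The inclusion $(\Lambda^!)_d\subseteq\soc_{\Lambda^{!\en}}(\Lambda^!)$ is automatic, because $\Lambda^!$ is generated in degree $1$ and vanishes in degrees $>d$, so $\Lambda^!_{>0}\cdot(\Lambda^!)_d=0=(\Lambda^!)_d\cdot\Lambda^!_{>0}$. For the reverse inclusion I would pass to the dual: by Lemma \ref{Lambda!*} the bimodule $\Lambda^{!*}$ is isomorphic to $L=\bigoplus_iK_{d+1-i}^{\vee*\ell}$, so $\soc_{\Lambda^{!\en}}(\Lambda^!)\cong(\hd_{\Lambda^{!\en}}L)^*$, and it suffices to show that $L$ is generated over $\Lambda^{!\en}$ by its single layer $K_d^{\vee*\ell}$. (Once this is known, the natural map $K_d^{\vee*\ell}\to\hd_{\Lambda^{!\en}}L$ is injective, since $K_d^{\vee*\ell}$ lies in the extreme layer and hence outside $\rad_{\Lambda^{!\en}}L$, and surjective since $K_d^{\vee*\ell}$ generates; so $\dim\soc_{\Lambda^{!\en}}(\Lambda^!)=\dim K_d=\dim(\Lambda^!)_d$, which together with the first inclusion gives equality.)

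Next I unwind what it means for $L$ to be generated by $K_d^{\vee*\ell}$. With the module structure of $L$ given by $(\theta_i^\ell)^{*\ell}$ and $(\theta_i^r)^{*\ell}$ coming from \eqref{theta}, left and right multiplication by $V^{*\ell}$ carry the layer $K_i^{\vee*\ell}$ into $K_{i-1}^{\vee*\ell}$, so $L$ is generated by $K_d^{\vee*\ell}$ if and only if for each $1\le i\le d$ the map $(V^{*\ell}\otimes_SK_i^{\vee*\ell})\oplus(K_i^{\vee*\ell}\otimes_SV^{*\ell})\to K_{i-1}^{\vee*\ell}$ given by $((\theta_i^\ell)^{*\ell},(\theta_i^r)^{*\ell})$ is surjective. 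Applying $(-)^{*\ell}$ (a duality between semisimple modules), this is equivalent to injectivity of $(\theta_i^\ell,\theta_i^r):K_{i-1}^\vee\to(K_i^\vee\otimes_SV)\oplus(V\otimes_SK_i^\vee)$, which by Lemma \ref{lem:easydual} has the same kernel as the restriction of $\delta_i'$ to the generators $K_{i-1}^\vee\subseteq\Lambda\otimes_SK_{i-1}^\vee\otimes_S\Lambda$. Since $\Lambda$ is Koszul, the complex $(\Lambda\otimes_SK_\bullet^\vee\otimes_S\Lambda,\delta_\bullet')$ computes $\Ext^\bullet_{\Lambda^\en}(\Lambda,\Lambda^\en)$, and because $\Image\delta_i'$ is supported in internal degrees $\ge 1-i$ it contributes nothing in degree $-i$; hence the degree $-(i-1)$ part of $\Ker\delta_i'$, namely $\Ker(\delta_i'|_{K_{i-1}^\vee})$, is isomorphic to $\Ext^{i-1}_{\Lambda^\en}(\Lambda,\Lambda^\en)_{1-i}$. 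So the theorem reduces to showing $\Ext^{i-1}_{\Lambda^\en}(\Lambda,\Lambda^\en)_{1-i}=0$ for $1\le i\le d$.

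For $2\le i\le d$ this follows from the stronger fact that $\Ext^{i-1}_{\Lambda^\en}(\Lambda,\Lambda^\en)=0$, which holds by $d$-heredity exactly as in the proof of Corollary \ref{cor:kos-nher-prep}, where $\Ext^{i-1}_{\Lambda^\en}(\Lambda,\Lambda^\en)\cong H^{d-i+1}(\nu_d^{-1}(\Lambda))=0$ since $d-i+1\notin d\Z$ for $2\le i\le d$. The remaining corner $i=1$ asks that $\Hom_{\Lambda^\en}(\Lambda,\Lambda^\en)_0=0$; concretely $\Hom_{\Lambda^\en}(\Lambda,\Lambda^\en)_0=\Ker(\delta_1'|_{S^\vee})$, and $\delta_1'|_{S^\vee}:S^\vee\to(V^\vee\otimes_SV)\oplus(V\otimes_SV^\vee)$ sends the functional dual to an idempotent $e_v$ to (up to sign) the canonical element assembled from the arrows incident to $v$, which is nonzero as soon as every vertex of the Gabriel quiver of $\Lambda$ meets an arrow; this holds here because $\Lambda$, being ring-indecomposable (as is standard for $d$-hereditary algebras) and of global dimension $d\ge 1$, is not semisimple. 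This proves $\soc_{\Lambda^{!\en}}(\Lambda^!)=(\Lambda^!)_d$ and hence the theorem. The main obstacle is organisational rather than conceptual: keeping the four duals $(-)^*,(-)^{*\ell},(-)^{*r},(-)^\vee$ and the two gradings consistent so as to match the structure maps of $L$ from Lemma \ref{Lambda!*} with the dual Koszul differentials $\delta_\bullet'$ of Lemma \ref{lem:easydual}, and isolating the low-degree case $i=1$, where it is genuinely non-semisimplicity of $\Lambda$ — equivalently, the statement that the top layer $(\Lambda^!)_d^*$ of $\triv_{d+1}(\Lambda^!)$ is hit by products of lower-degree elements — that does the work.
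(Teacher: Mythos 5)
Your argument is correct and follows essentially the same route as the paper: your identification of $(\soc_{\Lambda^{!\en}}\Lambda^!)_i$ with $\Ext^i_{\Lambda^\en}(\Lambda,\Lambda^\en)_{-i}$ (which you re-derive in dual form, via the head of $L\cong\Lambda^{!*}$ and the maps $\theta^\ell_i,\theta^r_i$) is exactly the paper's Lemma \ref{Ext and socle}, after which both proofs conclude by the same $d$-heredity vanishing of $\Ext^{i}_{\Lambda^\en}(\Lambda,\Lambda^\en)$ for $0<i<d$. Your explicit treatment of the boundary case $i=0$ --- where $\Hom_{\Lambda^\en}(\Lambda,\Lambda^\en)$ itself need not vanish for $d$-hereditary (e.g.\ $d$-RF) algebras and only its internal degree $0$ part does, via ring-indecomposability and non-semisimplicity rather than $d$-heredity per se --- is a point the paper's two-line proof passes over in silence, and is handled correctly here.
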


To prove this, we need the following.

\begin{lemma}\label{Ext and socle}
Let $\Lambda$ be a Koszul algebra and $i\ge0$. Then $\Ext^i_{\Lambda^\en}(\Lambda,\Lambda^\en)_{-i}\cong(\soc_{\Lambda^{!\en}}(\Lambda^!))_i$.
\end{lemma}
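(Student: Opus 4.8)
The plan is to compute both sides of the claimed isomorphism explicitly and to recognise them as the same subspace $\ker\theta^\ell_{i+1}\cap\ker\theta^r_{i+1}$ of $K_i^\vee$, where $\theta^\ell_\bullet,\theta^r_\bullet$ are the maps of \eqref{theta}.

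For the left-hand side, since $\Lambda$ is Koszul its minimal $\Lambda^\en$-projective resolution is the Koszul bimodule complex \eqref{Koszul bimodule complex} with $P_i=\Lambda\otimes_SK_i\otimes_S\Lambda$ (Theorem \ref{characterize koszul}). Applying $\Hom_{\Lambda^\en}(-,\Lambda^\en)$ and using Lemma \ref{lem-tensorhomiso} to identify $\Hom_{\Lambda^\en}(P_i,\Lambda^\en)\cong\Lambda\otimes_SK_i^\vee\otimes_S\Lambda$, Lemma \ref{lem:easydual} identifies the induced differential out of cohomological degree $i$ with $\delta'_{i+1}=\hat\theta^\ell_{i+1}+(-1)^{i+1}\hat\theta^r_{i+1}$; hence $\Ext^i_{\Lambda^\en}(\Lambda,\Lambda^\en)=\ker\delta'_{i+1}/\im\delta'_i$. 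I would then pass to the degree $-i$ component. As $K_j^\vee$ is concentrated in degree $-j$, the module $\Lambda\otimes_SK_{i-1}^\vee\otimes_S\Lambda$ is concentrated in degrees $\geq 1-i$, so $(\im\delta'_i)_{-i}=0$; moreover $(\Lambda\otimes_SK_i^\vee\otimes_S\Lambda)_{-i}=K_i^\vee$ and $(\Lambda\otimes_SK_{i+1}^\vee\otimes_S\Lambda)_{-i}=(K_{i+1}^\vee\otimes_SV)\oplus(V\otimes_SK_{i+1}^\vee)$, and by the construction of $\hat\theta^\ell,\hat\theta^r$ the map $(\delta'_{i+1})_{-i}$ is $\theta^\ell_{i+1}\oplus(-1)^{i+1}\theta^r_{i+1}$. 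Therefore $\Ext^i_{\Lambda^\en}(\Lambda,\Lambda^\en)_{-i}=\ker\theta^\ell_{i+1}\cap\ker\theta^r_{i+1}$.

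For the right-hand side, I would first note that $\Lambda^!$ has radical grading with $(\Lambda^!)_0$ semisimple and is generated in degree $1$, so $(\soc_{\Lambda^{!\en}}\Lambda^!)_i$ is exactly the set of $m\in(\Lambda^!)_i$ annihilated by $(\Lambda^!)_1$ on both sides. Under the isomorphism $\Lambda^!\cong\bigoplus_jK_j^{*\ell}$ of Lemma \ref{Koszul dual is K*}, right multiplication by $(\Lambda^!)_1$ on $(\Lambda^!)_i$ is $(\iota^\ell_{i+1})^{*\ell}\colon K_i^{*\ell}\otimes_SV^{*\ell}\to K_{i+1}^{*\ell}$ and left multiplication is $(\iota^r_{i+1})^{*\ell}\colon V^{*\ell}\otimes_SK_i^{*\ell}\to K_{i+1}^{*\ell}$. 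The key point is that, under the canonical isomorphism of functors $(-)^{*\ell}\cong(-)^\vee$, the transpose of $(\iota^\ell_{i+1})^{*\ell}$ (resp.\ of $(\iota^r_{i+1})^{*\ell}$) is identified with $\theta^\ell_{i+1}$ (resp.\ $\theta^r_{i+1}$): indeed both are obtained from the single inclusion $\iota^\ell_{i+1}$ (resp.\ $\iota^r_{i+1}$) by applying only functorial operations — hom--tensor adjunctions, the duals, and the cyclic rotation of $c(-)$ as in Lemma \ref{XYZ} — all of which are compatible with $(-)^{*\ell}\cong(-)^\vee$ (this is the content of Lemmas \ref{XYV} and \ref{duals}). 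Consequently the common annihilator is isomorphic to $\ker\theta^\ell_{i+1}\cap\ker\theta^r_{i+1}$, and comparing with the first computation finishes the proof. Alternatively one can deduce this from Lemma \ref{Lambda!*} together with $\soc_{\Lambda^{!\en}}M\cong(\hd_{\Lambda^{!\en}}M^*)^*$.

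I expect the main obstacle to be this last identification: reconciling $\ker\theta^\ell_{i+1}$ — defined via the $(-)^\vee$-duality and the cyclic-rotation isomorphisms — with the annihilator of right multiplication, which is defined via the $(-)^{*\ell}$-duality on $\Lambda^!$. Establishing it cleanly requires a diagram chase through the duality isomorphisms $\alpha,\beta,\gamma$ and the relevant adjunctions, in the style of Lemmas \ref{XYV}, \ref{duals}, and \ref{derivation}; everything else is bookkeeping with the internal degrees in the Koszul complex.
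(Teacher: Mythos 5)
Your proposal is correct and follows essentially the same route as the paper's proof: compute the degree $-i$ part of the Koszul bimodule cochain complex to identify $\Ext^i_{\Lambda^\en}(\Lambda,\Lambda^\en)_{-i}$ with the common kernel of $\theta^\ell_{i+1}$ and $\theta^r_{i+1}$ on $K_i^\vee$ (equivalently, the common annihilator of the $V^{*r}$- and $V^{*\ell}$-actions), and then match this with the degree $i$ part of $\soc_{\Lambda^{!\en}}(\Lambda^!)\subseteq K_i^{*\ell}$ via the duality comparison of Lemma \ref{duals}. The one point you flag as the main obstacle — reconciling the $(-)^\vee$- and $(-)^{*\ell}$-dualities — is exactly what Lemma \ref{duals} and its dual were set up to handle, so no new argument is needed there.
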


\begin{proof}
Recall that $\Ext^i_{\Lambda^\en}(\Lambda,\Lambda^\en)$ is the cohomology of the complex
\[\Lambda\otimes_SK_{i-1}^\vee\otimes_S\Lambda\xrightarrow{\delta'_i}\Lambda\otimes_SK_{i}^\vee\otimes_S\Lambda\xrightarrow{\delta'_{i+1}}\Lambda\otimes_SK_{i+1}^\vee\otimes_S\Lambda.\]
Taking the degree $-i$ part, $\Ext^i_{\Lambda^\en}(\Lambda,\Lambda^\en)_{-i}$ is the kernel of the morphism
\[\delta'_{i+1}=\theta_{i+1}^\ell+(-1)^{i+1}\theta_{i+1}^r:K_i^\vee\to(K_{i+1}^\vee\otimes_SV)\oplus(V\otimes_SK_{i+1}^\vee)\subset\Lambda\otimes_SK_{i+1}^\vee\otimes_S\Lambda.\]
By adjunctions, $f\in K_i^\vee$ is in the kernel if and only if $V^{*r}\cdot f=0=f\cdot V^{*\ell}$.

On the other hand, we have $(\Lambda^!)_i=K_i^{*\ell}$ and $(\soc_{\Lambda^{!\en}}(\Lambda^!))_i=\{f\in K_i^{*\ell}\mid V^{*\ell}\cdot f=0=f\cdot V^{*\ell}\}$.
By Lemma \ref{duals} and its dual, the isomorphism $K_i^{*\ell}\cong K_i^\vee$ induces an isomorphism
\[(\soc_{\Lambda^{!\en}}(\Lambda^!))_i\cong\{f\in K_i^\vee\mid V^{*r}\cdot f=0=f\cdot V^{*\ell}\}=\Ext^i_{\Lambda^\en}(\Lambda,\Lambda^\en)_{-i}.\qedhere\]
\end{proof}

Now we are ready to prove Theorem \ref{cor:nhered-surj}.

\begin{proof}
Suppose $\phi$ is not surjective, so $\soc_{\Lambda^{!\en}}(\Lambda^!)\neq (\Lambda^!)_d$ holds by Theorem \ref{thm:canonical map}(b). 
By Lemma \ref{Ext and socle}, we have $\Ext^i_{\Lambda^\en}(\Lambda,\Lambda^\en)\neq0$,
a contradiction to our assumption that $\Lambda$ is $d$-hereditary.
\end{proof}

Now we look at the case $d=1$.

\begin{example}\label{A1 and A2}
Let $Q$ be a connected quiver and $\Lambda=\F Q$. Assume that $\Lambda$ is 1-hereditary, that is, $Q$ is not of type $A_1$ by our convention.

Then $\Pi^!$ is given by the double quiver $\overline{Q}$ with the following relations, where we denote by $(-)^*$ the canonical involution of $\overline{Q}$: For any arrows $\alpha$ and $\beta$ in $\overline{Q}$, $\alpha\beta=0$ if $\beta\neq\alpha^*$, and $\alpha\alpha^*=\pm\beta\beta^*$ if $\alpha$ and $\beta$ start at the same vertex.

This implies that, if $Q$ is not of type $A_2$, then $(\Pi^!)_i$ is non-zero if and only if $i=0$, $1$ or $2$.

If $Q$ is of type $A_2$, then
$\Pi^!$ is the path algebra of $[\xymatrix@C=1.5em{1\ar@<.2em>[r]&2\ar@<.2em>[l]}]$ and hence infinite dimensional, while $\Triv_2(\Lambda^!)$ is the factor algebra of $\Pi^!$ by the ideal generated by paths of length 3.
\end{example}

For other cases in $d=1$, we have the following.

\begin{theorem}\label{thm:dynkin-triv}
Let $Q$ be a connected acyclic quiver which is not of type $A_1$ and $\Lambda:=\F Q$ its path algebra.
Then the natural morphism $\phi:\Pi^!\to\Triv_2(\Lambda^!)$ is an isomorphism if and only if
$Q$ is not of type $A_2$. 
\end{theorem}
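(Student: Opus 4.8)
The plan is to reduce the statement to the low-degree behaviour already recorded in Theorems~\ref{thm:canonical map} and~\ref{cor:nhered-surj}, together with the explicit computation of $\Pi^!$ carried out in Example~\ref{A1 and A2}. First I would note that, since $Q$ is connected, acyclic and not of type $A_1$, the path algebra $\Lambda=\F Q$ is a finite-dimensional Koszul algebra of global dimension $d=1$ (with zero relations, so $\Lambda=\tens_S(V)$) which is $1$-hereditary, so the whole apparatus of Section~5 applies. By Theorem~\ref{cor:nhered-surj} the canonical map $\phi:\Pi^!\onto\Triv_2(\Lambda^!)$ is surjective, and hence by Theorem~\ref{thm:canonical map}(b) it is an isomorphism in degrees $0$, $1$ and $2$.

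Next I would observe that $\Triv_2(\Lambda^!)$ is concentrated in degrees $0,1,2$: because $\Lambda$ is hereditary and Koszul, $(\Lambda^!)_i=\Ext^i_\Lambda(S,S)=0$ for $i\ge 2$, so $\Lambda^!=S\oplus V^{*\ell}$ lives in degrees $0$ and $1$, whence $\Triv_2(\Lambda^!)_i=(\Lambda^!)_i\oplus((\Lambda^!)_{2-i})^*$ vanishes for $i\ge 3$. Combining this with the previous paragraph, $\phi$ is an isomorphism if and only if $(\Pi^!)_i=0$ for all $i\ge 3$: the forward implication is clear since $\phi$ is degreewise, and the converse holds because $\phi$ is then an isomorphism in degrees $0,1,2$ and both algebras vanish in higher degrees.

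Finally I would invoke Example~\ref{A1 and A2}. For $Q$ not of type $A_2$, the description of $\Pi^!$ there (the doubled quiver $\overline Q$ with relations $\alpha\beta=0$ for $\beta\ne\alpha^*$ and $\alpha\alpha^*=\pm\beta\beta^*$ for $\alpha,\beta$ with common source) forces $(\Pi^!)_i=0$ for $i\ge 3$, so $\phi$ is an isomorphism. For $Q$ of type $A_2$, the same description gives $\Pi^!=\F[1\rightleftarrows 2]$, which is infinite-dimensional, whereas $\Triv_2(\Lambda^!)$ has dimension $2\dim_\F\Lambda^!<\infty$; a surjection from an infinite-dimensional algebra onto a finite-dimensional one cannot be injective, so $\phi$ is not an isomorphism. (Alternatively, for the $A_2$ case one could appeal to Theorem~\ref{thm:canonical map}(c): since $\Lambda$ is $1$-hereditary, $\phi$ is an isomorphism iff $\Triv_2(\Lambda^!)$ is quadratic, and the finite-dimensional algebra $\Triv_2(\F A_2)$, whose Gabriel quiver is the $2$-cycle, has no relations in degree $2$ yet is not free, hence is not quadratic.)

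There is no genuine obstacle here beyond careful bookkeeping of degrees: all the substantive work sits inside Example~\ref{A1 and A2}, whose content is precisely the computation, for $d=1$, of the quadratic-dual relations $\delta_1'(K_0^\vee)^\perp$ of $\Pi$ and the observation that they generate everything in degree $\ge 3$ exactly when $Q\ne A_2$. If one wanted a fully self-contained proof one would reproduce that computation, but as stated we are entitled to cite it.
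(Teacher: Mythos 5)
Your argument is correct and follows essentially the same route as the paper: both reduce the statement to the low-degree isomorphism from Theorem~\ref{thm:canonical map}(b) (the paper verifies the socle condition $(\Lambda^!)_1=\soc_{\Lambda^{!\en}}(\Lambda^!)$ directly, where you invoke Theorem~\ref{cor:nhered-surj}, which amounts to the same thing), and both then appeal to Example~\ref{A1 and A2} for the facts that $(\Pi^!)_i$ vanishes in degrees $i\ge 3$ when $Q$ is not of type $A_2$ and that $\Pi^!$ is infinite-dimensional in the $A_2$ case. The only cosmetic difference is your optional alternative treatment of the $A_2$ case via non-quadraticity of $\Triv_2(\Lambda^!)$, which the paper does not use.
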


\begin{proof}
By Example \ref{A1 and A2}, we only have to show the `if' part.
Clearly $\Lambda^!$ is the factor algebra of $\F Q^{\op}$ by the ideal generated by all paths of length 2. Thus $(\Lambda^!)_i$ is non-zero only when $i=0$ or $1$, and $(\Lambda^!)_1=\soc{}_{\Lambda^{!\en}}(\Lambda^!)$ holds since $Q$ is not of type $A_1$.
By Theorem \ref{thm:canonical map}(b), we have that $\phi$ is surjective morphism which is an isomorphism in degrees $0$, $1$ and $2$.
On the other hand, $\Triv_2(\Lambda^!)_i$ is non-zero only when $i=0$, $1$, or $2$,
while $(\Pi^!)_i$ is non-zero only when $i=0$, $1$, or $2$ by Example \ref{A1 and A2}.
Thus the assertion follows.
\end{proof}

As an application of Theorem \ref{thm:dynkin-triv}, we recover a well-known result, which is mentioned in Section 5.1 of \cite{bbk} and in the introduction of \cite{hk}:

\begin{corollary}
Let $Q$ be a connected quiver which is not of type $A_1$ or $A_2$ and has bipartite orientation, and $\Lambda:=\F Q$ its path algebra. Then
$$\Pi^!\cong\Triv(\Lambda).$$
\end{corollary}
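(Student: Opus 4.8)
The plan is to reduce the statement to Theorem~\ref{thm:dynkin-triv} and then unwind what the graded trivial extension $\Triv_2(\Lambda^!)$ becomes when the orientation of $Q$ is bipartite. Since $Q$ is connected and not of type $A_1$, the path algebra $\Lambda=\F Q$ is $1$-hereditary, and since $Q$ is not of type $A_2$, Theorem~\ref{thm:dynkin-triv} already provides an isomorphism $\Pi^!\cong\Triv_2(\Lambda^!)$ of $\Z$-graded $\F$-algebras. Thus the only task left is to identify $\Triv_2(\Lambda^!)$, \emph{as an ungraded algebra}, with the ordinary trivial extension $\Triv(\Lambda)=\Lambda\ltimes\Lambda^*$.

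First I would compute $\Lambda^!$. A bipartite orientation of $Q$ admits no path of length $2$, so $\Lambda=\F Q$ is concentrated in degrees $0$ and $1$ and has no relations; hence $K_0=S$, $K_1=V$ and $K_i=0$ for $i\ge2$, and Lemma~\ref{Koszul dual is K*} shows that $\Lambda^!$ is $S\oplus V^{*\ell}$ with vanishing products in degrees $\ge2$. As $V^{*\ell}$ is the arrow space of the (again bipartite) opposite quiver $Q^{\op}$, this algebra is precisely $\F Q^{\op}=\Lambda^{\op}$ with its radical grading, so $\Triv_2(\Lambda^!)=\Triv_2(\Lambda^{\op})$. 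By the interpretation of graded trivial extensions given in Section~\ref{ss:gtea}, the ungraded algebra underlying $\Triv_2(\Lambda^{\op})$ is the trivial extension of $\Lambda^{\op}$ by the twisted bimodule ${}_{\sigma}(\Lambda^{\op})^*$, where $\sigma$ acts by $(-1)^i$ on the degree-$i$ part, i.e.\ fixes every vertex and negates every arrow of $Q^{\op}$. The one point that genuinely uses the bipartite hypothesis is that this $\sigma$ is an \emph{inner} automorphism of $\Lambda^{\op}$: with respect to the splitting of $Q^{\op}_0$ into its sources and sinks, $\sigma$ is conjugation by the unit $u=\sum_v\epsilon_v e_v$ with $\epsilon_v=1$ on sources and $\epsilon_v=-1$ on sinks, so that $\epsilon_b\epsilon_a^{-1}=-1$ for every arrow $a\to b$ (the statement being vacuous in characteristic $2$). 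Since twisting a bimodule by an inner automorphism yields an isomorphic bimodule, ${}_{\sigma}(\Lambda^{\op})^*\cong(\Lambda^{\op})^*$, whence $\Triv_2(\Lambda^{\op})\cong\Triv(\Lambda^{\op})$ as $\F$-algebras.

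Finally I would note $\Triv(\Lambda^{\op})=\Triv(\Lambda)^{\op}$ (a direct check on the multiplication rule of the trivial extension), while $\Pi$, being the ordinary preprojective algebra $\F\overline{Q}/\bigl(\sum_{x\in Q_1}(xx^*-x^*x)\bigr)$, is isomorphic to its opposite via the anti-automorphism of $\overline{Q}$ interchanging $x$ and $x^*$; hence $\Pi^!\cong(\Pi^!)^{\op}$ as well (this can also be read off from the explicit presentation of $\Pi^!$ recalled in Example~\ref{A1 and A2}). Putting these together gives $\Pi^!\cong(\Pi^!)^{\op}\cong\bigl(\Triv(\Lambda)^{\op}\bigr)^{\op}=\Triv(\Lambda)$, as required; in particular the self-oppositeness of $\Pi^!$ makes it irrelevant whether one's conventions identify $\Lambda^!$ with $\Lambda$ or with $\Lambda^{\op}$. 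I expect the verification that $\sigma$ is inner, together with the routine bookkeeping involving opposite algebras, to be the only steps requiring any care, the substantive content being entirely contained in Theorem~\ref{thm:dynkin-triv}.
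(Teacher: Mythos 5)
Your proof is correct and follows essentially the same route as the paper: reduce to Theorem~\ref{thm:dynkin-triv}, observe that bipartiteness forces $\Lambda^!\cong\Lambda^{\op}$, and untwist $\Triv_2$ by noting that $\sigma$ is inner via a sign change on one side of the bipartition. Your extra care in passing through $\Triv(\Lambda^{\op})\cong\Triv(\Lambda)^{\op}$ and invoking the self-oppositeness of $\Pi^!$ is a legitimate (and slightly more explicit) way of justifying the step the paper states simply as $\Triv(\Lambda^{\op})\cong\Triv(\Lambda)$.
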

\begin{proof}This is a consequence of Theorem \ref{thm:dynkin-triv} because, when $Q$ has bipartite orientation, we have $\Lambda^!\cong \Lambda^\op$ and $\Triv(\Lambda^\op)\cong\Triv(\Lambda)$.
Moreover, as $Q$ is bipartite, the algebra automorphism $\sigma$ is inner: it is induced by a change of sign at either the sources or the sinks. Thus $\Triv_2(\Lambda)\cong\Triv(\Lambda)$.
\end{proof}

\begin{example}\label{ex-phi}
Note that our map $\phi$ is not necessarily injective nor surjective. Let $\Lambda$ be the Koszul algebra given by taking the quotient of the path algebra of the quiver
$$1\arr{\alpha}2\arr{\beta}3\arr{\gamma}4$$
by the ideal $(\alpha\beta)$.  Then $\Pi^!$ is infinite dimensional and $\Triv_3(\Lambda^!)$ is $16$-dimensional.  The kernel of $\phi$ is the infinite-dimensional space $(\Pi^!)_{\geq 4}$ and the cokernel is $2$-dimensional, generated by $\gamma\in K_1\subseteq \Triv_3(\Lambda^!)_2$ and $e_4\in K_0\subseteq \Triv_3(\Lambda^!)_3$.
\end{example}

\subsection{Type $A$ examples}
We finish this article by applying our theory to higher type A $d$-representation finite algebras \cite{iya-ct-higher,io-napr}.

Let $1\leq d<\infty$ and $2\leq s<\infty$.
Let $\overline{Q}^{(d,s)}$ denote the quiver whose vertices are $d+1$-tuples $x=(x_1,\ldots,x_{d+1})$ of nonnegative integers that sum to $s-1$, and whose arrows are
\[\alpha_{x,i}:x\to x+f_i\]
for $1\leq i\leq d+1$ whenever $x_i\geq1$, where
\[f_i=(0,\ldots,0,\stackrel{i}{-1},\stackrel{i+1}{+1},0,\ldots,0)\ \mbox{ and }\ f_{d+1}=(1,\ldots,0,\ldots,0,-1).\]
Let $Q^{(d,s)}$ be the quiver obtained by removing all arrows of the form $\alpha_{x,d+1}$ from $\overline{Q}^{(d,s)}$.
For example, the quivers $\overline{Q}^{(2,5)}$ and $Q^{(2,5)}$ are the following.
\[\begin{xy}
<0pt,0pt>;<0.5pt,0pt>:<0pt,-0.5pt>::
(0,0) *+{\overline{Q}^{(2,5)}},
(134,0) *+{040} ="0",
(100,56) *+{130} ="1",
(167,56) *+{031} ="2",
(67,112) *+{220} ="3",
(134,112) *+{121} ="4",
(200,112) *+{022} ="5",
(33,167) *+{310} ="6",
(100,167) *+{211} ="7",
(167,167) *+{112} ="8",
(234,167) *+{013} ="9",
(0,223) *+{400} ="10",
(67,223) *+{301} ="11",
(134,223) *+{202} ="12",
(200,223) *+{103} ="13",
(267,223) *+{004} ="14",
"1", {\ar"0"},
"0", {\ar"2"},
"2", {\ar"1"},
"3", {\ar"1"},
"1", {\ar"4"},
"4", {\ar"2"},
"2", {\ar"5"},
"4", {\ar"3"},
"6", {\ar"3"},
"3", {\ar"7"},
"5", {\ar"4"},
"7", {\ar"4"},
"4", {\ar"8"},
"8", {\ar"5"},
"5", {\ar"9"},
"7", {\ar"6"},
"10", {\ar"6"},
"6", {\ar"11"},
"8", {\ar"7"},
"11", {\ar"7"},
"7", {\ar"12"},
"9", {\ar"8"},
"12", {\ar"8"},
"8", {\ar"13"},
"13", {\ar"9"},
"9", {\ar"14"},
"11", {\ar"10"},
"12", {\ar"11"},
"13", {\ar"12"},
"14", {\ar"13"},
\end{xy}\ \ \ \ \ 
\begin{xy}
<0pt,0pt>;<0.5pt,0pt>:<0pt,-0.5pt>::
(0,0) *+{Q^{(2,5)}},
(134,0) *+{040} ="0",
(100,56) *+{130} ="1",
(167,56) *+{031} ="2",
(67,112) *+{220} ="3",
(134,112) *+{121} ="4",
(200,112) *+{022} ="5",
(33,167) *+{310} ="6",
(100,167) *+{211} ="7",
(167,167) *+{112} ="8",
(234,167) *+{013} ="9",
(0,223) *+{400} ="10",
(67,223) *+{301} ="11",
(134,223) *+{202} ="12",
(200,223) *+{103} ="13",
(267,223) *+{004} ="14",
"1", {\ar"0"},
"0", {\ar"2"},
"3", {\ar"1"},
"1", {\ar"4"},
"4", {\ar"2"},
"2", {\ar"5"},
"6", {\ar"3"},
"3", {\ar"7"},
"7", {\ar"4"},
"4", {\ar"8"},
"8", {\ar"5"},
"5", {\ar"9"},
"10", {\ar"6"},
"6", {\ar"11"},
"11", {\ar"7"},
"7", {\ar"12"},
"12", {\ar"8"},
"8", {\ar"13"},
"13", {\ar"9"},
"9", {\ar"14"},
\end{xy}
\]

\medskip
Let $I^{(d,s)}$ denote the ideal of $\F Q^{(d,s)}$ generated by elements:
\begin{align*}
\alpha_{x,i}\alpha_{x+f_i,j}&=\alpha_{x,j}\alpha_{x+f_j,i}&&\text{ if $x_i,x_j\geq1$;}\\
\alpha_{x,i}\alpha_{x+f_i,i+1}&=0&&\text{ if $x_i\geq1$ and $x_{i+1}=0$,}
\end{align*}
where $x\in Q^{(d,s)}_0$ and $1\le i<j\le d$.

For a field $\F$, let
\[\Lambda^{(d,s)}=\F Q^{(d,s)}/I^{(d,s)}.\]
Then $\Lambda^{(d,s)}$ is $d$-RF \cite[Theorems 1.18, 6.12]{iya-ct-higher}.
Also, as $I^{(d,s)}$ is a homogeneous ideal with respect to the path length grading on $\F Q^{(d,s)}$, $\Lambda^{(d,s)}$ inherits this grading.

The following notation will be useful: for a vertex $x$ in $Q^{(d,s)}$, let $e_x$ denote the idempotent of $\Lambda$ corresponding to the vertex $x$, and let
\[\alpha_i=\sum \alpha_{x,i}.\]
Then the relations in $\Lambda^{(d,s)}$ can be rewritten as:
\[ e_x(\alpha_i\alpha_j-\alpha_j\alpha_i)=0 \text{ for all vertices $x$ and all $i\neq j$}. \]

We have a natural morphism $\phi:\Pi^!\to\Triv_{d+1}(\Lambda^!)$.  We know by Theorem \ref{thm:canonical map} and Corollary \ref{cor:nhered-surj} that $\phi$ is always surjective.  If $s\geq3$, then it is shown in \cite[Section 3]{g-zz} that $\phi$ is an isomorphism.

We will make use of the following result:
\begin{proposition}[{\cite[Proposition 3.4]{g-zz}}]
$\Lambda$ is a Koszul algebra.
\end{proposition}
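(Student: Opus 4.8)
The plan is to prove Koszulity of $\Lambda=\Lambda^{(d,s)}$ by exhibiting an explicit quadratic Gröbner (PBW) basis and then invoking the standard criterion that a quadratic algebra admitting a quadratic Gröbner basis is Koszul (see \cite{pri,bgs}). Since $I^{(d,s)}$ is generated by the elements $e_x(\alpha_i\alpha_j-\alpha_j\alpha_i)$, which are homogeneous of degree $2$ for the path-length grading, $\Lambda$ is a quadratic algebra $\Tens_S(V)/(R)$ with $S=\F Q^{(d,s)}_0$ and $V=\F Q^{(d,s)}_1$. One should be warned that the ``naive'' term order, in which each relation is used to sort the directions of a length-$2$ path into increasing order, does \emph{not} yield a quadratic Gröbner basis: for instance, when $x_i\ge 2$ and $x_{i+1}=0$, the path $\alpha_{x,i}\,\alpha_{x+f_i,i}\,\alpha_{x+2f_i,i+1}$ is reduced in that order but vanishes in $\Lambda$ (use commutativity to move the last arrow past the second, then the monomial relation at $x$), producing a genuine length-$3$ relation.

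Instead I would use the degree-lexicographic order on paths of $\F Q^{(d,s)}$ coming from the total order on arrows in which $\alpha_{x,i}>\alpha_{y,j}$ whenever $i<j$ (and arrows of equal direction are ordered arbitrarily). With respect to this order the leading term of every quadratic relation is the direction-increasing monomial $\alpha_{x,i}\,\alpha_{x+f_i,j}$ with $i<j$; for $j>i+1$ this rewrites to $\alpha_{x,j}\,\alpha_{x+f_j,i}$, and for $j=i+1$ it rewrites to $\alpha_{x,i+1}\,\alpha_{x+f_{i+1},i}$ if $x_{i+1}\ge 1$ and to $0$ if $x_{i+1}=0$ (the latter being precisely the monomial relations). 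Hence the reduced monomials are exactly the paths whose direction sequence is weakly decreasing; in particular such a path never contains a pattern $\alpha_i\alpha_{i+1}$, so it is not an obvious consequence of a monomial relation. It then remains to check confluence of all overlap ambiguities. The ``numerical'' ambiguity $\alpha_a\alpha_b\alpha_c$ with $a<b<c$ resolves to $\alpha_c\alpha_b\alpha_a$ along both routes exactly as in the proof that a commutative polynomial ring has a PBW basis; the point that makes this legal on $Q^{(d,s)}$ is that a step in direction $m$ changes only coordinates $m$ and $m+1$, and in a weakly decreasing path all direction-$m$ steps precede all direction-$(m{-}1)$ steps, so the coordinates decremented along one route are never frozen along the other. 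The remaining ambiguities are those in which one relation involved is a monomial relation, and for these one verifies that the condition ``$x_{i+1}=0$'' triggering a reduction to $0$ is preserved under the reorderings produced by the other route (again because mass in $\overline{Q}^{(d,s)}$ only flows rightwards, so the relevant coordinate is untouched by the intervening moves).

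The main obstacle is exactly this final combinatorial verification: tracking which vertex coordinates are zero through the rewriting process and confirming that every overlap involving a monomial relation is confluent, so that the weakly decreasing paths really do span $\Lambda^{(d,s)}$ with no relations beyond the quadratic ones. Once that is in place, Koszulity is automatic. As alternatives one could instead (i) compute the spaces $K_i=\bigcap_j V^j\otimes_S R\otimes_S V^{i-2-j}$ for $\Lambda^{(d,s)}$ — they are ``local exterior powers'' of $V$ supported on the appropriate vertices — and verify directly via Theorem \ref{characterize koszul} that the Koszul bimodule complex \eqref{Koszul bimodule complex} is exact; or (ii) use that $\Lambda^{(d,s)}$ is $d$-representation finite \cite{iya-ct-higher} and build linear projective resolutions of the simple modules from its explicit $d$-almost split sequences via Theorem \ref{d-ass gives projective resolution}. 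In every approach the real content is a finite combinatorial analysis of the flow structure of $\overline{Q}^{(d,s)}$.
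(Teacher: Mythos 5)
The paper does not actually prove this statement: it is quoted verbatim from \cite[Proposition 3.4]{g-zz}, so there is no internal argument to compare yours against, and any genuine proof you supply is by definition a different route. Your strategy --- exhibit a quadratic Gr\"obner (PBW) basis for $\Lambda^{(d,s)}$ and invoke the standard fact that a quadratic algebra with a quadratic Gr\"obner basis is Koszul --- is a legitimate and natural way to fill this in, and your specific technical claims check out. Your warning about the naive order is correct: for $x_i\ge 2$ and $x_{i+1}=0$ the path $\alpha_{x,i}\alpha_{x+f_i,i}\alpha_{x+2f_i,i+1}$ contains no leading term for that order (the second pair is not a monomial relation because $(x+f_i)_{i+1}=1$), yet it equals $\alpha_{x,i}\alpha_{x+f_i,i+1}\alpha_{x+f_i+f_{i+1},i}=0$, so that order forces a cubic Gr\"obner element. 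With your order the leading terms are exactly the direction-increasing $2$-paths (including every monomial relation $\alpha_{x,i}\alpha_{x+f_i,i+1}$ with $x_{i+1}=0$), the normal forms are the direction-non-increasing paths, and the only overlaps are $\alpha_a\alpha_b\alpha_c$ with $a<b<c$. I checked the delicate overlaps, namely those in which one or both adjacent pairs is a monomial relation (consecutive $a,b,c$ with the various patterns of vanishing coordinates, and the mixed cases), and they do resolve, for exactly the reason you give: while a pair $(\alpha_a,\alpha_{a+1})$ waits to be reduced, its base vertex is only ever translated by some $f_c$ with $c\notin\{a,a+1\}$, and such a translation does not change the coordinate $x_{a+1}$ that decides whether the zero relation fires.

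So I regard the proof as essentially correct, with the one caveat that the deferred confluence verification is the entire mathematical content and must actually be written out (there are only finitely many case patterns) before the argument is complete; as it stands it is a well-designed sketch rather than a proof. Two smaller remarks. First, Koszulity of $\Lambda^{(d,s)}$ is used in this paper as an input to the type $A$ application of Theorem \ref{thm:kos-alm-kos}, so it does need an independent proof, which your argument (or the cited reference) provides. Second, your alternative (ii) does not work as stated: Theorem \ref{d-ass gives projective resolution} produces projective resolutions of the graded simple $\Pi$-modules, not of the simple $\Lambda$-modules, so it gives no direct handle on linearity of resolutions over $\Lambda$ itself. Alternative (i), computing the $K_i$ and checking exactness of the Koszul bimodule complex, is viable but amounts to the same combinatorics as the Gr\"obner computation.
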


\begin{lemma}\label{lem:K_d-type-a}
 The space $K_d$ has an $S^\en$-module basis $\{k_x\st x\in Q_0, x_1\neq0\}$, where
 $$k_x=e_x \sum_{\sigma\in S_d}({\rm sgn}\,\sigma)\alpha_{\sigma(1)}\alpha_{\sigma(2)}\cdots\alpha_{\sigma(d)}.$$
\end{lemma}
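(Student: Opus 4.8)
The plan is to identify $K_d$ explicitly as the $d$-th term of the Koszul bimodule complex for $\Lambda = \Lambda^{(d,s)}$ and to show that the stated elements $k_x$ span it. Recall from Section~\ref{subsec:kos} that for the quadratic algebra $\Lambda = \Tens_S(V)/(R)$ we have $K_d = \bigcap_{i=0}^{d-2} V^i \otimes_S R \otimes_S V^{d-2-i}$ inside $V^d$, where $V = \F Q_1$ and $R$ is spanned by the commutator relations $e_x(\alpha_i\alpha_j - \alpha_j\alpha_i)$ together with the ``boundary'' relations $\alpha_{x,i}\alpha_{x+f_i,i+1}$ with $x_{i+1}=0$. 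First I would reduce to a purely combinatorial description: a degree-$d$ path from $x$ is a word $\alpha_{\sigma(1)}\cdots\alpha_{\sigma(d)}$ (with appropriate idempotents) and two such paths are identified in $\Lambda$ exactly when one is obtained from the other by transposing adjacent distinct letters, so the nonzero degree-$d$ component $e_x\Lambda_d e_y$ is at most one-dimensional; it is nonzero precisely when the multiset of ``directions used'' is legal, which forces the source to satisfy $x_1 \neq 0$ (since a degree-$d$ path must use each of the $d$ directions $1,\dots,d$ once — this is where the type $A$ geometry enters).

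The key steps, in order: (1) Show that for each vertex $x$ with $x_1\neq 0$, the alternating sum $k_x = e_x\sum_{\sigma\in S_d}(\operatorname{sgn}\sigma)\,\alpha_{\sigma(1)}\cdots\alpha_{\sigma(d)}$, viewed as an element of $V^d$ (a genuine sum of paths in $\F Q$, not in $\Lambda$), lies in every $V^i\otimes_S R\otimes_S V^{d-2-i}$: for a fixed splitting position, pair up permutations $\sigma$ and $\sigma\cdot(i\ i{+}1)$; their difference produces a commutator relation $\alpha_j\alpha_{j'} - \alpha_{j'}\alpha_j$ in the two middle slots, which is an element of $R$, unless the boundary relation applies, in which case one checks the corresponding summands are individually in $R$. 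Hence $k_x \in K_d$. (2) Show the $k_x$ are linearly independent over $S^\en$: this is clear since distinct $x$ give elements supported at distinct source idempotents. (3) Show they span $K_d$: equivalently, $\dim_\F e_x K_d e_y \leq 1$ for all $x,y$ and is $0$ unless $y = x + f_1 + \dots + f_d$ with $x_1\neq 0$. For this I would use that $K_d\subseteq V^d$ and an element of $K_d$, being in $V\otimes R\otimes V^{d-3}$ etc., must map to zero in $\Lambda_d$ in a strong sense; more efficiently, I would invoke that $\Lambda$ is Koszul (cited Proposition above) and $d$-RF, so by Proposition~\ref{prop-describee} and the structure of the minimal bimodule resolution, $\dim K_d = \dim \hd E = \dim E / J^\en E$, and then count: $E = \Ext^d_\Lambda(\Lambda^*,\Lambda)$ and the indecomposable projective-injective summands of $\Pi$ (equivalently, the simple $\Lambda$-modules of projective dimension $d$) are indexed exactly by vertices $x$ with $x_1\neq 0$. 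Alternatively, and perhaps more cleanly, one computes $K_d\cong\Tor_d^\Lambda(S,S)$ directly from the linear resolution and matches dimensions with the number of such vertices.

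The main obstacle I expect is step (3): proving that $\dim_\F e_x K_d e_y\le 1$ and pinning down exactly which $(x,y)$ contribute. The subtlety is that $K_d$ is defined as an intersection of subspaces of $V^d$, and while the $k_x$ obviously lie in it, ruling out extra dimensions requires genuinely understanding the Koszul complex of $\Lambda^{(d,s)}$ — or equivalently understanding $\Ext^d_\Lambda(S_x, S_y)$. I would handle this either by a direct combinatorial argument (any alternating-type tensor in $\bigcap V^i\otimes R\otimes V^{d-2-i}$ is determined by a single coefficient, because fixing the coefficient of one path $\alpha_1\cdots\alpha_d$ forces all others via the commutator relations, and consistency with the boundary relations forces $x_1\neq 0$), or by citing the known computation of the Koszul dual / bimodule resolution of these type $A$ algebras. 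Since the paper is about type $A$ algebras $\Lambda^{(d,s)}$ whose homological behaviour is well documented in \cite{iya-ct-higher} and \cite{io-napr}, I would lean on those references for the dimension count and keep the combinatorial verification of $k_x\in K_d$ as the explicit part of the proof.
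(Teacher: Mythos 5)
Your proposal follows essentially the same route as the paper: membership $k_x\in\bigcap_r V^r\otimes_S R\otimes_S V^{d-r-2}$ is checked by pairing off adjacent transpositions to produce commutator (or zero) relations, and spanning is obtained by the direct combinatorial argument that an element of $K_d$ can have no summand with a repeated letter $\alpha_i\alpha_i$ and that its remaining coefficients are forced to alternate, so it is a scalar multiple of $k_x$. The dimension-count alternative you sketch is not needed, and the only slip is in your motivational paragraph (a length-$d$ path in $Q^{(d,s)}$ need not use each direction once --- that constraint comes from membership in $K_d$, not from the quiver), which does not affect the key steps.
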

\begin{proof}
Fix $0\leq r\leq d-2$.  First we show that $k_x\in V^r R V^{d-r-2}$.  For any vertex $y$ and any $i\neq j$ we have $e_y(\alpha_i\alpha_j-\alpha_j\alpha_i)\in R$.  Thus, for any indices $i_1,\ldots,i_{d-2}$ such that $\{i,j, i_1,\ldots,i_{d-2}\}=\{1,2,\ldots,d\}$ we have $e_x\alpha_{i_1}\ldots\alpha_{i_r}(\alpha_i\alpha_j-\alpha_j\alpha_i)\alpha_{r+1}\ldots\alpha_{d-2}\in V^r R V^{d-r-2}$.  Summing over all such sets $\{i_1,\ldots,i_{d-2}\}$, with sign, we get that $k_x\in V^r R V^{d-r-2}$.  But this did not depend on $r$, so we have $k_x\in K_d=\bigcap_{r=0}^{d-2}V^r R V^{d-r-2}$.

Conversely, consider an element $k\in K_d$.  Without loss of generality, $k=e_xk$ for some vertex $x$.  No summand of $k$ can be of the form $p\alpha_i\alpha_iq$ with $p\in V^r$ and $q\in V^{d-r-2}$, or else $k\notin V^r R V^{d-r-2}$.  So we must have 
\[ k=e_x \sum_{\sigma\in S_d}\lambda_\sigma\alpha_{\sigma(1)}\alpha_{\sigma(2)}\cdots\alpha_{\sigma(d)} \]
for some scalars $\lambda_\sigma\in \F$.  But this can only be in $RV^{d-2}$ if $\lambda_\sigma+\lambda_{(12)\sigma}=0$.  Similarly, we have $\lambda_\sigma+\lambda_{(i,i+1)\sigma}=0$ for all $1\leq i<d$.  Thus ${\rm sgn}\,\sigma={\rm sgn}\,\tau$ implies $\lambda_\sigma=\lambda_\tau$, and ${\rm sgn}\,\sigma=-{\rm sgn}\,\tau$ implies $\lambda_\sigma=-\lambda_\tau$.  So $k$ is a scalar multiple of $k_x$.
\end{proof}

Let $\overline{I}^{(d,s)}$ denote the ideal of $\F\overline{Q}^{(d,s)}$ generated by elements: 
\begin{align*}
\alpha_{x,i}\alpha_{x+f_i,j}&=\alpha_{x,j}\alpha_{x+f_j,i}&&\text{ if $x_i,x_j\geq1$;}\\
\alpha_{x,i}\alpha_{x+f_i,i+1}&=0&&\text{ if $x_i\geq1$ and $x_{i+1}=0$,}
\end{align*}
where $x\in\overline{Q}^{(d,s)}_0$ and $1\le i<j\le d+1$. Here, $\alpha_{x+f_{d+1},d+2}$ should be interpreted as $\alpha_{x+f_{d+1},1}$.
As an application of our results in this paper, we give the following description of the higher preprojective algebra of $\Lambda$, which recovers the quiver with relations in 
\cite[Definition 5.1, Proposition 5.48]{io-napr}.

\begin{theorem}
Let $\Pi=\Pi(\Lambda)$.
The quiver $\overline{Q}$ of $\Pi$ is $\overline{Q}^{(d,s)}$,
and we have an isomorphism
\[\Pi\cong\F\overline{Q}^{(d,s)}/\overline{I}^{(d,s)}.\]
\end{theorem}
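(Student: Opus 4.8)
The plan is to apply our main structural results, chiefly Corollary~\ref{cor:kos-nher-prep} and Proposition~\ref{prop:piquiv}, together with the explicit description of $K_d$ in Lemma~\ref{lem:K_d-type-a}. First I would verify the hypotheses: $\Lambda=\Lambda^{(d,s)}$ is finite-dimensional, basic, ring-indecomposable, of global dimension $d$, and $d$-hereditary (indeed $d$-RF) by \cite[Theorems 1.18, 6.12]{iya-ct-higher}; moreover it is Koszul by \cite[Proposition 3.4]{g-zz}; and $S=\F Q^{(d,s)}_0$ is separable since $\F$ is a field and $S$ is a product of copies of $\F$. Hence Corollary~\ref{cor:kos-nher-prep} applies and gives $\Pi\cong P_S^d(\overline{V},W)$, where $\overline{V}=V\oplus K_d^\vee$ and $W$ is the associated superpotential of degree $d+1$.

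Next I would identify the quiver $\overline{Q}$ of $\Pi$. By Proposition~\ref{prop:piquiv}(a) and the discussion following it, $\overline{Q}$ is obtained from $Q^{(d,s)}$ by adjoining, for each basis element $k_x$ of $K_d$, one new arrow $k_x^*$ from the target of $k_x$ to its source. By Lemma~\ref{lem:K_d-type-a}, the basis of $K_d$ is indexed by vertices $x$ with $x_1\neq0$, and $k_x=e_x\sum_{\sigma\in S_d}(\mathrm{sgn}\,\sigma)\alpha_{\sigma(1)}\cdots\alpha_{\sigma(d)}$ is a path from $x$ to $x+f_1+\cdots+f_d$; one checks $f_1+\cdots+f_d = -f_{d+1}$, so the target is $x-f_{d+1}$, i.e. $k_x^*$ runs $x-f_{d+1}\to x$, which is exactly the arrow $\alpha_{x-f_{d+1},\,d+1}$ present in $\overline{Q}^{(d,s)}$ but not in $Q^{(d,s)}$. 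The condition $x_1\neq0$ corresponds precisely to $(x-f_{d+1})_{d+1}\geq1$, which is the condition for $\alpha_{x-f_{d+1},d+1}$ to be an arrow of $\overline{Q}^{(d,s)}$. Thus $\overline{Q}=\overline{Q}^{(d,s)}$, establishing the first claim.

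For the isomorphism of algebras, I would compute the relations $\partial_p W = p^\vee\cdot\varphi(W)$ for $p$ ranging over paths of length $d-1$ in $\overline{Q}^{(d,s)}$, and match them with the generators of $\overline{I}^{(d,s)}$. Writing $\alpha_{d+1}=\sum_x k_x^*$ (the new arrows), the superpotential $W$ is, up to the coevaluation identification, represented by $\sum_{x:\,x_1\neq 0}\,k_x\,k_x^* = \sum_x e_x\sum_{\sigma\in S_{d+1}}(\mathrm{sgn}\,\sigma)\alpha_{\sigma(1)}\cdots\alpha_{\sigma(d+1)}$ where one views the index set cyclically; making this precise using $\rho$ and $\varphi$ from Section~\ref{subsec:spots} is the technical heart. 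Differentiating with respect to a path $p=\alpha_{i_1}\cdots\alpha_{i_{d-1}}$ of length $d-1$ (using $p^\vee$ against the cyclically symmetrized $W$) produces exactly the commutator relations $e_y(\alpha_i\alpha_j-\alpha_j\alpha_i)$ for the complementary pair of indices $\{i,j\}$ (with $i$ or $j$ allowed to equal $d+1$), together with the vanishing relations $\alpha_{x,i}\alpha_{x+f_i,i+1}=0$ which arise from differentiating along repeated-index directions; these are precisely the generators listed for $\overline{I}^{(d,s)}$, with the cyclic convention $\alpha_{x+f_{d+1},d+2}:=\alpha_{x+f_{d+1},1}$ built in automatically by the cyclic symmetry of $W$. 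Combining, $\Pi\cong P_S^d(\overline{V},W)=\F\overline{Q}^{(d,s)}/(\partial_p W)_{p\in\overline{Q}^{(d,s)}_{d-1}}=\F\overline{Q}^{(d,s)}/\overline{I}^{(d,s)}$.

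The main obstacle I anticipate is the bookkeeping in the last step: carefully writing the superpotential $W$ in the $\tens_S(\overline{V})$-language via the coevaluation map of Definition~\ref{assoc-superpot}, tracking the signs coming from $\varphi=\sum_{i=0}^{d}(-1)^{di}\rho^i$, and confirming that the partial differentials $\partial_p W$ for all length-$(d-1)$ paths generate exactly $\overline{I}^{(d,s)}$ and nothing more. A clean way to organize this is to note that $\F\overline{Q}^{(d,s)}/\overline{I}^{(d,s)}$ is, by construction in \cite{io-napr}, already known to be the higher preprojective algebra (or can be checked to be a $d$-Jacobi algebra of $(\overline{Q}^{(d,s)},W)$ directly by the symmetric-group computation in Lemma~\ref{lem:K_d-type-a}), so it suffices to exhibit a surjective algebra morphism $\F\overline{Q}^{(d,s)}/\overline{I}^{(d,s)}\to\Pi$ between algebras of the same (graded) dimension, or to invoke Lemma~\ref{lem-quad012} after checking agreement in degrees $\le 2$ once one knows $\Pi$ is quadratic (Proposition~\ref{prop:pi-quadratic}) and that $\overline{I}^{(d,s)}$ is generated in degree $2$.
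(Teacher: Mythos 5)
Your proposal is correct and follows essentially the same route as the paper's proof: the quiver statement via Proposition \ref{prop:piquiv}, the basis of $K_d$ from Lemma \ref{lem:K_d-type-a} yielding the superpotential $W=\sum_{\sigma\in S_d}(\mathrm{sgn}\,\sigma)\alpha_{\sigma(1)}\cdots\alpha_{\sigma(d)}\alpha_{d+1}$, and Corollary \ref{cor:kos-nher-prep} reducing the isomorphism to differentiating $W$ along length-$(d-1)$ paths; the paper is in fact even terser about the final bookkeeping than you are. (One small imprecision: the condition $x_1\neq 0$ matches the requirement that $x-f_{d+1}$ have nonnegative entries, i.e.\ be a vertex, rather than the inequality $(x-f_{d+1})_{d+1}\geq 1$, which holds automatically; the resulting bijection with the arrows $\alpha_{\cdot,d+1}$ is nonetheless as you state.)
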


\begin{proof}
The former statement follows from Proposition \ref{prop:piquiv}. We prove the latter one.
From Lemma \ref{lem:K_d-type-a} we obtain the superpotential
$$W=\sum_{\sigma\in S_d}({\rm sgn}\,\sigma)\alpha_{\sigma(1)}\alpha_{\sigma(2)}\cdots\alpha_{\sigma(d)}\alpha_{d+1}$$
for $\overline{Q}$.
By differentiating this superpotential with respect to all paths of length $d-1$ in $\overline{Q}$, we have the isomorphism.
\end{proof}

We now apply Theorem \ref{thm:kos-alm-kos} to obtain a large family of pairs of almost Koszul algebras. This statement generalizes \cite[Corollary 4.3]{bbk} for type $A$ quivers.  It appears to be the first construction of $(p,q)$-Koszul algebras for all $p,q\geq2$.
\begin{proposition}
If $s\geq3$ and $n\geq1$, then $\Pi$ and $\Pi^!$ are an almost Koszul pair: $\Pi$ is $(s-1,d+1)$-Koszul and $\Pi^!$ is $(d+1,s-1)$-Koszul.
\end{proposition}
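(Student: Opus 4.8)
The plan is to derive the statement about $\Pi$ from Theorem~\ref{thm:kos-alm-kos}(b) together with an explicit computation of the top degree of $\Pi$, and then to obtain the statement about $\Pi^!$ by almost Koszul duality. First I would observe that $\Lambda=\Lambda^{(d,s)}$ is a Koszul $d$-representation finite algebra (both facts are recorded above), hence $d$-hereditary, so Theorem~\ref{thm:kos-alm-kos}(b) applies and shows that $\Pi$, with the $(d+1)$-total grading, is $(p,d+1)$-Koszul where $p=\max\{i\ge0\mid\Pi_i\neq0\}$. Next I would check that under the isomorphism $\Pi\cong\F\overline{Q}^{(d,s)}/\overline{I}^{(d,s)}$ established above the $(d+1)$-total grading is exactly the path-length grading on $\overline{Q}^{(d,s)}$: every old arrow has bidegree $(0,1)$ and every new arrow $\alpha_{x,d+1}$ has bidegree $(1,-d)$, so all arrows have total degree $1$. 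Consequently $p$ equals the length of the longest nonzero path in $\F\overline{Q}^{(d,s)}/\overline{I}^{(d,s)}$, and it remains to prove $p=s-1$.

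For the lower bound $p\ge s-1$ I would exhibit the path of length $s-1$ consisting of $s-1$ consecutive copies of the arrow labelled $\alpha_1$, running from the vertex $(s-1,0,\dots,0)$ through $(s-2,1,0,\dots,0),\dots$ to $(0,s-1,0,\dots,0)$; since every defining relation of $\overline{I}^{(d,s)}$ involves two distinct arrow labels, no forbidden subpath occurs, and one checks this path is nonzero (for instance by realizing it on the string representation supported on these $s$ vertices). For the upper bound $p\le s-1$ I would argue that every path of length $\ge s$ in $\overline{Q}^{(d,s)}$ vanishes modulo $\overline{I}^{(d,s)}$: using the commutativity relations to normalize a path according to the multiset of its arrow labels, and then using the boundary relations $\alpha_{x,i}\alpha_{x+f_i,i+1}=0$, one sees that a path this long is forced to leave the simplex $\{x_i\ge0,\ \sum_i x_i=s-1\}$ or to cross one of its faces in a prohibited way. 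This last step is the main obstacle of the proof; alternatively one may be able to invoke the explicit Loewy structure of the type $A$ higher preprojective algebras in \cite{io-napr}. Granting $p=s-1$, we conclude that $\Pi$ is $(s-1,d+1)$-Koszul.

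Finally, for $\Pi^!$: since $\Lambda^{(d,s)}$ is a finite-dimensional Koszul algebra of global dimension $d$ over the separable algebra $S=\F Q_0$, Proposition~\ref{prop:pi-quadratic} gives that $\Pi$ is quadratic, so $\Pi^!$ is its quadratic dual. Because $d+1\ge2$, a $(p,d+1)$-Koszul algebra is generated in degrees $0,1$ with relations in degree $2$, so its quadratic dual coincides with the almost Koszul dual of Brenner--Butler--King, and by their almost Koszul duality \cite{bbk} the dual of a $(p,q)$-Koszul algebra is $(q,p)$-Koszul. Applying this with $(p,q)=(s-1,d+1)$ shows that $\Pi^!$ is $(d+1,s-1)$-Koszul. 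Note that the hypotheses $s\ge3$ and $d\ge1$ are exactly what guarantee $s-1\ge2$ and $d+1\ge2$, so both algebras are genuinely almost Koszul rather than Koszul, and as $(d,s)$ varies the pairs $(s-1,d+1)$ exhaust all pairs of integers $\ge2$.
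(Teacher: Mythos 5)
Your overall strategy agrees with the paper's: invoke Theorem \ref{thm:kos-alm-kos}(b) to get that $\Pi$ is $(p,d+1)$-Koszul with $p=\max\{i\mid\Pi_i\neq0\}$, identify the $(d+1)$-total grading with the path-length grading on $\overline{Q}^{(d,s)}$, and then pass to $\Pi^!$ via \cite[Proposition 3.11]{bbk}. The lower bound $p\geq s-1$ via the path $e_x\alpha_1^{s-1}$ from $x=(s-1,0,\ldots,0)$ is also the same computation the paper makes. The problem is the upper bound $p\leq s-1$, which you yourself flag as ``the main obstacle'' and do not actually prove. Your sketch is moreover not salvageable as stated: a long path is never ``forced to leave the simplex,'' since every arrow of $\overline{Q}^{(d,s)}$ (including the new arrows $\alpha_{x,d+1}$) maps one vertex of the simplex to another, and the quiver contains oriented cycles, so arbitrarily long paths exist in $\F\overline{Q}^{(d,s)}$; the vanishing has to come entirely from the relations, and normalizing an arbitrary path by the commutation relations until a zero relation $\alpha_i\alpha_{i+1}$ appears is a genuinely delicate induction that you have not carried out.

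The paper sidesteps this entirely with one extra input you are missing: since $\Lambda^{(d,s)}$ is $d$-RF, the algebra $\Pi$ is self-injective (Corollary \ref{cor:nrf-pi-twper}), and by Mart\'inez-Villa's theorem \cite[Theorem 3.3]{mv-selfinj} all projective modules over a $\Z$-graded self-injective algebra have the same Loewy length. Hence it suffices to compute the Loewy length of a \emph{single} projective, and the vertex $(s-1,0,\ldots,0)$ is chosen precisely because there the computation trivializes: any path from that vertex not equal to $e_x\alpha_1^m$ must begin $e_x\alpha_1^m\alpha_2$, which the commutation relations rewrite as $e_x\alpha_1\alpha_2\alpha_1^{m-1}=0$, so $\Pi e_x$ is spanned by $e_x\alpha_1^m$ for $0\leq m\leq s-1$ and is concentrated in degrees $0$ to $s-1$. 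I would recommend either adopting this reduction or, if you want to keep a direct combinatorial argument, actually proving the normalization claim at every vertex (or citing the explicit description of the Loewy structure in \cite{io-napr}, as you suggest in passing, in which case that citation needs to be made precise and do real work). The final step for $\Pi^!$ (quadraticity of $\Pi$ via Proposition \ref{prop:pi-quadratic} and almost Koszul duality, using $s-1\geq2$ and $d+1\geq2$) is fine and matches the paper.
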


\begin{proof}
Theorem \ref{thm:kos-alm-kos} tells us that $\Pi$ is $(p,d+1)$-Koszul if $\Pi$ is concentrated in degrees $0$ to $p$, and \cite[Proposition 3.11]{bbk} tells us that the quadratic dual of a $(p,q)$-Koszul ring with $p,q\geq2$ is a $(q,p)$-Koszul ring.  So we just need to show that $\Pi$ is concentrated in degrees $0$ to $s-1$.

We use Mart\'inez-Villa's result that all projective modules for a $\Z$-graded self-injective algebra have the same Loewy length \cite[Theorem 3.3]{mv-selfinj}.  Thus we only need to show that there is a projective $\Pi$-module concentrated in degrees $0$ to $s-1$.  Consider the left projective $\Pi$-module $\Pi e_{(s-1,0,\ldots,0)}$ associated to the vertex $x=(s-1,0,\ldots,0)$.  First we claim that all paths starting at $x$ are of the form $e_x\alpha_1^d$.  To see this, note that the arrows in $\overline{Q}$ ensure that every path not of this form starting at $x$ must begin $e_x\alpha_1^m\alpha_2$ for some $m\geq1$.  But then the commutation relations in $\Pi$ show that $e_x\alpha_1^m\alpha_2=e_x\alpha_1\alpha_2\alpha_1^{m-1}$.  But $e_x\alpha_1\alpha_2=0$.  Next we note that $e_x\alpha_1^d$ is nonzero for $0\leq d\leq s-1$ and is zero for $d\geq s$.  So $\Pi e_{(s-1,0,\ldots,0)}$ is nonzero precisely in degrees $0$ to $s-1$. 
\end{proof}

\appendix
\section{On global dimension of $\Z$-graded rings}

The aim of this appendix is to remark that several possible definitions of global dimensions of $\Z$-graded rings coincide.

For a ring $A$, we denote by $A\mlMod$ the abelian category of all left $A$-modules. For a $\Z$-graded ring $A=\bigoplus_{i\in\Z}A_i$, we denote by $A\mgrlMod$ the abelian category of all $\Z$-graded left $A$-modules. We denote by $\lModm A$ and $\grlModm A$ the right versions. If $A_i=0$ for all $i<0$, then by \cite[I.7.8]{NV} (see also 
\cite[7.6.18(ii)]{MR}) and \cite[2.4.8]{NV2}, we have
\begin{eqnarray}\label{lgldim 3}
&\gl(A\mlMod)=\gl(A\mgrlMod)=\sup\{\pd_AX\mid X\in A\mgrlMod\ \text{: cyclic}\},&\\ \label{rgldim 3}
&\gl(\lModm A)=\gl(\grlModm A)=\sup\{\pd X_A\mid X\in \grlModm A\ \text{: cyclic}\}.&
\end{eqnarray}
The aim of this section is to prove the following general observation.

\begin{theorem}\label{global dimension}
Let $A=\bigoplus_{i\ge0}A_i$ be a $\Z$-graded ring. If $A_0$ is artinian, then
\begin{eqnarray*}
&\gl(A\mlMod)=\sup\{\pd_AX\mid X\in A\mgrlMod\ \text{: simple}\}\\
=&\gl(\lModm A)=\sup\{\pd X_A\mid X\in \grlModm A\ \text{: simple}\}.
\end{eqnarray*}
\end{theorem}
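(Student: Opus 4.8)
The plan is to reduce each side's global dimension to a supremum of projective dimensions of graded \emph{simple} modules, and then to use the bimodule $A/\mathfrak{r}$, where $\mathfrak{r}$ denotes the graded radical, as a bridge forcing the left and right answers to coincide.

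Set $J:=\rad(A_0)$; since $A_0$ is artinian, $J$ is nilpotent, say $J^t=0$, and $A_0/J$ is semisimple. Put $\mathfrak{r}:=J\oplus\bigoplus_{i\ge1}A_i$, a two-sided graded ideal with $A/\mathfrak{r}\cong A_0/J$. The first technical step I would carry out is a \emph{degreewise} nilpotence statement: for each $k$ there is $m_0$ with $(\mathfrak{r}^m)_k=0$ for all $m\ge m_0$. In a product of $m$ homogeneous factors of $\mathfrak{r}$ of total degree $k$, at most $k$ factors have positive degree, so at least $m-k$ lie in $J$ and fall into at most $k+1$ consecutive runs; once $m-k>(k+1)(t-1)$ some run has length $\ge t$, and the product vanishes. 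From this, the graded Nakayama lemma holds for bounded-below graded modules (inspect the bottom nonzero degree and apply Nakayama over $A_0$), and every bounded-below graded module $M$ admits a \emph{minimal} graded projective resolution $P_\bullet\to M$, i.e.\ with $\operatorname{im}(P_{i+1}\to P_i)\subseteq\mathfrak{r}P_i$ and each $P_i$ bounded below. I would be careful here not to invoke noetherianity: only boundedness below is used, and graded \emph{projective} covers rather than free ones.

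Granting this, for a minimal resolution one has $\Tor_i^A(A/\mathfrak{r},M)\cong P_i/\mathfrak{r}P_i$, since the induced differentials on $(A/\mathfrak{r})\otimes_AP_\bullet$ vanish, and $P_i/\mathfrak{r}P_i=0$ if and only if $P_i=0$ by Nakayama. Hence, if $N$ is an integer with $\Tor_i^A(A/\mathfrak{r},M)=0$ for all $i>N$, then $\pd_AM\le N$ for every bounded-below graded left module $M$, and by \eqref{lgldim 3} this yields $\gl(A\mlMod)\le N$. Now $A/\mathfrak{r}\cong A_0/J$ is, as a graded left $A$-module, a finite direct sum of graded simple modules concentrated in degree $0$; so $\pd_A(A/\mathfrak{r})\le n_\ell:=\sup\{\pd_AS\mid S\in A\mgrlMod\text{ simple}\}$, and symmetrically $\pd(A/\mathfrak{r})_A\le n_r:=\sup\{\pd S_A\mid S\in\grlModm A\text{ simple}\}$. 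Resolving $A/\mathfrak{r}$ on the right by projectives of length $\le n_r$ shows $\Tor_i^A(A/\mathfrak{r},-)=0$ for $i>n_r$, whence $\gl(A\mlMod)\le n_r$; the mirror argument gives $\gl(\lModm A)\le n_\ell$. Since also $n_\ell\le\gl(A\mlMod)$ and $n_r\le\gl(\lModm A)$ trivially, we get $n_\ell\le\gl(A\mlMod)\le n_r\le\gl(\lModm A)\le n_\ell$, so all four quantities are equal.

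The main obstacle is the first step: the degreewise nilpotence of $\mathfrak{r}$, the graded Nakayama lemma, and the existence of minimal graded projective resolutions of bounded-below --- not necessarily finitely generated --- modules over a ring not assumed noetherian. Once this is in place, the bridge via $A/\mathfrak{r}$ (finite projective dimension $\le n_\ell$ on the left, $\le n_r$ on the right, plus balancing of $\Tor$) is short and formal, and the passage from ``cyclic'' to ``simple'' in \eqref{lgldim 3} comes for free.
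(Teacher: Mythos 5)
Your proposal is correct and follows essentially the same route as the paper: the paper's Lemma A.3 constructs exactly the minimal graded covers you describe (degree by degree via $A_0$-projective covers rather than via your degreewise-nilpotence/graded-Nakayama argument, but to the same effect), and its Lemma A.2 is precisely your Tor-balancing step against $A/\mathfrak{r}$, yielding the same chain $n_\ell\le\gl(A\mlMod)\le n_r\le\gl(\lModm A)\le n_\ell$. The only cosmetic difference is that the paper writes $J$ for your $\mathfrak{r}$ and verifies minimality directly from the construction instead of quoting a graded Nakayama lemma.
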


It is well-known that \eqref{lgldim 3} fails if we drop the condition $A_i=0$ for $i<0$, e.g.\ $A=k[x,x^{-1}]$ with field $A_0=k$ and $\deg x=1$. Also Theorem \ref{global dimension} fails if $A_0$ is not artinian, e.g.\ $A=A_0\oplus A_1=\Z\oplus\Q x$ is a subring of $\Q[x]/(x^2)$.

Theorem \ref{global dimension} follows immediately from \eqref{lgldim 3}, \eqref{rgldim 3} and the following observation.

\begin{lemma}\label{upper bound by simple}
Let $A=\bigoplus_{i\ge0}A_i$ be a $\Z$-graded ring. If $A_0$ is left artinian, then 
\[\gl(A\mgrlMod)\le\sup\{\pd X_A\mid X\in \grlModm A\ \mbox{: simple}\}.\]
\end{lemma}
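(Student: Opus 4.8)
The plan is to reduce to cyclic left modules and then compute projective dimension by means of minimal graded projective resolutions together with $\Tor$ against the semisimple quotient $\bar A:=A/\mathfrak r$, where $\mathfrak r:=\rad(A_0)\oplus\bigoplus_{i>0}A_i$ is the graded Jacobson radical. Set $n:=\sup\{\pd X_A\mid X\in\grlModm A\text{ simple}\}$; if $n=\infty$ there is nothing to prove, so assume $n<\infty$. By \eqref{lgldim 3} it is enough to show $\pd_A M\le n$ for every cyclic graded left $A$-module $M$. Such an $M$ is concentrated in degrees $\ge a$ for some $a\in\Z$, and the same holds for each of its syzygies, since a syzygy is a submodule of a projective generated in degrees $\ge a$.

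Next I would record that, as $A_0$ is artinian, $\rad(A_0)$ is nilpotent, so graded Nakayama's lemma holds for \emph{bounded below} graded modules: if $N$ is concentrated in degrees $\ge a$ and $N=\mathfrak r N$, then inspecting the lowest nonzero degree forces $N=0$. Consequently every bounded below graded module has a graded projective cover, so $M$ admits a minimal graded projective resolution $\cdots\to P_1\to P_0\to M\to0$ in which each $P_i$ is bounded below and each differential has image in $\mathfrak r P_{i-1}$. Applying $\bar A\otimes_A-$ annihilates all differentials, hence $\Tor^A_i(\bar A,M)\cong\bar A\otimes_A P_i=P_i/\mathfrak r P_i$; and since $P_i$ is bounded below, graded Nakayama gives $P_i/\mathfrak r P_i=0$ if and only if $P_i=0$. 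Therefore $\pd_A M=\sup\{i\mid\Tor^A_i(\bar A,M)\ne0\}$.

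Finally, $\bar A\cong A_0/\rad(A_0)$ has finite length as a right $A$-module, and its simple summands are, up to a degree shift, precisely the simple graded right $A$-modules: any graded simple right module $S$ satisfies $S\mathfrak r\subsetneq S$ by graded Nakayama, hence $S\mathfrak r=0$, so a suitable shift of $S$ is a simple $\bar A$-module concentrated in degree $0$. Writing $\bar A\cong\bigoplus_k X_k^{\oplus m_k}$ with the $X_k$ simple graded right modules and $m_k\ge1$, one obtains $\Tor^A_i(\bar A,M)\cong\bigoplus_k\Tor^A_i(X_k,M)^{\oplus m_k}$. Since $\pd (X_k)_A\le n$, we have $\Tor^A_i(X_k,M)=0$ for $i>n$, so $\Tor^A_i(\bar A,M)=0$ for $i>n$, whence $\pd_A M\le n$. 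As $M$ was an arbitrary cyclic graded left module, \eqref{lgldim 3} gives $\gl(A\mgrlMod)\le n$, which is the assertion.

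The step I expect to be the main obstacle is justifying the existence of minimal graded projective resolutions (equivalently, of graded projective covers): graded Nakayama fails for graded modules that are not bounded below, so one must carefully check that $M$ and all of its syzygies stay bounded below — which is exactly why the reduction to cyclic modules provided by \eqref{lgldim 3} is needed, rather than arguing with arbitrary graded modules from the start.
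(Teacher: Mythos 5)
Your proof is correct and follows essentially the same route as the paper: reduce to cyclic graded left modules via \eqref{lgldim 3}, build a minimal graded projective resolution using the boundedness below of the module and its syzygies (the paper packages this as Lemma \ref{existence of minimal resolution}), and then kill $\Tor^A_i(A/J,-)$ for $i$ above the projective dimension of the semisimple right module $A/J$. Your explicit identification of the simple summands of $A/J$ with the shifted simple graded right modules is the same observation the paper uses implicitly when it sets $\ell=\pd (A/J)_A$.
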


To prove this, we need a preparation.  For $i\in\Z$, let $\PP^i$ be the full subcategory of $A\mgrlMod$ consisting of arbitrary direct sums of modules of the form $Ae(-i)$ for some idempotents $e\in A_0$. Let $\PP$ be the full subcategory of $A\mgrlMod$ consisting of arbitrary direct sums of objects from $\PP^i$ for all $i\in\Z$.

\begin{lemma}\label{existence of minimal resolution}
Let $A=\bigoplus_{i\ge0}A_i$ be a $\Z$-graded ring such that $A_0$ is left artinian, and $J:=(\rad A_0)\oplus(\bigoplus_{i\ge1}A_i)$. For any $X\in A\mgrlMod$ such that $X_j=0$ for $j\ll0$, there exists a surjective morphism $f:P\to X$ in $A\mgrlMod$ with $P\in\PP$ such that $\Ker f\subset JP$ and $(\Ker f)_j=0$ for $j\ll0$.
\end{lemma}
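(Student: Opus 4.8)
The goal is to construct a "projective cover" of $X$ in the category $A\mgrlMod$ even though $A$ need not be graded-local. The natural candidate is to take the top $X/JX$, which is a $\Z$-graded $A_0/\rad A_0$-module, lift a chosen decomposition of it into simples to a graded projective module $P\in\PP$, and define $f\colon P\to X$ by lifting. First I would set $\overline{A}_0=A_0/\rad A_0$, which is semisimple (left) artinian; since $X_j=0$ for $j\ll0$, the graded module $X/JX$ is a $\Z$-graded $\overline{A}_0$-module bounded below, so in each degree $j$ it is a finite-or-infinite direct sum of simple $\overline{A}_0$-modules, each of which is of the form $\overline{A}_0\overline{e}(-j)$ for a primitive idempotent $\overline{e}\in\overline{A}_0$. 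For each such summand I would lift $\overline{e}$ to an idempotent $e\in A_0$ (idempotents lift modulo the nil ideal $\rad A_0$ since $A_0$ is artinian), giving a summand $Ae(-j)\in\PP^j$; taking the direct sum over all summands in all degrees produces $P\in\PP$ together with a graded map $P\to X/JX$ which is degreewise surjective, in fact an isomorphism onto the top. Because $P$ is projective and $X\to X/JX$ is surjective, this lifts to a graded map $f\colon P\to X$.

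**Checking the three properties.** The surjectivity of $f$ is the "Nakayama"-type step: $\mathrm{Im}\,f + JX = X$, and since $X$ is bounded below and $J$ raises degree (on the $\bigoplus_{i\ge1}A_i$ part) or is nilpotent in degree $0$, a standard degreewise induction shows $\mathrm{Im}\,f=X$. More precisely, in the lowest nonzero degree $j_0$ we have $(JX)_{j_0}=(\rad A_0)X_{j_0}$, and iterating $(\rad A_0)^nX_{j_0}=0$ for large $n$ gives $(\mathrm{Im}\,f)_{j_0}=X_{j_0}$; then one proceeds upward by induction on degree, at each stage using that $(JX)_j$ involves only strictly lower degrees together with a factor from $\rad A_0$, whose powers vanish. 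The inclusion $\Ker f\subseteq JP$ is immediate from the construction: $f$ induces an isomorphism $P/JP\xrightarrow{\sim}X/JX$, so any element of $\Ker f$ maps to $0$ in $P/JP$. Finally $(\Ker f)_j=0$ for $j\ll0$ because $\Ker f\subseteq P$ and $P_j=\bigoplus(Ae(-i))_j$ is zero for $j$ below the infimum of the degrees appearing in $X/JX$, which is $\ge\inf\{j\mid X_j\ne0\}>-\infty$.

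**Main obstacle.** The delicate point is the surjectivity argument (the graded Nakayama lemma) in the absence of finiteness hypotheses on $X$: one only knows $X$ is bounded below, not that it is finitely generated or that each $X_j$ is finite-dimensional, and $J$ is neither nilpotent nor contained in the graded radical in any naive sense. The resolution is exactly that $X$ is bounded below and that the degree-zero obstruction $\rad A_0$ \emph{is} nilpotent (as $A_0$ is left artinian), so the transfinite-looking induction actually terminates degree-by-degree after finitely many steps of multiplication by $\rad A_0$. I would phrase this carefully as: fix $j$, assume $(\mathrm{Im}\,f)_{j'}=X_{j'}$ for all $j'<j$; then $X_j=(\mathrm{Im}\,f)_j+(\rad A_0)X_j$, and since $(\rad A_0)^N=0$ for some $N$, iterating gives $X_j=(\mathrm{Im}\,f)_j$. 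This is the only place where artinianness of $A_0$ (rather than mere semiperfectness) is genuinely used, and it is also why the hypothesis cannot be dropped, matching the counterexample $\Z\oplus\Q x$ mentioned after Theorem~\ref{global dimension}.

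**Remark on lifting idempotents and well-definedness.** One small technical matter I would address is that to build $P$ I choose, for each degree $j$, a decomposition of $(X/JX)_j$ into simple $\overline{A}_0$-modules and lift the corresponding primitive idempotents; different choices give isomorphic $P$, so the statement (which only asserts existence) is unaffected. With $P$ in hand the map $f$ exists by projectivity of $P$ in $A\mgrlMod$ applied to the graded surjection $X\onto X/JX$, and the three asserted properties follow as above. This completes the construction; the minimal graded projective resolution of $X$ is then obtained by iterating on $\Ker f$, which again is bounded below, and Lemma~\ref{upper bound by simple} follows by comparing with the resolutions of the simple modules $\overline{A}_0\overline{e}(-j)$ that appear as the summands of successive tops.
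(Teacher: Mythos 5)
Your proof is correct, but it takes a genuinely different route from the paper's. You build $P$ in one step: form the semisimple graded top $X/JX$ over $A/J\cong A_0/\rad A_0$, decompose it degreewise into shifted simples, lift the primitive idempotents through the nil ideal $\rad A_0$ to get $P\in\PP$ with $P/JP\cong X/JX$, and then lift along $X\onto X/JX$ by projectivity. With this setup the minimality $\Ker f\subseteq JP$ is immediate (the induced map $P/JP\to X/JX$ is the chosen isomorphism), but surjectivity becomes the nontrivial point, and you correctly dispatch it with a graded Nakayama argument: induct on the degree, using that $X$ is bounded below so that $(JX)_j$ only involves strictly lower degrees plus a $\rad A_0$-contribution, which is killed by nilpotence of $\rad A_0$. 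The paper instead constructs $P=\bigoplus_{i}P^i$ inductively on the degree $i$, taking at each stage a projective cover over $A_0$ of the degree-$i$ part of the cokernel of the map built so far; there surjectivity is automatic degree by degree, and it is the inclusion $\Ker f\subseteq JP$ that requires a separate bookkeeping argument (tracking where each component $x^i$ of a kernel element lands in $\Cok f^{[0,i)}$ and invoking the superfluousness of the kernel of a projective cover). Both arguments rest on the same input — $A_0$ artinian guarantees that arbitrary (not necessarily finitely generated) $A_0$-modules admit projective covers, equivalently that idempotents lift and Nakayama applies without finiteness hypotheses — and your remark that this is exactly where the counterexample $\Z\oplus\Q x$ is excluded is on point. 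Your version arguably isolates the role of the hypothesis more transparently; the paper's version avoids stating a graded Nakayama lemma explicitly.
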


\begin{proof}
Without loss of generality, we can assume $X_j=0$ for all $j<0$.
We take a morphism $f^0:P^0\to X$ in $A\mgrlMod$ with $P^0\in\PP^0$ such that $(f^0)_0:(P^0)_0\to X_0$ is a projective cover of the $A_0$-module $X_0$.
Assume that $f^j:P^j\to X$ with $P^j\in\PP^j$ are constructed for $0\le j<i$ such that
\[f^{[0,i)}:=(f^0,\ldots,f^{i-1}):P^{[0,i)}:=\bigoplus_{0\le j<i}P^j\to X\]
satisfies $(\Cok f^{[0,i)})_j=0$ for all $j<i$. 
We take a morphism $g^i:P^i\to\Cok f^{[0,i)}$ in $A\mgrlMod$ with $P^i\in\PP^i$ such that $(g^i)_i:(P^i)_i\to(\Cok f^{[0,i)})_i$ is a projective cover of the $A_0$-module $(\Cok f^{[0,i)})_i$.
We lift $g^i$ to $f^i:P^i\to X$. Then $f^j$ with $0\le j\le i$ satisfy the same assumption.

Now we show that the morphism $f:=(f^j)_{j\le0}:P:=\bigoplus_{0\le j}P^j\to X$ satisfies the desired properties. Clearly $f$ is surjective, and $P\in\PP$ and $(\Ker f)_j=0$ holds for all $j<0$. It remains to show $\Ker f\subset JP$.
Take any $x=(x^j)_{j\ge0}\in\Ker f$ with $x^j\in P^j$. Then the composition $P\xrightarrow{f}X\to\Cok f^{[0,i)}$ sends $(x^j)_{j\ge i}$ to zero, and $(x^j)_{j>i}$ to an element in $(\Cok f^{[0,i)})_{>i}$. Thus $g^i(x^i)$ belongs to $(\Cok f^{[0,i)})_{>i}$.
Since $(g^i)_i:(P^i)_i\to(\Cok f^{[0,i)})_i$ is a projective cover, $x^i\in JP^i$ holds, as desired.
\end{proof}

Now we are ready to prove Lemma \ref{upper bound by simple}.

\begin{proof}[Proof of Lemma \ref{upper bound by simple}]
Since $A_0$ is left artinian, $A/J=A_0/\rad A_0$ is a semisimple ring and $A/J$ is a semisimple right $A$-module. Let $\ell=\pd(A/J)_A$. Thanks to \eqref{lgldim 3}, it suffices to show that $\pd_AX\le\ell$ holds for any $X\in A\mgrlMod$ which is cyclic. Since $X_i=0$ holds for $i\ll0$, by applying Lemma \ref{existence of minimal resolution} to $X$ and its syzygies repeatedly, we obtain an exact sequence
\[\cdots\xrightarrow{f^3}Q^2\xrightarrow{f^2}Q^1\xrightarrow{f^1}Q^0\to X\to0\]
such that $Q^i\in\PP$ and $f^i(Q^i)\subset JQ^{i-1}$ for all $i$. Since $(A/J)\otimes f^i=0$ for all $i>0$, we have
\[Q^{\ell+1}/JQ^{\ell+1}=(A/J)\otimes_AQ^{\ell+1}=\Tor^A_{\ell+1}(A/J,X)=0.\]
Thus $Q^{\ell+1}=0$ and hence $\pd_AX\le\ell$.
\end{proof}

%
%

\end{document}